\newcommand{\bip}{{\operatorname{BP}}}
\newcommand{\creg}{{\operatorname{C}}}
\newcommand{\ireg}{{\operatorname{I}}}
\def\namedlabel#1#2{\begingroup
	\def\@currentlabel{#2}%
	\phantomsection\label{#1}\endgroup
}
\newcommand{\ORD}{\operatorname{ORD}}
\newcommand{\bzeta}{\boldsymbol{\zeta}}
\newcommand{\bzer}{\boldsymbol{0}}
\newcommand{\cM}{{\mathcal{M}}}
\newcommand{\bcM}{{\boldsymbol{\cM}}}
\newcommand{\bz}{{\mathbf{z}}}
\def\namedlabel#1#2{\begingroup
	#2%
	\def\@currentlabel{#2}%
	\phantomsection\label{#1}\endgroup
}
\newcommand{\bS}{{\mathbf{S}}}
\newtheorem{theorem}{Theorem}[section]
\newtheorem{corollary}[theorem]{Corollary}
\newtheorem{lemma}[theorem]{Lemma}
\newtheorem{proposition}[theorem]{Proposition}
\theoremstyle{definition}
\newtheorem{definition}[theorem]{Definition}
\newtheorem{regime}{Regime}
\newtheorem{assumption}[theorem]{Assumption}
\newtheorem{remark}[theorem]{Remark}
\DeclareMathAlphabet{\mathscrbf}{OMS}{mdugm}{b}{n}
\numberwithin{equation}{section}
\newcommand{\dotms}{\dotsm}
\newcommand{\Leb}{\operatorname{Leb}}
\newcommand{\by}{{\mathbf y}}
\newcommand{\bx}{{\mathbf x}}
\newcommand{\bw}{{\mathbf w}}
\newcommand{\bW}{{\mathbf W}}
\newcommand{\bv}{{\mathbf v}}
\newcommand{\bu}{{\mathbf u}}
\newcommand{\bDelta}{\boldsymbol{\Delta}}
\newcommand{\eps}{\varepsilon}
\newcommand{\R}{\mathbb{R}}
\newcommand{\D}{{\mathbb{D}}}
\newcommand{\N}{\mathbb{N}}
\newcommand{\cY}{\mathcal{Y}}
\newcommand{\bX}{{\mathbf X}}
\newcommand{\bT}{{\mathbf T}}
\newcommand{\weakarrow}{{\overset{(d)}{\Longrightarrow}}}
\newcommand{\PR}{\mathbb{P}}
\newcommand{\bbX}{{\mathbb{X}}}
\newcommand{\G}{{\mathcal{G}}}
\newcommand{\cI}{\mathcal{I}}
\newcommand{\btheta}{\boldsymbol{\theta}}
\newcommand{\SBM}{{\operatorname{SBM}}}
\newcommand{\cC}{{\mathcal{C}}}
\newcommand{\len}{\operatorname{len}}
\newcommand{\EE}{\mathcal{E}}
\newcommand{\LL}{\mathcal{L}}
\newcommand{\bccc}{\mathbf{c}}
\newcommand{\cS}{\mathcal{S}}
\newcommand{\fS}{\mathfrak{S}}
\newcommand{\Exp}{\textup{Exp}}
\title{Component sizes of rank-2 multiplicative random graphs}
\author{David Clancy, Jr.}
\date{\today}
\begin{document}
	
	\maketitle
	
	\begin{abstract}
		We show that in three different critical regimes, the masses of the connected components of rank-2 multiplicative random graph converge to lengths of excursions of a thinned L\'{e}vy process, perhaps with random coefficients. The three critical regimes are those identified by Bollob\'{a}s, Janson and Riordan \cite{BJR.07}, the interacting regime identified by Konarovskyi and Limic \cite{KL.21}, and what we call the nearly bipartite regime which has recently gained interest for its connection to random intersection graphs. Our results are able to extend some of the results by Baslingker et al.\cite{BBBSW.23} on component sizes of the stochastic blockmodel with two types and those of Federico \cite{Federico.19} and Wang \cite{Wang.23} on the sizes of the connected components of random intersection graphs. 
	\end{abstract}

	\section{Introduction}
	
	Over the last several decades, many models of random graphs have been proposed and studied. They often undergo a phase transition described informally as follows. One can parameterize the edge density by a parameter $t>0$ and find a critical value $t_c$ such that as the number of vertices $n\to\infty$ all the connected component are microscopic (typically $O(\log (n))$ in size) for $t<t_c$ while for $t>t_c$ there is a single macroscopic component of size $\Theta(n)$. This property holds for a wide class of random graph models \cite{BJR.07} and has prompted a lot of interest in understanding the sizes (and subsequently the geometry) of the connected components at and near the critical value $t_c$.  
	
	A major breakthrough in understanding the component sizes of random graphs at criticality came with Aldous's work on the Erd\H{o}s-R\'{e}nyi random graph \cite{Aldous.97}. Using elementary weak convergence techniques he was able to show that if you take the Erd\H{o}s-R\'{e}nyi random graph $G(n,n^{-1}+\lambda n^{-4/3})$ and list the connected components $(\cC_l^{(n)};l\ge 1)$ in decreasing order of their size then there exists some limiting random vector $\bzeta^{1,0,\lambda}$ such that
	\begin{equation*}
		\left(n^{-2/3}\#\cC_l^{(n)};l\ge 1\right) \weakarrow \bzeta^{1,\bzer,\lambda}\qquad\textup{ in }\ell^2.
	\end{equation*}
	A precise description of the limiting object is delayed until Section \ref{sec:prelims}.

	This weak convergence approach has proved to be quite successful. In \cite{AL.98} Aldous and Limic identified the entrance boundary of the multiplicative coalescent (see \cite{Aldous.97}) which is a key object that describes the connected sizes of connected components of many random graphs. See \cite{DLV.19,BSW.17, BBSW.14,BS.20,DvdHvLS.17,DvdHvLS.20,BDvdHS.20,CKG.20,Joseph.14,Wang.23,Federico.19,KL.21,CKL.22,AL.98} for a non-exhaustive list of such works. However, it has mostly been used to understand the scaling limits of so-called rank-1 models of random graphs where the expected adjacency matrix is (approximately) rank-1 \cite{CRvdH.22}. Unfortunately, many random graph models of interest to computer scientists, statisticians and physicists are not rank-1. One such model is the stochastic blockmodel (SBM) \cite{HLL.83,Abbe:2017} and its closely related inhomogeneous counterpart the degree-corrected SBM (DCSBM) \cite{Karrer:2011}.

	To put things on a more solid foundation, consider the following model as in \cite{CRvdH.22}. Let $A = A_n\in \R^{n\times n}_+$ be a matrix with non-negative entries and let $t>0$ be a parameter. The graph $G_n = G_n(A,t)$ on the vertex set $[n]:=\{1,\dotsm,n\}$ with edges included independently with probabilities
	\begin{equation*}
		\PR(i\sim j) = 1-\exp(-t a_{i,j})
	\end{equation*}
	is a rank-$k$ random graph whenever $\operatorname{rank}(A) = k$. While the rank-1 case is very well understood, much is still open about even the simplest extension to rank-2 random graphs. Quoting from \cite{CRvdH.22}
	\begin{quote}
		The general structure in Bollob\'{a}s, Janson and Riordan \cite{BJR.07} allows one to identify exactly when there is a giant or not, as well of several other properties of the random graphs involved. However, this set-up is likely not suitable to decide what the precise critical behavior is. Already in the rank-1 case, there are several universality classes (see \cite[Chapter 4]{vanderHofstad.23} and the references therein for an extensive overview), and one can expect that
		this only becomes more tricky when the rank is higher.
	\end{quote}
	
	We show that this expectation that things become more tricky is essentially \textit{not} the case for the component sizes in the rank-2 setting under mild assumptions generalizing those found in \cite{AL.98}. This should be compared with results by Miermont \cite{Miermont.08} and Berzunza \cite{BerzunzaOjeda.18} on multitype Bienaym\'{e}-Galton-Watson trees convergening stable continuum random trees. Key to our approach is the recent work by Konarovskyi, Limic and the author initiated in \cite{CKL.22} on the degree-corrected stochastic blockmodel (DCSBM).

	\subsubsection*{Organization of the paper} 
	In Section \ref{sec:mainResutlsStated}, we state our main results after we recall the necessary rank-1 results needed to properly state our main results. Some of these results (Theorem \ref{thm:classic} and \ref{thm:bipartite}) imply other results in the literature, and we take the time to draw these connections explicitly in Section \ref{sec:discussion}. In particular, we are able to extend recent results of Baslingker et al. in \cite{BBBSW.23} for SBMs to degree corrected SBMs of \cite{Karrer:2011}. We are also able to extend the results of Federico and Wang \cite{Federico.19,Wang.23} to the component sizes of inhomogeneous random intersection graphs.

	In Section \ref{sec:exploration}, we use the construction in \cite{CKL.22} (and a slight extension) to construct several relevant processes that encode the component sizes of the random graphs. We see that for each of the three regimes we have a pair of stochastic process $(V, U^2\circ X^{2,1})$ which encode the component sizes of the random graph. 
	
	In Section \ref{sec:generaltopology}, we recall some necessary results on the $M_1$ topology that allow us to efficiently establish the weak convergence of the encoding processes using recent results of Limic \cite{Limic.19,Limic.19_Supplement}. These weak convergence arguments are found Section \ref{sec:WeakConv}. Finally, in Section \ref{sec:proofsofMain} we prove our main results.
	
	As noted above, we prove our weak convergence results using the Skorohod $M_1$ topology instead of the standard $J_1$ topology. This is due to needing to use several functionals that are discontinuous in the $J_1$ topology but continuous in the $M_1$ topology -- at least on the relevant subsets that the limits a.s. live in. Due to the use of the $M_1$ topology, we have a lengthy Section \ref{sec:M1} on the useful $M_1$-continuous functionals.
	
	\subsection*{Acknowledgements} The author was partially supported by NFS-DMS 2023239. He would also like to thank Vitalii Konarovskyi and Vlada Limic for helpful discussions early on in hte project.

	\section{Main Results} \label{sec:mainResutlsStated}
	
	\subsection{Component sizes of rank-1 models}\label{sec:prelims}

	For $p\in[1,\infty)$ write $\ell^p_\downarrow$ denote the collection of vectors $\bz = (z_1,z_2,\dotsm)\in\R^\infty_+$ where $z_1\ge z_2\ge\dotsm\ge 0$ and
	\begin{equation*}
		\sigma_p(\bz):=\sum_{j=1}^\infty z_j^p = \|\bz\|_{\ell^p}^p <\infty.
	\end{equation*} Write $\ell^\infty_\downarrow$ for the non-negative decreasing sequences of real numbers. We will write $\ell^f_\downarrow$ for the collection of finite length vectors $\bz = (z_1,\dotms, z_k,0,\dotsm)$ where $z_k>0$ and we will write $\len(\bz) = k$ as the length of the vector $\bz$. Given any vector $\bz = (z_1,z_2,\dotsm)\in\ell^p$, $1\le p<\infty$ of non-negative real numbers we denote $\ORD(\bz)$ the unique element of $\ell^p_\downarrow$ obtained by rearranging the entries of $\bz$. We will write $[n]:=\{1,2,\dotsm,n\}$.
	
	Given a vector $\bz\in \ell^f_\downarrow$, and a scalar $q\ge 0$ we will write $\G(\bz,q)$ as the rank-1 multiplicative random graph on $[\len(\bz)]$ where each edge $i,j$ is included independently with probability
	\begin{equation*}
		\PR(i\sim j\textup{ in }\G(\bz,q)) = 1-\exp(-qz_iz_j).
	\end{equation*}
	It is easy to see that one can couple these graph $(\G(\bz,q);q\ge 0)$ where $\G(\bz,q)\subset \G(\bz,q')$ for all $q\le q'$. Given a set $A\subset \G(\bz,q)$ we will write $\cM(A) = \sum_{i\in A} z_i$ and will call this the total mass of the set $A$.

	The random graph was studied by Aldous and Limic \cite{AL.98}, extending the prior work of \cite{Aldous.97} on the Erd\H{o}s-R\'{e}nyi random graph. As identified in \cite{AL.98}, the limiting component sizes of the near-critical random graph $\G(\bz,q)$ are determined by the following asymptotic conditions on the sequence of weight vectors $\bz  = \bz^{(n)}$ and $q = q^{(n)}$:
	\begin{subequations}
		\begin{align}
			\label{eqn:sigma2tozer}&\sigma_2(\bz) \longrightarrow 0 \\
			\label{eqn:sigma3order} &\frac{\sigma_3(\bz)}{\sigma_2(\bz)^3} \longrightarrow \beta + \sum_{j=1}^\infty \theta_j^3 <\infty\\
			\label{eqn:sigmahumbs}
			&\forall j\ge 1\quad \frac{z_j}{\sigma_2(\bz)} \longrightarrow \theta_j\\
			\label{eqn:qrange} &q = \frac{1}{\sigma_2(\bz)} + \lambda
		\end{align}
	\end{subequations}where $\btheta = (\theta_1,\theta_2,\dotsm)\in \ell^3_\downarrow$, $\beta\ge 0$ and $\lambda\in \R$. As this will play a fundamental role in the sequel, we will write
	\begin{equation*}
		\cI^\circ  = \R_+\times \ell^3_\downarrow\qquad \textup{and}\qquad \cI = \cI^\circ \setminus \{0\}\times \ell^2_\downarrow,
	\end{equation*}
	and we will write 
	\begin{equation*}
		\bz\rightsquigarrow (\beta,\btheta)\textup{  if and only if \eqref{eqn:sigma2tozer}--\eqref{eqn:sigmahumbs} hold}.
	\end{equation*} Whenever we write $\bz\rightsquigarrow(\beta,\btheta)$ will implicitly assume that $(\beta,\btheta)\in \cI^\circ$.
	
	The limiting component sizes are encoded by a \textit{thinned L\'{e}vy process} described as follows. Given any $\bccc\in \ell^3_\downarrow$, let
	\begin{equation}\label{eqn:Jcreg}
		J^\bccc(t) = \sum_{p\ge 1} c_p (1_{[\xi_p\le t]} - c_p t)
	\end{equation}
	where $\xi_p\sim \Exp(c_p)$ are independent exponential random variables with rates $c_p$ (and means $1/c_p$). Let $W$ be an independent standard Brownian motion then
	\begin{align}\label{eqn:thinnedLevy}
		W^{\beta,\btheta, \lambda}(t)&= \sqrt{\beta} W(t) + \lambda t - \frac{\beta}{2}t^2 + J^{\btheta}(t).
	\end{align} Whenever $\beta>0$ or $\btheta\in \ell^3_\downarrow\setminus \ell^2_\downarrow$, Aldous and Limic \cite{AL.98} show that one can write 
	\begin{equation*}
		\left\{t: W^{\beta,\btheta,\lambda} (t)> \inf_{s\le t} W^{\beta,\btheta,\lambda}(s)\right\} = \bigsqcup_{j=1}^\infty (l_j,r_j) 
	\end{equation*} where $r_1-l_1\ge r_2-l_2\ge\dotsm> 0$. Set  
	\begin{equation}\label{eqn:MCextremedef}
		\bzeta^{\beta,\btheta,\lambda} = \left(r_j-l_j;j\ge 1\right).
	\end{equation}
	\begin{theorem}[{Aldous and Limic \cite{AL.98}}]
		Let $(\cC_j^{(n)};j\ge 1)$ be the connected components of $\G(\bz^{(n)},q)$ listed in decreasing order of mass. If $\bz^{(n)}\rightsquigarrow (\beta,\btheta)\in \cI$ and \eqref{eqn:qrange} holds then 
		\begin{equation*}
			\left(\cM(\cC_j^{(n)});j\ge 1\right)\weakarrow \bzeta^{\beta,\btheta,\lambda}\qquad\textup{ in }\quad \ell^2_\downarrow.
		\end{equation*}
	\end{theorem}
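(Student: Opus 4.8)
The plan is to encode the masses of the connected components of $\G(\bz^{(n)},q)$ by the excursions of a breadth-first exploration walk, to prove that a vertically rescaled version of this walk converges to the thinned L\'evy process $W^{\beta,\btheta,\lambda}$, and then to read off the component masses as the ordered lengths of its excursions above the running infimum. Concretely: run breadth-first search on $\G(\bz^{(n)},q)$, taking the order in which vertices enter the exploration to be size-biased (the next fresh vertex and the order of the queue weighted proportionally to the $z_i^{(n)}$), and parametrize ``time'' by the total mass of the vertices processed so far. Let $Z^n(t)$ be the mass discovered by time $t$ minus $t$ -- equivalently the mass currently waiting in the queue within an excursion. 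Its excursions above the running infimum are in bijection with the components of $\G(\bz^{(n)},q)$, and the length of each excursion equals the total mass $\cM(\cdot)$ of the corresponding component; this is the mass analogue of Aldous's breadth-first walk. Since a vertical rescaling does not change which intervals are excursions, the same holds for $\bar Z^n := Z^n/\sigma_2(\bz^{(n)})$.

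The heart of the argument is showing $\bar Z^n\weakarrow W^{\beta,\btheta,\lambda}$ in Skorohod space. While vertex $i$ is processed, each not-yet-seen vertex $j$ is revealed independently with probability $1-e^{-q z_i^{(n)}z_j^{(n)}}\approx q z_i^{(n)}z_j^{(n)}$. One splits the increments of $\bar Z^n$ into a bulk part, from the vertices with $z_j^{(n)}/\sigma_2(\bz^{(n)})$ small, and the contributions of the distinguished vertices with $z_j^{(n)}/\sigma_2(\bz^{(n)})\to\theta_j>0$. Using $q=1/\sigma_2(\bz^{(n)})+\lambda$ together with \eqref{eqn:sigma2tozer}--\eqref{eqn:sigmahumbs}: after subtracting the unit downward drift, the bulk has drift rate $\approx\lambda-\beta s$ at time $s$ (the $-\beta s$ coming from depletion of the mass $\sigma_2$ of yet-unexplored vertices), hence drift $\lambda t-\tfrac{\beta}{2}t^2$; its fluctuations, with quadratic variation converging to $\beta t$ by \eqref{eqn:sigma3order}, converge by a martingale functional CLT to $\sqrt{\beta}\,W(t)$; and each distinguished vertex $j$ is discovered exactly once, producing a single up-jump of size $\to\theta_j$ at an asymptotically $\Exp(\theta_j)$ time with compensator $\theta_j^2 t$, independently across $j$, so these contributions assemble into $J^{\btheta}$. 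Together with a tightness estimate for $\bar Z^n$, this gives the convergence.

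One then transfers this to $\ell^2_\downarrow$. Because $(\beta,\btheta)\in\cI$ (i.e.\ $\beta>0$ or $\btheta\in\ell^3_\downarrow\setminus\ell^2_\downarrow$), the limit $W^{\beta,\btheta,\lambda}$ almost surely admits the excursion decomposition recalled before \eqref{eqn:MCextremedef}, with an infinite family of summable, a.s.\ positive, a.s.\ distinct excursion lengths; on the set of \cdl functions with these properties the map to the decreasing rearrangement of excursion lengths is continuous, so the continuous mapping theorem yields $(\cM(\cC_j^{(n)});j\ge1)\weakarrow\bzeta^{\beta,\btheta,\lambda}$ in the product topology. To upgrade to $\ell^2_\downarrow$ one proves tail tightness, $\limsup_n\E\big[\sum_{j>K}\cM(\cC_j^{(n)})^2\big]\to0$ as $K\to\infty$, via a second-moment bound on $\sum_j\cM(\cC_j^{(n)})^2$ -- the expected mass-weighted size of the component of a size-biased vertex -- again using \eqref{eqn:sigma2tozer}--\eqref{eqn:qrange}; product convergence together with this estimate gives convergence in $\ell^2_\downarrow$.

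The main obstacle is the walk convergence: one must track carefully the mass depleted both into the current component and into the previously explored ones in order to extract simultaneously the parabolic drift $-\tfrac{\beta}{2}t^2$ and the \emph{thinning} of the jumps (each $\theta_j$-vertex being discovered at most once, so its clock is $\Exp(\theta_j)$ rather than a Poisson stream), and to control the error uniformly enough for tightness; this bookkeeping, carried out by Aldous and Limic, is where essentially all the work lies, the $\ell^2_\downarrow$ tail estimate being a more routine second ingredient. An alternative route -- showing that $q\mapsto(\cM(\cC_j^{(n)}))_j$ converges, as a Feller process, to the standard multiplicative coalescent and then identifying the entrance law singled out by \eqref{eqn:sigma2tozer}--\eqref{eqn:qrange} -- meets the same essential difficulties.
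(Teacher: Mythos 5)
The paper does not prove this theorem; it quotes it directly from Aldous and Limic \cite{AL.98}, so there is no internal proof to compare against. Your outline is a faithful, if compressed, account of the Aldous--Limic argument: a size-biased breadth-first exploration with time parametrized by cumulative mass; a functional limit theorem splitting the walk increments between a small-weight bulk (martingale CLT, yielding $\sqrt{\beta}\,W(t)+\lambda t-\tfrac{\beta}{2}t^2$ with the quadratic term coming from depletion of the available $\sigma_2$-mass and the Brownian part from \eqref{eqn:sigma3order}) and the finitely many distinguished coordinates with $z_j/\sigma_2(\bz)\to\theta_j>0$, each of which can fire at most once, producing the thinned jump process $J^{\btheta}$; a continuous-mapping step (valid because on $\cI$ the limit a.s.\ has the good excursion structure) to pass from walk convergence to ranked excursion lengths in the product topology; and a tail estimate to upgrade to $\ell^2_\downarrow$, which Aldous and Limic actually run through the size-biased point process machinery of \cite{Aldous.97} -- your description via a bound on $\sum_j\cM(\cC_j^{(n)})^2$, the mass-biased expected component mass, captures the essential input. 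You are also right that the hard work is the depletion bookkeeping, which must deliver simultaneously the parabolic drift and the ``one jump per $\theta_j$'' thinning; that is the content of the long Section 3 of \cite{AL.98}.

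As a point of comparison, the machinery this paper actually sets up gives an alternative and cleaner proof, essentially that of Limic \cite{Limic.19,Limic.19_Supplement} recorded here as Theorem \ref{thm:limic}. Rather than a sequential breadth-first walk, one uses the simultaneous process $N^{\bz}(t)=\sum_j z_j\mathbf{1}_{[\xi_j\le qt]}$ with independent $\xi_j\sim\Exp(z_j)$, already a sum of independent single-jump processes at finite $n$; the convergence $c_n(N^{\bz}-\operatorname{Id})\Rightarrow W^{\beta,\btheta,\lambda}$ is then a clean triangular-array limit with no depletion bookkeeping, the excursions of $N^{\bz}(t)-t$ above the running infimum encode the components with lengths appearing in size-biased order, and the upgrade to $\ell^2_\downarrow$ is handled by the excursion/point-process lemmas (Lemmas \ref{lem:good} and \ref{lem:ppconv}), which is precisely the pattern the paper follows for its new rank-2 regimes, e.g.\ in Lemma \ref{lem:vecforClassic}. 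Both routes are correct; Limic's replaces the difficult step you flagged with a more transparent independence structure, at the cost of a slightly less obvious coupling between $N^{\bz}$ and the graph.
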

	
	More generally, whenever $Z$ is a c\`adl\`ag stochastic process such that we can a.s. write
	\begin{equation*}
		\left\{t : Z(t)>\inf_{s\le t} Z(s)\right\} = \bigsqcup_{j=1}^\infty (l_j^Z,r_j^Z)
	\end{equation*}
	where $r_1^Z-l_1^Z\ge r^Z_2-l_2^Z\ge\dotsm\ge 0$ (and ties broken in some measurable way), we will let 
	\begin{equation}\label{eqn:EE_inftyDef}   \LL^\downarrow_\infty (Z) = \left(r_j^Z-l_j^Z;j\ge 1 \right)\in \ell^\infty_\downarrow,\quad\qquad\EE_\infty(Z) = \{\{(l_j,r_j);j\ge 1\}\}
	\end{equation}
	where we write $\{\{\cdot\}\}$ for a multiset. If we wish to clear up confusion with additional paramters appearing, we will write $\LL^\downarrow_\infty(Z(s))$ in lieu of $\LL^\downarrow_\infty(Z)$.

	\subsection{Rank-2 model}
	In this work we will be interested in the rank-2 random graph denoted by $\G(\bW,Q)$. Here $\bW = (\bw^1,\bw^2)$ with $\bw^i\in \ell^f_\downarrow$ are two weight vectors and $Q\in \R^{2\times 2}_+$ is a symmetric $2\times 2$ matrix with non-negative entries. Each vector $\bw^i = (w_l^i;l\ge 1)$. The graph $\G(\bW,Q)$ is on $\len(\bw^1)+\len(\bw^2)$ many vertices made of two types $i\in\{1,2\}$. The type $i$ vertices are indexed by $(l,i)$ for $l\in[\len(\bw^i)]$. Edges are included independently with probabilities
	\begin{equation*}
		\PR\left((l,i)\sim (r,j) \textup{ in }\G(\bW,Q)\right) = 1-\exp\left(- q_{i,j} w^i_l w^j_r\right).
	\end{equation*}
	This graph was studied by Konarovskyi, Limic and the author in \cite{CKL.22} and is an example of the degree-corrected SBM introduced in \cite{Karrer:2011}. One can easily see that $\G(\bW,Q) = G_n(A,t)$ from the introduction and that $A$ is a rank-2 matrix. 
	
	A standing assumption throughout the article is the following:
	\begin{assumption}\label{ass:main}
		The weight parameters $\bw^{i} = \bw^{i,(n)}$ satisfy $\bw^i\rightsquigarrow (\beta^i,\btheta^i)$ for each $i\in[2]$.
		Moreover, 
		\begin{equation}\label{eqn:Qextension}
			Q = D^{-1/2} K D^{-1/2} + \Lambda +o(1),\qquad D = \begin{bmatrix}
				\sigma_2(\bw^1)&0\\0&\sigma_2(\bw^2)
			\end{bmatrix}
		\end{equation} where $K = (\kappa_{ij})\in \R_+^{2\times 2}$ with $\det(K)\neq 0$ and $\Lambda = (\lambda_{ij}) \in \R^{2\times 2}$. Both $K$ and $\Lambda$ are symmetric and if $\kappa_{ij} = 0$ then $\lambda_{ij}\ge 0$.
		Lastly, we assume
		\begin{equation}\label{eqn:weightWindow}
			\sqrt{\frac{\sigma_2(\bw^1)}{\sigma_2(\bw^2)}} = 1 + \alpha c_n^{-1} + o(c_n^{-1}) \qquad\textup{where} \qquad c_n = \frac{1}{\sqrt{\sigma_2(\bw^1)\sigma_2(\bw^2)}}.
		\end{equation}  
	\end{assumption}
	
	\begin{remark}
		The assumption that $\det(K)\neq 0$ is so that we are dealing with a near-critical rank-2 random graph instead of a rank-2 perturbation of a rank-1 random graph.
	\end{remark}
	
	\begin{remark} Note that by scaling either $\bw^1$ or $\bw^2$, there is no loss in generality in assuming that the leading order term on the right-hand side of \eqref{eqn:weightWindow} is $1$. This choice just simplifies some later computations. As we will see below, the assumption \eqref{eqn:weightWindow} corresponds to the critical window identified for the SBM studied by Baslingker et al. in \cite[Section 7]{BBBSW.23}. This also extends the critical window of Wang in \cite{Wang.23}.
	\end{remark}

	Since the connected components of $\G(\bW,Q)$ consist of vertices of both type 1 and type 2 vertices, it is natural to track the number of vertices by type using a vector. We do this as follows. Order the connected components of $\G(\bW,Q)$ as $(\cC_l;l\ge 1)$ in such a way that $\cM_{1,1}\ge \cM_{2,1}\ge\dotsm$ where
	\begin{equation}\label{eqn:massDef}
		\cM_{l,i} = \sum_{(r,i)\in \cC_l} w_r^i
	\end{equation} with ties broken in some measurable, but otherwise arbitrary, way.  Write
	\begin{equation}\label{eqn:BCMdef}
		\bcM_l = \begin{bmatrix}
			\cM_{l,1}\\\cM_{l,2}
		\end{bmatrix}.
	\end{equation}
	The natural space that these elements live is
	\begin{equation*}
		\ell^{2}(\N\to \R_+^2) = \left\{\bz=\left(\begin{bmatrix}
			x_l\\
			y_l
		\end{bmatrix};l\ge 1\right): \sum_{l} (x_l^2+y_l^2)<\infty, x_l,y_l\ge 0\right\}.
	\end{equation*} Note that $\ell^2(\N\to\R_+^2)$ is a cone in the separable Banach space $\ell^2(\N\to \R^2)$ (defined in the obvious way) with norm
	\begin{equation}
		\|\bz-\bz'\|_{2,1}^2 = \sum_{l=1}^\infty \left\|\begin{bmatrix}
			x_l\\
			y_l
		\end{bmatrix}-\begin{bmatrix}
			x_l'\\
			y_l'
		\end{bmatrix} \right\|_1^2 = \sum_{l=1}^\infty (x_l-x_l')^2+(y_l-y_l')^2.\label{eqn:ell21def}
	\end{equation} This functional analysis is only necessary for the following lemma, whose proof is elementary and is omitted.
	\begin{lemma}
		The space $\ell^2(\N\to \R_+^2)$ is Polish with distance $d(\bz,\bz') = \|\bx-\bx'\|_{2,1}$ defined in \eqref{eqn:ell21def}.
	\end{lemma}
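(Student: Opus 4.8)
The plan is to realize $\ell^2(\N\to\R_+^2)$ as a closed subset of a separable Banach space and then invoke the standard fact that a closed subset of a Polish space is Polish.

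First I would record that $\|\cdot\|_{2,1}$ is a genuine norm on the vector space $\ell^2(\N\to\R^2)$: it is obtained by taking the $\ell^1$-norm of each two-dimensional slice $\binom{x_l}{y_l}$ and then the $\ell^2$-norm of the resulting scalar sequence, so the triangle inequality follows from Minkowski's inequality applied twice, and homogeneity and definiteness are immediate. Since on $\R^2$ one has $\|v\|_2\le\|v\|_1\le\sqrt{2}\,\|v\|_2$ with constants that do not depend on the index $l$, the norm $\|\cdot\|_{2,1}$ is equivalent to the canonical Hilbert-space norm $\|\bz\|^2=\sum_l(x_l^2+y_l^2)$ on $\ell^2(\N\to\R^2)$, which is isometrically a copy of $\ell^2$. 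Consequently $(\ell^2(\N\to\R^2),\|\cdot\|_{2,1})$ is a separable Banach space: completeness is the usual $\ell^2$ argument — a Cauchy sequence converges coordinatewise, the coordinatewise limit lies in $\ell^2$ by Fatou, and the convergence is in norm — and separability follows because finitely supported sequences with rational entries are dense.

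Next I would verify that $\ell^2(\N\to\R_+^2)$ is closed in $\ell^2(\N\to\R^2)$. By the norm equivalence just noted, convergence in $\|\cdot\|_{2,1}$ implies convergence in each coordinate; since the quadrant $\{(x,y):x\ge 0,\ y\ge 0\}$ is closed in $\R^2$, each nonnegativity constraint passes to the limit, so the cone $\ell^2(\N\to\R_+^2)$ is a closed subset of a Polish space. A closed subset of a separable completely metrizable space is itself separable and completely metrizable, hence Polish, and the restriction of $d(\bz,\bz')=\|\bz-\bz'\|_{2,1}$ is a compatible complete metric. That finishes the argument.

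The only point that takes any attention is the norm-equivalence bookkeeping in the first step, since it is what allows the standard $\ell^2$ completeness and separability facts to be imported verbatim; the closedness of the cone and the passage to a closed subspace are entirely routine. I would therefore expect the write-up to be short, with the main (mild) obstacle being simply to state the equivalence constants uniformly in $l$ so that completeness in $\|\cdot\|_{2,1}$ is not merely asserted.
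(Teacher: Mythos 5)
Your argument is correct, and it is the standard one the paper presumably had in mind: the paper explicitly omits the proof as elementary, so there is nothing to compare against. Realizing $\|\cdot\|_{2,1}$ as equivalent (uniformly in $l$, via $\|v\|_2\le\|v\|_1\le\sqrt{2}\|v\|_2$ on $\R^2$) to the canonical Hilbert norm on $\ell^2(\N\to\R^2)$, importing completeness and separability from $\ell^2$, and then observing that the nonnegativity cone is closed so that the closed-subset-of-Polish-is-Polish fact applies, is exactly the expected route.
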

	
	We now turn to three scaling regimes. We call these the ``classic'' regime, the ``interacting'' regime and the ``nearly bipartite'' regime.

	\subsection{Classic regime: $\kappa_{ij}>0$}

	\begin{regime}[Classic Regime]
		Assumption \ref{ass:main} holds with $\sum_{i} (\beta_i,\btheta_i)\in \cI$. Moreover, 
		\begin{equation}\tag{C}\label{regime:classic}
			K \in (0,\infty)^{2\times 2} \qquad \textup{ and}\qquad \Lambda\in \R^{2\times 2}
		\end{equation}
		and $K$ is such that the PF eigenvalue is $1$.
	\end{regime}
	
	\begin{remark}
		The assumption on the PF eigenvalue being 1 is essentially the criticality assumption from \cite{BJR.07}.
	\end{remark}
	
	Write $\bu^\creg= (u_1^\creg, u_2^\creg)^T$ as the right PF eigenvector of $K$ normalized so that $\langle \bu^\creg, \boldsymbol{1}\rangle = u_1^\creg + u_2^\creg = 1$. 
	Let us define
	\begin{align}\label{eqn:Zcregdef}
		Z^\creg (t) = W^{\beta^\creg, \btheta^\creg,\lambda^{\creg}}(t),
	\end{align}
	where
	\begin{subequations}
		\begin{align}
			\label{eqn:betacreg} \beta^\creg &= (u_1^\creg)^3 \beta_1 + (u_2^\creg)^3 \beta_2\\
			\label{eqn:bthetacreg}\btheta^\creg &=  u_1^\creg\btheta_1 \bowtie u_2^\creg\btheta_2\\
			\label{eqn:lambdacreg} \lambda^\creg&= \langle \bu^\creg, \Lambda\bu^\creg\rangle.
		\end{align} 
	\end{subequations}

	\begin{theorem}\label{thm:classic} Suppose $\bW$ and $Q$ are as in Regime \ref{regime:classic}. Then in $\ell^2(\N\to\R^2_+)$
		\begin{equation*}
			\left(\bcM_l;l\ge 1\right) \weakarrow \left( \zeta_l^\creg \bu^\creg;l\ge 1 \right)
		\end{equation*}
		where $(\zeta_l^\creg;l\ge 1) = \LL^\downarrow_\infty(Z^\creg)$. 
	\end{theorem}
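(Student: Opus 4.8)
The strategy is to reduce the rank-2 model in the classic regime to the rank-1 Aldous–Limic theorem via an exploration process that tracks mass by type. Following the construction in \cite{CKL.22}, I would build a breadth-first (or depth-first) exploration of $\G(\bW,Q)$ in which, as each vertex is explored, we add its type-$1$ and type-$2$ mass contributions to a vector-valued walk $(V^{(n)},U^{2,(n)}\circ X^{2,1,(n)})$ — the pair of encoding processes promised in Section~\ref{sec:exploration}. The key heuristic is that near criticality the PF eigenvector $\bu^\creg$ dictates that a ``typical'' explored component is, to leading order, a scalar multiple of $\bu^\creg$: type-$1$ and type-$2$ masses do not fluctuate independently but are slaved together along the direction $\bu^\creg$. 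Concretely, I expect that after rescaling by $c_n$ (equivalently by $\sigma_2$, which are comparable under \eqref{eqn:weightWindow}), the two coordinates of the encoding process converge jointly, with the limit being $(u_1^\creg Z^\creg, u_2^\creg Z^\creg)$ for the single scalar thinned Lévy process $Z^\creg$ defined in \eqref{eqn:Zcregdef}.

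\textbf{Key steps.} First, I would write down explicitly the vector-valued exploration walk and identify its drift, quadratic-variation, and jump structure. The drift term should, after rescaling, produce $\lambda^\creg t - \tfrac{\beta^\creg}{2}t^2$: the matrix $Q = D^{-1/2}KD^{-1/2} + \Lambda + o(1)$ contributes a linear-in-$t$ piece whose leading part is governed by $K$ (criticality: PF eigenvalue $1$, so this piece is ``neutral'' along $\bu^\creg$ and contracting transversally) and a correction piece governed by $\Lambda$, which upon projecting onto $\bu^\creg$ yields $\langle \bu^\creg,\Lambda\bu^\creg\rangle = \lambda^\creg$. Second, the big jumps: a vertex $(j,i)$ of macroscopic weight $z_j^i/\sigma_2(\bw^i)\to\theta_j^i$ contributes a jump; collecting type-$1$ and type-$2$ ``hubs'' gives the thinned-Lévy jump part with rate vector $\btheta^\creg = u_1^\creg\btheta_1\bowtie u_2^\creg\btheta_2$ (the $\bowtie$ merging the two rescaled hub sequences), and the Brownian part gets variance $\beta^\creg = (u_1^\creg)^3\beta_1 + (u_2^\creg)^3\beta_2$ from \eqref{eqn:sigma3order} applied to each type and weighted by the cube of the eigenvector coordinate (the cube coming from one power for the mass increment and two from the size of the exploration step). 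Third — the transversal direction — I must show that the component of the walk orthogonal to $\bu^\creg$ is, on the $c_n$ timescale, negligible: the $K$-dynamics contract it at rate bounded away from the critical direction, so it stays $o(1)$ and the pair degenerates onto the line $\R\bu^\creg$. This is where Assumption~\ref{ass:main}, specifically $\det(K)\neq 0$ and the PF structure, is essential. Fourth, having established $(V^{(n)},U^{2,(n)}\circ X^{2,1,(n)}) \Rightarrow (u_1^\creg Z^\creg, u_2^\creg Z^\creg)$ in the appropriate Skorohod space (the paper uses $M_1$ because of discontinuous-but-$M_1$-continuous functionals — the thinning/reflection maps), I would apply the excursion-length functional $\LL^\downarrow_\infty$, which is continuous at the limit (a.s.\ the limit has only good excursions and no ties issues, by \cite{AL.98}), to conclude that the ordered excursion lengths converge to $\LL^\downarrow_\infty(Z^\creg) = (\zeta_l^\creg;l\ge 1)$. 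Since each excursion of the vector process is simultaneously $u_1^\creg$ times a scalar excursion in coordinate~1 and $u_2^\creg$ times the same scalar excursion in coordinate~2, the $l$-th component vector converges to $\zeta_l^\creg\bu^\creg$, and the convergence upgrades from Skorohod-$M_1$ of the processes to $\ell^2(\N\to\R_+^2)$ of the mass vectors exactly as in the rank-1 proof (controlling the $\ell^2$-tail via the $\sigma_2\to 0$ input and a summability estimate on small components).

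\textbf{Main obstacle.} The crux is Step~3 together with the joint convergence: showing rigorously that the transversal fluctuations vanish on the $c_n$ scale and that the longitudinal process converges to precisely $Z^\creg$ with the claimed $(\beta^\creg,\btheta^\creg,\lambda^\creg)$. The bookkeeping for the jump part — matching the merged hub sequence $u_1^\creg\btheta_1\bowtie u_2^\creg\btheta_2$ and verifying the third-moment condition \eqref{eqn:sigma3order} transfers with the cubic weights — is delicate, and one must be careful that the exploration does not reorder or double-count hub contributions across types. A secondary subtlety is the choice of topology: the reflection map and the ``$U^2\circ X^{2,1}$'' composition are not $J_1$-continuous, so one genuinely needs the $M_1$ continuity results of Section~\ref{sec:M1} and \cite{Limic.19,Limic.19_Supplement}, and one must check the limit process a.s.\ avoids the bad set where $M_1$-continuity fails (monotone portions, no matching jumps from both sides). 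I would handle the joint convergence by first proving finite-dimensional convergence of the drift/martingale/jump decomposition via a martingale FCLT with the transversal part shown tight and collapsing to zero, then invoking the continuous-mapping argument for $\LL^\downarrow_\infty$.
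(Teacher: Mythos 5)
Your plan correctly identifies the high-level architecture the paper follows: encode components by a pair $(V_n, U_n^2\circ X_n^{2,1})$, prove an $M_1$ functional limit theorem, then pass to excursion lengths. But two of your proposed key steps diverge from what the paper does, and one contains a genuine misconception that would derail the argument.

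First, the proposed ``transversal/longitudinal'' decomposition and martingale FCLT are not how the paper proceeds, and the reason is instructive. The process $V_n(t) = X_n^{1,1}(t) + X_n^{1,2}\circ U_n^2\circ X_n^{2,1}(t)$ built from the Chaumont--Marolleau first-hitting-time construction is already a \emph{scalar} process; there is no vector-valued walk whose transversal component must be shown to vanish. The subcriticality of the type-$2$ subsystem (forced by $\det K\ne 0$, PF eigenvalue $1$, $\kappa_{22}<1$) enters not through a contraction estimate in a transversal direction, but through the law-of-large-numbers fact that $L_n^2$ has strictly positive drift $1-\kappa_{22}$, which makes $U_n^2$ well behaved. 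Proposition~\ref{prop:vconvClass} is proved by composing Limic's CLT for each $N^{\bw^i}$ with the $M_1$ continuity results for supremum, inverse, and composition (Lemmas~\ref{lem:sup}, \ref{lem:passageFluctuations}, Corollary~\ref{cor:threefunc}), not by a martingale FCLT with a separate tightness argument.

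Second, and this is the concrete error: you claim that ``after rescaling by $c_n$ the two coordinates of the encoding process converge jointly to $(u_1^\creg Z^\creg, u_2^\creg Z^\creg)$.'' This is not what happens, and if pursued it would break. In Proposition~\ref{prop:vconvClass} the first component is rescaled, $c_n V_n \Rightarrow \kappa_{11}^{-2} W^{\beta^\bv,\btheta^\bv,\lambda^\bv}$, but the second component $U_n^2\circ X_n^{2,1}$ is \emph{not} rescaled by $c_n$ and converges to the deterministic linear function $t\mapsto \frac{\kappa_{11}\kappa_{12}}{\kappa_{22}(1-\kappa_{22})}t$ (Lemma~\ref{lem:weaklawFromCLT} applied to the fluctuation limit). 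If you multiply it by $c_n$ it diverges. The type-$2$ mass of a component is recovered as the increment of this (deterministically linear in the limit) function over the corresponding excursion interval of $V_n$, then premultiplied by $R_n^{-1}\to K^{-1}\operatorname{diag}(K)$ via Theorem~\ref{thm:ckl1} and Lemma~\ref{lem:jointtandu}; it is only at that step that the PF eigenvector $\bu^\creg$ appears. Your picture of both coordinates fluctuating as scalar multiples of the same thinned L\'{e}vy process is a reasonable heuristic for the final vector of masses, but it is not the right intermediate process-level statement.

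Third, the upgrade to convergence in $\ell^2(\N\to\R_+^2)$ is \emph{not} ``exactly as in the rank-$1$ proof.'' A single size-biased exploration (with $\vec\rho=(1,0)^T$) only gives $\ell^2_\downarrow$ convergence of the type-$1$ masses and pointwise convergence of the type-$2$ masses (Lemma~\ref{lem:vecforClassic}); it cannot control the $\ell^2$-tail of the type-$2$ masses because components with tiny type-$1$ mass could a priori have non-negligible type-$2$ mass. The paper handles this by running the argument twice — once with type $1$ primary, once with type $2$ primary — to get both marginal $\ell^2_\downarrow$ convergences, and then invoking the deterministic Lemma~\ref{lem:margtowhole} together with a Skorohod-representation/tightness argument to get the joint $\ell^2(\N\to\R_+^2)$ limit. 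Without this double exploration or an equivalent device, your proposed proof has no way to rule out mass escaping in the second coordinate.
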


	\subsection{Interacting regime: $\kappa_{12} = 0$}
	
	In \cite{KL.21}, Konarovskyi and Limic identified a new critical regime for the SBM with $m$ types consisting of $n$ vertices of each type. In the two-type case, they considered a random graph $G^{\SBM}(n,n;P)$ with $n$ vertices of each type $i\in[2]$ and vertex set $[n]\times [2]$ with 
	\begin{equation*}
		\PR((l,i)\sim (r,j)) = p_{ij} =\begin{cases}
			1-\exp(-(n^{-1}+\lambda n^{-4/3}))&:i  =j\\
			1-\exp(-\mu n^{-4/3}) &: i\neq j.
		\end{cases}
	\end{equation*} Reformulating this into our setting, they considered the graph $\G(\bW,Q)$ where
	\begin{equation*}
		\bw^i = n^{-2/3} \mathbf{1}_{[n]}, \qquad Q = 
		n^{1/3} \begin{bmatrix}
			1&0\\0&1
		\end{bmatrix} + \begin{bmatrix}
			\lambda & \mu\\
			\mu & \lambda
		\end{bmatrix}+o(1).
	\end{equation*}
	They prove that there exists a two-parameter family of $\ell^2_\downarrow$-valued random variables $$\bX^{\operatorname{IMC}} = (\bX^{\operatorname{IMC}}(\lambda,\mu);\lambda\in\R,\mu\ge 0)$$ called the \textit{(standard) interacting multiplicative coalescent} such that
	\begin{equation*}
		\operatorname{ORD}(\|\bcM_{l}\|_1 ; l\ge 1) \weakarrow \bX^{\operatorname{ICM}} (\lambda,\mu)\qquad \textup{ in}\qquad \ell^2_\downarrow.
	\end{equation*}
	A more detailed account of the limiting process can be deduced from the results by Konarovskyi, Limic and the author in the forthcoming \cite{CKL.24+}; however, here we present a slightly different formulation.
	
	\begin{regime}[Interacting Regime]
		Assumption \ref{ass:main} holds with $ (\beta_i,\btheta_i)\in \cI$ for each $i$. Moreover,
		\begin{equation}\tag{I}\label{regime:interacting}
			K = \begin{bmatrix}
				1&0\\
				0&1
			\end{bmatrix} \qquad \textup{ and}\qquad \Lambda = \begin{bmatrix}
				\lambda_{11} & \lambda_{12}\\
				\lambda_{12}& \lambda_{22}
			\end{bmatrix} 
		\end{equation}
		where $\lambda_{12}>0$.
	\end{regime}
	\begin{remark}
		The assumption that $\kappa_{ii} = 1$ is simply a criticality assumption for the naturally embedded subgraphs $\G(\bw^i,q_{ii}) \hookrightarrow \G(\bW,Q)$ consisting of just the type $i$ vertices. The assumption that $\lambda_{12}>0$ is solely so that we do not have two independent random graphs.
	\end{remark}
	
	Let $Z_i = (Z_i(t);t\ge 0)$ be two independent thinned L\'{e}vy processes defined by 
	\begin{equation}\label{eqn:Zidef}
		Z_i(t) = W_i^{\beta_i,\btheta_i,\lambda_{ii}}(t).
	\end{equation}
	Let
	\begin{equation}\label{eqn:ZiIregDef}
		Z^\ireg(t) = Z_1(t) + \lambda_{12} \inf\{u: Z_2(u)<-\lambda_{12} t\}.
	\end{equation}
	
	\begin{theorem}\label{thm:interact}
		Suppose $\bW$ and $Q$ are as in Regime \ref{regime:interacting}. Then in $\ell^2_\downarrow$, 
		\begin{equation*}
			\left(\cM_{l,1};l\ge 1\right) \weakarrow \left(\zeta^\ireg_l;l\ge 1\right)
		\end{equation*}
		where $(\zeta_l^\ireg;l\ge 1) = \LL_\infty^\downarrow(Z^\ireg).$
	\end{theorem}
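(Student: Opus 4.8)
The plan is to encode the decreasingly-ordered type-1 component masses as the excursion lengths of an explicit \cdl\ process, to prove that process converges to $Z^\ireg$ in the Skorohod $M_1$ topology on $D[0,\infty)$, and then to read off the conclusion from the $M_1$-continuity of the excursion-length functional $\LL^\downarrow_\infty$ together with an $\ell^2_\downarrow$-tightness estimate.

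I would first recall the exploration construction of \cite{CKL.22}, extended as in Section~\ref{sec:exploration}. Exploring the type-1 vertices of $\G(\bW,Q)$ in a size-biased random order produces the process $V^{(n)}$ — the first coordinate of the pair $(V,U^2\circ X^{2,1})$ of Section~\ref{sec:exploration} — whose excursions above its running infimum are in bijection with the connected components of $\G(\bW,Q)$ and whose excursion lengths, rearranged decreasingly, are exactly $(\cM_{l,1};l\ge1)$. Because $\kappa_{12}=0$, each type-1--type-2 edge is present with probability of order $\lambda_{12}w^1_lw^2_r=o(1)$, so cross-edges are sparse; the substance of Section~\ref{sec:exploration} is that their net effect on the type-1 walk reduces, up to a correction that is negligible in $M_1$, to a single inverse-time-change of the type-2 exploration walk, giving
\[
  V^{(n)}(t)\;=\;X_1^{(n)}(t)\;+\;\lambda_{12}^{(n)}\,\inf\!\bigl\{u:\,X_2^{(n)}(u)<-\lambda_{12}^{(n)}t\bigr\}\;+\;o_{\p}(1)\qquad\text{in }M_1,
\]
where $X_i^{(n)}$ is the rank-1 Aldous--Limic exploration walk of the type-$i$ subgraph $\G(\bw^i,q_{ii})\hookrightarrow\G(\bW,Q)$ and $\lambda_{12}^{(n)}\to\lambda_{12}$ by \eqref{eqn:Qextension}. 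Proving this identity — and in particular checking that the small type-1 mass dragged in through a freshly attached type-2 component, as well as the discreteness of the exploration clocks, go into the error — is the most delicate bookkeeping and the step I expect to be the main obstacle.

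Second, I would feed in the rank-1 theory. By Assumption~\ref{ass:main} and Regime~\ref{regime:interacting} we have $\bw^i\rightsquigarrow(\beta_i,\btheta_i)\in\cI$ and $q_{ii}=\sigma_2(\bw^i)^{-1}+\lambda_{ii}+o(1)$, so the Aldous--Limic theorem, in the functional $M_1$ form due to Limic \cite{Limic.19,Limic.19_Supplement}, gives $X_i^{(n)}\weakarrow Z_i=W_i^{\beta_i,\btheta_i,\lambda_{ii}}$; and since the two types interact only through the sparse cross-edges, $(X_1^{(n)},X_2^{(n)})$ converges jointly to $(Z_1,Z_2)$ with $Z_1$ and $Z_2$ independent. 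The first-passage map $\phi\mapsto\bigl(t\mapsto\inf\{u:\phi(u)<-\lambda_{12}t\}\bigr)$ is $M_1$-continuous at every $\phi$ that drifts to $-\infty$ and whose running infimum is constant on no interval on which $\phi$ attains it; $Z_2$ has these properties a.s. (the Brownian part handles the case $\beta_2>0$, while for $\beta_2=0$, $\btheta_2\in\ell^3_\downarrow\setminus\ell^2_\downarrow$ the infinite negative drift of $J^{\btheta_2}$ still forces $Z_2\to-\infty$ with a strictly decreasing running infimum off a Lebesgue-null set). Combining this with $M_1$-continuity of addition (valid here because $Z_1$ and $\lambda_{12}\inf\{u:Z_2(u)<-\lambda_{12}\cdot\,\}$ a.s. share no discontinuity), with $\lambda_{12}^{(n)}\to\lambda_{12}$, and with the continuous mapping theorem, one obtains $V^{(n)}\weakarrow Z_1(t)+\lambda_{12}\inf\{u:Z_2(u)<-\lambda_{12}t\}=Z^\ireg$ in $M_1$.

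Third, I would pass to the excursion vectors. On the a.s. event that $Z^\ireg$ decomposes into excursions with $\LL^\downarrow_\infty(Z^\ireg)\in\ell^2_\downarrow$, only finitely many of length exceeding any $\eps>0$, and leaves its running infimum immediately after each — which one checks directly, the square-summability being inherited from $\LL^\downarrow_\infty(Z_1)\in\ell^2_\downarrow$ by a comparison that controls the lengthening produced by the increasing term $\lambda_{12}\inf\{u:Z_2(u)<-\lambda_{12}\cdot\,\}$, which also drags $Z^\ireg$ to $-\infty$ — the functional $\LL^\downarrow_\infty$ is $M_1$-continuous by the results of Section~\ref{sec:M1}, so $(\cM_{l,1};l\ge1)=\LL^\downarrow_\infty(V^{(n)})\weakarrow\LL^\downarrow_\infty(Z^\ireg)=(\zeta^\ireg_l;l\ge1)$ in $\ell^\infty_\downarrow$. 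To upgrade this to convergence in $\ell^2_\downarrow$ I would prove the standard ``no escape of mass'' bound $\limsup_n\E\bigl[\sum_{l>L}\cM_{l,1}^2\bigr]\to0$ as $L\to\infty$, by dominating the type-1 cluster grown from a uniformly chosen vertex by a subcritical branching process (using $\kappa_{ii}=1$, $\det K\ne0$, and that $\lambda_{12}$ is fixed to control the additional type-2-mediated offspring); together with the $\ell^\infty_\downarrow$ convergence this gives tightness and hence the claimed convergence in $\ell^2_\downarrow$.
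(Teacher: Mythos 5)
Your first two steps track the paper closely. The exploration encoding you describe is exactly the pair $(V, U^2\circ X^{2,1})$ of Section~\ref{sec:exploration}, with the size-biased order of excursions being the content of Proposition~\ref{prop:vexcursions}, and the $M_1$ convergence $c_nV_n\weakarrow Z^\ireg$ is Proposition~\ref{prop:vconvINter}, proved via Lemma~\ref{lem:decreaseat0} and the first-passage and composition results of Section~\ref{sec:M1}. One small inaccuracy: you write $V^{(n)}=X_1^{(n)}+\lambda_{12}^{(n)}\inf\{u:X_2^{(n)}(u)<-\lambda_{12}^{(n)}t\}+o_\PR(1)$, but in the paper the representation is \emph{exact}, namely $V_n=X_n^{1,1}+X_n^{1,2}\circ U^2_n\circ X_n^{2,1}$; the $o_\PR(1)$ absorption you describe is not a decomposition step but rather what the $M_1$ convergence proof accomplishes. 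Also, the identification of $Z^\ireg$ as the genuine thinned L\'evy process $W^{\beta_1,\btheta^\ireg,\lambda_{11}}$ (the unnumbered lemma after Proposition~\ref{prop:vconvINter}) is essential for knowing $Z^\ireg$ is a.s.\ good (Corollary~\ref{cor:zireggoodpath}); your inline justification of goodness is more ad hoc.

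The genuine gap is in your third step. The paper does \emph{not} upgrade to $\ell^2_\downarrow$ via a second-moment ``no escape of mass'' estimate; instead it invokes Aldous's size-biased point process machinery (the analogue of \cite[Lemma~25]{Aldous.97}), which is the whole reason the paper develops Lemma~\ref{lem:ppconv} and Corollary~\ref{cor:techcor2} and records that the excursions of $V_n$ appear in size-biased order by $\cM_{l,1}$. Your proposed bound $\limsup_n\E\bigl[\sum_{l>L}\cM_{l,1}^2\bigr]\to 0$ as $L\to\infty$, derived by ``dominating by a subcritical branching process,'' has two problems. First, the graph is \emph{critical}, not subcritical, so there is no subcritical dominating branching process; at best one gets $\E\bigl[\sum_l\cM_{l,1}^2\bigr]=O(1)$ (a susceptibility bound), which is of the right order but does not vanish. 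Second, and more fundamentally, $\sum_{l>L}\cM_{l,1}^2$ is a tail of the \emph{decreasingly reordered} sequence, and bounding it requires knowing how the total mass distributes across large vs.\ small components --- precisely what the theorem asserts. Aldous's approach sidesteps this by instead proving a ``late excursions are small'' bound (all excursions starting after time $T$ have length $<\eps$ with high probability), which follows from the negative drift of the exploration process (Lemma~\ref{lem:Zpath}(c)) and is compatible with the size-biased ordering; this is the route the paper takes. Your step three would need to be replaced by (or reduced to) such a tail-of-excursions argument to close the proof.
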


	\subsection{Nearly bipartite regime: $\kappa_{ii} = 0$}

	\begin{regime}[Nearly Bipartite Regime] Assumption \ref{ass:main} holds with $\sum_{i} (\beta_i,\btheta_i)\in \cI.$ Moreover,
		\begin{equation}\tag{BP}\label{regime:bipartite}
			K = \begin{bmatrix}
				0&1\\
				1&0
			\end{bmatrix} \qquad \textup{ and }\qquad \Lambda = \begin{bmatrix}
				\lambda_{11} & \lambda_{12}\\
				\lambda_{12}& \lambda_{22}
			\end{bmatrix}\textup{  with  }\lambda_{ii}\ge 0.
		\end{equation}
	\end{regime}

	Set $\btheta = \btheta_1\bowtie \btheta_2$ and $\beta = \beta_1+\beta_2$ and define
	\begin{equation}\label{eqn:zbipdef}
		Z^\bip(t) = W^{\beta,\btheta,0}(t) + (\lambda_{11}+2\lambda_{12} + \lambda_{22})t.
	\end{equation}

	The main theorem for the nearly-bipartite regime is the following.
	\begin{theorem}
		Suppose $\bW$ and $Q$ are as in Regime \ref{regime:bipartite}. Then in $\ell^2(\N\to \R_+^2)$
		\begin{equation}\label{thm:bipartite}
			\left(\bcM_{l};l\ge 1\right)\weakarrow \left(\zeta_l^\bip \boldsymbol{1};l\ge 1\right)
		\end{equation} where $(\zeta^\bip_l;l\ge 1) = \LL_\infty^\downarrow (Z^\bip)$ and $\boldsymbol{1} = (1,1)^T.$
	\end{theorem}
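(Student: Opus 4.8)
\medskip
\noindent\emph{Proof plan.}
The plan is to follow the exploration strategy of Sections~\ref{sec:exploration}--\ref{sec:WeakConv}: run the exploration of $\G(\bW,Q)$ from \cite{CKL.22} (and its extension in Section~\ref{sec:exploration}), extract from it the encoding pair $(V,\,U^2\circ X^{2,1})$ whose excursion data recovers $(\bcM_l;l\ge1)$, prove that this pair converges in the Skorohod $M_1$ topology to an explicit limit built from $Z^\bip$, and transport the convergence through the excursion-length functional $\LL^\downarrow_\infty$. The structural feature driving this regime is that $K=\left[\begin{smallmatrix}0&1\\1&0\end{smallmatrix}\right]$ is off-diagonal, so a type-$i$ vertex is joined to another type-$i$ vertex only through the $\Lambda$-correction, with probability $O(w^i_l w^i_r)$ rather than $O(w^i_l w^i_r/\sigma_2(\bw^i))$. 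Hence the exploration alternates --- up to rare same-type edges --- between revealing type-$2$ neighbours of type-$1$ vertices and type-$1$ neighbours of type-$2$ vertices, and, since $q_{12}\sigma_2(\bw^{3-i})=\sqrt{\sigma_2(\bw^{3-i})/\sigma_2(\bw^i)}+o(1)=1+o(1)$ by the weight window~\eqref{eqn:weightWindow}, exploring a weight-$w$ vertex of either type reveals total new mass $\approx w$. Thus type-$1$ and type-$2$ mass get explored at the same rate up to an $O(c_n^{-1})=o(1)$ correction; this is what forces the limiting direction $\boldsymbol{1}$ and makes $\alpha$ absent from the limit.

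Next I would identify the limit of the encoding walk, which should come out with the same shape as in the classic regime (Theorem~\ref{thm:classic}) but with the PF eigenvector specialised to $(1,1)^T$. Computing the one-step conditional mean and variance of the exploration and peeling off the dust- and hub-depletion terms in the Aldous--Limic manner: the two types' dust contributes additively, giving Brownian coefficient $\beta=\beta_1+\beta_2$ and quadratic term $-\tfrac{\beta}{2}t^2$; the hubs of the two types, each first hit at an exponential time run on the (asymptotically common) clock, merge into the interleaved family $\btheta=\btheta_1\bowtie\btheta_2$ and produce $J^{\btheta}$; the $\Lambda$-corrections, collected with equal weight over the two exploration directions (equal because the exploration alternates and $(1,1)^T$ is the PF eigenvector of $K$), yield the drift $\lambda_{11}+2\lambda_{12}+\lambda_{22}=\langle\boldsymbol{1},\Lambda\boldsymbol{1}\rangle$; and criticality of $K$ (PF eigenvalue $1$) keeps the unperturbed part of the walk exactly critical, i.e.\ equal to $W^{\beta,\btheta,0}$. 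So the limiting walk is precisely $Z^\bip=W^{\beta,\btheta,0}(\cdot)+(\lambda_{11}+2\lambda_{12}+\lambda_{22})(\cdot)$. Feeding these asymptotics into the weak-convergence criterion of Limic~\cite{Limic.19,Limic.19_Supplement} recalled in Section~\ref{sec:generaltopology} (convergence of the compensators and of the jump-intensity measure, negligibility of small jumps) then gives $U^2\circ X^{2,1}\weakarrow Z^\bip$ in the $M_1$ topology, jointly with the convergence of $V$ pinning down the $\boldsymbol{1}$-direction.

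Finally I would conclude. Since $\sum_i(\beta_i,\btheta_i)=(\beta,\btheta)\in\cI$, either $\beta>0$ or $\btheta\in\ell^3_\downarrow\setminus\ell^2_\downarrow$, so $Z^\bip$ a.s.\ admits the excursion decomposition \eqref{eqn:EE_inftyDef} with a.s.\ distinct excursion lengths, and a.s.\ lies in the subset of path space on which $\LL^\downarrow_\infty$ is $M_1$-continuous (Section~\ref{sec:M1}). The continuous-mapping theorem gives $\LL^\downarrow_\infty(U^2\circ X^{2,1})\weakarrow\LL^\downarrow_\infty(Z^\bip)=(\zeta^\bip_l;l\ge1)$, which together with the $\boldsymbol{1}$-direction yields $(\bcM_l;l\ge1)\weakarrow(\zeta^\bip_l\boldsymbol{1};l\ge1)$ for every finite collection of the largest components. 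Upgrading to convergence in $\ell^2(\N\to\R_+^2)$ is the last, routine step: one shows the tails are uniformly negligible, e.g.\ via $\limsup_n\E\big[\sum_l(\cM_{l,1}^2+\cM_{l,2}^2)\big]<\infty$ together with $\E\big[\sum_l(\zeta^\bip_l)^2\big]<\infty$, exactly as in the rank-$1$ case.

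The step I expect to be hardest is the joint $M_1$-convergence of the encoding processes. The difficulty is two-fold. First, the exploration runs on two interleaved clocks --- type-$1$ mass drives the discovery of type-$2$ vertices and conversely --- and one must show these synchronise uniformly over the growing collection of components, not merely on a fixed finite set of the largest ones; this is where the $\boldsymbol{1}$-direction is genuinely established, and where one must check that the $O(c_n^{-1})$ rate imbalance coming from \eqref{eqn:weightWindow} and the $o(1)$ errors in Assumption~\ref{ass:main} are truly subleading. Second, the limit carries non-trivial jumps from the hubs of \emph{both} types while the functionals reading off component masses are only $M_1$- (not $J_1$-) continuous, so the compensator and jump-intensity computations and the limit passage must all be done in the weaker topology. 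This also explains why no shortcut through a rank-$1$ reduction is available: because edges occur only across types, the rank-$1$ graph on $\bw^1\bowtie\bw^2$ is supercritical, and it is only the correctly bookkept two-step alternating exploration that produces the critical data $(\beta_1+\beta_2,\btheta_1\bowtie\btheta_2)$.
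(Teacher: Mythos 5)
Your plan captures the right overall architecture (encode by a pair of exploration processes, prove joint $M_1$ convergence, push through the excursion-length functional, then upgrade to $\ell^2$), and your intuition about the alternating exploration, the role of \eqref{eqn:weightWindow}, and the emergent $\boldsymbol{1}$-direction is correct. However, there are two genuine gaps.

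The first and most serious is a role reversal between the two encoding processes. You claim the Limic-style weak-convergence criterion ``gives $U^2\circ X^{2,1}\weakarrow Z^\bip$ in the $M_1$ topology, jointly with the convergence of $V$ pinning down the $\boldsymbol{1}$-direction,'' and you then apply $\LL^\downarrow_\infty$ to $U^2\circ X^{2,1}$. This is backwards. By Proposition~\ref{prop:vexcursionsbip} the excursion intervals $(l_j,r_j)$ of $V^\bip_n$ are what encode the (size-biased) type-$1$ masses, so it is the rescaled walk $c_nV_n^\bip$ that must converge to $Z^\bip$ and to which $\LL^\downarrow_\infty$ is applied (Proposition~\ref{prop:vconvbip}). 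The composite $\widetilde{U}^2_n\circ\widetilde{X}^{2,1}_n$ is not rescaled by $c_n$, converges to the identity $\operatorname{Id}$, and its increments across the excursion intervals are what read off the type-$2$ masses; that the limit is $\operatorname{Id}$ rather than a non-trivial process is exactly what forces the $\boldsymbol{1}$-direction. A proof built on the reversed assignment cannot be repaired without rewriting the convergence step. Related to this, your plan never confronts the technical obstruction that the field-theoretic encoding of \cite{CKL.22} requires $q_{ii}>0$, which fails in Regime~\eqref{regime:bipartite}; the paper bypasses this via asymptotic equivalence with $\G(\bW,Q+\delta_nI)$ \cite{Janson.10a} and the $\eps_i$-rescaling that produces $\widetilde{X}^{j,i}$, and it is precisely these rescaled processes (not the raw $X^{j,i}$) whose limit theory one carries out.

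The second gap is more minor but worth flagging: you call the upgrade to convergence in $\ell^2(\N\to\R_+^2)$ ``routine'' via second-moment bounds, but the paper explicitly remarks (start of Section~\ref{sec:proofsofMain}) that the size-biased machinery yields marginal $\ell^2_\downarrow$ convergence of the type-$1$ masses (and, by symmetry, type-$2$ masses) together with only pointwise convergence of the other coordinate, and this does not combine directly into $\ell^2(\N\to\R_+^2)$ convergence. The paper closes the gap with the deterministic Lemma~\ref{lem:margtowhole}, which needs both orderings $\ORD_1$ and $\ORD_2$ and a somewhat delicate tail comparison. A genuine second-moment bound $\limsup_n\E[\sum_l(\cM_{l,1}^2+\cM_{l,2}^2)]<\infty$ would of course also suffice, but you would need to prove it, and you give no indication of how; for the rank-$2$ model this is not an off-the-shelf estimate.
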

	
	Observe the eigenvector of $K$ with eigenvalue $1$ is $\bu^\bip = (1,1)^T$ and
	\begin{equation*}
		\lambda_{11}+2\lambda_{12} + \lambda_{22} = \langle \bu, \Lambda \bu\rangle.
	\end{equation*}
	So, modulo a trivial scaling (formalized in Corollary \ref{cor:vectorvalue} below), na\"ively taking the limit of Theorem \ref{thm:classic} as $\kappa_{ii}\downarrow 0$ yields Theorem \ref{thm:bipartite}. It would be interesting if this can be formalized into a precise theorem.
	
	\section{Connections with Related Results}\label{sec:discussion}

	The assumptions that we place on the model in Regime \ref{regime:classic} are fairly different than those appearing in \cite{BBBSW.23}. In Regime \ref{regime:bipartite}, the conditions we analyze are much more general than those in \cite{Federico.19} or \cite{Wang.23}. So, before turning to the proof of main results, let us mention how our results can recover and extend certain aspects of these results. The reader can safely skip this section.
	
	\subsection{Classic Regime}

	As mentioned above, Miermont \cite{Miermont.08} shows that under certain conditions including a finite-variance assumption, the height process of a forest of critical multitype Bienaym\'{e}-Galton-Watson (BGW) trees converges after appropriate rescaling to a reflected Brownian motion. His proof relies on a ``reduction of types'' argument that allows him to go from $K$ type BGW forest to a $(K-1)$ type BGW forest. This method allows for explicit representations of the scaling constants in terms of the moments of the offspring distribution. This ``reduction of types'' method was extended to the $\alpha$-stable case in \cite{BerzunzaOjeda.18}. See also the infinitely many types work of \cite{deRaphelis.17}. For an associated branching process scaling limit, one can see the classical result of Joffe and M\'{e}tivier \cite{JM.86} in the critical regime. 
	
	The results of Miermont \cite{Miermont.08} involve a closely related multi-type branching process; however, it does not appear that his approach easily extends to the random graph case. Indeed, Miermont performs the reduction of types in order to obtain useful representations of probability generating functions for the offspring distribution of a single-type BGW forest. This does not appear possible in the random graph context and so we work on the level of exploration processes instead.
	
	For results in the random graph context we know of only two relevant references in this classical regime. The first is the general result of Bollob\'as, Janson and Riordan \cite{BJR.07} which identifies the critical point for a very general model of random graphs. Their results do not extend to establishing the the precise asymptotic size of the largest connected component in the critical case; however, we do mention in the finite-type case Theorem 9.10 therein describes the asymptotic proportion of vertices of each type in the giant component of the supercritical random graph in terms of an eigenvector of the kernel. The appearance of the Perron-Frobenius (PF) eigenvector $\bu$ in Theorem \ref{thm:classic} can be viewed in this light. We also note that this ``convergence-of-types'' result is a well-known result in the study of super-critical branching processes dating back to Kesten and Stigum \cite{KS.66}. See also \cite{KLPP.97} as well as \cite{Miermont.08,deRaphelis.17}.
	
	For results at criticality we refer to \cite{BBBSW.23}. Therein, the authors show that certain models of critical random graphs under failry general assumptions lie in the ``basin of attraction'' of the Erd\H{o}s-R\'{e}nyi random graph. A consequence of their general result is the following simplification of Theorem 7.2 therein. Let  $G^{\SBM}(n_1, n_2; P)$ be the SBM consisting of $n_i$ many vertices of type $i$ where a type $i$ vertex is connected to a type $j$ vertex independently with probability $p_{ij}$.
	\begin{theorem}[Baslingker et al. {\cite[Theorem 7.2]{BBBSW.23}}]\label{thm:BBSW}
		Suppose as $n = n_1+n_2\to\infty$ that
		\begin{enumerate}
			\item $n_i = \mu_i n + b_i n^{2/3} + o(n^{2/3})$ where $\mu_i>0$, $b_i\in\R$ and $\mu_{1}+\mu_2 = 1$;
			\item $p_{ij} = n^{-1} k_{ij}+ n^{-4/3} a_{ij} + o(n^{-4/3})$ is symmetric and $k_{ij}>0$ and $a_{ij}\in \R$ for each $i,j\in[2]$;
			\item The PF root of $\widetilde{M}:=\begin{bmatrix}
				k_{11}\mu_{1} &k_{12}\mu_2\\ k_{21}\mu_1 & k_{22}\mu_{2}
			\end{bmatrix}$ is $1$.
		\end{enumerate}
		Let $\widetilde{\bv},\widetilde{\bu}$ be the left- and right- eigenvecotrs of $M$ (respectively) normalized so that $\langle \widetilde{\bu}, \boldsymbol{1}\rangle = \langle \widetilde{\bv}, \widetilde{\bu}\rangle = 1$. 
		Then in the product topology
		\begin{equation*}
			\left(n^{-2/3}\#\cC_l;l\ge 1\right) \weakarrow \chi^{1/3} \bzeta^{1,\bzer, \chi^{2/3} \lambda}
		\end{equation*} where for $\widetilde{B} = \operatorname{diag}(b_1,b_2)$, $\widetilde{D}= \operatorname{diag}(\mu_1,\mu_2)$, $\widetilde{A} = (a_{ij})$ and $\widetilde{K} = (k_{ij})$
		\begin{equation*}
			\chi = \frac{v_1u_1^2 + v_2 u_2^2}{(v_1+v_2)(\mu_1 u_1+ \mu_2 u_2)^2} \qquad \lambda = \frac{\langle \widetilde{\bv}, (\widetilde{A}\widetilde{D} + \widetilde{K}\widetilde{B})\widetilde{\bu}\rangle }{(v_1+v_2)(\mu_1u_1+\mu_2 u_2)}
		\end{equation*}where $(\cC_l;l\ge 0)$ are the connected components of $G^{\SBM}(n_1,n_2,P)$ ordered from largest to smallest.
	\end{theorem}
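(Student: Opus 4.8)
The plan is to realize the stochastic blockmodel as an instance of the degree-corrected model $\G(\bW,Q)$ and then to quote Theorem \ref{thm:classic}. Since $p_{ij}$ does not depend on the vertex labels, I would take constant weight vectors $\bw^i = \rho_i\mathbf{1}_{[n_i]}$ with, say, $\rho_i = \mu_i^{-1/2}n^{-2/3}$; then $\len(\bw^i) = n_i$, and the requirement $1-\exp(-q_{ij}\rho_i\rho_j) = p_{ij}$ forces $q_{ij} = -\rho_i^{-1}\rho_j^{-1}\log(1-p_{ij})$, so $\G(\bW,Q) = G^{\SBM}(n_1,n_2;P)$ on $n = n_1+n_2$ vertices. (Any admissible normalization of the $\rho_i$ works; this one makes the leading term in \eqref{eqn:weightWindow} equal to $1$, and since $\#\cC_l = \rho_1^{-1}\cM_{l,1} + \rho_2^{-1}\cM_{l,2}$ is invariant under rescaling the weights, the final limit will not depend on the choice.) A short computation gives $\sigma_2(\bw^i) = n^{-1/3} + \mu_i^{-1}b_in^{-2/3} + o(n^{-2/3})$ and $\sigma_3(\bw^i)/\sigma_2(\bw^i)^3\to\mu_i^{-1/2}$, so $\bw^i\rightsquigarrow(\mu_i^{-1/2},\bzer)$; since $\mu_i^{-1/2}>0$ we get $\sum_i(\beta_i,\btheta_i)\in\cI$, and \eqref{eqn:weightWindow} holds with $\alpha = \tfrac12(\mu_1^{-1}b_1-\mu_2^{-1}b_2)$, so Assumption \ref{ass:main} is in force.

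Next I would read off \eqref{eqn:Qextension}. Using $-\log(1-p_{ij}) = n^{-1}k_{ij} + n^{-4/3}a_{ij} + o(n^{-4/3})$ (the $O(p_{ij}^2) = O(n^{-2})$ term is absorbed) and $(\sigma_2(\bw^i)\sigma_2(\bw^j))^{-1/2} = n^{1/3} - \tfrac12(\mu_i^{-1}b_i + \mu_j^{-1}b_j) + o(1)$, one finds
\begin{equation*}
 K = \widetilde{D}^{1/2}\widetilde{K}\widetilde{D}^{1/2}, \qquad \Lambda = \widetilde{D}^{1/2}\widetilde{A}\widetilde{D}^{1/2} + \tfrac12\bigl(\widetilde{D}^{1/2}\widetilde{K}\widetilde{B}\widetilde{D}^{-1/2} + \widetilde{D}^{-1/2}\widetilde{B}\widetilde{K}\widetilde{D}^{1/2}\bigr),
\end{equation*}
where $\widetilde{D} = \operatorname{diag}(\mu_1,\mu_2)$. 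Because $K = \widetilde{D}^{-1/2}\widetilde{M}\widetilde{D}^{1/2}$ is conjugate to $\widetilde{M}$, its Perron--Frobenius eigenvalue equals that of $\widetilde{M}$, which is $1$ by hypothesis; thus we are in Regime \ref{regime:classic}, and $\det(K)\ne0$ is automatic because $\widetilde{K}$ has rank $2$. The same conjugacy identifies the eigenvectors: $\bu^\creg\propto\widetilde{D}^{1/2}\widetilde{\bu}$ and, using symmetry of $\widetilde{K}$, $\widetilde{\bv}\propto\widetilde{D}\widetilde{\bu}$. Theorem \ref{thm:classic} then gives $(\bcM_l;l\ge1)\weakarrow(\zeta_l^\creg\bu^\creg;l\ge1)$ in $\ell^2(\N\to\R_+^2)$, where $\btheta^\creg = \bzer$ so that $(\zeta_l^\creg) = \LL_\infty^\downarrow(W^{\beta^\creg,\bzer,\lambda^\creg})$.

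To pass from $\bcM_l$ to $\#\cC_l$, observe $\cM_{l,i} = \rho_i\#\{(r,i)\in\cC_l\}$, hence $(n^{-2/3}\#\cC_l;l\ge1) = \ORD\circ\Phi\bigl((\bcM_l;l\ge1)\bigr)$ with $\Phi\bigl(((x_l,y_l))_l\bigr) = (\sqrt{\mu_1}x_l + \sqrt{\mu_2}y_l)_l$ Lipschitz from $\ell^2(\N\to\R_+^2)$ to $\ell^2$ and $\ORD$ the ($1$-Lipschitz) decreasing rearrangement. By the continuous mapping theorem and Theorem \ref{thm:classic}, $(n^{-2/3}\#\cC_l;l\ge1)\weakarrow(S\zeta_l^\creg;l\ge1)$ with $S = \sqrt{\mu_1}u_1^\creg + \sqrt{\mu_2}u_2^\creg$ (the rearrangement is trivial, as $\zeta^\creg$ is already decreasing, $S>0$, and ties in $\zeta^\creg$ have probability $0$ since $\beta^\creg>0$). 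Finally I would use the Brownian scaling identity $\bzeta^{\beta,\bzer,\lambda} = \beta^{-1/3}\bzeta^{1,\bzer,\lambda\beta^{-2/3}}$ — which follows from $a\,W^{\beta,\bzer,\lambda}(ct)\overset{d}{=}W^{1,\bzer,\lambda\beta^{-2/3}}(t)$ with $a = c = \beta^{-1/3}$, since excursion lengths above the running infimum scale linearly in the time variable — to rewrite the limit as $S(\beta^\creg)^{-1/3}\bzeta^{1,\bzer,\lambda^\creg(\beta^\creg)^{-2/3}}$. Substituting $\beta^\creg = \sum_i(u_i^\creg)^3\mu_i^{-1/2}$, $\lambda^\creg = \langle\bu^\creg,\Lambda\bu^\creg\rangle$ from \eqref{eqn:betacreg}--\eqref{eqn:lambdacreg}, the eigenvector relations above, and the identities $\widetilde{M}\widetilde{\bu} = \widetilde{\bu}$, $\widetilde{\bv}^{T}\widetilde{K} = \widetilde{\bv}^{T}\widetilde{D}^{-1}$, one should check that the prefactor and the drift simplify to the quantities $\chi^{1/3}$ and $\chi^{2/3}\lambda$ appearing in the statement.

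I expect the difficulty here to be bookkeeping rather than anything structural. Two steps demand care: first, carrying the two-term expansions of $\sigma_2(\bw^i)$ and of $\log(1-p_{ij})$ precisely enough to pin down $\Lambda$ — in particular confirming that the $b_in^{2/3}$ corrections to the $n_i$ enter $\Lambda$ exactly through the $\widetilde{K}\widetilde{B}$ term; and second, the linear-algebra reduction in the last step, in which $\langle\bu^\creg,\Lambda\bu^\creg\rangle$ and $\sum_i(u_i^\creg)^3\mu_i^{-1/2}$ must be rewritten in terms of $\widetilde{\bu},\widetilde{\bv}$ and one must verify that every choice-dependent normalization — the free constant hidden in the $\rho_i$, and the scales of the two eigenvectors — cancels, so that precisely the stated $\chi$ and $\lambda$ emerge.
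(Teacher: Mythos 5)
Your proposal follows essentially the same route as the paper's Section~3.1: choose $\bw^i = \mu_i^{-1/2}n^{-2/3}\boldsymbol{1}_{n_i}$, expand $\sigma_2, \sigma_3$ to identify $(\beta_i,\btheta_i)=(\mu_i^{-1/2},\bzer)$, read off $K=\widetilde D^{1/2}\widetilde K\widetilde D^{1/2}$ and $\Lambda = \widetilde D^{1/2}\widetilde A\widetilde D^{1/2}+\tfrac12(\widetilde D^{1/2}\widetilde K\widetilde B\widetilde D^{-1/2}+\widetilde D^{-1/2}\widetilde B\widetilde K\widetilde D^{1/2})$ from the second-order expansion of $p_{ij}$, verify the eigenvector correspondences $\bu^\creg\propto\widetilde D^{1/2}\widetilde{\bu}$, $\widetilde{\bv}\propto\widetilde D\widetilde{\bu}$, and then apply Theorem~\ref{thm:classic} together with the Brownian scaling of Lemma~\ref{lem:jumplevypart}. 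The paper stops at the identity $\langle\widetilde{\bv},(\widetilde A\widetilde D+\widetilde K\widetilde B)\widetilde{\bu}\rangle=c\langle\bu,\Lambda\bu\rangle$ and declares the two results consistent ``up to scaling,'' whereas you propose to carry the conversion from mass vectors $\bcM_l$ to cardinalities $\#\cC_l$ through to the explicit $\chi^{1/3}$ and $\chi^{2/3}\lambda$; that is a harmless, slightly more complete endgame. Two small remarks: your value $\alpha=\tfrac12(\mu_1^{-1}b_1-\mu_2^{-1}b_2)$ is the correct one (the paper's displayed $\alpha$ is missing the factor $\tfrac12$; as you can check, $\alpha$ cancels out of $\lambda^\creg$ anyway, so this has no downstream effect); and your claim that ``$\det(K)\neq0$ is automatic because $\widetilde K$ has rank $2$'' is not justified by the hypotheses of Theorem~\ref{thm:BBSW}, which only require $k_{ij}>0$ --- a singular $\widetilde K$ with positive entries is possible, and in that degenerate case Regime~\ref{regime:classic} does not apply (the model is effectively rank-1). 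The paper is equally silent on this edge case, so it is not a gap relative to the paper's own argument, but your phrasing slightly overstates what is ``automatic.''
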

	
	The assumptions above can be converted into our assumptions by setting $\bw^i =  \mu_{i}^{-1/2} n^{-2/3}\boldsymbol{1}_{n_i}
	$ where $\boldsymbol{1}_{m}$ is the length $m$ vector of all ones. 
	Note that
	\begin{align*}
		\sigma_{2}(\bw^i) = n^{-1/3}+ \frac{b_i}{\mu_i} n^{-2/3}+o(n^{-2/3})\qquad 
		\sigma_3(\bw^i) = \mu_{i}^{-1/2} n^{-1} + o(n^{-1}).
	\end{align*}
	Hence, for any $i,j\in[2]$ \begin{equation*}
		(\sigma_2(\bw^i)\sigma_2(\bw^j))^{-1/2} = n^{1/3} - \frac{1}{2}\left(\frac{b_i}{\mu_i}+\frac{b_j}{\mu_j}\right) + o(1).
	\end{equation*}
	and $c_n = \frac{1}{\sqrt{{\sigma_2(\bw^1)\sigma_2(\bw^2)}}} = (1+o(1))n^{1/3}$. Therefore,  
	\begin{equation*}
		\bw^i\rightsquigarrow (\mu^{-1/2}_i, \bzer)
		\qquad\textup{and}\qquad
		\sqrt{\frac{\sigma_2(\bw^1)}{\sigma_2(\bw^2)}} = 1+ \left(\frac{b_1}{\mu_1}-\frac{b_2}{\mu_2}\right) c_n^{-1} + o(c_n^{-1}).
	\end{equation*}

	Looking at the connection probabilities, and using the approximation $1-e^{-x} = x+O(x^2)$ as $x\to 0$, we can find the appropriate matrix $Q$ in Assumption \ref{ass:main} by solving
	\begin{align*}
		q_{ij} \mu_i^{-1/2}n^{-2/3} \mu_j^{-1/2} n^{-2/3} = p_{ij}.
	\end{align*}
	Therefore, using condition 2 in the above theorem
	\begin{align*}
		q_{ij} &= \sqrt{\mu_{i}\mu_j} n^{4/3} (n^{-1} k_{ij}+n^{-4/3} a_{ij}) = \sqrt{\mu_i} k_{ij} \sqrt{\mu_j}  n^{1/3} + \sqrt{\mu_i} a_{ij} \sqrt{\mu_j}\\
		&= \frac{\sqrt{\mu_i}k_{ij}\sqrt{\mu_j}}{\sqrt{\sigma_2(\bw^i)\sigma_2(\bw^j)}} + \sqrt{\mu_{i}}a_{ij}\sqrt{\mu_j} + \frac{1}{2} \frac{b_i}{\sqrt{\mu_{i}}} k_{ij} \sqrt{\mu_j} + \frac{1}{2} \sqrt{\mu_i} k_{ij} \frac{b_j}{\sqrt{\mu_j}} + o(1).
	\end{align*}
	Hence, we obtain that $Q$ in Assumption \ref{ass:main} is of the form
	\begin{equation*}
		Q = \begin{bmatrix}
			\sigma_2(\bw^1)^{-1/2}&0\\
			0& \sigma_2(\bw^2)^{-1/2}
		\end{bmatrix} K \begin{bmatrix}
			\sigma_2(\bw^1)^{-1/2}&0\\
			0& \sigma_2(\bw^2)^{-1/2}
		\end{bmatrix} + \Lambda
	\end{equation*} where (recalling that matrices in Theorem \ref{thm:BBSW})
	\begin{equation*}
		K =\widetilde{D}^{1/2}\widetilde{K} \widetilde{D}^{1/2} \qquad \Lambda = \widetilde{D}^{1/2}\widetilde{A} \widetilde{D}^{1/2} + \frac{1}{2} \widetilde{D}^{-1/2} \widetilde{B} \widetilde{K}\widetilde{D}^{1/2}  + \frac{1}{2} \widetilde{D}^{1/2} \widetilde{K} \widetilde{B}\widetilde{D}^{-1/2} 
	\end{equation*}
	Observe that since $\widetilde{\bu}$ is an eigenvector of $\widetilde{M} = \widetilde{K} \widetilde{D}$ with eigenvalue $1$ then $\bu = \widetilde{D}^{1/2} \widetilde{\bu}$ is an eigenvector of $K$ with eigenvalue $1$. Similarly, $\bv = \widetilde{\bv} \widetilde{D}^{-1/2}$ is a left-eigenvector of $K$ with eigenvalue 1. Also note that since $K$ is symmetric $ \bv = c \bu^T$ for some constant $c$. Hence, for any matrix $C\in \R^{2\times 2}$ we can symmetrize
	\begin{equation*}
		\langle \bv, C \bu\rangle = \frac{1}{2} \langle \bv, (C+C^T) \bu\rangle,
	\end{equation*}
	and so 
	\begin{align*}
		\langle& \widetilde{\bv}, \widetilde{A}\widetilde{D}\widetilde{\bu}\rangle = c \langle \bu, \widetilde{D}^{1/2} \widetilde{A} \widetilde{D}^{1/2} \bu\rangle\\
		\langle &\widetilde{\bv} , \widetilde{K} \widetilde{B} \widetilde{\bu}\rangle = \langle \bv, \widetilde{D}^{-1/2}\widetilde{K} \widetilde{B}\widetilde{D}^{1/2} \bu\rangle\\
		&\qquad\qquad\qquad = \frac{c}{2} \langle \bu, \frac{1}{2}\left( \widetilde{D}^{-1/2} \widetilde{B} \widetilde{K}\widetilde{D}^{1/2}  +  \widetilde{D}^{1/2} \widetilde{K} \widetilde{B}\widetilde{D}^{-1/2}  \right)\bu\rangle.
	\end{align*}
	Said another way 
	\begin{equation*}
		\langle \widetilde{\bv} ,(\widetilde{A}\widetilde{D}+\widetilde{K}\widetilde{B}) \widetilde{\bu}\rangle = c \langle \bu, \Lambda\bu\rangle,
	\end{equation*}
	hence, up to scaling, Theorem \ref{thm:classic} agrees with those of \cite{BBBSW.23}.

	\subsection{Bipartite Regime}
	
	In this section we will restrict our attention solely to case to the purely bipartite regime where $\lambda_{ii} = 0$ for each $i\in[2]$. 
	
	Recently, much interest has been devoted to studying bipartite random graphs because of their connection with random intersection graphs. See \cite{vdHKV.21,vdHKV.22,BST.14,Wang.23,Federico.19,DK.09,Behrisch.07,Bloznelis.17,Bloznelis.10,Bloznelis.13,FSSC.00} for example. Up to now, most of the work on bipartite graphs have focused on the homogeneous settings where vertices have roughly the same degree; however, Theorem \ref{thm:bipartite} sheds new light on the inhomogeneous setting.

	The simplest model of a random bipartite graph is the bipartite Erd\H{o}s-R\'{e}nyi random graph $B(n,m,p)$ with $n$ many vertices on one side (say the left) and $m$ many vertices on the other (say the right) with each edge between the two parts appearing with probability $p$. In \cite{BST.14, Behrisch.07}, it was shown that if $m = \lfloor n^\alpha \rfloor$ (for $\alpha\neq 1)$ and $p = p(n) = \frac{c}{\sqrt{nm}}$ then there is a phase transition in the maximum number of left vertices in the connected components and this phase transition occurs at the critical value $c = 1$. The phase transition also occurs at $c = 1$ in the case where $m = n$ (i.e. $\alpha = 1$) using the general results of Bollob\'as, Janson and Riordan \cite{BJR.07}. The component sizes at the critical value were further studied by Federico \cite{Federico.19} for $\alpha\neq 1$ and by De Ambroggio \cite{DeAmbroggio.22} for $\alpha =1$. We summarize all of these results in the following table. Let $L_1^{(n)}$ denote the maximum number of left vertices in any connected component of $B(n,m,p)$. Then 
	\begin{center}
		$L^{(n)}_1= $   \begin{tabular}{|c|c|c|c|}\hline
			& $c<1 $   &  $c = 1$ & $c>1$ \\ \hline
			$  \alpha<1 $ &$O_\PR\left(\sqrt{n^{1-\alpha}} \log n\right) ${\cite{Behrisch.07}}   & $\Theta_\PR\left(\sqrt{n^{1+\alpha/3}}\right)$\cite{Federico.19} &$ \Theta_\PR(\sqrt{n^{1+\alpha}}) $\cite{Behrisch.07} \\\hline
			$ \alpha  = 1$& $O_\PR(\log n)$\cite{BJR.07}& $\Theta_\PR(n^{2/3})$\cite{DeAmbroggio.22}& $\Theta_\PR(n) $ \cite{BJR.07}\\\hline
			$ \alpha >1 $ & $O_{\PR}(\log n) ${\cite{Behrisch.07}}& $\Theta_\PR(n^{2/3})$ \cite{Federico.19} &$ \Theta_\PR(n) $\cite{Behrisch.07} \\\hline
		\end{tabular}
	\end{center}
	We should mention that the above table does not properly convey the strength of the results in the referenced works but is instead meant to convey briefly the various results in the same setting. 
	
	Deeper understanding of the critical regime was obtained by Wang \cite{Wang.23}. The limits she construct depend on three different \textit{clustering regimes} described as follows.
	\begin{enumerate}
		\item[(\namedlabel{LightWang}{L})] $m/n\to+\infty$ and $p = \frac{1}{\sqrt{nm}}(1+\lambda n^{-1/3})$
		\item[(\namedlabel{ModerateWang}{M})]$m/n = \theta+o(n^{-1/3})$ and $p = \frac{1}{\sqrt{nm}}(1+\lambda n^{-1/3})$ for some $\theta>0$
		\item[(\namedlabel{HeavyWang}{H})]$m/n\to0$ and $p = \frac{1}{\sqrt{nm}}(1+\lambda m^{-1/3})$
	\end{enumerate} We mention that Federico \cite{Federico.19} does study the \eqref{LightWang} and \eqref{HeavyWang} regimes in the restricted cases where $m = n^\alpha$.
	
	The following theorem, as stated, is a consequence of Propositions 2.7, 2.8 and Corollary 2.9 in \cite{Wang.23} along with Lemma \ref{lem:summingtwothinned} below. To state the result we let $\bS_L(\cC)$ denote the number of left vertices in a connected component, $\cC$, of $B(n,m,p)$ and we let $\bS_R(\cC)$ be the number of right vertices in the same connected component.
	\begin{theorem}[Wang \cite{Wang.23} and Federico \cite{Federico.19}]\label{thm:WF} Let $(\cC_j^{(n)};j\ge 1)$ denote the connected components of $B(n,m,p)$ listed in arbitrary order. The following limits hold in the product topology on $\ell^2$.
		\begin{enumerate}
			\item In regime \eqref{LightWang} it holds that
			\begin{align*}
				&\ORD\left(n^{-2/3}\bS_L(\cC_j^{(n)});j\ge 1\right) \weakarrow \bzeta^{1,0,2\lambda}\\
				&\ORD\left(n^{-1/6}m^{-1/2}\bS_R(\cC_j^{(n)});j\ge 1\right)\weakarrow \bzeta^{1,0,2\lambda}.
			\end{align*}
			\item In regime \eqref{ModerateWang} it holds that
			\begin{align*}
				&\ORD\left(n^{-2/3}\bS_L(\cC_j^{(n)});j\ge 1\right) \weakarrow \LL^\downarrow_\infty \bzeta^{1+\theta^{-1/2}, \bzer, \lambda}\\
				&\ORD\left(n^{-2/3}\bS_R(\cC_j^{(n)});j\ge 1\right) \weakarrow \sqrt{\theta}\bzeta^{1+\theta^{-1/2}, \bzer, \lambda}
			\end{align*}
			for two independent thinned L\'{e}vy processes $W^{1,0,\lambda}$ and $\widetilde{W}^{\theta^{-1/2},0,\lambda}$.
			\item In regime \eqref{HeavyWang} it holds that
			\begin{align*}
				&\ORD\left(n^{-1/6}m^{-1/2}\bS_L(\cC_j^{(n)});j\ge 1\right) \weakarrow \bzeta^{1,0,2\lambda},\\
				&\ORD\left(n^{-2/3}\bS_R(\cC_j^{(n)});j\ge 1\right)\weakarrow \bzeta^{1,0,2\lambda}.
			\end{align*}
		\end{enumerate}
		In each limiting regime the convergences hold jointly with respect to the same limiting objects. 
	\end{theorem}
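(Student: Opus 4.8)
The statement is a restatement of Wang's results in the thinned-L\'evy-process language of Section~\ref{sec:prelims}, and the only input beyond \cite{Wang.23} is Lemma~\ref{lem:summingtwothinned}; the plan is to record what that lemma gives and then push Wang's Propositions 2.7, 2.8 and Corollary 2.9 through it in each clustering regime. I expect Lemma~\ref{lem:summingtwothinned} to state that if $Y_1=W^{\beta_1,\btheta_1,\lambda_1}$ and $Y_2=\widetilde W^{\beta_2,\btheta_2,\lambda_2}$ are \emph{independent} processes of the form \eqref{eqn:thinnedLevy}, then $Y_1+Y_2\overset{d}{=}W^{\beta_1+\beta_2,\,\btheta_1\bowtie\btheta_2,\,\lambda_1+\lambda_2}$, with the right-hand process realisable on the same space by literally adding $Y_1$ and $Y_2$. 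This is elementary: independent Brownian motions with variances $\beta_1 t$ and $\beta_2 t$ add to one with variance $(\beta_1+\beta_2)t$; the parabolic terms $-\tfrac{\beta_i}{2}t^2$ and the linear terms $\lambda_i t$ are additive; and two independent copies of \eqref{eqn:Jcreg} with clock-rate sequences $\btheta_1,\btheta_2$ sum to a copy with clock-rate sequence $\btheta_1\bowtie\btheta_2$, because such a process is a measurable functional of the unordered family of its exponential rates. The degenerate case $(\beta_i,\btheta_i)=(0,\bzer)$, where $Y_i=\lambda_i t$ is a pure drift, is allowed and just shifts the drift of the surviving piece.

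With the lemma in hand I would argue regime by regime. The bipartite exploration of $B(n,m,p)$ alternately reveals right-neighbours of a queued left vertex and left-neighbours of a queued right vertex. In the moderate regime~\eqref{ModerateWang} both families of rounds contribute non-negligible fluctuations, and Wang's Propositions identify the limit of the rescaled exploration walk as the superposition $W^{1,0,\lambda}+\widetilde W^{\theta^{-1/2},0,\lambda}$ of two independent thinned L\'evy processes; applying Lemma~\ref{lem:summingtwothinned} collapses this to a single thinned L\'evy process with Brownian variance $1+\theta^{-1/2}$, empty jump sequence, and drift the sum of the two drifts, so its $\LL^\downarrow_\infty$ is the $\bzeta^{1+\theta^{-1/2},\bzer,\lambda}$ of item~(2); the extra factor $\sqrt\theta$ on the right masses is the ratio between the common rescaling $n^{-2/3}$ Wang uses on both sides and the weight scale $\theta^{-1/2}n^{-2/3}$ the right vertices naturally carry in this regime. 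In the light and heavy regimes~\eqref{LightWang}, \eqref{HeavyWang} one of the two sides has asymptotically vanishing fluctuation and is tracked deterministically, so only one thinned L\'evy piece survives; here the use of Lemma~\ref{lem:summingtwothinned} is the trivial degenerate case, the surviving drift is the sum $2\lambda$ of the two rounds' drifts, and Wang already records the limit as $\bzeta^{1,0,2\lambda}$ under both the $\bS_L$- and $\bS_R$-normalisations, which is items (1) and (3). The joint convergence in the last line carries over verbatim from \cite{Wang.23}, because Lemma~\ref{lem:summingtwothinned} realises the collapsed process as an almost sure functional of Wang's limit.

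The step I expect to cost the most care is the bookkeeping of normalising constants: verifying which side is rescaled by which power of $n$ and $m$, that the $\theta^{\pm1/2}$ factors land correctly, and that the sub-leading $n^{-1/3}$ or $m^{-1/3}$ correction in $p$ produces the stated drift. A convenient cross-check --- and an alternative derivation, at the price of a forward reference --- is to write $B(n,m,p)$ as $\G(\bW,Q)$ with constant weights, $\bw^1$ a constant multiple of $n^{-2/3}$ and $\bw^2$ a constant multiple of the regime-dependent power of $n$ and $m$, to compute $\sigma_2$ and $\sigma_3$ of each, verify $\bw^i\rightsquigarrow(\beta_i,\bzer)$ with $\beta_1+\beta_2$ equal to the Brownian variance above, and check that the connection probabilities force $Q$ into the shape of Assumption~\ref{ass:main} with $K=\left(\begin{smallmatrix}0&1\\1&0\end{smallmatrix}\right)$ and $\lambda_{11}=\lambda_{22}=0$. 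This places $B(n,m,p)$ in Regime~\ref{regime:bipartite}, so Theorem~\ref{thm:bipartite} returns the same limits with $Z^\bip=W^{\beta_1+\beta_2,\bzer,2\lambda_{12}}$; since that theorem is proved later, the derivation as stated runs through \cite{Wang.23} and Lemma~\ref{lem:summingtwothinned} instead.
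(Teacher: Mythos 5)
Your approach is the same as the paper's: Theorem~\ref{thm:WF} is stated as a restatement of Wang's Propositions 2.7, 2.8 and Corollary 2.9 together with Lemma~\ref{lem:summingtwothinned}, and the paper offers no independent proof beyond this attribution. You correctly anticipate the content of Lemma~\ref{lem:summingtwothinned}, and your closing ``cross-check'' --- embedding $B(n,m,p)$ into $\G(\bW,Q)$ with $\bw^1 = n^{-2/3}\mathbf{1}_n$, $\bw^2 = n^{-1/6}m^{-1/2}\mathbf{1}_m$, and $K = \left(\begin{smallmatrix}0&1\\1&0\end{smallmatrix}\right)$ so that Theorem~\ref{thm:bipartite} applies --- is precisely the computation the paper carries out in the paragraphs following Theorem~\ref{thm:WF}.

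One arithmetic wrinkle worth flagging, though it reflects the theorem statement rather than your reasoning: in the moderate regime you write that Lemma~\ref{lem:summingtwothinned} collapses $W^{1,0,\lambda}+\widetilde W^{\theta^{-1/2},0,\lambda}$ into a thinned L\'evy process with ``drift the sum of the two drifts'' and then identify the excursion lengths with $\bzeta^{1+\theta^{-1/2},\bzer,\lambda}$. If both summands carry drift $\lambda$, the sum carries drift $2\lambda$, not $\lambda$ --- and indeed the paper's own cross-check for the light regime gives $Z^\bip = W^{1,\bzer,2\lambda_{12}}$ with $\lambda_{12}=\lambda$, so by parallel logic the moderate regime should produce drift parameter $2\lambda$ as well. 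So your prose is internally inconsistent at that point; either the summands' drifts should be $\lambda/2$ each, or the stated limit should be $\bzeta^{1+\theta^{-1/2},\bzer,2\lambda}$. (Relatedly, the paper's remark that $\bw^2\rightsquigarrow(\sqrt\theta,\bzer)$ in the moderate case should read $\theta^{-1/2}$, consistent with the $\beta = 1+\theta^{-1/2}$ appearing in item (2); your computation of $\sigma_3(\bw^2)/\sigma_2(\bw^2)^3 \to \theta^{-1/2}$ gets this right.) Worth reconciling these before relying on the precise drift constant, but it does not affect the structure of the argument.
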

	
	\begin{remark}
		We state the above result in terms of the product topology; however, some can be easily extended to the $\ell^2$ topology on $\ell^2_\downarrow$ using the theory of size-biased point processes from \cite{Aldous.97}.
	\end{remark}
	
	Theorem \ref{thm:bipartite} can be used to recover the above results. Indeed, for the light regime \eqref{LightWang} simply take 
	$\bw^{1} = n^{-2/3} \mathbf{1}_{n}$ and $  \bw^2 = n^{-1/6} m^{-1/2}\mathbf{1}_m$
	and note that
	\begin{equation*}
		\sigma_2(\bw^{1}) = \sigma_2(\bw^2) = n^{-1/3},\qquad \frac{\sigma_3(\bw^2)}{\sigma_2(\bw^2)^3} = \sqrt{\frac{n}{m}} = o(1)\qquad \frac{\sigma_3(\bw^1)}{\sigma_2(\bw^1)^3} = 1.
	\end{equation*} Hence, $\bw^1 \rightsquigarrow (1,\bzer)$ and $\bw^2 \rightsquigarrow (0,\bzer)$ and so
	\begin{equation*}
		Z^\bip(t) = W^{1,\bzer,2\lambda_{12}}(t).
	\end{equation*} As
	\begin{equation*}
		q_{12} = \frac{1}{\sqrt{\sigma_2(\bw^1)\sigma_2(\bw^2)}} + \lambda_{12}  = n^{1/3}+\lambda_{12}
	\end{equation*}
	we have
	\begin{equation*}
		\PR((l,i)\sim (r,j)) = 1-\exp\left(- n^{-\frac{2}{3}} n^{-\frac{1}{6}}m^{-\frac{1}{2}} (n^{\frac{1}{3}} +\lambda_{12})\right) = \frac{1+ \lambda_{12} n^{-1/3} +o(n^{-1/3}) }{\sqrt{nm}}.
	\end{equation*} Hence, our results recover the light regime \eqref{LightWang} described in Theorem \ref{thm:WF} above. The heavy regime \eqref{HeavyWang} follows from interchanging the roles of $\bw^1$ and $\bw^2$. The moderate regime \eqref{ModerateWang} follows by taking the same weight vectors but instead noting that $\bw^2 \rightsquigarrow (\sqrt{\theta},\bzer)$ in that case.

	\section{Exploring the graph}\label{sec:exploration}

	In this section we recall from \cite{CKL.22} the encoding of the connected components of a two-type DCSBM. In \cite{CKL.22}, the authors show that we can encode the random graph $\G(\bW,Q)$ using a random field $\bbX = (\bbX(s,t);s,t\ge 0)$ with values in $\R^2$ and the first hitting times of Chaumont and Marolleau \cite{Chaumont:2020, Chaumont:2021} which extends the discrete analysis of Chaumont and Liu from \cite{Chaumont:2016}.

	\subsection{First hitting times of fields}
	
	As this will play a fundamental role in our scaling limits in the sequel, we spend some time on the construction of first hitting times from \cite{Chaumont:2020}. As we will only focus on the two-type SBMs in this article, we will only focus exclusively on the case where the additive field $F:\R_+^2\to \R^2$.
	
	The field $F$ is defined using four c\`adl\`ag functions $f^{j,i}:\R_+\to \R$ for $i,j\in\{1,2\}$ where $f^{j,i}(0) = 0$ for all $i,j$, $f^{i,i}(t)\ge f^{i,i}(t-)$ and $f^{j,i}$ are non-decreasing for $i\neq j$. More precisely for $\vec{t} = (t_1,t_2)$
	\begin{equation*}
		F(\vec{t}) = \Big(F^1(\vec{t}),F^2(\vec{t})\Big) \qquad F^j(\vec{t}) = \sum_{i=1}^2 f^{j,i}(t_i).
	\end{equation*}Given a vector $\vec{r}\in \R^2_+$ we are interested in solutions $\vec{t}\in [0,\infty]^2$
	\begin{equation}\label{eqn:fieldMin}\tag{$\vec{r},F$}
		F^{j}(\vec{t}-) = f^{j,1}(t_1-) + f^{j,2}(t_2-) = -r_j \textup{ for all }j\textup{ s.t. }t_j<\infty.
	\end{equation} Here we interpret $f^{i,i}(\infty)$ as undefined and, for $i\neq j$, $f^{j,i}(\infty) = f^{j,i}(\infty-) = \lim_{t\to\infty} f^{j,i}(t)\in[0,\infty]$. 
	Observe that \eqref{eqn:fieldMin} is vacuously true for $\vec{t} = (\infty,\infty)$ and so $f^{i,i}(\infty)$ being undefined posses no issue.
	
	The following is a key lemma in \cite{Chaumont:2020}. 
	\begin{lemma}[Existence of first hitting times \cite{Chaumont:2020}] Suppose that $F$ is as above and $\vec{r}\in \R^2_+$. Then there exists a unique solution $\mathbf{T} =\mathbf{T}(\vec{r}) = \mathbf{T}(\vec{r},F) \in[0,\infty]^2$ such that for all other solutions $\vec{t}$ to \eqref{eqn:fieldMin} satisfies $\mathbf{T}\le \vec{t}.$ That is $\mathbf{T} = (T_1,T_2)$ satisfies $T_i\le t_i$ for $i = 1,2$.
	\end{lemma}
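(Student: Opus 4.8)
The plan is to realize $\mathbf T(\vec r)$ as the increasing limit of an explicit first--passage iteration, i.e.\ as a least--fixed--point construction on the complete lattice $[0,\infty]^2$ under the coordinatewise order. Write $\bar j$ for the index complementary to $j$. For a real offset $a$ set
\begin{equation*}
	\tau_j(a) = \inf\big\{u\ge 0 : f^{j,j}(u-) + a \le -r_j\big\}\in[0,\infty],
\end{equation*}
the first time the left--limit function $u\mapsto f^{j,j}(u-)$, shifted by $a$, reaches the level $-r_j$. Since $f^{j,j}$ has only nonnegative jumps by hypothesis, on $\{\tau_j(a)<\infty\}$ the shifted function cannot jump strictly below $-r_j$ before $\tau_j(a)$, so it must creep down to it, which gives the \emph{equality} $f^{j,j}(\tau_j(a)-)+a=-r_j$; moreover $a\mapsto\tau_j(a)$ is non--decreasing (a lower offset forces a longer descent). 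Define $\Phi:[0,\infty]^2\to[0,\infty]^2$ by
\begin{equation*}
	\Phi(\vec t)_j = \tau_j\big(f^{j,\bar j}(t_{\bar j}-)\big),\qquad j\in\{1,2\},
\end{equation*}
with the convention $f^{j,\bar j}(\infty-):=\lim_{s\to\infty}f^{j,\bar j}(s)\in[0,\infty]$. Because $f^{j,\bar j}$ is non--decreasing and nonnegative, $t_{\bar j}\mapsto f^{j,\bar j}(t_{\bar j}-)$ is non--decreasing, and composing with the non--decreasing map $\tau_j$ shows that $\Phi$ is monotone non--decreasing on $[0,\infty]^2$.

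First I would run the iteration $\vec t^{(0)}=(0,0)$, $\vec t^{(k+1)}=\Phi(\vec t^{(k)})$. Since $\vec t^{(1)}=\Phi(\vec 0)\ge\vec 0=\vec t^{(0)}$, monotonicity of $\Phi$ propagates $\vec t^{(k)}\le\vec t^{(k+1)}$ for all $k$, so the sequence increases coordinatewise to a well--defined limit $\mathbf T:=\sup_k\vec t^{(k)}\in[0,\infty]^2$.

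Next I would verify that $\mathbf T$ genuinely solves \eqref{eqn:fieldMin}. Fix $j$ with $T_j<\infty$ (for $j$ with $T_j=\infty$ there is no constraint). Writing $a_k:=f^{j,\bar j}(t^{(k)}_{\bar j}-)$, left--continuity of $s\mapsto f^{j,\bar j}(s-)$ together with $t^{(k)}_{\bar j}\uparrow T_{\bar j}$ gives $a_k\uparrow a_\infty:=f^{j,\bar j}(T_{\bar j}-)$, while $t^{(k+1)}_j=\tau_j(a_k)\uparrow T_j$. The point is that $\tau_j$ is left--continuous along such an increasing sequence, so $T_j=\tau_j(a_\infty)$; this is where one uses the \cdl\ regularity of $f^{j,j}$ and, once more, the absence of downward jumps, which forces the descent to $-r_j-a_\infty$ to be continuous and the passage level to be attained with equality. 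Combining, $f^{j,j}(T_j-)+a_\infty=-r_j$, that is $f^{j,1}(T_1-)+f^{j,2}(T_2-)=-r_j$, which is \eqref{eqn:fieldMin} for this $j$. In particular a solution exists (and $(\infty,\infty)$ is always a trivial one), so the real content of the lemma is the \emph{minimality} of $\mathbf T$.

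Finally I would prove minimality by a short induction. Let $\vec t$ be any solution of \eqref{eqn:fieldMin}. Then $\vec t\ge\vec t^{(0)}=\vec 0$, and assuming $\vec t\ge\vec t^{(k)}$: for each $j$ with $t_j<\infty$ the defining equality $f^{j,j}(t_j-)+f^{j,\bar j}(t_{\bar j}-)=-r_j$ implies the inequality $f^{j,j}(t_j-)+f^{j,\bar j}(t_{\bar j}-)\le -r_j$, hence $t_j\ge\tau_j\big(f^{j,\bar j}(t_{\bar j}-)\big)=\Phi(\vec t)_j$; for $j$ with $t_j=\infty$ this is trivial. Thus $\vec t\ge\Phi(\vec t)\ge\Phi(\vec t^{(k)})=\vec t^{(k+1)}$, using monotonicity of $\Phi$ for the second inequality, so $\vec t\ge\vec t^{(k)}$ for every $k$, whence $\vec t\ge\mathbf T$. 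Since $\mathbf T$ is itself a solution and is dominated by every solution, it is the unique minimal one, which is the assertion. I expect the main obstacle to be the third step: showing the limit $\mathbf T$ is a genuine solution rather than one that overshoots the level $-r_j$ in the limit. This amounts to the left--continuity (Scott--continuity) of the first--passage operator $\Phi$, and it is exactly where the standing hypotheses that $f^{i,i}(t)\ge f^{i,i}(t-)$ and that $f^{j,i}$ ($i\ne j$) are non--decreasing enter in an essential way.
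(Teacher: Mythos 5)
Your proof is correct, and it follows exactly the iterative construction that the paper itself records (as Lemma~\ref{lem:construct}, cited from Chaumont--Marolleau \cite{Chaumont:2020}): your map $\Phi$ is the same iteration $\vec u^{(n)}\mapsto\vec u^{(n+1)}$ defined in \eqref{eqn:uconstr1}--\eqref{eqn:uandlj}, with $\tau_j(a)=\tau^j(r_j+a)$ in the paper's notation, so the monotone-limit and minimality-by-induction arguments are the natural completion of the paper's outline. The one piece worth tightening is the parenthetical justification of monotonicity of $\tau_j$ (``a lower offset forces a longer descent'' has the direction reversed; a \emph{higher} $a$ lowers the target level $-r_j-a$ and hence lengthens the passage), and the left-continuity step for $\tau_j$ along $a_k\uparrow a_\infty$ deserves one more sentence: using $f^{j,j}(\tau_j(a_k)-)=-r_j-a_k$ at each finite $\tau_j(a_k)$ and passing to the limit gives $f^{j,j}(L-)=-r_j-a_\infty$ where $L:=\sup_k\tau_j(a_k)$, so $\tau_j(a_\infty)\le L$, while monotonicity gives the reverse inequality; together with the local boundedness of $f^{j,j}$, which rules out $L<\infty$ when $a_\infty=\infty$, this makes the left-continuity airtight.
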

	
	We include the following construction of the solution $\mathbf{T}$ from \cite{Chaumont:2020} as this will be vital to subsequent analysis. Define $\vec{u}^{(0)} = \vec{0}$ and for each $n\ge 1$ define $\vec{u}^{(n)}$ by
	\begin{equation}\label{eqn:uconstr1}
		u_j^{(n)} = \inf\left\{t: f^{j,j}(t-) + f^{j,i} (u_i^{(n-1)}-) = -r_j \right\}.
	\end{equation}
	That is, if $\tau^j(v) = \inf\{t: f^{j,j}(t-) = - v\}$ then
	\begin{equation}\label{eqn:uandlj}
		u^{(n)}_j = \tau^j\left(r_j+ f^{j,i} (u^{(n-1)}_i-)\right)\qquad \textup{for  }  i\neq j.
	\end{equation}
	\begin{lemma}[Construction of $\mathbf{T}$ \cite{Chaumont:2020}]\label{lem:construct}
		Using the above notation, it holds that $u^{(n)}_j\le u^{(n+1)}_j$ for all $n\ge 0$ and $j = 1,2$. Moreover, $        \mathbf{T}(\vec{r}) = \lim_{n\to\infty} \vec{u}^{(n)}.
		$    \end{lemma}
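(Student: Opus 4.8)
The plan is to establish the two assertions in turn: first the coordinatewise monotonicity $\vec u^{(n)}\le \vec u^{(n+1)}$, which guarantees that the limit $\vec U:=\lim_n\vec u^{(n)}\in[0,\infty]^2$ exists, and then the identification $\vec U=\mathbf T(\vec r)$ via the characterization of $\mathbf T$ as the least solution of \eqref{eqn:fieldMin} supplied by the preceding lemma. The only structural input is the absence of negative jumps of the diagonal functions: this makes each one-dimensional first-passage map $\tau^j$ nondecreasing on $[0,\infty)$ (a lower level cannot be reached strictly before a higher one), and it makes the left-limit in the definition of $\tau^j$ attained with equality, i.e. $f^{j,j}(\tau^j(v)-)=-v$ whenever $\tau^j(v)<\infty$ and $v\ge 0$. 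Together with the fact that $s\mapsto f^{j,i}(s-)$ is nondecreasing for $i\neq j$ (since $f^{j,i}$ is), the formula \eqref{eqn:uandlj} is a coordinatewise monotone iteration.

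For the monotonicity I would induct on $n$: the case $n=0$ is immediate from $u_j^{(0)}=0$, and if $u_i^{(n-1)}\le u_i^{(n)}$ for $i=1,2$ then $r_j+f^{j,i}(u_i^{(n-1)}-)\le r_j+f^{j,i}(u_i^{(n)}-)$, whence $u_j^{(n)}=\tau^j\!\big(r_j+f^{j,i}(u_i^{(n-1)}-)\big)\le \tau^j\!\big(r_j+f^{j,i}(u_i^{(n)}-)\big)=u_j^{(n+1)}$ by monotonicity of $\tau^j$. Hence $\vec U:=\lim_n\vec u^{(n)}$ exists in $[0,\infty]^2$.

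Next I would show $\vec U$ solves \eqref{eqn:fieldMin}. Fix $j$ with $U_j<\infty$; then every $u_j^{(n)}$ is finite and $f^{j,j}(u_j^{(n)}-)=-\big(r_j+f^{j,i}(u_i^{(n-1)}-)\big)$. Letting $n\to\infty$ and using that left limits are stable along increasing sequences ($f^{j,j}(s_n-)\to f^{j,j}(s-)$ when $s_n\uparrow s$, whether or not the sequence eventually stabilizes) together with $f^{j,i}(u_i^{(n-1)}-)\uparrow f^{j,i}(U_i-)$ — with the convention $f^{j,i}(\infty-)=\lim_{t\to\infty}f^{j,i}(t)$ when $U_i=\infty$ — gives $f^{j,j}(U_j-)+f^{j,i}(U_i-)=-r_j$; for indices with $U_j=\infty$ there is nothing to check. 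Then I would show, again by induction on $n$, that $\vec u^{(n)}\le\vec t$ for every solution $\vec t$ of \eqref{eqn:fieldMin}: if $u_i^{(n-1)}\le t_i$ and $t_j<\infty$, then $f^{j,j}(t_j-)=-r_j-f^{j,i}(t_i-)\le-\big(r_j+f^{j,i}(u_i^{(n-1)}-)\big)$, so $t_j$ belongs to the passage set defining $u_j^{(n)}$ and hence $t_j\ge u_j^{(n)}$ (and $t_j=\infty$ is vacuous). Passing to the limit, $\vec U\le\vec t$ for every solution; since $\vec U$ is itself a solution, the preceding lemma forces $\vec U=\mathbf T(\vec r)$.

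The main obstacle is not conceptual but bookkeeping: one must use the absence of negative jumps of $f^{j,j}$ to reconcile the ``equals $-v$'' versus ``first drops below $-v$'' readings of $\tau^j$ and to get its monotonicity, handle the value $+\infty$ and the stated conventions for $f^{j,i}(\infty-)$ consistently, and verify the $n\to\infty$ passage in the solution step both when the approximating sequence stabilizes at $U_j$ and when it is strictly increasing to $U_j$. Once these points are in place, the argument is the routine ``monotone iteration converges to the least fixed point'' pattern.
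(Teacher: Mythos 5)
The paper does not prove this lemma itself; it is recalled as a result of Chaumont and Marolleau, with a citation to \cite{Chaumont:2020}, so there is no in-paper proof to compare against. Your proposal is a correct, self-contained argument and follows exactly the ``monotone iteration converges to the least fixed point'' pattern that underlies the cited result. The monotonicity induction is right once you note that $\tau^j$ is nondecreasing (which follows from the continuity of the running infimum of $f^{j,j}$, itself a consequence of $f^{j,j}$ having no negative jumps), and that $s\mapsto f^{j,i}(s-)$ is nondecreasing for $i\neq j$. The passage to the limit is handled correctly: you use left-continuity of $s\mapsto f^{j,j}(s-)$ along increasing sequences and monotone convergence of $f^{j,i}(u^{(n-1)}_i-)$ with the right convention at $+\infty$. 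The minimality induction (every solution $\vec t$ dominates $\vec u^{(n)}$) is also sound, again using the no-negative-jump intermediate-value argument to conclude $t_j\ge u_j^{(n)}$ from $f^{j,j}(t_j-)\le -(r_j+f^{j,i}(u_i^{(n-1)}-))$.

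One place where your sketch compresses a nontrivial step is the claim $f^{j,j}(\tau^j(v)-)=-v$ when $\tau^j(v)<\infty$. Since the infimum in $\tau^j(v)=\inf\{t:f^{j,j}(t-)=-v\}$ is over a c\`agl\`ad map, showing the left-limit equals $-v$ at the infimum requires the two-sided squeeze: approximation from the right gives $f^{j,j}(\tau^j(v))=-v$ (or the infimum is attained directly), while minimality of $\tau^j(v)$ together with left-continuity and non-negative right jumps of $s\mapsto f^{j,j}(s-)$ rules out $f^{j,j}(\tau^j(v)-)<-v$; combining with the no-negative-jump inequality $f^{j,j}(t)\ge f^{j,j}(t-)$ then pins down $f^{j,j}(\tau^j(v)-)=-v$. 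If you want the written proof to be airtight, spell that step out; otherwise the structure and the conclusion are correct.
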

	
	Before turning to our special case, we recall the following lemma that was used heavily in \cite{CKL.22}. 
	\begin{lemma}\label{lem:leftCont}
		For any vector $\vec{r}$ the map $s\mapsto \mathbf{T}(s\vec{r},F)$ is left-continuous. 
	\end{lemma}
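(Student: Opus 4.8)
The plan is to combine a monotonicity property of the first hitting time with the minimality characterization of $\mathbf{T}$ already available to us. First I would show that $\vec r\mapsto \mathbf{T}(\vec r,F)$ is coordinatewise non-decreasing on $\R_+^2$. This follows by induction on $n$ from the iterative construction in Lemma \ref{lem:construct}, $u_j^{(n)} = \tau^j\bigl(r_j + f^{j,i}(u_i^{(n-1)}-)\bigr)$ for $i\neq j$: the map $v\mapsto \tau^j(v)$ is non-decreasing (here one uses the standing hypothesis $f^{j,j}(t)\ge f^{j,j}(t-)$, i.e.\ only upward jumps, so that $f^{j,j}$ descends continuously and therefore cannot reach a lower level $-v'$ without having first reached $-v$ whenever $v\le v'$), while $f^{j,i}$ is non-decreasing for $i\neq j$ by hypothesis. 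Hence if $\vec u^{(n-1)}$ is monotone in $\vec r$ then so is $\vec u^{(n)}$, and letting $n\to\infty$ via Lemma \ref{lem:construct} gives monotonicity of $\mathbf{T}$. In particular, for fixed $\vec r$ the map $s\mapsto \mathbf{T}(s\vec r,F)$ is non-decreasing.

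Next, fix $s_0\ge 0$ and a sequence $s_n\uparrow s_0$, and set $\vec t^{(n)} := \mathbf{T}(s_n\vec r,F)$. By the monotonicity just established, $(\vec t^{(n)})_n$ is non-decreasing in $[0,\infty]^2$ and bounded above by $\mathbf{T}(s_0\vec r,F)$, so $\vec t^\ast := \lim_n \vec t^{(n)}$ exists coordinatewise and satisfies $\vec t^\ast \le \mathbf{T}(s_0\vec r,F)$. It then suffices to prove that $\vec t^\ast$ is itself a solution of \eqref{eqn:fieldMin} with $\vec r$ replaced by $s_0\vec r$: the minimality statement in the existence lemma above will force $\mathbf{T}(s_0\vec r,F)\le \vec t^\ast$, hence equality, which is exactly the left-continuity of $s\mapsto \mathbf{T}(s\vec r,F)$ at $s_0$.

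To check that $\vec t^\ast$ solves $(s_0\vec r,F)$, fix $j$ with $t_j^\ast<\infty$; then $t_j^{(n)}<\infty$ for all $n$, so $F^j(\vec t^{(n)}-) = -s_n r_j$. I would pass to the limit coordinate by coordinate. For each $i$ we have $t_i^{(n)}\uparrow t_i^\ast$; if $t_i^\ast<\infty$ then $f^{j,i}(t_i^{(n)}-)\to f^{j,i}(t_i^\ast-)$ by left-continuity of $u\mapsto f^{j,i}(u-)$ for a \cdl\ function (the subcase where $t_i^{(n)}=t_i^\ast$ eventually being trivial), while if $t_i^\ast=\infty$ then necessarily $i\neq j$ (since $t_j^\ast<\infty$), so $f^{j,i}(t_i^{(n)}-)\to f^{j,i}(\infty-)=\lim_{t\to\infty}f^{j,i}(t)\in[0,\infty]$ is well defined. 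Summing over $i=1,2$ gives $F^j(\vec t^{(n)}-)\to F^j(\vec t^\ast-)$, and since the left side equals $-s_n r_j\to -s_0 r_j$ we get $F^j(\vec t^\ast-)=-s_0 r_j$. As this holds for every $j$ with $t_j^\ast<\infty$, $\vec t^\ast$ solves $(s_0\vec r,F)$, completing the proof.

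The main obstacle is the bookkeeping in the last step: one must track that the limiting coordinates $t_i^\ast$ may be infinite, that $f^{i,i}(\infty)$ is undefined but is never actually invoked (it enters only through $f^{j,i}(\infty-)$ with $i\neq j$, which the paper has declared well defined), and that the convergence of left limits $f^{j,i}(t_i^{(n)}-)\to f^{j,i}(t_i^\ast-)$ along an increasing sequence of times is valid for a general \cdl\ $f^{j,i}$. The only other point needing care is the spectral-positivity input to the monotonicity of $\tau^j$ in Step~1, which is precisely where the hypothesis $f^{i,i}(t)\ge f^{i,i}(t-)$ is used.
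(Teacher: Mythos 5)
The paper itself does not prove this lemma---it recalls the statement from \cite{CKL.22} (``we recall the following lemma that was used heavily in \cite{CKL.22}'') and moves on---so there is no in-paper argument to compare yours against.

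That said, your proof is correct, and it only relies on facts the present paper does state (the existence/minimality lemma and the iterative construction in Lemma~\ref{lem:construct}). A few confirmations of the points you flagged as delicate. (i) Monotonicity of $\tau^j$: writing $g(t)=f^{j,j}(t-)$, which is left-continuous with only upward jumps and starts at $0$, one checks for $T=\inf\{t:g(t)\le -v\}$ that $g(T)\ge -v$ by left-continuity and $g(T)\le -v$ by spectral positivity, so $T=\tau^j(v)$, and the sets $\{g\le -v\}$ are nested in $v$; hence $\tau^j$ is non-decreasing, and the induction on $n$ (base case $\vec u^{(0)}=\vec 0$) goes through. (ii) The convergence $f^{j,i}(t_i^{(n)}-)\to f^{j,i}(t_i^\ast-)$ along $t_i^{(n)}\uparrow t_i^\ast<\infty$ is a standard fact for c\`adl\`ag functions (for $t_i^{(n)}<t_i^\ast$ eventually one uses the existence of the left limit at $t_i^\ast$; the stabilizing subcase is trivial, as you note). (iii) In the case $t_i^\ast=\infty$ one indeed has $i\neq j$, and moreover the off-diagonal limit $f^{j,i}(\infty-)$ is forced to be finite here, since $f^{j,j}(t_j^{(n)}-)\to f^{j,j}(t_j^\ast-)\in\R$ and $f^{j,j}(t_j^{(n)}-)+f^{j,i}(t_i^{(n)}-)=-s_n r_j$ is bounded; so the passage to the limit of the equation $F^j(\vec t^{(n)}-)=-s_n r_j$ is legitimate. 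With $\vec t^\ast$ shown to solve $(s_0\vec r,F)$, the minimality of $\mathbf T(s_0\vec r)$ combined with the monotonicity bound $\vec t^\ast\le\mathbf T(s_0\vec r)$ closes the sandwich. The only cosmetic point worth adding is that the argument implicitly assumes $\vec r\in\R_+^2$ and $s_0>0$ (left-continuity at $s_0=0$ is vacuous), consistent with how the lemma is used in the paper.
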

	
	We know turn to a special case where $\vec{r}= (r,0)$ for some $r>0$. To state this we will write $g_-(t) = g(t-)$.
	
	\begin{lemma}\label{lem:singleProcessRep}
		Suppose that $\vec{r} = (r,0)$ for some $r>0$. Then $\mathbf{T}(\vec{r}) = (T_1(r),T_2(r))$ satisfies
		\begin{equation}\label{eqn:T1(r)}
			T_1(r) = \inf\left\{t: f^{1,1}_-(t)  + f^{1,2}_-\circ\tau^2\circ  f^{2,1}_-(t) = -r \right\}
		\end{equation}
		and
		\begin{equation}\label{eqn:T2(r)}
			T_2(r) = \tau^2(f^{2,1}_-(T_1(r))).
		\end{equation}
	\end{lemma}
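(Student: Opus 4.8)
The plan is to perform a ``reduction of types'': eliminate the second coordinate by solving the $j=2$ equation of \eqref{eqn:fieldMin} for $t_2$ as a function of $t_1$, substitute the resulting value into the $j=1$ equation to obtain a scalar equation, and then check that the pair so produced is precisely the minimal solution furnished by the existence lemma for first hitting times. Set $\psi(t):=\tau^2\bigl(f^{2,1}_-(t)\bigr)$. Since $f^{2,2}$ has only nonnegative jumps, the left-continuous path $s\mapsto f^{2,2}(s-)$ crosses every level in $(-\infty,0]$ continuously from above, so $f^{2,2}(\tau^2(v)-)=-v$ for every $v\ge 0$ (with the usual convention when the level is never attained, i.e. $\tau^2(v)=\infty$). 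Consequently, for \emph{every} $t_1$ the pair $(t_1,\psi(t_1))$ satisfies $F^2\bigl((t_1,\psi(t_1))-\bigr)=f^{2,1}_-(t_1)+f^{2,2}(\psi(t_1)-)=f^{2,1}_-(t_1)-f^{2,1}_-(t_1)=0=-r_2$, and $\psi(t_1)$ is by construction the \emph{smallest} $t_2$ with this property. One also records that $\psi$ is non-decreasing and left-continuous with $\psi(0)=0$; this is where the left-continuity of $\tau^2$ and of $f^{2,1}_-$ enters, in the spirit of Lemma \ref{lem:leftCont}.

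Next define $h(t):=F^1\bigl((t,\psi(t))-\bigr)=f^{1,1}_-(t)+f^{1,2}_-\circ\tau^2\circ f^{2,1}_-(t)$, which is exactly the function appearing on the right-hand side of \eqref{eqn:T1(r)}. As the sum of the left-continuous, only-upward-jumping path $t\mapsto f^{1,1}(t-)$ and the non-decreasing left-continuous path $t\mapsto f^{1,2}_-(\psi(t))$, $h$ is left-continuous with only nonnegative jumps, and $h(0)=0>-r$. Hence $h$ cannot leap over the level $-r$, so $T_1^{\ast}:=\inf\{t: h(t)=-r\}=\inf\{t:h(t)\le -r\}$ and $h(T_1^{\ast})=-r$; this is precisely the quantity $T_1(r)$ of \eqref{eqn:T1(r)}. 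Combining the two paragraphs, the pair $(T_1^{\ast},\psi(T_1^{\ast}))$ solves \eqref{eqn:fieldMin} for $\vec r=(r,0)$: the $j=2$ equation holds by the construction of $\psi$, and the $j=1$ equation holds because $h(T_1^{\ast})=-r$. By the minimality assertion of the existence lemma we conclude $T_1(r)\le T_1^{\ast}$ and $T_2(r)\le\psi(T_1^{\ast})$.

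It remains to prove the reverse inequality $T_1(r)\ge T_1^{\ast}$. Assume first $T_2(r)<\infty$. From $F^2(\mathbf T(\vec r)-)=0$ we get $f^{2,2}(T_2(r)-)=-f^{2,1}_-(T_1(r))$, hence $T_2(r)\ge\tau^2\bigl(f^{2,1}_-(T_1(r))\bigr)=\psi(T_1(r))$, and since $f^{1,2}_-$ is non-decreasing, $f^{1,2}_-(T_2(r))\ge f^{1,2}_-(\psi(T_1(r)))$. Plugging this into $F^1(\mathbf T(\vec r)-)=f^{1,1}_-(T_1(r))+f^{1,2}_-(T_2(r))=-r$ gives $h(T_1(r))=f^{1,1}_-(T_1(r))+f^{1,2}_-(\psi(T_1(r)))\le -r$. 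Since $h$ only jumps upward and starts above $-r$, this forces $T_1(r)\ge\inf\{t:h(t)\le -r\}=T_1^{\ast}$, so $T_1(r)=T_1^{\ast}$, which is \eqref{eqn:T1(r)}. Then $T_2(r)\ge\psi(T_1(r))=\psi(T_1^{\ast})\ge T_2(r)$, whence $T_2(r)=\psi(T_1(r))=\tau^2(f^{2,1}_-(T_1(r)))$, which is \eqref{eqn:T2(r)}. The degenerate case $T_2(r)=\infty$ (equivalently $T_1(r)=\infty$, since then no finite $t_1$ makes $h$ reach $-r$) is handled in the same way upon reading all left-limits at $\infty$ as in \cite{Chaumont:2020}.

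I expect the only genuinely delicate points to be bookkeeping ones: (i) that $\tau^2$, and hence $\psi=\tau^2\circ f^{2,1}_-$, is non-decreasing and left-continuous — needed both for the right-hand side of \eqref{eqn:T1(r)} to be meaningful and to pass from $T_2(r)\ge\psi(T_1(r))$ to the monotonicity inequality for $f^{1,2}_-$; and (ii) the ``hits the level exactly'' facts $f^{2,2}(\tau^2(v)-)=-v$ and $h(T_1^{\ast})=-r$, which rely on $f^{2,2}$ and $h$ having no downward jumps. Both are of the type already established in \cite{Chaumont:2020} and used throughout \cite{CKL.22}, so once they are invoked the argument above is routine. (An alternative is to run the iteration $\vec u^{(n)}$ of Lemma \ref{lem:construct} directly: for $\vec r=(r,0)$ one checks inductively that $u_2^{(n)}=\psi(u_1^{(n-1)})$ and $u_1^{(n)}=\tau^1\bigl(r+f^{1,2}_-(\psi(u_1^{(n-2)}))\bigr)$, and then passes to the monotone limit, recovering the same two formulas; the substitution argument above is cleaner.)
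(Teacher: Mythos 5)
Your proof is correct, and it takes a genuinely different route from the paper. The paper proves the lemma by running the monotone iteration $\vec u^{(n)}$ of Lemma \ref{lem:construct}: since $r_2=0$, the recursion collapses to $u_2^{(n)}=\tau^2(f^{2,1}_-(u_1^{(n-1)}))$ and $u_1^{(n+1)}=\inf\{t:f^{1,1}_-(t)+f^{1,2}_-\circ\tau^2\circ f^{2,1}_-(u_1^{(n-1)})=-r\}$; passing to the monotone limit and using left-continuity gives \eqref{eqn:T1(r)}--\eqref{eqn:T2(r)} via a two-sided sandwich. You instead bypass the iteration entirely: you set $\psi=\tau^2\circ f^{2,1}_-$, verify that the pair $(T_1^\ast,\psi(T_1^\ast))$ solves \eqref{eqn:fieldMin}, and then invoke the \emph{minimality} assertion of Lemma 4.1 for ``$\le$'' and the $j=2$ and $j=1$ equations satisfied by $\mathbf T(\vec r)$ for ``$\ge$''. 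Both arguments rest on the same ``no-overshoot'' intermediate-value facts for left-continuous paths with only nonnegative right jumps (you use them to get $f^{2,2}(\tau^2(v)-)=-v$ and $h(T_1^\ast)=-r$; the paper's induction step $u_1^{(n+1)}\le\inf\{t:h(t)=-r\}$ uses the same mechanism), so the two proofs are of comparable depth. One small caution on the phrase ``crosses every level in $(-\infty,0]$ continuously from above'': $f^{2,2}_-$ can of course skip a level on an upward jump without ever revisiting it, so the precise claim is only that \emph{when} $\tau^2(v)<\infty$, one has $f^{2,2}(\tau^2(v)-)=-v$; this is what your argument actually uses, and it does hold (set $t^\ast=\inf\{t:f^{2,2}(t)\le-v\}$, use $f^{2,2}(t^\ast-)\ge -v$ from below, $f^{2,2}(t^\ast-)\le f^{2,2}(t^\ast)\le -v$ from càdlàg and no negative jumps, and then $t^\ast\ge\tau^2(v)$ combined with a contradiction argument if $\tau^2(v)<t^\ast$). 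Minor: your parenthetical ``equivalently $T_1(r)=\infty$'' for the degenerate case is not literally an equivalence (one can have $T_1(r)=\infty$, $T_2(r)<\infty$), but neither proof needs that case with any care.
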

	\begin{proof}
		We have by the construction in Lemma \ref{lem:construct} that $\bT(\vec{r}) = \lim_{n\to\infty} \vec{u}^{(n)}$. 
		
		Since $r_2 = 0$, equation \eqref{eqn:uandlj} is equivalent to 
		\begin{equation*}
			u_2^{(n)} = \tau^{2} \big(f^{2,1}_-(u_1^{(n-1)})\big)
		\end{equation*}
		and so \eqref{eqn:T2(r)} follows from the observation that for $i\neq j$ both $f_{-}^{j,i}$ and $\tau^j$ are left-continuous functions.
		
		To show that \eqref{eqn:T1(r)} holds, we use \eqref{eqn:uconstr1} to get
		\begin{equation*}
			u_1^{(n+1)} = \inf\left\{t: f^{1,1}_-(t) + f^{1,2}_-\left(\tau^2\circ f_{-}^{2,1} (u_1^{(n-1)})\right)  = - r\right\}.
		\end{equation*}
		Taking the limit as $n\to\infty$, and using the left-continuity of all functions involved in the RHS above, we get
		\begin{equation}\label{eqn:ubound}
			T_1(r) = \inf \{t: f^{1,1}_-(t) + f_-^{1,2}\circ\tau^2\circ f_-^{2,1} (T_1(r)) = -r\}.
		\end{equation}
		Also, note that $u^{(n)}_1\le u_1^{(n+1)}\le T_1(r)$, and $f^{1,2}, f^{2,1}$ and $\tau^2$ are non-decreasing functions. A simple induction argument yields
		\begin{equation*}
			u_1^{(n+1)} \le \inf\{t: f^{1,1}_-(t) + f_{-}^{1,2}\circ \tau^2\circ f_-^{2,1}(t) = -r\}.
		\end{equation*} 
		Taking the limit as $n\to\infty$ above we get
		\begin{equation*}
			T_1(r) \le \inf\{t: f^{1,1}_-(t) + f_-^{1,2}\circ \tau^2 \circ f^{2,1}_-(t) = -r\}.
		\end{equation*} 
		We also have from \eqref{eqn:ubound} that 
		\begin{equation*}
			f_{-}^{1,1}(T_1(r)) + f^{1,2}_-\circ\tau^2\circ f_-^{2,1}(T_1(r)) = -r.
		\end{equation*}Combining the last two displayed equations we see that \eqref{eqn:T1(r)} holds. 
	\end{proof}

	\subsection{Fields and Random Graphs}
	
	Now that we have the construction of the first hitting times for additive fields, we can turn to one of the key constructions in \cite{CKL.22}. To begin, we define the stochastic processes that will play a key role in our subsequent analysis. Following \cite{Limic.19,Limic.19_Supplement}, we introduce
	\begin{align}\label{eqn:Nidef}
		N^i(t) &= \sum_{l=1}^{\len(\bw^i)} w_l^i 1_{[\xi_l^{i} \le q_{ii}t]}
	\end{align}
	where $\xi_l^i\sim \Exp(w_l^i)$ independent exponential random variables of rate $w_l^i$ (and mean $1/w_l^i$). As in \cite{CKL.22} we write
	\begin{align}
		\label{eqn:Xdef}X^{1,1}(t) &= N^1(t)  -t  &X^{1,2}(t)&= \frac{q_{1,2}}{q_{1,1}} N^2(t)\\  X^{2,1}(t)&= \frac{q_{1,2}}{q_{2,2}} N^{1}(t)
		& X^{2,2}(t)&= N^2(t).\nonumber
	\end{align}
	Let $\bX$ denote the corresponding additive field
	\begin{equation*}
		\bX(\vec{t}) = \left(\sum_{i=1}^2 X^{1,i}(t_i), \sum_{i=1}^2 X^{2,i}(t_i)\right)
	\end{equation*}
	
	Recall that we label the connected components of $\G(\bW,Q)$ by $(\cC_l;l\ge 1)$ in such a way that $\cM_{l,1}\ge \cM_{l,2}\ge \dotsm$ where $\cM_{l,i}$ are defined as in \eqref{eqn:massDef} and that we define the vectors $\bcM_j$ by \eqref{eqn:BCMdef}. Define the matrix $R$ by 
	\begin{equation*}
		R = \begin{bmatrix}
			1& q_{12}/{q_{11}}\\
			q_{21}/q_{22} &1 
		\end{bmatrix} = \operatorname{diag}(Q)^{-1} Q.
	\end{equation*}

	Let $\vec{\rho}\in \R^2_+\setminus\{0\}$ be fixed. Let $(t_l = t_l(\vec{\rho});l\ge 1)$ be the discontinuity times of $t\mapsto \bT(s\vec{\rho};\bX)$ in the order in which they appear. Denote the respective jumps by $(\bDelta_l = \bDelta_l(\vec{\rho});l\ge 1)$ so that
	\begin{equation}\label{eqn:tjdef}
		\bDelta_l = \lim_{t\downarrow t_l}\bT(\vec{\rho}t) - \bT(\vec{\rho}t_l).
	\end{equation}We also set $\cS_l = \cS_l(\vec{\rho}) = \vec{\rho}^T \operatorname{diag}(Q)^{-1} \bcM_l$. 
	
	We now turn to a size-biasing operation. Given a collection of non-negative scalars $(s_j;j\in[n])$ with $s_j>0$, we say that $\pi$ is a size-biased permutation if for every permutation $\tau\in \fS_n$, the symmetric group on $n$ letters, 
	\begin{equation*}
		\PR(\pi = \tau) = \prod_{l=1}^n \frac{s_{\tau(l)}}{\sum_{k\ge l} s_{\tau(k)}}.
	\end{equation*} We will say that $\pi\sim \operatorname{SB}(s_j).$ We extend this slightly to the case where some $s_j$ are zero. If $(s_j;j\in[n])$ with $s_j\ge 0$ and at least one non-zero, we write $\operatorname{SB}(s_j)$ as a random function $\pi:[\#A] \to A$ where $A = \{j:s_j>0\}$ with law
	\begin{equation*}
		\PR\left(\pi = \tau\right) = \prod_{l=1}^{\#A} \frac{s_{\tau(l)}}{\sum_{k\ge l} s_{\tau(k)}}\qquad \forall \tau:[\#A]\to A.
	\end{equation*}
	Given a finite list $(o_j;j\in [n])$ of objects, and another finite list $(s_j;j\in [n])$ of sizes $s_j\ge 0$, we will say that $(o_{\pi(j)};j\in[\#A])\sim \operatorname{SB}(o_j,s_j)$ if $\pi\sim \operatorname{SB}(s_j).$ If $(o_j)$ and $(s_j)$ are random, then the random function $\pi$ is generated conditionally independently given $(o_j), (s_j)$. We want to emphasize that when some of the $s_j = 0$, the randomized sequence $\operatorname{SB}(o_j,s_j)$ is incomplete. It is well known that the size-biasing operation can be accomplished by using a family of independent exponential random variables of rates $s_j$.
	
	\begin{theorem}[{\cite[Corollary 4.13]{CKL.22}}]\label{thm:ckl1}
		For each fixed $\vec{\rho}\in \R_+^2\setminus\{0\}$ it holds that 
		$\left(\bDelta_l;l\ge 1\right) \overset{d}{=} \left(R \bcM_l^\circ;l\ge 1 \right)
		$ where $(\bcM^\circ_l;l\ge 1)\sim \operatorname{SB}(\bcM_l,\cS_l).$

	\end{theorem}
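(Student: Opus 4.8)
I would condition on the realized graph $G=\G(\bW,Q)$ and its partition into connected components, so that the only randomness left in the field $\bX$ sits in the independent exponential clocks $\xi_l^i\sim\Exp(w_l^i)$ that define $N^1,N^2$. It then suffices to prove that, conditionally on $G$, the curve $s\mapsto\bT(s\vec\rho;\bX)$ performs a breadth-first exploration of $G$ that reveals its connected components one at a time, registering a jump $\bDelta_l=R\,\bcM(\cC)$ as the $l$-th component $\cC$ is revealed, and that the order of revelation is a size-biased permutation of the components whose relevant weights turn out to be $\cS_l=\vec\rho^T\operatorname{diag}(Q)^{-1}\bcM_l$. Averaging over $G$, and recalling (as noted just before the theorem) that size-biasing by $(\cS_l)$ is realized by a race of independent $\Exp(\cS_l)$ clocks, then yields $(\bDelta_l)\overset{d}{=}(R\bcM^\circ_l)$ with $(\bcM^\circ_l)\sim\operatorname{SB}(\bcM_l,\cS_l)$.

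The main tool is the construction of $\bT$ recalled in Lemma \ref{lem:construct}: $\bT(\vec r;\bX)=\lim_n\vec u^{(n)}$, where the recursion $u_j^{(n)}=\tau^j\big(r_j+f^{j,i}(u_i^{(n-1)}-)\big)$ with $i\neq j$ passes control back and forth between the type-$1$ and type-$2$ sides. This is exactly the mechanism of a two-type breadth-first exploration: advancing $\tau^j$ reveals fresh type-$j$ vertices through the jumps of $N^j$, while the off-diagonal terms $f^{j,i}$ feed back the masses of the type-$i$ vertices these make reachable. In the axis case $\vec\rho=(\rho,0)$, Lemma \ref{lem:singleProcessRep} collapses this to a single first-passage problem, which I would take as the base case; for a general $\vec\rho$ I would run the same analysis with both sides active, using the left-continuity of Lemma \ref{lem:leftCont} to pass the alternating recursion to the limit. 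Either way the classical Aldous excursion argument applies: $s\mapsto\bT(s\vec\rho;\bX)$ is the generalized inverse of a running maximum of the exploration walk, so its successive jumps are indexed by the components as they are revealed, and the jump registered in coordinate $j$ when a component $\cC$ is completed equals $(R\bcM(\cC))_j$ by the definitions of the $X^{j,i}$ and of $R=\operatorname{diag}(Q)^{-1}Q$; hence $\bDelta_l=R\bcM(\cC)$ for the $l$-th revealed component $\cC$.

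For the revelation order I would use the memorylessness of the exponential clocks: a component $\cC$ is first entered by the exploration when one of its vertices' clocks fires, and these first-entry events are governed by clocks that are independent across the disjoint components; moreover once a component has been completely revealed all of its clocks have already fired, so after deleting it the remaining clocks are fresh and the argument recurses. This forces the revelation order, conditionally on $G$, to be a size-biased permutation of the components, and one then checks that the associated weights are (proportional to) $\cS_l=\vec\rho^T\operatorname{diag}(Q)^{-1}\bcM_l$. Together with the previous paragraph this is the assertion.

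I expect the main obstacle to be making the second paragraph rigorous: one must verify that the maximal intervals of continuity of $s\mapsto\bT(s\vec\rho;\bX)$ coincide exactly with the intervals over which a single component is being explored, keep track of which clocks $\xi_l^i$ have already fired so that every component is revealed exactly once, handle the degenerate configurations ($q_{12}=0$, a side with no reachable vertices, components meeting only one side, and parameter values at which $\bT$ acquires an infinite coordinate), and pin down the exact size-biasing weights so that they come out equal to $\cS_l$ on the nose. The remaining steps — the identification $\bDelta_l=R\bcM(\cC)$ and the passage from the conditional to the unconditional statement — are routine bookkeeping built on the definitions and on Lemma \ref{lem:construct}.
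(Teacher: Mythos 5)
The paper does not prove this statement: it is imported verbatim from \cite{CKL.22}, where it appears as Corollary 4.13, and no argument for it is given here. There is therefore no ``paper's own proof'' against which to compare your sketch. With that caveat, a few remarks on the sketch itself.

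Your overall plan --- a two-type generalization of Aldous's exploration argument, in which the recursion of Lemma \ref{lem:construct} is read as a BFS that alternates between the two vertex types, and the ordering of components comes from a race of exponential clocks --- is the right spirit, and it matches the informal description the present paper gives of \cite{CKL.22}'s construction. You also correctly isolate Lemma \ref{lem:singleProcessRep} as the place where the axis case $\vec\rho=(1,0)$ collapses the two-dimensional first-passage problem to a one-dimensional one, and you correctly read the remark after the theorem about missing type-2-only components for that choice of $\vec\rho$.

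The sketch has two gaps that are not mere bookkeeping. First, the sentence ``the classical Aldous excursion argument applies'' is doing all the work: in the two-type setting, one must actually prove that the successive jumps of $s\mapsto\bT(s\vec\rho;\bX)$ correspond bijectively to connected components and that the jump vector equals $R\bcM(\cC)$ --- this is precisely the content of the cited Corollary 4.13, and it rests on the fluctuation theory for additive L\'evy fields of Chaumont and Marolleau \cite{Chaumont:2020} that the present paper recalls but that you do not engage with beyond Lemma \ref{lem:construct}. Second, the framing ``condition on $G$, treat the clocks as the only remaining randomness'' requires care: the processes $N^1,N^2$ (and hence $\bX$) and the graph $\G(\bW,Q)$ are built from one joint probability space, and conditioning on $G$ changes the conditional law of the $\xi_l^i$. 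The memorylessness heuristic you invoke is indeed the reason this works, but making it rigorous is exactly the classical Aldous argument in disguise, not a separate easy step; and in the two-type case one must also account for the fact that a component's ``first entry time'' can be triggered on either side depending on $\vec\rho$, which is why $\cS_l = \vec\rho^{T}\operatorname{diag}(Q)^{-1}\bcM_l$ depends on $\vec\rho$ in the way it does. You flag these obstacles honestly, but they are the heart of the theorem, not peripheral details.
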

	\begin{remark}
		Whenever $\vec{\rho}= (1,0)^T$ is the first coordinate vector the resulting size-biased ordering $\cM_j^\circ$ does not include all the connected components. It omits those components $\cC_j$ consisting of only type $2$ vertices.
	\end{remark}
	The above theorem and Lemma \ref{lem:singleProcessRep} lead us to examine the following processes:
	\begin{equation}\label{eqn:LandUdef}
		L^{j}(t) = - \inf_{s\le t} X^{j,j}(s),\qquad U^j(y) = \inf\{t: L^j(t) > y\} ,
	\end{equation}
	and
	\begin{equation}\label{eqn:vdef}
		V(t) = X^{1,1}(t) + X^{1,2}\circ U^2\circ X^{2,1}(t) .
	\end{equation}
	We note in Lemma \ref{lem:singleProcessRep} the functions in \eqref{eqn:T1(r)} and \eqref{eqn:T2(r)} are left-continuous, but here we are using the right-continuous versions. This posses no issue to us due to the following lemma used in \cite{CKL.22}. It is a consequence of Corollary 4.5 found therein.
	\begin{lemma} \label{lem:ascontinuity}For any fixed $\vec{\rho}\in \R_+^2\setminus \{0\}$, almost surely for every $s\ge 0$ the vector $(T_1,T_2)=\lim_{s'\downarrow s} \mathbf{T}(s'\vec{\rho})$ satisfy $N^{1}(T_1) = N^1(T_1-)$ and $N^2(T_2) = N^2(T_2-)$.
	\end{lemma}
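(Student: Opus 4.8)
The plan is to reduce the statement to the ``creeping downward'' property of a process with no negative jumps --- which is the content of \cite[Corollary 4.5]{CKL.22} --- and to feed this into the iterative description of $\bT$ from Lemma \ref{lem:construct}. First, recall from \eqref{eqn:Nidef}--\eqref{eqn:Xdef} that $N^1$ and $N^2$ are driven by two independent families of exponential clocks, so they are independent pure-jump nondecreasing processes, and the diagonal coordinates $X^{1,1},X^{2,2}$ have only upward jumps. Consequently each running infimum $-L^j(t)=\inf_{s\le t}X^{j,j}(s)$ is continuous in $t$, and the two assertions $N^j(T_j)=N^j(T_j-)$ are equivalent to continuity of $X^{j,j}$ at $T_j$.

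Next I would unwind the right-limit $(T_1,T_2)=\lim_{s'\downarrow s}\bT(s'\vec\rho)$, which exists for every $s\ge 0$ because $s\mapsto\bT(s\vec\rho)$ is nondecreasing and (Lemma \ref{lem:leftCont}) left-continuous. Passing to the limit $n\to\infty$ in \eqref{eqn:uconstr1}--\eqref{eqn:uandlj} and then letting $s'\downarrow s$ exhibits $T_j$ as a first passage of $X^{j,j}$ below a level that itself depends on $T_i$ and $X^{j,i}$ ($i\ne j$); in the notation of \eqref{eqn:LandUdef} this says precisely that $T_j$ lies in the closure of the range of $y\mapsto U^j(y)$, i.e.\ $X^{j,j}$ attains a new running infimum at $T_j$. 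The level being hit is a complicated random quantity --- it even depends circularly on both coordinates through $X^{2,1}(T_1-)$ and $X^{1,2}(T_2-)$ --- so what is needed is a statement that is uniform over all levels.

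That statement is \cite[Corollary 4.5]{CKL.22}: almost surely, simultaneously for every $y\ge 0$, the process $X^{j,j}$ is continuous at $U^j(y)$; equivalently, the closed range of $y\mapsto U^j(y)$ contains no jump time of $N^j$. This is the uniform form of the elementary fact that a process with no negative jumps attains each new running infimum continuously. Applying it for $j=1$ and $j=2$ and combining with the previous paragraph, $N^1$ is continuous at $T_1$ and $N^2$ is continuous at $T_2$ for every $s\ge 0$, all on a single almost sure event. Because the corollary is uniform in $y$, one never has to identify the random hitting levels explicitly, and one does not even have to invoke the independence of $N^1$ and $N^2$ at this point.

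The only genuine obstacle is bookkeeping around one-sided limits: one must check that, after taking $n\to\infty$ and then $s'\downarrow s$ in \eqref{eqn:uandlj}, the left-continuous first-passage maps $\tau^j$ really do deliver a value $T_j$ lying in the closed range of the right-continuous inverse local time $U^j$ of \eqref{eqn:LandUdef} --- the left- versus right-endpoint subtleties for the excursions of $X^{j,j}$ above its running infimum --- and that ``continuity of $X^{j,j}$ along its descending ladder set'' is exactly what \cite[Corollary 4.5]{CKL.22} controls. Once the ranges are matched, the lemma is immediate; the rest is the routine observation that upward-jumping processes have continuous running infima. A heavier alternative --- conditioning on the $\sigma$-algebra generated by $N^i$ ($i\ne j$) to render the hit level measurable, then combining Fubini with the fixed-level creeping fact --- runs into the circular dependence of $T_1$ and $T_2$, which is why the uniform-in-level route is the one to push through.
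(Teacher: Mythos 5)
Your proposal is correct and takes essentially the same approach as the paper: the paper gives no detailed proof, stating only that the lemma ``is a consequence of Corollary 4.5 found therein [in CKL.22]'', and your argument identifies exactly that corollary (uniform-in-level continuity of $X^{j,j}$ along its descending ladder set) as the key ingredient and supplies the bookkeeping---passing to the limit in the iterative construction of Lemma \ref{lem:construct} and matching the closed range of $\tau^j$ with that of $U^j$---that connects it to the claim. The one thing worth flagging is that you quietly rely on $X^{2,2}$ having a $-t$ drift (so that $L^2$ is nontrivial); as written in \eqref{eqn:Xdef} the paper has $X^{2,2}(t)=N^2(t)$, apparently a typo for $N^2(t)-t$ given the bipartite analogue in \eqref{eqn:Ydef}, and your reading is the correct one.
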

	
	Let $(l_j,r_j)$ be the excursion intervals of $V(t)$ above its running minimum:
	\begin{equation}\label{eqn:vexcursionsGen}
		\{t:V(t)>\inf_{s\le t} V(s) \}^\circ = \bigcup_{j}(l_j,r_j)
	\end{equation} in the almost surely unique way that $l_1<r_1<l_2<r_2<\dotsm$. Here we write $A^\circ$ for the interior of the set $A$. 
	Therefore, using Theorem \ref{thm:ckl1} we get the following proposition.
	\begin{proposition}\label{prop:vexcursions}
		Let $\vec{\rho} = (1,0)^T$. Then
		\begin{equation*}
			\left\{      \begin{bmatrix}
				r_j-l_j\\
				U^2\circ X^{2,1}(r_j) - U^2\circ X^{2,1}(l_j-)
			\end{bmatrix}; j\ge 1\right\} \sim \operatorname{SB}(R\bcM_j,\cS_j)
		\end{equation*}
		where $\cS_j = \frac{1}{q_{11}}\cM_{j,1}$ is the scaled mass of the type 1 vertices in $\cC_j$.
		
	\end{proposition}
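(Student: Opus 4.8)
The plan is to combine the distributional identity of Theorem~\ref{thm:ckl1} with the pathwise description of the first hitting times in Lemma~\ref{lem:singleProcessRep}. Throughout, fix $\vec\rho=(1,0)^T$ and take the additive field to be $\bX$.

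First I would unwind Theorem~\ref{thm:ckl1}: it gives $(\bDelta_l;l\ge 1)\overset{d}{=}(R\bcM_l^\circ;l\ge1)$ with $(\bcM_l^\circ;l\ge1)\sim\operatorname{SB}(\bcM_l,\cS_l)$ and, for $\vec\rho=(1,0)^T$, $\cS_l=\vec\rho^{\,T}\operatorname{diag}(Q)^{-1}\bcM_l=\tfrac1{q_{11}}\cM_{l,1}$. Since $R$ is a fixed (invertible) linear map, applying it coordinatewise to a size-biased reordering of $(\bcM_l)$ produces a size-biased reordering of $(R\bcM_l)$ with the \emph{same} weight sequence, so $(\bDelta_l;l\ge1)\sim\operatorname{SB}(R\bcM_l,\cS_l)$. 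Hence the proposition reduces to the almost sure pathwise claim that the jumps $\bDelta_l$ of $t\mapsto\bT(\vec\rho t;\bX)$, read off in the order in which their discontinuity times $t_l$ occur, are exactly the vectors $(r_j-l_j,\ U^2\circ X^{2,1}(r_j)-U^2\circ X^{2,1}(l_j-))^T$, with $(l_j,r_j)$ the excursion intervals of $V$ above its running minimum from \eqref{eqn:vexcursionsGen} enumerated by $l_1<r_1<l_2<r_2<\dotsm$.

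For this identification I would apply Lemma~\ref{lem:singleProcessRep} with $\vec r=(r,0)$ and $f^{j,i}=X^{j,i}$, giving $T_1(r)=\inf\{t:X^{1,1}(t-)+X^{1,2}_-\circ\tau^2\circ X^{2,1}_-(t)=-r\}$ and $T_2(r)=\tau^2(X^{2,1}_-(T_1(r)))$, where $\tau^2$ is the left-continuous inverse of $L^2$. The crux is to replace the left-continuous objects $\tau^2,X^{1,2}_-,X^{2,1}_-$ by the right-continuous $U^2,X^{1,2},X^{2,1}$: by Lemma~\ref{lem:ascontinuity}, almost surely $N^1$ and $N^2$ are continuous at every value of $\lim_{s'\downarrow s}\bT(s'\vec\rho)$, and this is precisely what is needed to conclude that $X^{1,2}_-\circ\tau^2\circ X^{2,1}_-$ can be swapped for $X^{1,2}\circ U^2\circ X^{2,1}$ without affecting the relevant level sets, so that $T_1(r)=\inf\{t:V(t-)=-r\}$. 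Because $V$ has no negative jumps, $t\mapsto-\inf_{s\le t}V(s)$ is continuous and $T_1$ is its genuine right-continuous inverse; therefore the jumps of $T_1$ are exactly the excursion intervals $(l_j,r_j)$, with jump size $r_j-l_j$, occurring in the order $l_1<r_1<l_2<\dotsm$ (here I would also use the left-continuity of $s\mapsto\bT(s\vec\rho)$ from Lemma~\ref{lem:leftCont} to line up \eqref{eqn:tjdef} with the appropriate one-sided limits). Feeding the jump times of $T_1$ into $T_2(r)=\tau^2(X^{2,1}_-(T_1(r)))$ and once more passing to right-continuous versions via Lemma~\ref{lem:ascontinuity} then identifies the corresponding jump of $T_2$ as $U^2\circ X^{2,1}(r_j)-U^2\circ X^{2,1}(l_j-)$, which completes the pathwise claim.

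I expect the main obstacle to be exactly the bookkeeping in the previous paragraph: rigorously justifying that the left-continuous compositions in Lemma~\ref{lem:singleProcessRep} may be replaced by the right-continuous processes $U^2$ and $V$ without disturbing the first-passage and excursion structure --- this is where Lemma~\ref{lem:ascontinuity}, together with the monotonicity of $X^{1,2},X^{2,1},U^2$, is essential --- and the elementary but fiddly fact that the jumps of the right-continuous inverse of $-\inf_{s\le\cdot}V(s)$ stand in a mass- and order-preserving bijection with the excursion intervals of $V$ above its running minimum, which relies on $V$ having only upward jumps. Everything else is routine, and the fact that components consisting only of type-$2$ vertices are dropped is automatically encoded in the $\operatorname{SB}$ notation since their $\cS_j$ vanish.
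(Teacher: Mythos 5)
Your proposal is correct and takes the same route as the paper, which presents the proposition as a direct consequence of Theorem~\ref{thm:ckl1} together with Lemma~\ref{lem:singleProcessRep} and Lemma~\ref{lem:ascontinuity} without writing out a formal proof; your write-up simply fills in the bookkeeping (that prefixing by $R$ preserves the $\operatorname{SB}$ weights, and that left-continuous objects may be swapped for $U^2,X^{1,2},X^{2,1}$ at the relevant times) that the paper leaves implicit. One small slip: you call $T_1$ the \emph{right}-continuous inverse of $-\inf_{s\le \cdot}V(s)$, but by Lemma~\ref{lem:leftCont} the map $s\mapsto\bT(s\vec\rho)$ (hence $T_1$) is left-continuous, which is exactly what makes the jump formula in \eqref{eqn:tjdef}, $\bDelta_l=\lim_{t\downarrow t_l}\bT(\vec\rho t)-\bT(\vec\rho t_l)$, pick out the full excursion lengths $r_j-l_j$; your own parenthetical invoking Lemma~\ref{lem:leftCont} shows you had the right picture in mind, so this is a terminological slip rather than a gap.
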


	\subsection{Bipartite Analog}
	
	There is a technical downside to the above approach. The above results recalled require that $q_{ii}$ be strictly positive for each $i$. Therefore, we cannot directly handle the bipartite case when $q_{ii} = \lambda_{ii} + o(1) = 0$ for some $i$. 
	
	Let us first note the following consequence of \cite{Janson.10a}.
	\begin{lemma}
		Suppose $\bW$ and $Q$ are as in Regime \eqref{regime:bipartite}. Then there exists a sequence $\delta_n\to 0$ sufficiently fast such that $\G(\bW,Q)$ is asymptotically equivalent to $\G(\bW,Q+\delta_nI).$ In particular, we can assume that $q_{ii}>0$ in Regime \ref{regime:bipartite}.
	\end{lemma}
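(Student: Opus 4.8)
The plan is to produce an explicit monotone coupling of $\G(\bW,Q)$ and $\G(\bW,Q+\delta_nI)$ under which the two graphs coincide with probability tending to $1$; this yields asymptotic equivalence (vanishing total variation distance between their laws) and is precisely the special case of the contiguity/equivalence estimates of \cite{Janson.10a} that we need. Since $Q+\delta_nI\ge Q$ entrywise and the two matrices differ only on the diagonal, I would build both graphs on one probability space by attaching a single uniform variable $U_e$ to each potential edge $e$ and including $e=\{(l,i),(r,j)\}$ in $\G(\bW,Q)$ (resp.\ in $\G(\bW,Q+\delta_nI)$) precisely when $U_e$ lies below the corresponding connection probability. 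Under this coupling the two graphs agree on every between-type pair, while a within-type pair $\{(l,i),(r,i)\}$ can differ only when $U_e$ falls in an interval of length $e^{-q_{ii}w^i_lw^i_r}\bigl(1-e^{-\delta_nw^i_lw^i_r}\bigr)\le\delta_nw^i_lw^i_r$.

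Consequently the expected number of edges on which the two graphs disagree is at most
\begin{equation*}
	\delta_n\sum_{i=1}^2\sum_{l<r}w^i_lw^i_r\le\frac{\delta_n}{2}\bigl(\sigma_1(\bw^1)^2+\sigma_1(\bw^2)^2\bigr).
\end{equation*}
Each $\bw^i=\bw^{i,(n)}$ has finite length, so $\sigma_1(\bw^i)<\infty$ for every $n$, and we may therefore choose $\delta_n\in(0,1/n]$ small enough (with $\delta_n$ depending on $n$ only) that the right-hand side is at most $1/n$. Then $\delta_n\to0$, and under the coupling $\PR\bigl(\G(\bW,Q)\neq\G(\bW,Q+\delta_nI)\bigr)$ is at most the expected number of disagreeing edges, hence at most $1/n\to0$; in particular the total variation distance between the two laws tends to $0$, which is the asserted asymptotic equivalence. (Equivalently, one bounds the squared Hellinger distance by tensorization, $H^2\le\sum_eH^2(\mathrm{Be}(p_e),\mathrm{Be}(\tilde p_e))\le\sum_e|p_e-\tilde p_e|$, and uses $d_{\mathrm{TV}}\le\sqrt2\,H$, which is the type of estimate underlying the equivalence results of \cite{Janson.10a}.)

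Finally I would check that $\G(\bW,Q+\delta_nI)$ is again covered by Regime~\eqref{regime:bipartite} with the same limiting data. The weight vectors are untouched, so $\bw^i\rightsquigarrow(\beta^i,\btheta^i)$ and \eqref{eqn:weightWindow} are unchanged; and since $\delta_n\to0$ we have $Q+\delta_nI=Q+o(1)=D^{-1/2}KD^{-1/2}+\Lambda+o(1)$ with the same $D$, $K$ and $\Lambda$ (in particular still $\lambda_{ii}\ge0$), so Assumption~\ref{ass:main} and Regime~\eqref{regime:bipartite} continue to hold verbatim; the sole change is that the diagonal entries are now $q_{ii}+\delta_n\ge\delta_n>0$. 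I anticipate no real difficulty: the one point needing care is that there is no uniform-in-$n$ control of $\sigma_1(\bw^i)$, but this is harmless precisely because the statement only demands \emph{some} $\delta_n\to0$, so $\delta_n$ may decay as fast as the finite, $n$-dependent quantity $\sigma_1(\bw^1)^2+\sigma_1(\bw^2)^2$ dictates.
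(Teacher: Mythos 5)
The paper does not include a proof of this lemma at all: it simply labels the statement ``a consequence of \cite{Janson.10a}'' and moves on, relying on Janson's general asymptotic-equivalence machinery for inhomogeneous random graphs. Your proof replaces that citation with a self-contained argument, and it is correct. The monotone coupling via uniform variables is the natural way to bound the total-variation distance; the key observation is that within-type edges disagree on an event of probability at most $\delta_n w^i_l w^i_r$, the expected number of disagreeing edges is at most $\tfrac{\delta_n}{2}\bigl(\sigma_1(\bw^1)^2+\sigma_1(\bw^2)^2\bigr)$, and because each $\bw^{i,(n)}$ has finite length this quantity is finite for every $n$, so $\delta_n$ can simply be taken small enough (depending on $n$) to drive the bound to zero. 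This is exactly what the quantifier ``sufficiently fast'' in the statement is signalling, and you correctly flag that no uniform-in-$n$ control of $\sigma_1(\bw^i)$ is needed. Your final check that $Q+\delta_nI$ still satisfies Assumption~\ref{ass:main} and Regime~\eqref{regime:bipartite} with the same $D,K,\Lambda$ (since $\delta_nI=o(1)$) closes the loop. Compared with the paper's approach, yours is more elementary and transparent (it never invokes the Hellinger/contiguity framework of \cite{Janson.10a}, though your parenthetical correctly identifies that framework as the underlying technology), at the cost of being slightly longer than a one-line citation; both are valid, and yours has the merit of making explicit why the $n$-dependence of $\sigma_1(\bw^i)$ is harmless.
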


	We will implicitly assume that we are working with an asymptotically equivalent version and write
	\begin{equation}\label{eqn:qremixbp}
		Q = c_n\begin{bmatrix}
			0 & 1\\
			1& 0
		\end{bmatrix} + \Lambda_n\qquad\textup{where}\qquad
		\Lambda_n = \begin{bmatrix}
			\lambda_{11}^{(n)} & \lambda_{12}^{(n)}\\
			\lambda_{12}^{(n)} & \lambda_{22}^{(n)}
		\end{bmatrix}
	\end{equation} where $\lambda_{ii}^{(n)}>0$.

	Let us define $\widetilde{\bw}^i = \eps_i \bw^i$
	for some $\eps_i>0$ which is to be determined. Let $\widetilde{\bW} = (\widetilde{\bw}^1,\widetilde{\bw}^2)$. Observe that if $\widetilde{Q} = (\widetilde{q}_{ji})$ with $q_{ji} = \widetilde{q}_{ji} \eps_i\eps_j$ then
	\begin{equation}\label{eqn:equalgraphs}
		\PR\big((l,i)\sim (r,j)\textup{  in  }\G(\widetilde{\bW},\widetilde{Q})\big) = \PR\big((l,i)\sim (r,j)\textup{  in  }\G({\bW},{Q})\big)\qquad \forall l,r,i,j.
	\end{equation}
	
	For $Q$ as in \eqref{eqn:qremixbp} and any $\eps_1,\eps_2>0$, define
	\begin{align*}
		\widetilde{q}_{ii} &= \eps_i^{-1} (c_n+\lambda_{12}^{(n)}),& \eps_i &=\frac{\lambda_{ii}^{(n)}}{c_n+ \lambda_{12}^{(n)}}, & &\textup{and} &\widetilde{q}_{12} &= \frac{c_n+\lambda_{12}^{(n)}}{\eps_1\eps_2}.
	\end{align*}
	Observe that $\eps_i\eps_j \widetilde{q}_{ji} = q_{ji}$ and so \eqref{eqn:equalgraphs} holds. In the remainder of this subsection, we couple $\G:= \G(\bW,Q)$ with $\widetilde{\G}: = \G(\widetilde{\bW}, \widetilde{Q})$ so that they are equal a.s. as graphs on $\{(l,i):i\in[2],l\in[\len(\bw^i)]\}$.

	Observe that if $\xi_{l}^i\sim \operatorname{Exp}(w_l^i)$ then $\widetilde{\xi}_l^i := \eps_i^{-1} \xi_l^i\sim \operatorname{Exp}(\widetilde{w}_l^i)$. Define 
	\begin{equation*}
		\widetilde{N}^{i}(t):= \sum_{l=1}^{\len(\bw^i)} \eps_i w_l^i 1_{[\widetilde{\xi}_l^i \le \widetilde{q}_{ii}t]} = \eps_i \sum_{l=1}^{\len(\bw^i)} w_l^i 1_{[\xi_l^i\le qt]},\qquad\textup{where  } q = c_n+\lambda_{12}^{(n)}.
	\end{equation*}
	Similar to \eqref{eqn:Xdef}, define
	\begin{align*}
		\widetilde{X}^{1,1}(t) &= -t + \widetilde{N}^{1}(t)  & \widetilde{X}^{1,2}(t) &= \frac{\widetilde{q}_{12}}{\widetilde{q}_{11}} \widetilde{N}^2(t)\\
		\widetilde{X}^{2,1}(t) &= \frac{\widetilde{q}_{12}}{\widetilde{q}_{22}} \widetilde{N}^1(t) & \widetilde{X}^{2,2}(t) &= -t + \widetilde{N}^{2}(t).
	\end{align*} 
	
	Observe that $\widetilde{q}_{12}/\widetilde{q}_{11} = \eps_2^{-1}$ and $\widetilde{q}_{12}/\widetilde{q}_{22} = \eps^{-1}_1$. Hence
	\begin{equation*}
		\widetilde{R} := \operatorname{diag}(\widetilde{Q})^{-1} \widetilde{Q} = \begin{bmatrix}
			1& \eps_2^{-1}\\
			\eps_1^{-1} & 1
		\end{bmatrix} = \begin{bmatrix}
			\eps_1 & 1\\
			1&\eps_2 
		\end{bmatrix}\begin{bmatrix}
			\eps_1^{-1}&0\\
			0&\eps_2^{-1}
		\end{bmatrix}
	\end{equation*} and
	\begin{equation*}
		\frac{\widetilde{q}_{12}}{\widetilde{q}_{jj}} \widetilde{N}^i(t) = \sum_{l=1}^{\len(\bw^i)}w_l^i 1_{[\xi_l^i\le qt]}.
	\end{equation*} It now follows that
	\begin{align}
		\widetilde{X}^{1,1}(t) &=-t+\eps_1\sum_{l=1}^{\len(\bw^1)} w_l^1 1_{[\xi_l^1\le qt]} & \widetilde{X}^{2,1}(t) &= \sum_{l=1}^{\len(\bw^1)} w_l^1 1_{[\xi_l^1\le qt]}\label{eqn:Ydef}\\
		\widetilde{X}^{2,1}(t) &= \sum_{l=1}^{\len(\bw^2)} w_l^2 1_{[\xi_l^2\le qt]} & \widetilde{X}^{2,2}(t) &=-t+\eps_2\sum_{l=1}^{\len(\bw^2)} w_l^2 1_{[\xi_l^2\le qt]}\nonumber.
	\end{align} This reformulation makes proving our limit theorems in the sequel much easier. 
	
	Note that if $\cC$ is a connected component of $\G$ (and hence $\widetilde{\G}$ using the coupling) then
	\begin{equation*}
		\sum_{(l,i)\in \cC} \widetilde{w}_l^i = \eps_i \sum_{(l,i)\in \cC} w_l^i.
	\end{equation*} Hence, if we label the connected components of $\G$ in some arbitrary way by $(\cC_l;l\ge 1)$ these are the same connected components of $\widetilde{\G}$ and, moreover, the associated mass vectors satisfy
	\begin{equation*}
		\widetilde{\bcM}_l = \begin{bmatrix}
			\eps_1&0\\
			0&\eps_2
		\end{bmatrix}\bcM_l.
	\end{equation*}
	Let $R^{\bip} = \begin{bmatrix}
		\eps_1&1\\1&\eps_2
	\end{bmatrix}$
	and note $\widetilde{R} \widetilde{\bcM}_l = R^\bip \bcM_l.$
	Also observe that if $\vec{\rho} = (1,0)^T$ then 
	\begin{equation*}\langle \vec{\rho},\operatorname{diag}(\widetilde{Q})^{-1} \widetilde{\bcM}_l\rangle = \frac{1}{q} \cM_{l,1}\qquad\textup{where} \qquad q = c_n + \lambda_{12}^{(n)} = c_n + \lambda_{12}+o(1).
	\end{equation*} A direct consequence of Proposition \ref{prop:vexcursions} and the above analysis is the following. 
	\begin{proposition}\label{prop:vexcursionsbip} Let $\bW,Q$ be as in Regime \ref{regime:bipartite} subject to \eqref{eqn:qremixbp}. 
		Let
		\begin{equation*}
			V^\bip (t) = \widetilde{X}^{1,1}(t)+\widetilde{X}^{1,2}\circ \widetilde{U}^{2}\circ {\widetilde{X}}^{2,1}(t)
		\end{equation*}
		where
		\begin{equation*}
			\widetilde{U}^j(y) = \inf\{t: \widetilde{X}^{j,j}(t) < -y\}.
		\end{equation*}  Then
		\begin{equation*}
			\left\{   \begin{bmatrix}
				r_j-l_j\\
				\widetilde{U}^2\circ \widetilde{X}^{2,1}(r_j) - \widetilde{U}^2\circ \widetilde{X}^{2,1}(l_j-)
			\end{bmatrix};j\ge 0\right\} \sim \operatorname{SB}(R^\bip \bcM_j,\cS_j)
		\end{equation*}
		where $\cS_j = q^{-1} \cM_{j,1}$ with $q = c_n +\lambda_{12}^{(n)}.$
		
	\end{proposition}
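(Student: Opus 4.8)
The plan is to derive Proposition~\ref{prop:vexcursionsbip} from Proposition~\ref{prop:vexcursions} applied to the auxiliary graph $\widetilde{\G} = \G(\widetilde{\bW},\widetilde{Q})$, exploiting the coupling $\G = \widetilde{\G}$ (a.s.) supplied by \eqref{eqn:equalgraphs} and then transporting the conclusion back through the weight rescaling $\bw^i\mapsto\eps_i\bw^i$. The entire content is bookkeeping: one must check that Proposition~\ref{prop:vexcursions} is applicable to $\widetilde{\G}$ and that, when its machinery is fed the data $(\widetilde{\bW},\widetilde{Q})$, it produces exactly the objects $V^\bip$, $\widetilde{U}^2$, $\widetilde{X}^{2,1}$ named in the statement.

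First, I would record that $\widetilde{\G}$ is in ``standard form''. After the $\delta_n I$-perturbation lemma (via \cite{Janson.10a}) we may assume $\lambda_{ii}^{(n)}>0$ in \eqref{eqn:qremixbp}; then $\eps_i = \lambda_{ii}^{(n)}/q>0$ and $\widetilde{q}_{ii} = \eps_i^{-1}q>0$ with $q = c_n+\lambda_{12}^{(n)}$, so $\widetilde{\G}$ is a two-type DCSBM with strictly positive diagonal entries, and Proposition~\ref{prop:vexcursions} (equivalently Theorem~\ref{thm:ckl1} together with Lemma~\ref{lem:singleProcessRep}) applies to it with weight vectors $\widetilde{\bW}$ and matrix $\widetilde{Q}$. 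Using the identities $\widetilde{q}_{12}/\widetilde{q}_{11} = \eps_2^{-1}$ and $\widetilde{q}_{12}/\widetilde{q}_{22} = \eps_1^{-1}$ computed just before the proposition, the processes produced by \eqref{eqn:Xdef} for $(\widetilde{\bW},\widetilde{Q})$ are precisely the $\widetilde{X}^{j,i}$ of \eqref{eqn:Ydef}; hence the process $V$ of \eqref{eqn:vdef} associated to $\widetilde{\G}$ is $V^\bip$, and the inverse $U^2$ of \eqref{eqn:LandUdef} built from $\widetilde{X}^{2,2}$ is the $\widetilde{U}^2$ of the statement. Here one invokes the routine identity $\inf\{t:-\inf_{s\le t}\widetilde{X}^{2,2}(s)>y\} = \inf\{t:\widetilde{X}^{2,2}(t)<-y\}$ (valid because the running-infimum process of $\widetilde{X}^{2,2}$ is continuous), together with Lemma~\ref{lem:ascontinuity} to reconcile the left- and right-continuous versions appearing in \eqref{eqn:T1(r)}--\eqref{eqn:T2(r)}.

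Next, I would apply Proposition~\ref{prop:vexcursions} to $\widetilde{\G}$ with $\vec{\rho} = (1,0)^T$: writing $(l_j,r_j)$ for the excursion intervals of $V^\bip$ above its running minimum as in \eqref{eqn:vexcursionsGen}, it yields
\begin{equation*}
\left\{\begin{bmatrix} r_j-l_j\\ \widetilde{U}^2\circ\widetilde{X}^{2,1}(r_j)-\widetilde{U}^2\circ\widetilde{X}^{2,1}(l_j-)\end{bmatrix};j\ge1\right\}\sim\operatorname{SB}\!\big(\widetilde{R}\,\widetilde{\bcM}_j,\ \widetilde{q}_{11}^{-1}\widetilde{\cM}_{j,1}\big),
\end{equation*}
where $\widetilde{R} = \operatorname{diag}(\widetilde{Q})^{-1}\widetilde{Q}$ and $\widetilde{\bcM}_j$ is the type-mass vector of $\cC_j$ inside $\widetilde{\G}$. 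Finally I transport this back using the coupling: since $\G=\widetilde{\G}$ the components $\cC_j$ and their ordering are unchanged, $\widetilde{\bcM}_j = \operatorname{diag}(\eps_1,\eps_2)\bcM_j$, and therefore $\widetilde{R}\,\widetilde{\bcM}_j = R^\bip\bcM_j$ by the computation recorded before the proposition. Moreover $\widetilde{q}_{11}^{-1}\widetilde{\cM}_{j,1}$ equals a deterministic (that is, $j$-independent) positive multiple of $\cM_{j,1}$, hence of $\cS_j := q^{-1}\cM_{j,1}$; since the law of a size-biased permutation depends on the sizes only through their mutual ratios (immediate from $\PR(\pi=\tau)=\prod_l s_{\tau(l)}/\sum_{k\ge l}s_{\tau(k)}$), we get $\operatorname{SB}(R^\bip\bcM_j,\widetilde{q}_{11}^{-1}\widetilde{\cM}_{j,1}) = \operatorname{SB}(R^\bip\bcM_j,\cS_j)$. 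Substituting gives the claimed distributional identity.

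There is no genuinely hard step here; the argument is entirely bookkeeping on the coupling. If anything deserves care it is Step~1: verifying that running the construction \eqref{eqn:Xdef}--\eqref{eqn:vdef} on $(\widetilde{\bW},\widetilde{Q})$ reproduces exactly the processes named in the statement (in particular the clean form $\widetilde{X}^{2,1}(t) = \sum_l w_l^1 1_{[\xi^1_l\le qt]}$), and tracking how the matrix $R$ and the scaled masses $\cS_j$ transform under the rescaling $\bw^i\mapsto\eps_i\bw^i$, $q_{ij}\mapsto\widetilde{q}_{ij}$.
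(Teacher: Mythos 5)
Your proposal is correct and follows the same route the paper intends: apply Proposition~\ref{prop:vexcursions} to the rescaled graph $\widetilde{\G}=\G(\widetilde{\bW},\widetilde{Q})$, check that the encoding processes of \eqref{eqn:Xdef}--\eqref{eqn:vdef} applied to $(\widetilde{\bW},\widetilde{Q})$ produce exactly $\widetilde{X}^{j,i}$, $\widetilde{U}^2$, $V^\bip$, and then transport back along the a.s.\ coupling $\G=\widetilde{\G}$ using $\widetilde{R}\widetilde{\bcM}_j=R^\bip\bcM_j$. Your observation that the size variable actually comes out as $\widetilde{q}_{11}^{-1}\widetilde{\cM}_{j,1}=\eps_1^2 q^{-1}\cM_{j,1}$, a $j$-independent positive multiple of $\cS_j$, and that $\operatorname{SB}(\cdot,\cdot)$ is invariant under such a rescaling of the sizes, is a genuine and worthwhile precision (the paper's displayed identity $\langle\vec{\rho},\operatorname{diag}(\widetilde{Q})^{-1}\widetilde{\bcM}_l\rangle=q^{-1}\cM_{l,1}$ appears to drop the $\eps_1^2$ factor, but your reduction to ratios makes this harmless).
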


	\section{Some Results in the $M_1$ topology}\label{sec:generaltopology}
	In this section we recall the technical background for the $M_1$ topology which allows us to prove our main results. This will involve recalling various deterministic functionals that are continuous in the $M_1$ topology but not necessarily in the $J_1$ topology. Most of these results we recall from \cite{Whitt.02,Whitt.02a,Whitt.80}. For the sake of space, we only include the functionals in the $M_1$ topology.

	\subsection{Skorohod Topologies} \label{sec:M1}

	Given a c\`adl\`ag function $f:[0,T]\to \R$, let $\Gamma^f =\{(t,y): y\in[f(t-), f(t)]\}$ be the \textit{thin} graph of the function $f$.
	
	\begin{definition}
		Let $\psi_n,\psi_0\in \D([0,T])$.
		\begin{enumerate}
			\item[(J1)] We say that $\psi_n\to \psi_0$ in the $J_1$ topology if there are a sequence of increasing homeomorphisms $\lambda_n:[0,T]\to [0,T]$ such that
			\begin{equation*}
				\sup_{t\le T} \left\{\left|\lambda_n(t) - t\right| + \left|\psi_n\circ\lambda_n(t) -\psi_0(t)\right|\right\} \longrightarrow 0.
			\end{equation*}
			\item[(M1)] We say that $\psi_n\to \psi_0$ in the $M_1$ topology if there exists a continuous functions $t_n,y_n:[0,T]\to \R$ such that $(t_n,y_n):[0,T]\to \Gamma^{\psi_n}$ is onto for all $n\ge 0$ and $t_n$ is non-decreasing such that
			\begin{equation*}
				\sup_{s\in[0,T]} \left\{\left|t_n(s)-t_0(s)\right| + \left|y_n(s) - y_0(s)\right| \right\} \longrightarrow 0.
			\end{equation*}
		\end{enumerate}
		If $\psi_n\to \psi_0$ in topology $\ast\in\{J_1,M_1\}$ then we write $\psi_n\overset{\ast}{\longrightarrow} \psi_0$.
		
		Lastly, we say that $\psi_n\overset{\ast}\longrightarrow \psi_0$ in $\D(\R_+)$ if the same convergence holds in $\D([0,T])$ for every continuity point $T$ of $\psi_0$.
	\end{definition}
	
	The following lemma is useful and is easy to see from the definition. See \cite{Skorohod.56}.
	\begin{lemma}\label{lem:J1IMpliesM1}
		If $\psi_n\overset{J_1}{\longrightarrow}\psi_0$, then $\psi_n\overset{M_1}{\longrightarrow}\psi_0$.
	\end{lemma}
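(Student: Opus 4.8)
\emph{Overall plan.} Since the $M_1$ topology on $\D([0,T])$ is coarser than the $J_1$ topology, the substance of the lemma is the sequential statement to be proved directly. The plan is twofold: first I would use the time changes witnessing $J_1$ convergence to reduce the assertion to the special case of uniform convergence, and then I would prove that uniform convergence of c\`adl\`ag functions implies $M_1$ convergence by explicitly building the parametrizations of the thin graphs.

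\emph{Reduction via the $J_1$ time changes.} Assume $\psi_n\overset{J_1}{\longrightarrow}\psi_0$ and pick increasing homeomorphisms $\lambda_n:[0,T]\to[0,T]$ with $\gamma_n:=\sup_{t\le T}(|\lambda_n(t)-t|+|\psi_n\circ\lambda_n(t)-\psi_0(t)|)\to 0$; write $\phi_n:=\psi_n\circ\lambda_n$. Because $\lambda_n$ is a strictly increasing homeomorphism, $\psi_n(\lambda_n(s)-)=\phi_n(s-)$ and $\psi_n(\lambda_n(s))=\phi_n(s)$, so the thin graphs satisfy $\Gamma^{\psi_n}=\{(\lambda_n(s),y):(s,y)\in\Gamma^{\phi_n}\}$. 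Hence, if $(a_n,b_n):[0,T]\to\Gamma^{\phi_n}$ is continuous and onto with $a_n$ non-decreasing, then $(t_n,y_n):=(\lambda_n\circ a_n,b_n):[0,T]\to\Gamma^{\psi_n}$ is continuous and onto with $t_n$ non-decreasing, and for any continuous parametrization $(t_0,y_0)$ of $\Gamma^{\psi_0}$,
\[
\sup_s\big(|t_n(s)-t_0(s)|+|y_n(s)-y_0(s)|\big)\le\|\lambda_n-\mathrm{id}\|_\infty+\sup_s\big(|a_n(s)-t_0(s)|+|b_n(s)-y_0(s)|\big).
\]
Since $\|\lambda_n-\mathrm{id}\|_\infty\le\gamma_n\to 0$ and $\|\phi_n-\psi_0\|_\infty\le\gamma_n\to 0$, it is enough to handle the case $\phi_n\to\psi_0$ uniformly: produce parametrizations $(a_n,b_n)$ of $\Gamma^{\phi_n}$ and a parametrization $(t_0,y_0)$ of $\Gamma^{\psi_0}$ with $\sup_s(|a_n-t_0|+|b_n-y_0|)\to 0$.

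\emph{Uniform convergence implies $M_1$ convergence.} Here I would fix a continuous parametrization $(t_0,y_0):[0,T]\to\Gamma^{\psi_0}$ with $t_0$ non-decreasing and onto $[0,T]$ (such a parametrization exists for every c\`adl\`ag function: sweep the completed graph from left to right, pausing in the time coordinate to traverse the vertical segment at each jump). With $\delta_n:=\|\phi_n-\psi_0\|_\infty\to 0$, one follows $(t_0,y_0)$ while remaining on $\Gamma^{\phi_n}$: take $a_n$ equal to $t_0$ outside the jump times of $\psi_0$, let $b_n(s)$ be the point of the segment $[\,\phi_n(a_n(s)-)\wedge\phi_n(a_n(s)),\ \phi_n(a_n(s)-)\vee\phi_n(a_n(s))\,]$ nearest to $y_0(s)$, and near each jump time $\tau$ of $\psi_0$ modify $a_n$ so that it dwells on a short interval over which $\phi_n$ accumulates most of the oscillation $|\psi_0(\tau)-\psi_0(\tau-)|$, allowing $b_n$ to climb the corresponding near-vertical piece of $\Gamma^{\phi_n}$ continuously. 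For each $\eta>0$ the function $\psi_0$ has only finitely many jumps of size exceeding $\eta$; restricting the modification to those, and using that the oscillation of $\phi_n$ on a small neighbourhood of such a $\tau$ converges to $|\psi_0(\tau)-\psi_0(\tau-)|$ (by uniform convergence together with right-continuity), one verifies that $(a_n,b_n)$ is a genuine continuous onto parametrization of $\Gamma^{\phi_n}$ with $a_n$ non-decreasing and $\sup_s(|a_n-t_0|+|b_n-y_0|)\le C\eta+o_n(1)$; a diagonal choice $\eta\downarrow 0$ finishes the reduction, and hence the lemma. Alternatively, the fact that uniform convergence of c\`adl\`ag functions implies $M_1$ convergence is recorded in \cite{Whitt.02}.

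\emph{Main obstacle.} Everything is routine except the jump-handling in the last step: a parametrization of $\Gamma^{\phi_n}$ must spend parameter time traversing the (near-)vertical segments, and this dwell time has to be inserted into the schedule of $t_0$ without moving the parametrization far from $(t_0,y_0)$. The control comes precisely from $\psi_0$ having only finitely many macroscopic jumps and $\phi_n$ concentrating its oscillation near them; making the bookkeeping of continuity, monotonicity and surjectivity of the constructed maps precise is where the real work lies.
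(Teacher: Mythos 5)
The paper does not give a proof of this lemma; it is stated as ``easy to see from the definition'' with a citation to Skorokhod's 1956 paper, so there is no in-text argument to compare against. Your strategy --- reduce via the $J_1$ time changes to the case of uniform convergence, then build parametric representations of the thin graphs by hand --- is a legitimate route and is in the spirit of Skorokhod and Whitt. Your reduction step is correct: since $\lambda_n$ is an increasing homeomorphism, left limits transform correctly (as $s'\uparrow s$ one has $\lambda_n(s')\uparrow\lambda_n(s)$), so indeed $\Gamma^{\psi_n}=\{(\lambda_n(s),y):(s,y)\in\Gamma^{\phi_n}\}$, and the displayed triangle estimate is immediate.

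There is a genuine gap in the second step. The pair $(a_n,b_n)$ you describe --- with $a_n=t_0$ off the jumps of $\psi_0$, with $b_n(s)$ the nearest point of the fiber of $\Gamma^{\phi_n}$ over $a_n(s)$ to $y_0(s)$, and with $a_n$ modified to dwell only near the macroscopic jumps of $\psi_0$ --- will in general be neither continuous nor onto. Uniform convergence does not remove the jumps of $\phi_n$ occurring at continuity points $t$ of $\psi_0$; it only bounds them, $|\phi_n(t)-\phi_n(t-)|\le 2\|\phi_n-\psi_0\|_\infty$. At such a $t$ the schedule $t_0$ passes through in a single instant, so your $b_n$ jumps from one endpoint of the fiber to the other (failing continuity) and the interior of that fiber is never swept (failing surjectivity onto $\Gamma^{\phi_n}$, which the paper's definition requires). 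The repair is to pause $a_n$ by a short positive amount at \emph{every} jump time of $\phi_n$, not only those matching a jump of $\psi_0$; since the overlooked jumps are uniformly $O(\|\phi_n-\psi_0\|_\infty)$, these extra pauses can be made of arbitrarily small total length and perturb $(a_n,b_n)$ away from $(t_0,y_0)$ by only $o_n(1)$. If you would rather sidestep the bookkeeping entirely, a cleaner way to finish the second step is the oscillation characterization of $M_1$ convergence recorded in \cite{Whitt.02}: pointwise convergence on a dense set together with $\lim_{\delta\downarrow0}\limsup_n\sup_t w(\phi_n,t,\delta)=0$ for the $M_1$ oscillation modulus $w$; here uniform convergence gives $w(\phi_n,t,\delta)\le w(\psi_0,t,\delta)+2\|\phi_n-\psi_0\|_\infty$, and $\sup_t w(\psi_0,t,\delta)\to 0$ as $\delta\downarrow 0$ for any c\`adl\`ag $\psi_0$.
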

	The following is essential.
	\begin{lemma}\label{lem:polish}
		The Skorohod space $\D(\R_+,\R)$ is Polish with respect to $J_1$ and $M_1$ topologies. 
	\end{lemma}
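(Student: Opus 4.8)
The plan is to reduce to the classical compact-interval statements and then transfer to $\R_+$ by the standard patching construction, exploiting that the whole argument is insensitive to whether one works with $J_1$ or $M_1$.

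\textbf{Step 1 (compact intervals).} I would begin from the known facts: $\D([0,T],\R)$ is separable and completely metrizable in the $J_1$ topology (Skorohod \cite{Skorohod.56}; the ``naive'' Skorohod metric is separable but not complete, so one passes to the equivalent metric $d^0_{J_1,T}$ that also controls the slope $\sup_{s\ne t}\big|\log\tfrac{\lambda(s)-\lambda(t)}{s-t}\big|$ of the allowed time changes), and the same holds for the $M_1$ topology with an analogous complete separable metric $d^0_{M_1,T}$ (Whitt \cite{Whitt.02}). I would also record a second standard input, valid for $\ast\in\{J_1,M_1\}$: if $S<T$ and $f\in\D([0,T])$ is continuous at $S$, then the restriction map $\D([0,T])\to\D([0,S])$ is $\ast$-continuous at $f$ \cite{Whitt.02}. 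Everything afterwards uses only these two inputs, so the $J_1$ and $M_1$ cases can be handled simultaneously; write $d^0_{\ast,T}$ generically.

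\textbf{Step 2 (separability).} Let $\mathcal{D}_0$ denote the countable family of right-continuous step functions $\R_+\to\Q$ with finitely many jumps, all at rational times, and constant after the last jump. Given $\psi_0\in\D(\R_+)$, a continuity point $T$ of $\psi_0$, and $\eps>0$, rational step functions are $\ast$-dense in $\D([0,T])$, so one can pick $\phi\in\mathcal{D}_0$ (taken constant beyond $T$) with $d^0_{\ast,T}(\psi_0|_{[0,T]},\phi|_{[0,T]})<\eps$. Since $\psi_0$ has only countably many discontinuities, such $T$ may be taken arbitrarily large, which gives density of $\mathcal{D}_0$ in $\D(\R_+)$ for the topology of the paper.

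\textbf{Step 3 (completeness), and the main obstacle.} I would metrize $\D(\R_+)$ by
\[ d_\ast(\psi,\phi)=\int_0^\infty e^{-u}\Big(1\wedge d^0_{\ast,u}\big(\psi|_{[0,u]},\phi|_{[0,u]}\big)\Big)\,du , \]
which, after the routine check that $u\mapsto d^0_{\ast,u}(\psi|_{[0,u]},\phi|_{[0,u]})$ is Lebesgue-measurable (done as in \cite{Whitt.02}), is a metric; and because the continuity points of a fixed function have full Lebesgue measure while, by the restriction input of Step 1, convergence at a continuity point $u$ forces convergence at every smaller continuity point, the condition $d_\ast(\psi_n,\psi_0)\to 0$ is equivalent to the convergence notion used in the paper. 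Given a $d_\ast$-Cauchy sequence $(\psi_n)$, for a.e.\ $u$ the sequence $(\psi_n|_{[0,u]})$ is $d^0_{\ast,u}$-Cauchy and hence converges to some $\psi^{(u)}\in\D([0,u])$; one picks $u_k\uparrow\infty$ among such $u$ and checks the partial limits are consistent, $\psi^{(u_j)}=\psi^{(u_k)}|_{[0,u_j]}$ for $j\le k$, wherever $u_j$ is a continuity point of $\psi^{(u_k)}$. One then patches these into a single $\psi_0$, which is \cdl{} on each $[0,u_k]$, so $\psi_0\in\D(\R_+)$, and $\psi_n\to\psi_0$ in $\D(\R_+)$. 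The genuinely delicate part is exactly this bookkeeping: because the $\D(\R_+)$ topology only tests intervals terminating at continuity points of the \emph{limit}, one must choose the cut-points $u_k$ so as to avoid the (countable) jump sets of all the $\psi^{(u_k)}$ as well as of $\psi_0$, and then invoke $\ast$-continuity of the restriction maps at those points. Once this is arranged, neither the separability nor the completeness argument uses anything specific to $J_1$ versus $M_1$, and the lemma follows.
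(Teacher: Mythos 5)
The paper itself offers no proof of this lemma: it simply points to Billingsley for the $J_1$ case and to Section~12.8 of Whitt for the $M_1$ case, where complete separable metrics on $\D([0,T])$ are constructed and then extended to $\D(\R_+)$ essentially as you describe (Whitt does this in Section~12.9). So your argument is not an alternative route so much as a reconstruction of what those references carry out; in outline it is correct, and the two ingredients you isolate in Step~1 (a complete separable metric on each compact interval, plus $\ast$-continuity of the restriction map at continuity points of the function being restricted) are exactly what is needed.

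One point in Step~3 is stated too casually: from $(\psi_n)$ being $d_\ast$-Cauchy you cannot immediately infer that $(\psi_n|_{[0,u]})$ is $d^0_{\ast,u}$-Cauchy for a.e.\ $u$, because the exceptional set of $u$ on which $d^0_{\ast,u}(\psi_n,\psi_m)$ is not small depends on the pair $(n,m)$. The standard repair is to pass to a subsequence $n_k$ with $\sum_k d_\ast(\psi_{n_k},\psi_{n_{k+1}})<\infty$; Tonelli then gives $\sum_k \bigl(1\wedge d^0_{\ast,u}(\psi_{n_k},\psi_{n_{k+1}})\bigr)<\infty$ for a.e.\ $u$, hence $(\psi_{n_k}|_{[0,u]})$ is Cauchy for a.e.\ $u$, and since a Cauchy sequence with a convergent subsequence converges this is enough. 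Your remaining bookkeeping — choosing the cut-points $u_k$ so as to avoid the countable jump sets of the partial limits and of the patched $\psi_0$, and invoking restriction-map continuity there to get both consistency of the $\psi^{(u_k)}$ and convergence $\psi_n\to\psi_0$ in the sense used in the paper — is the right way to finish, and you correctly flag it as the delicate step.
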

	
	For the $J_1$ topology this is well-known (see e.g. \cite{Billingsley.99}), but for the $M_1$ topology we refer the reader section 12.8 of \cite{Whitt.02} where an explicit metric is constructed. While this metric is useful for proving a space is Polish (in fact, such a metric is essential), we will only need to recall various continuous functions and so we do not describe the metric.
	
	In the sequel, we write $\operatorname{Id}$ for the identity function $\operatorname{Id}(t) = t$ on $\R_+$.

	\subsubsection{Some Properties}
	
	The first result we recall is trivial.
	\begin{lemma}\label{lem:weaklawFromCLT}
		Suppose that $c_n\to \infty$, $\kappa>0$, and $\psi_n,\psi$ are c\`adl\`ag functions such that 
		$ c_n(\psi_n - \kappa \operatorname{Id})\overset{M_1}{\longrightarrow} \psi.$ Then $\psi_n\overset{M_1}{\longrightarrow} \kappa \operatorname{Id}.$
	\end{lemma}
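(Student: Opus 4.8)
Write $\phi_n := c_n(\psi_n - \kappa\operatorname{Id})$, so that $\psi_n = \kappa\operatorname{Id} + c_n^{-1}\phi_n$ with $\phi_n \overset{M_1}{\longrightarrow}\psi$ in $\D(\R_+)$. Since the candidate limit $\kappa\operatorname{Id}$ is continuous, the plan is to reduce $M_1$ convergence to locally uniform convergence: if I can show that for a sequence of times $T_k\uparrow\infty$ one has $\sup_{t\le T_k}|\psi_n(t) - \kappa t|\to 0$, then $\psi_n\to\kappa\operatorname{Id}$ uniformly on $[0,T_k]$, hence $\psi_n\overset{J_1}{\longrightarrow}\kappa\operatorname{Id}$ on $[0,T_k]$ (use the identity time change), hence $\psi_n\overset{M_1}{\longrightarrow}\kappa\operatorname{Id}$ on $[0,T_k]$ by Lemma \ref{lem:J1IMpliesM1}; and since $\kappa\operatorname{Id}$ is continuous at every point this upgrades to $\psi_n\overset{M_1}{\longrightarrow}\kappa\operatorname{Id}$ in $\D(\R_+)$. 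Because $\psi$ is \cdl\ it has at most countably many discontinuities, so such a sequence $T_k\uparrow\infty$ of continuity points of $\psi$ exists, and the hypothesis gives $\phi_n\overset{M_1}{\longrightarrow}\psi$ in $\D([0,T_k])$ for each $k$.

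It therefore suffices, for a fixed continuity point $T$ of $\psi$, to show
\[
\sup_{t\le T}|\psi_n(t) - \kappa t| \;=\; c_n^{-1}\sup_{t\le T}|\phi_n(t)| \;\longrightarrow\; 0 .
\]
Since $c_n\to\infty$, this will follow once I establish that the sequence $\big(\sup_{t\le T}|\phi_n(t)|\big)_n$ is bounded. This is where I use the parametric-representation definition of $M_1$ convergence: pick continuous maps $(t_n,y_n):[0,T]\to\Gamma^{\phi_n}$, onto, with $t_n$ nondecreasing, and a parametric representation $(t_0,y_0):[0,T]\to\Gamma^{\psi}$, such that $\sup_{s\in[0,T]}\{|t_n(s)-t_0(s)| + |y_n(s)-y_0(s)|\}\to 0$. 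For each $t\in[0,T]$ the point $(t,\phi_n(t))$ lies on the thin graph $\Gamma^{\phi_n}$, hence equals $(t_n(s),y_n(s))$ for some $s$, so
\[
|\phi_n(t)| \;\le\; \sup_{s\in[0,T]}|y_n(s)| \;\le\; \sup_{s\in[0,T]}|y_0(s)| + \sup_{s\in[0,T]}|y_n(s)-y_0(s)|.
\]
The first term on the right is finite because $\psi$ is \cdl\ on the compact $[0,T]$, so the $y$-coordinates of $\Gamma^{\psi}$ are bounded by $\sup_{t\le T}\big(|\psi(t)|\vee|\psi(t-)|\big)<\infty$; the second term is $o(1)$. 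Hence $\limsup_n\sup_{t\le T}|\phi_n(t)|<\infty$, which is exactly what was needed, and the lemma follows.

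\textbf{Main obstacle.} As the paper indicates, there is essentially no obstacle: the only point requiring a word of justification is the uniform boundedness of an $M_1$-convergent sequence on a compact interval, and this is immediate from the parametric-representation definition as above (alternatively one may quote that $f\mapsto\sup_{[0,T]}|f|$ is $M_1$-continuous, e.g.\ from \cite{Whitt.02}). The rest is bookkeeping with the $\D(\R_+)$ convergence (restricting to continuity points $T_k$ of $\psi$) and the trivial implication uniform $\Rightarrow J_1 \Rightarrow M_1$ from Lemma \ref{lem:J1IMpliesM1}.
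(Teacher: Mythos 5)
Your proof is correct, and the paper itself gives no argument at all here: the lemma is introduced with the sentence ``The first result we recall is trivial'' and stated without proof, so there is nothing to compare your route against. Your argument is the natural one — reduce to local uniform convergence because the limit is continuous, and extract the needed uniform boundedness of $c_n(\psi_n-\kappa\operatorname{Id})$ on compacts directly from the parametric-representation definition of $M_1$ — and the bookkeeping with continuity points $T_k\uparrow\infty$ and the chain uniform $\Rightarrow J_1\Rightarrow M_1$ is handled correctly. One remark: the paper's own toolbox contains an even shorter path. Writing $\psi_n=\kappa\operatorname{Id}+c_n^{-1}\phi_n$ as you do, Lemma~\ref{lem:composition}(2) with $a_n=c_n^{-1}\to 0$ gives $c_n^{-1}\phi_n\overset{M_1}{\longrightarrow} 0$, and then Lemma~\ref{lem:composition}(1) applies (the jump-sign condition holds vacuously because $\kappa\operatorname{Id}$ is continuous), yielding $\psi_n\overset{M_1}{\longrightarrow}\kappa\operatorname{Id}$. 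That shortcut simply delegates the uniform-boundedness step you proved by hand to the scalar-multiplication part of Lemma~\ref{lem:composition}; both are fine, and your version has the merit of being self-contained.
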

	
	The following is also observed in \cite{Skorohod.56}. See also Section 12.4 and 12.5 of \cite{Whitt.02} generally and Theorems 12.4.1 and 12.5.1 and Lemma 12.5.1 in particular.
	\begin{lemma}\label{lem:contpoints}
		Suppose that $\psi_n\to \psi$ in the $M_1$ topology. If $\psi(t) = \psi(t-)$ then $\psi_n(t-)\to\psi(t)$ and $\psi_n(t)\to \psi(t)$. Moreover, if the limit $\psi$ is continuous then $\psi_n\to \psi$ locally uniformly.
	\end{lemma}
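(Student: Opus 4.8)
The plan is to argue directly from the parametric–representation definition of $M_1$ convergence, together with a compactness argument on the parameter interval. First I would reduce to a compact time interval: when we work in $\D(\R_+)$, fix the time $t$ in question and pick a continuity point $T>t$ of $\psi$, so that $\psi_n\overset{M_1}{\longrightarrow}\psi$ holds in $\D([0,T])$ with $t$ interior; for the last assertion every $T$ is a continuity point of the continuous limit $\psi$, so it suffices to prove uniform convergence on each $[0,T]$. Having fixed $T$, take parametric representations $(t_n,y_n)\colon[0,T]\to\Gamma^{\psi_n}$ ($n\ge 0$) that are onto, with $t_n$ continuous and non-decreasing, such that $\sup_{s\in[0,T]}\big(|t_n(s)-t_0(s)|+|y_n(s)-y_0(s)|\big)\to 0$. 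Note that from uniform convergence and continuity of $t_0,y_0$ one gets, for any $s_n\to s_\ast$ in $[0,T]$, that $t_n(s_n)\to t_0(s_\ast)$ and $y_n(s_n)\to y_0(s_\ast)$.

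The one observation that drives everything is: since $\psi(t)=\psi(t-)$, the thin graph $\Gamma^\psi$ meets the vertical line $\{t\}\times\R$ in the single point $(t,\psi(t))$. Suppose, toward a contradiction, that $\psi_n(t)\to\psi(t)$ fails; after passing to a subsequence, $|\psi_n(t)-\psi(t)|\ge\eps$ for all $n$ and some $\eps>0$. The point $(t,\psi_n(t))$ lies on $\Gamma^{\psi_n}$, so by surjectivity of the parametric representation there is $s_n\in[0,T]$ with $t_n(s_n)=t$ and $y_n(s_n)=\psi_n(t)$. By compactness of $[0,T]$, pass to a further subsequence with $s_n\to s_\ast$. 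Then $t_0(s_\ast)=\lim_n t_n(s_n)=t$ and $y_0(s_\ast)=\lim_n y_n(s_n)=\lim_n\psi_n(t)$, while $(t,y_0(s_\ast))=(t_0(s_\ast),y_0(s_\ast))\in\Gamma^\psi$; since the only point of $\Gamma^\psi$ on $\{t\}\times\R$ is $(t,\psi(t))$, we get $y_0(s_\ast)=\psi(t)$, contradicting $|\psi_n(t)-\psi(t)|\ge\eps$. Hence $\psi_n(t)\to\psi(t)$. Since $(t,\psi_n(t-))$ is likewise a point of $\Gamma^{\psi_n}$ lying on $\{t\}\times\R$, the identical argument gives $\psi_n(t-)\to\psi(t)$.

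For the local uniform convergence when $\psi$ is continuous everywhere, observe first that on $[0,T]$ continuity of $\psi$ forces $y_0(s)=\psi(t_0(s))$ for all $s$, because $(t_0(s),y_0(s))\in\Gamma^\psi=\{(u,\psi(u))\colon u\in[0,T]\}$. If uniform convergence on $[0,T]$ failed, then along a subsequence $\sup_{u\in[0,T]}|\psi_n(u)-\psi(u)|\ge 2\eps$ for some $\eps>0$, so one may choose $u_n\in[0,T]$ with $|\psi_n(u_n)-\psi(u_n)|\ge\eps$; writing $(u_n,\psi_n(u_n))=(t_n(s_n),y_n(s_n))$ by surjectivity and passing to a subsequence with $s_n\to s_\ast$, we get $u_n=t_n(s_n)\to u_\ast:=t_0(s_\ast)$ and $\psi_n(u_n)=y_n(s_n)\to y_0(s_\ast)=\psi(u_\ast)$, while $\psi(u_n)\to\psi(u_\ast)$ by continuity of $\psi$, so $|\psi_n(u_n)-\psi(u_n)|\to 0$, a contradiction. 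Carrying this out on every $[0,T]$ yields the claimed locally uniform convergence on $\R_+$.

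I do not expect a serious obstacle here; the heart of the matter is simply that continuity of $\psi$ at $t$ collapses its thin graph over $t$ to a single point, after which compactness of the parameter interval finishes the job. The only points requiring care—rather than genuine difficulties—are the bookkeeping ones: checking that $(t,\psi_n(t))$ and $(t,\psi_n(t-))$ genuinely lie in the range of the parametric representation (which is exactly the surjectivity onto the thin graph built into the $M_1$ definition), justifying the reduction to a compact interval in the $\D(\R_+)$ setting, and the fact that the supremum defining the uniform distance need not be attained, which is why one works with points $u_n$ approaching it rather than with a maximizer.
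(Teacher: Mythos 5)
Your proof is correct. The paper does not actually prove Lemma~\ref{lem:contpoints}; it cites Skorohod and Sections~12.4--12.5 of Whitt's book. Your argument is a clean self-contained version of the standard one: use the surjectivity of the parametric representations onto the thin graphs together with compactness of the parameter interval $[0,T]$, and exploit the fact that $\psi(t)=\psi(t-)$ collapses the fiber $\Gamma^\psi\cap(\{t\}\times\R)$ to the single point $(t,\psi(t))$. The reduction from $\D(\R_+)$ to $\D([0,T])$ by choosing a continuity point $T>t$ is handled correctly, as is the subsequence bookkeeping (every subsequence has a further subsequence along which the limit is forced to be $\psi(t)$). The locally uniform statement when $\psi$ is continuous is also handled correctly: continuity forces $y_0=\psi\circ t_0$, and the same surjectivity-plus-compactness argument then rules out any $\eps$-gap on $[0,T]$. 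No gap that I can find.
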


	\subsubsection{Continuity of some functions}
	
	Let
	\begin{equation*}
		\psi^{\uparrow}(t) = \sup_{s\le t} \psi(s).
	\end{equation*}

	\begin{lemma} \label{lem:composition}
		Suppose that $\varphi_n\to\varphi$ and $\psi_n\to\psi$ in the $M_1$ topology. Then the following hold:
		\begin{enumerate}
			\item If 
			\begin{equation*}
				\left\{t: \Big(\psi(t)-\psi(t-)\Big) \Big(\varphi(t)-\varphi(t-)\Big) < 0\right\} = \emptyset
			\end{equation*} then $\varphi_n+\psi_n\to \varphi+\psi$ in the $M_1$ topology.
			\item  If $a_n\to a$ are real numbers then $a_n\psi_n \to a\psi$ in the $M_1$ topology.
			\item 
			If $\psi_n, \psi$ are non-decreasing. Then $\varphi_n\circ \psi_n\to \varphi\circ\psi$ in the $M_1$ topology if either $\psi$ is strictly increasing and continuous or $\varphi$ is continuous. 
			
			\item If $\psi_n$ and $\psi$ are non-decreasing, then $\psi^{-1}_n\to \psi^{-1}$ in the $J_1$ topology if $\psi$ is unbounded and strictly increasing at $0$.
			\item  $\psi_n^\uparrow \longrightarrow \psi^\uparrow$.
		\end{enumerate}
	\end{lemma}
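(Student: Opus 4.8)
The plan is to read each of the five items off the parametric-representation description of the $M_1$ topology together with the standard catalogue of $M_1$-continuous functionals in \cite{Whitt.02,Whitt.80}, giving self-contained arguments only for the two items (scalar multiplication and the running supremum) that do not appear in an immediately applicable form. Throughout, $M_1$ convergence on $\D(\R_+)$ is tested on compacts $[0,T]$ with $T$ a continuity point of the limit, and I write $(t_n,y_n)\colon[0,T]\to\Gamma^{\psi_n}$ for parametric representations with $\sup_{s\le T}(|t_n(s)-t_0(s)|+|y_n(s)-y_0(s)|)\to 0$.

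For (2): given such a representation for $\psi_n$, the pair $(t_n,a_ny_n)$ is still a parametric representation of $\Gamma^{a_n\psi_n}$ --- the time coordinate $t_n$ is unchanged and still non-decreasing, the sign of $a_n$ being irrelevant to the thin graph --- and $\|a_ny_n-ay_0\|_\infty\le|a_n|\,\|y_n-y_0\|_\infty+|a_n-a|\,\|y_0\|_\infty\to 0$, which is exactly $M_1$ convergence of $a_n\psi_n$ to $a\psi$. For (5): if $(t_n,y_n)$ is a parametric representation of $\Gamma^{\psi_n}$ then $(t_n,\,s\mapsto\sup_{u\le s}y_n(u))$ is a parametric representation of $\Gamma^{\psi_n^\uparrow}$ --- the time coordinate is unchanged, and the running supremum of a continuous curve traversing $\Gamma^{\psi_n}$ from left to right traverses $\Gamma^{\psi_n^\uparrow}$ from left to right (the discontinuity set of $\psi^\uparrow$ is contained in that of $\psi$) --- and since $y\mapsto\sup_{u\le\cdot}y(u)$ is $1$-Lipschitz for the uniform norm, the required uniform convergence is inherited from $\|y_n-y_0\|_\infty\to 0$. (This item also appears as \cite[\S13.4]{Whitt.02}.)

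Items (1), (3), (4) are the classical continuity theorems, which I would invoke after checking that the hypotheses match. Item (1) is $M_1$-continuity of addition at a pair of limits with no common discontinuity of opposite orientation: the emptiness hypothesis forces, at each common jump time $t$, the co-oriented identity $[\varphi(t-),\varphi(t)]+[\psi(t-),\psi(t)]=[\varphi(t-)+\psi(t-),\varphi(t)+\psi(t)]$, i.e.\ the thin graph of $\varphi+\psi$ over $t$ is the Minkowski sum of the two thin graphs, so parametric representations of $\varphi_n$ and $\psi_n$ may be added coordinatewise once synchronized to a common non-decreasing time scale; the synchronization is supplied by \cite[\S12.7]{Whitt.02} (cf.\ \cite{Skorohod.56}). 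Item (3) is $M_1$-continuity of composition with a non-decreasing inner function: if $\varphi$ is continuous then Lemma \ref{lem:contpoints} upgrades $\varphi_n\to\varphi$ to locally uniform convergence, $(t_n,\varphi_n\circ y_n)$ parametrizes $\Gamma^{\varphi_n\circ\psi_n}$ using that $\psi_n$ is non-decreasing, and $\varphi_n\circ y_n\to\varphi\circ y_0$ uniformly because $\varphi$ is uniformly continuous on compacts and $y_n\to y_0$ uniformly; if instead $\psi$ is strictly increasing and continuous then Lemma \ref{lem:contpoints} gives $\psi_n\to\psi$ locally uniformly, composition with the fixed continuous strictly increasing time change $\psi$ is an $M_1$-homeomorphism of path space, and the uniformly small perturbation $\psi_n-\psi$ is absorbed --- both cases being contained in \cite[\S13.2]{Whitt.02}. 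Item (4) is Whitt's inverse-map theorem: for non-decreasing functions the inverse interchanges jumps with flat stretches, unboundedness makes $\psi^{-1}$ finite on all of $[0,\infty)$ and strict increase at $0$ removes any initial flat stretch, and under these conditions $M_1$ convergence $\psi_n\to\psi$ already yields $J_1$ convergence of the inverses; this is \cite[\S7]{Whitt.80} (equivalently \cite[\S13.6]{Whitt.02}).

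The step I expect to need the most care --- and would write out rather than merely cite --- is the synchronization in (1). Marginal $M_1$ convergence of $\varphi_n$ and $\psi_n$ does \emph{not} in general imply joint convergence of $(\varphi_n,\psi_n)$ in $\D([0,T],\R^2)$, so one genuinely has to use the no-opposite-jump hypothesis to construct parametric representations of $\varphi_n$ and $\psi_n$ on one common non-decreasing time scale along which both still converge uniformly; everything else in (1) is then the coordinatewise addition of parametric representations described above.
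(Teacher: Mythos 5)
Your proof takes essentially the same approach as the paper's: the paper simply declares the scaling result (item 2) obvious and then defers items (1), (3), (4), (5) to the very same places in Whitt's book and supplement (Corollary~12.7.1 / Theorem~12.7.3, Theorem~13.2.3 / \S7.2.2, Corollary~13.6.3, and Theorem~13.4.1 of \cite{Whitt.02,Whitt.02a}) that you invoke. What you add beyond the paper is a self-contained parametric-representation argument for (2) and (5) and a gloss on why the hypotheses in (1), (3), (4) are the right ones; these elaborations are useful and the ones for (2) and (5) are basically correct (in (5) you do need the standard Whitt convention that a parametric representation traverses the thin graph monotonically — the paper's quoted definition only says $t_n$ is non-decreasing — but Whitt's definition and the cited Theorem 13.4.1 carry that).

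One small inaccuracy in your gloss on (3): the claim that $(t_n,\varphi_n\circ y_n)$ parametrizes $\Gamma^{\varphi_n\circ\psi_n}$ when $\varphi$ is continuous is not literally true, because $\varphi_n$ may still have jumps, in which case $s\mapsto\varphi_n(y_n(s))$ is discontinuous and hence not a parametric representation at all (and, separately, $\Gamma^{\varphi_n\circ\psi_n}$ acquires segments at preimages under $\psi_n$ of jump values of $\varphi_n$, which this map would miss). The correct statement is contained in Whitt's Theorem~13.2.3, which you ultimately cite, so this does not invalidate your proposal — it is just that the heuristic you offer for (3) is not a substitute for that theorem the way your heuristics for (2) and (5) are. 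Your remark that the synchronization step in (1) is where the real content lies is well taken and consistent with the paper's decision to defer entirely to Whitt there.
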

	\begin{proof} The scaling results are obvious from the definitions so we only mention where to find proofs for the other results. For addition see  \cite[Corollary 12.7.1, Theorem 12.7.3]{Whitt.02}, for composition see  \cite[Theorem 13.2.3]{Whitt.02} and \cite[Section 7.2.2]{Whitt.02a}, for inverses \cite[Corollary 13.6.3]{Whitt.02} and for supremumums see \cite[Theorem 13.4.1]{Whitt.02}.
	\end{proof}
	
	\begin{remark}
		In \cite[Chapter 12]{Whitt.02}, Whitt introduces the strong and weak $M_1$ topologies ($SM_1$ and $WM_1$ respectively). There is a difference between the $SM_1$ and $WM_1$ on $\D(\R_+,\R^k)$ for $k\ge2$; however, they both agree with the $M_1$ topology on $\D(\R_+,\R)$. See Section 12.3 of \cite{Whitt.02} for more details.
	\end{remark}
	
	The following follows from the Lemma \ref{lem:contpoints} and part (5) of Lemma \ref{lem:composition} above.
	\begin{corollary}\label{cor:inf}
		Suppose that $\psi_n\to \psi$ in the $M_1$ topology. If $\psi$ is continuous at both $a<b$ then
		\begin{equation*}
			\inf_{s\in[a,b]} \psi_n(s) \to \inf_{s\in [a,b]} \psi(s).
		\end{equation*}
	\end{corollary}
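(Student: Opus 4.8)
The plan is to express the infimum of $\psi_n$ over the window $[a,b]$ as a running infimum started at time $0$, so that part (5) of Lemma~\ref{lem:composition} applies; the device is to truncate $\psi_n$ on the initial segment $[0,a)$ by a large additive constant. I would fix once and for all a real number $M$ large enough that
\begin{equation*}
	M \;>\; \sup_{n}\Big(\sup_{s\le b}\psi_n(s)-\inf_{s\le b}\psi_n(s)\Big);
\end{equation*}
this supremum is finite because, by part (5) of Lemma~\ref{lem:composition} together with its scaling statement (part (2), with $a_n\equiv-1$), both $\sup_{s\le b}\psi_n(s)\to\sup_{s\le b}\psi(s)$ and $\inf_{s\le b}\psi_n(s)\to\inf_{s\le b}\psi(s)$ since $b$ is a continuity point of $\psi$ (hence of $\psi^\uparrow$ and of $-(-\psi)^\uparrow$) and Lemma~\ref{lem:contpoints} applies, so the oscillations of the $\psi_n$ on $[0,b]$ form a bounded sequence, with the same bound dominating the oscillation of $\psi$. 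I would then set
\begin{equation*}
	\phi_n := \psi_n + M\,\mathbf{1}_{[0,a)},\qquad \phi := \psi + M\,\mathbf{1}_{[0,a)},
\end{equation*}
and write $g^\downarrow(t):=\inf_{s\le t}g(s)=-(-g)^\uparrow(t)$. With this choice of $M$ the constant shift on $[0,a)$ lifts $\psi_n$ there strictly above its minimum over $[a,b]$, so that
\begin{equation*}
	\phi_n^\downarrow(b)=\inf_{s\in[a,b]}\psi_n(s)\qquad\text{and}\qquad \phi^\downarrow(b)=\inf_{s\in[a,b]}\psi(s).
\end{equation*}

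With the reduction in place, the argument proceeds in three short steps on $\D([0,b])$. First I would check that $\phi_n\to\phi$ in the $M_1$ topology: since $M\,\mathbf{1}_{[0,a)}$ is a fixed c\`adl\`ag function whose only jump, at $t=a$, is negative, while the limit $\psi$ has a vanishing jump at $t=a$ by hypothesis, the set of times at which $\psi$ and $M\,\mathbf{1}_{[0,a)}$ jump in opposite directions is empty, and part (1) of Lemma~\ref{lem:composition} yields $\phi_n\to\phi$. Next, applying part (2) (scaling by $-1$) and part (5) of Lemma~\ref{lem:composition}, the running-infimum map $g\mapsto g^\downarrow$ is $M_1$-continuous, so $\phi_n^\downarrow\to\phi^\downarrow$ in the $M_1$ topology. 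Finally, $\phi$ is continuous at $b$ (both $\psi$ and $\mathbf{1}_{[0,a)}$ are, as $b>a$), hence $\phi^\downarrow(b)=\inf_{s<b}\phi(s)\wedge\phi(b)=\inf_{s<b}\phi(s)=\phi^\downarrow(b-)$, the middle equality because $\phi(b)=\lim_{s\uparrow b}\phi(s)\ge\inf_{s<b}\phi(s)$; thus $b$ is a continuity point of $\phi^\downarrow$, and Lemma~\ref{lem:contpoints} gives $\phi_n^\downarrow(b)\to\phi^\downarrow(b)$. Rewriting the two sides via the identities of the previous paragraph yields $\inf_{s\in[a,b]}\psi_n(s)\to\inf_{s\in[a,b]}\psi(s)$, as desired.

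The one step that is not pure bookkeeping is the verification that $\phi_n\to\phi$, i.e.\ that the truncation operation is $M_1$-continuous at $\psi$, and this is exactly where the hypothesis that $\psi$ is continuous at $a$ enters: if $\psi$ had an upward jump at $a$ it would clash with the downward jump of $M\,\mathbf{1}_{[0,a)}$ there, part (1) of Lemma~\ref{lem:composition} would be inapplicable, and the conclusion would in fact be false (the approximating functions $\psi_n$ may still sit near the pre-jump level on a small interval to the right of $a$, dragging $\inf_{[a,b]}\psi_n$ strictly below $\inf_{[a,b]}\psi$). The remaining ingredients — the uniform oscillation bound used to fix $M$, the continuity of $\phi^\downarrow$ at $b$, and the bookkeeping identities for $\phi_n^\downarrow(b)$ — are elementary once $M$ has been chosen large enough.
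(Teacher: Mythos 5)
Your proof is correct, and the truncation device is a clean way to make the paper's one-line remark (``follows from Lemma~\ref{lem:contpoints} and part~(5) of Lemma~\ref{lem:composition}'') fully rigorous. The paper does not spell out how the two cited facts yield an infimum over a \emph{window} $[a,b]$ rather than a running infimum from $0$; the natural route one might reach for is a restriction lemma (that $\psi_n|_{[a,b]}\to\psi|_{[a,b]}$ in $M_1[a,b]$ when $a$ and $b$ are continuity points), whereas you instead add the large constant $M\,\mathbf{1}_{[0,a)}$ so that the unrestricted running infimum through $b$ is forced to ignore $[0,a)$. This avoids ever needing a restriction-continuity statement, at the modest cost of invoking parts~(1) and~(2) of Lemma~\ref{lem:composition} in addition to part~(5) and Lemma~\ref{lem:contpoints}. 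Two small checks that you handle correctly and which an inattentive reader might miss: the uniform choice of $M$ (obtained from the convergence of $\sup_{s\le b}\psi_n(s)$ and $\inf_{s\le b}\psi_n(s)$, which itself rests on part~(5), scaling, and Lemma~\ref{lem:contpoints} at the continuity point $b$), and the verification that $\phi^\downarrow$ is continuous at $b$ so that Lemma~\ref{lem:contpoints} applies to the inf process. Your closing remark identifying exactly where continuity at $a$ enters — an upward jump of $\psi$ at $a$ would clash with the downward jump of $M\,\mathbf{1}_{[0,a)}$, and the corollary would genuinely fail — is a useful observation confirming the hypotheses are sharp.
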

	\subsubsection{Fluctuations of inverse} We now turn to fluctuations of the various functions above. We start with the fluctuations of the inverse. 
	In the continuous case see Vervaat \cite{Vervaat.72} and references therein, and in the general result below can be found in the (unindexed) corollary after Lemma 7.6 in \cite{Whitt.80}.
	\begin{lemma}\label{lem:passageFluctuations}
		Let $(\psi_n;n\ge 1)$ be a sequence non-decreasing c\`adl\`ag functions and $\kappa>0$ a constant. Suppose that there are constants $c_n\to\infty$ such that $c_n(\psi_n - \kappa \operatorname{Id})\overset{M_1}\longrightarrow \psi$
		for some c\`adl\`ag function $\psi$. Then 
		\begin{equation*}\label{eqn:appendixSecond}
			c_n(\psi_n^{-1} - \kappa^{-1}\operatorname{Id})\overset{M_1}\longrightarrow \left(-\kappa^{-1} \psi(t/\kappa); t\ge 0 \right).
		\end{equation*}
	\end{lemma}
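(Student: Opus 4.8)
The plan is to deduce the fluctuation result for inverses from the fluctuation result combined with a continuity/uniformization argument for the inverse map. First I would reduce to the case $\kappa = 1$. Indeed, if $\psi_n$ satisfies $c_n(\psi_n - \kappa\operatorname{Id}) \overset{M_1}\to \psi$, then $\widehat\psi_n := \kappa^{-1}\psi_n$ satisfies $c_n(\widehat\psi_n - \operatorname{Id}) \overset{M_1}\to \kappa^{-1}\psi$ by part (2) of Lemma~\ref{lem:composition}, and $\widehat\psi_n^{-1}(t) = \psi_n^{-1}(\kappa^{-1} t)$; once the case $\kappa=1$ is known, substituting back and keeping track of the linear time-changes gives the stated formula $\bigl(-\kappa^{-1}\psi(t/\kappa); t\ge 0\bigr)$. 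So the heart of the matter is: if $c_n(\psi_n - \operatorname{Id}) \overset{M_1}\to \psi$ for non-decreasing c\`adl\`ag $\psi_n$ and $c_n\to\infty$, then $c_n(\psi_n^{-1} - \operatorname{Id}) \overset{M_1}\to (-\psi(t); t\ge 0)$.

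For this reduced statement, the natural route is to invoke the general inverse-fluctuation theorem for monotone functions, which is exactly the corollary after Lemma~7.6 in Whitt~\cite{Whitt.80} cited in the statement; the lemma is essentially a packaging of that result. Concretely, one writes $\eta_n := c_n(\psi_n - \operatorname{Id})$, so $\psi_n = \operatorname{Id} + c_n^{-1}\eta_n$ with $\eta_n \overset{M_1}\to \psi$. Because $\psi_n$ is non-decreasing and $\psi_n \overset{M_1}\to \operatorname{Id}$ (apply Lemma~\ref{lem:weaklawFromCLT}), and since $\operatorname{Id}$ is continuous, unbounded, and strictly increasing at $0$, part (4) of Lemma~\ref{lem:composition} already tells us $\psi_n^{-1} \overset{J_1}\to \operatorname{Id}$, which is the zeroth-order statement; the inverse-fluctuation theorem upgrades this to the first order by identifying the limit of $c_n(\psi_n^{-1} - \operatorname{Id})$. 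The key identity driving the computation is the elementary relation between the deviation of a monotone function from the identity and the deviation of its inverse: at a "time'' $t$ where $\psi_n^{-1}(t) \approx t - c_n^{-1}\psi(t)$, one has $\psi_n(t - c_n^{-1}\psi(t)) \approx t$, which after cancelling $\operatorname{Id}$ and rescaling forces the limiting fluctuation of $\psi_n^{-1}$ to be $-\psi$ evaluated at $t$ (to leading order the argument shift inside $\psi$ is negligible since $\psi$ has the same modulus of continuity behaviour and $c_n^{-1}\to 0$). Making this rigorous at the level of the $M_1$ parametric representations — choosing parametrizations $(t_n, y_n)$ of the thin graphs of $c_n(\psi_n - \operatorname{Id})$ converging to one of $\psi$, and transporting them to parametrizations of $c_n(\psi_n^{-1} - \operatorname{Id})$ by essentially swapping the two coordinate roles and applying the linear maps — is precisely the content of Whitt's proof, which I would cite rather than reproduce.

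The main obstacle is the bookkeeping at jump points of the limit $\psi$: when $\psi_n$ has a jump, $\psi_n^{-1}$ has a flat stretch, and vice versa, so the $J_1$ topology is genuinely insufficient here and one must work with the $M_1$ thin-graph parametrizations throughout; the delicate point is checking that the parametric representations one obtains for the inverse fluctuations are indeed $M_1$-admissible (the time component non-decreasing, the map onto the thin graph) and that the convergence is uniform on compacts. All of this is handled by the cited Whitt corollary, so in the write-up I would (i) state the $\kappa$-reduction explicitly, (ii) rewrite the hypothesis as $\psi_n = \operatorname{Id} + c_n^{-1}\eta_n$ with $\eta_n\overset{M_1}\to\psi$, (iii) invoke \cite[Lemma~7.6 and following corollary]{Whitt.80} to conclude $c_n(\psi_n^{-1} - \operatorname{Id})\overset{M_1}\to -\psi$, and (iv) unwind the linear time-change to recover the displayed formula, noting that the sign flip and the argument rescaling $t\mapsto t/\kappa$ together with the outer factor $-\kappa^{-1}$ are exactly what the $\kappa$-substitution produces.
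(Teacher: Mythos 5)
Your approach matches the paper's: the paper gives no independent proof of this lemma but simply cites the unindexed corollary after Lemma~7.6 in Whitt \cite{Whitt.80}, which is exactly the result you invoke after the (routine) $\kappa$-reduction. One small algebra slip worth fixing: with $\widehat\psi_n := \kappa^{-1}\psi_n$ one has $\widehat\psi_n^{-1}(t) = \psi_n^{-1}(\kappa t)$, not $\psi_n^{-1}(\kappa^{-1}t)$ as you wrote; the correct relation, combined with the $\kappa=1$ conclusion $c_n(\widehat\psi_n^{-1}-\operatorname{Id})\to -\kappa^{-1}\psi$, gives $c_n(\psi_n^{-1}(\kappa t)-t)\to -\kappa^{-1}\psi(t)$, and substituting $t\mapsto t/\kappa$ yields the displayed limit, so your final formula is right even though that intermediate identity is typo'd.
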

	
	\subsubsection{Fluctuations of supremum}
	We will need the following result about supremums as well from Whitt \cite{Whitt.80}.

	\begin{lemma}[Whitt {\cite[Theorems 6.2, 6.3]{Whitt.80}}]\label{lem:sup}
		Let $(\psi_n;n\ge 1)$ be a sequence c\`adl\`ag functions and $\kappa>0$ a constant. Suppose that there are constants $c_n\to\infty$ such that $c_n(\psi_n - \kappa \operatorname{Id})\overset{M_1}\longrightarrow \psi$ for some c\`adl\`ag function $\psi$. Then it holds that
		\begin{equation*}
			c_n( \psi_n^{\uparrow} - \kappa \operatorname{Id})\operatorname{M_1}\longrightarrow \psi.
		\end{equation*}
		
	\end{lemma}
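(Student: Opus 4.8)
The plan is to derive this from the scaling theory for the supremum functional in the $M_1$ topology; the statement is precisely \cite[Theorems 6.2 and 6.3]{Whitt.80}, but let me indicate the argument. I would first reduce to $\D([0,T],\R)$ for a continuity point $T$ of $\psi$, write $\eta_n:=c_n(\psi_n-\kappa\operatorname{Id})$ and $\phi_n:=c_n(\psi_n^\uparrow-\kappa\operatorname{Id})$, and unwind the running supremum to get the identity
\begin{equation*}
	\phi_n(t)=\sup_{0\le s\le t}\bigl(\eta_n(s)-c_n\kappa(t-s)\bigr),\qquad 0\le t\le T,
\end{equation*}
so that $\phi_n\ge\eta_n$ pointwise and $\phi_n(0)=\eta_n(0)$. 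Since $\eta_n\overset{M_1}{\longrightarrow}\psi$ on a compact interval, $M:=\sup_n\|\eta_n\|_{\infty,[0,T]}<\infty$, and hence, for fixed $\delta>0$ and $n$ so large that $c_n\kappa\delta>2M$, the supremum above is attained on $[(t-\delta)\vee0,\,t]$: for $s\le t-\delta$ the bracket is at most $M-c_n\kappa\delta<-M\le\eta_n(t)\le\phi_n(t)$. This gives the sandwich
\begin{equation*}
	\eta_n(t)\ \le\ \phi_n(t)\ \le\ \sup_{(t-\delta)\vee0\le s\le t}\eta_n(s),
\end{equation*}
and in particular $\|\phi_n\|_{\infty,[0,T]}\le M$.

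Next I would extract finite-dimensional convergence at continuity points of $\psi$. At such a point $t$, the left side of the sandwich tends to $\psi(t)$ by Lemma~\ref{lem:contpoints}, and the right side is controlled by the $M_1$ oscillation modulus: using the standard facts that a c\`adl\`ag function has vanishing oscillation at a continuity point and that $M_1$-convergence forces $\lim_{\delta\downarrow0}\limsup_n w_{M_1}(\eta_n,\delta)=0$ \cite[Ch.~12]{Whitt.02}, one bounds $\sup_{(t-\delta)\vee0\le s\le t}\eta_n(s)$ by $\psi(t)+o_\delta(1)+w_{M_1}(\eta_n,\delta)+o_n(1)$. Letting $n\to\infty$ and then $\delta\downarrow0$ along continuity points gives $\phi_n(t)\to\psi(t)$ at every continuity point of $\psi$, hence on a dense set containing $0$ and $T$. (Lemma~\ref{lem:weaklawFromCLT} together with part~(5) of Lemma~\ref{lem:composition} already yields the law-of-large-numbers statement $\psi_n^\uparrow\overset{M_1}{\longrightarrow}\kappa\operatorname{Id}$, so all the content here is in these fluctuations.)

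The remaining step --- promoting this to convergence in $M_1$ --- is where I expect the real work to be, and it is exactly where $J_1$ would fail. The structural fact to exploit is that $\phi_n+c_n\kappa\operatorname{Id}=c_n\psi_n^\uparrow$ is non-decreasing, so $\phi_n$ has only upward jumps and decreases at rate at most $c_n\kappa$; combined with the localized identity above, in which for large $n$ the supremum reaches back only a distance $\delta_n:=2M/(c_n\kappa)\to0$, one checks that $\psi_n^\uparrow=\psi_n$, hence $\phi_n=\eta_n$, off a union of intervals that collapse onto the jump set of $\psi$, while on those intervals $\phi_n$ is merely a uniformly bounded, essentially monotone interpolation between its two (converging) boundary values. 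Thus $\phi_n$ differs from $\eta_n$ only by modifications supported on intervals shrinking to the jump times of $\psi$ and not overshooting the corresponding jump segments, and such modifications are asymptotically negligible in the $M_1$ metric; hence $d_{M_1}(\phi_n,\eta_n)\to0$ and $\phi_n\overset{M_1}{\longrightarrow}\psi$. Making this last paragraph rigorous is the main obstacle: the cleanest routes are either to build an explicit parametric representation of the thin graph $\Gamma^{\phi_n}$ from one of $\Gamma^{\eta_n}$ via the displayed identity (in the spirit of the inverse fluctuations in Lemma~\ref{lem:passageFluctuations}), or simply to quote \cite[Theorems 6.2, 6.3]{Whitt.80}.
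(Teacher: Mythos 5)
The paper does not give an argument for this lemma; it relies entirely on the citation to Whitt's Theorems 6.2 and 6.3, so there is no ``paper proof'' to compare your sketch against. Your reduction is sound: the identity $\phi_n(t)=\sup_{0\le s\le t}\bigl(\eta_n(s)-c_n\kappa(t-s)\bigr)$ is correct, the uniform bound $M<\infty$ does follow from $M_1$ convergence via the parametric representation, the localization to $[(t-\delta)\vee 0,t]$ when $c_n\kappa\delta>2M$ is valid, and the resulting sandwich plus the $M_1$ oscillation modulus $w_s(\eta_n,\delta)$ correctly yields $\phi_n(t)\to\psi(t)$ at each continuity point of $\psi$. However, you have (deliberately) stopped short of the substantive step, which is exactly the content of Whitt's theorems: pointwise convergence at continuity points together with a uniform bound does \emph{not} by itself give $M_1$ convergence, and the heuristic ``$\phi_n=\eta_n$ off shrinking intervals near the jump set of $\psi$'' as stated is not quite right --- $\eta_n$ can have downward excursions of size $O(1)$ anywhere (not only near jumps of $\psi$), producing intervals of length $O(1/c_n)$ where $\phi_n>\eta_n$, and you must control these everywhere, not just near limiting jump times. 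The correct mechanism is the one you gesture at in your last sentence: build a parametrization of $\Gamma^{\phi_n}$ from one of $\Gamma^{\eta_n}$ using the identity, checking that the replacement of $\eta_n$ by the ``flattened-then-steeply-descending'' $\phi_n$ on those $O(1/c_n)$-length intervals is an $M_1$-small perturbation because the spatial displacement on each such interval is at most $2M/c_n$ (for the constant part) plus whatever $\eta_n$ itself does, and the extra descent has the correct monotonicity to be absorbed into the thin-graph parametrization. So: your sketch is a correct skeleton and the finite-dimensional part is complete, but, as you acknowledge, the $M_1$ upgrade still requires the parametrization argument and one should either carry that out or, as the paper does, simply cite Whitt.
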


	\subsubsection{Fluctuations of Compositions}
	The following is about convergence of compositions.
	\begin{theorem}[Whitt {\cite[Theorem 13.3.3]{Whitt.02}}]\label{thm:compositionFluc}
		Suppose that $\psi_n$ are a sequence of c\`adl\`ag functions, $\psi$ is a strictly increasing with continuous derivative $\psi'$. Suppose that $\varphi_n$ are non-decreasing c\`adl\`ag functions and that $\varphi$ is a strictly increasing continuous function. Furthermore, suppose that there exists c\`adl\`ag functions $u, v$ and a sequence $c_n\to\infty$ such that $c_n (\psi_n-\psi) \overset{M_1}{\longrightarrow }   u$ {and}$c_n(\varphi_n-\varphi) \overset{M_1}{\longrightarrow} v.$
		If
		\begin{equation}\label{eqn:JumpSigns}
			\left\{t\ge 0: \Big( u\circ\varphi(t) - u\circ\varphi(t-)\Big)\cdot \Big(v(t)-v(t-)\Big)< 0\right\} = \emptyset,
		\end{equation}
		then
		\begin{equation*}
			c_n\left(\psi_n\circ \varphi_n - \psi\circ\varphi\right) \overset{M_1}{\longrightarrow} u\circ \varphi+ (\psi'\circ \varphi)\cdot v.
		\end{equation*}
	\end{theorem}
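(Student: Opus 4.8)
The plan is to decompose the increment $c_n(\psi_n\circ\varphi_n-\psi\circ\varphi)$ into an ``outer'' fluctuation piece and an ``inner'' fluctuation piece, identify the $M_1$-limit of each, and then recombine them via the additivity criterion of Lemma \ref{lem:composition}(1). Fixing $T$ a continuity point of both $v$ and the candidate limit $u\circ\varphi+(\psi'\circ\varphi)\cdot v$ (these are co-countable, so it suffices to work on $\D([0,T])$), I would write
\[
c_n(\psi_n\circ\varphi_n-\psi\circ\varphi)=[c_n(\psi_n-\psi)]\circ\varphi_n+c_n(\psi\circ\varphi_n-\psi\circ\varphi)=:A_n+B_n.
\]
For $A_n$: since $c_n\to\infty$, Lemma \ref{lem:composition}(2) gives $\varphi_n-\varphi\overset{M_1}{\longrightarrow}0$, and then (the sign hypothesis in Lemma \ref{lem:composition}(1) being vacuous because $\varphi$ is continuous) $\varphi_n\overset{M_1}{\longrightarrow}\varphi$; as the $\varphi_n$ are non-decreasing and $\varphi$ is strictly increasing and continuous, Lemma \ref{lem:composition}(3) applied to $c_n(\psi_n-\psi)\to u$ and $\varphi_n\to\varphi$ gives $A_n\overset{M_1}{\longrightarrow}u\circ\varphi$.

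For $B_n$ I would use the smoothness of $\psi$. Writing $\psi(\varphi_n(t))-\psi(\varphi(t))=g_n(t)\,(\varphi_n(t)-\varphi(t))$ with $g_n(t):=\int_0^1\psi'\bigl((1-s)\varphi(t)+s\varphi_n(t)\bigr)\,ds$, one has $B_n=g_n\cdot c_n(\varphi_n-\varphi)$. By Lemma \ref{lem:contpoints} the convergence $\varphi_n\to\varphi$ is in fact uniform on $[0,T]$ (the limit being continuous); hence, by uniform continuity of $\psi'$ on a compact interval containing all the relevant ranges, $g_n\to\psi'\circ\varphi$ uniformly on $[0,T]$, with continuous limit. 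The remaining ingredient is the elementary fact that if $h_n\to h$ in $M_1$ and $g_n\to g$ uniformly with $g$ continuous, then $g_nh_n\to gh$ in $M_1$: write $g_nh_n=gh_n+(g_n-g)h_n$, note the last term tends to $0$ uniformly (the $h_n$ being uniformly bounded on $[0,T]$), and for the first term use parametric representations --- if $(r_n,z_n)\to(r,z)$ uniformly are parametrizations of $\Gamma^{h_n}\to\Gamma^{h}$, then $(r_n,(g\circ r_n)z_n)$ parametrizes $\Gamma^{gh_n}$ (continuity of $g$ makes the jump of $gh_n$ at a point equal to $g$ times the jump of $h_n$), is still non-decreasing in its first coordinate, and converges uniformly to $(r,(g\circ r)z)$, which parametrizes $\Gamma^{gh}$. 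Applying this with $h_n=c_n(\varphi_n-\varphi)\to v$ yields $B_n\overset{M_1}{\longrightarrow}(\psi'\circ\varphi)\cdot v$.

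It then remains to add: both $A_n$ and $B_n$ converge in $M_1$, and by Lemma \ref{lem:composition}(1) their sum converges to $u\circ\varphi+(\psi'\circ\varphi)\cdot v$ provided those two limits never have jumps of opposite sign at the same time. Since $\varphi$ is continuous and strictly increasing, $u\circ\varphi$ jumps at $t$ by $u(\varphi(t))-u(\varphi(t)-)$, while $(\psi'\circ\varphi)\cdot v$ --- having continuous first factor --- jumps at $t$ by $\psi'(\varphi(t))(v(t)-v(t-))$, which (as $\psi'\ge 0$) has the same sign as $v(t)-v(t-)$ or vanishes; so the required sign condition is precisely hypothesis \eqref{eqn:JumpSigns}. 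Letting $T$ run through the admissible values then gives the stated convergence in $\D(\R_+)$.

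The hard part will be $B_n$: one must extract the factor $\psi'\circ\varphi$ by a \emph{uniform} first-order Taylor expansion, which is exactly what forces the hypotheses that $\psi'$ be continuous and (via Lemma \ref{lem:contpoints}) that $\varphi$ be continuous, so that $M_1$-convergence of $\varphi_n$ can be upgraded to uniform convergence; and one then has to verify that multiplying the $M_1$-convergent sequence $c_n(\varphi_n-\varphi)$ by the slowly varying factor $g_n$ does not disturb the underlying $M_1$ parametric representations. Continuity of the multiplier (hence of $\psi'\circ\varphi$) is also what makes the final additivity step work under the single sign condition \eqref{eqn:JumpSigns} rather than a more restrictive one.
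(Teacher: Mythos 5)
The paper does not prove this statement: it is stated and attributed to Whitt's textbook (\cite[Theorem 13.3.3]{Whitt.02}), so there is no in-paper proof to compare against. Evaluated on its own merits, your argument is a correct self-contained proof and it takes the natural route. The decomposition $c_n(\psi_n\circ\varphi_n-\psi\circ\varphi)=A_n+B_n$ with $A_n=[c_n(\psi_n-\psi)]\circ\varphi_n$ and $B_n=c_n(\psi\circ\varphi_n-\psi\circ\varphi)$, the treatment of $A_n$ by Lemma~\ref{lem:composition}(3) (valid because the inner limit $\varphi$ is strictly increasing and continuous, even though $u$ need not be continuous), the exact Taylor-integral factorization $B_n=g_n\cdot c_n(\varphi_n-\varphi)$ with $g_n\to\psi'\circ\varphi$ uniformly by Lemma~\ref{lem:contpoints} and uniform continuity of $\psi'$, the parametric-representation multiplication step, and the final additivity via Lemma~\ref{lem:composition}(1) all check out. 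Two implicit ingredients are worth making explicit. In the multiplication step you use $(gh_n)(t-)=g(t)h_n(t-)$ and that scaling by $g(t)$ maps the vertical segment $[h_n(t-),h_n(t)]$ onto $[(gh_n)(t-),(gh_n)(t)]$; this is where continuity of $g=\psi'\circ\varphi$ is used, together with $g\ge 0$ (which holds since $\psi$ is strictly increasing, so $\psi'\ge 0$). In the final step, the jump of $(\psi'\circ\varphi)\cdot v$ at $t$ is $\psi'(\varphi(t))\,(v(t)-v(t-))$, and it is precisely $\psi'\ge 0$ that makes the required same-sign-jumps condition for Lemma~\ref{lem:composition}(1) reduce exactly to hypothesis~\eqref{eqn:JumpSigns}. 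You invoke both of these facts, but flagging the role of $\psi'\ge 0$ would make the argument airtight.
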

	
	In words, \eqref{eqn:JumpSigns} says that if $u\circ\varphi$ and $v$ jump at the same time $t$, then they only jump in the same direction. We will often only use the above theorem when $\psi$ and $\varphi$ are linear functions, and so we state the following corollary for ease of use later.
	
	\begin{corollary}\label{cor:threefunc}
		Suppose that $\psi_n,\varphi_n,\gamma_n$ are c\`adl\`ag functions, $\alpha,\beta,\delta>0$. Suppose that $c_n\to\infty$, $u,v,w$ are c\`adl\`ag functions with no negative jumps. If    \begin{equation*}
			c_n\left( \psi_n- \alpha\operatorname{Id}\right) \overset{M_1}{\longrightarrow} u,\qquad   c_n\left( \varphi_n-\beta \operatorname{Id}\right) \overset{M_1}{\longrightarrow} v,\qquad \textup{and}\qquad  c_n\left( \gamma_n-\delta \operatorname{Id}\right) \overset{M_1}{\longrightarrow} w,
		\end{equation*}
		then
		\begin{align*}
			c_n &\left( \psi_n\circ\varphi_n\circ\gamma_n(t) -  \alpha\beta\delta t;t\ge 0\right)\overset{M_1}{\longrightarrow} \left(u(\beta\delta t) + \alpha v(\delta t) + \alpha \beta w(t);t\ge 0\right).
		\end{align*}    
	\end{corollary}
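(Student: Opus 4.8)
The plan is to reduce the three-fold composition to two applications of Theorem \ref{thm:compositionFluc}, after first upgrading the convergence of $\gamma_n$ (and later of the inner composition) from a statement about $c_n(\gamma_n - \delta\,\mathrm{Id})$ to a statement about $\gamma_n$ itself. The key observation is that $\psi_n \circ \varphi_n \circ \gamma_n = \psi_n \circ (\varphi_n \circ \gamma_n)$, so I would first treat the inner pair $(\varphi_n,\gamma_n)$ and then feed the result into a second application with $\psi_n$ on the outside.

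\emph{Step 1 (inner composition).} Apply Theorem \ref{thm:compositionFluc} with the roles $\psi_n \rightsquigarrow \varphi_n$, $\psi \rightsquigarrow \beta\,\mathrm{Id}$, $\varphi_n \rightsquigarrow \gamma_n$, $\varphi \rightsquigarrow \delta\,\mathrm{Id}$, $u \rightsquigarrow v$, $v \rightsquigarrow w$. The hypotheses are met: $\beta\,\mathrm{Id}$ is strictly increasing with continuous derivative $\beta$, $\delta\,\mathrm{Id}$ is strictly increasing and continuous, and $\gamma_n$ are non-decreasing c\`adl\`ag (this uses the extra hypothesis in the corollary that $\gamma_n$ are non-decreasing — I note this is needed and should be added to the statement if not already implicit). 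The sign condition \eqref{eqn:JumpSigns} becomes: whenever $v\circ(\delta\,\mathrm{Id})$ and $w$ jump at the same time, they jump in the same direction; since $v$ and $w$ have no negative jumps by hypothesis, this holds vacuously. The conclusion is
\[
c_n\big(\varphi_n\circ\gamma_n - \beta\delta\,\mathrm{Id}\big)\overset{M_1}{\longrightarrow} \big(v(\delta t) + \beta w(t);t\ge 0\big)=:\tilde v.
\]
Note $\tilde v$ still has no negative jumps, being a sum (with positive coefficient $\beta$) of two no-negative-jump functions with a continuous time change.

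\emph{Step 2 (outer composition).} Now apply Theorem \ref{thm:compositionFluc} again with $\psi_n \rightsquigarrow \psi_n$, $\psi \rightsquigarrow \alpha\,\mathrm{Id}$, $\varphi_n \rightsquigarrow \varphi_n\circ\gamma_n$, $\varphi \rightsquigarrow \beta\delta\,\mathrm{Id}$, $u \rightsquigarrow u$, $v \rightsquigarrow \tilde v$. Again $\alpha\,\mathrm{Id}$ is strictly increasing with continuous derivative $\alpha$, $\beta\delta\,\mathrm{Id}$ is strictly increasing continuous, and $\varphi_n\circ\gamma_n$ is non-decreasing as a composition of non-decreasing functions. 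The sign condition is again vacuous since $u$ and $\tilde v$ have no negative jumps. The theorem yields
\[
c_n\big(\psi_n\circ\varphi_n\circ\gamma_n - \alpha\beta\delta\,\mathrm{Id}\big)\overset{M_1}{\longrightarrow} u(\beta\delta t) + \alpha\,\tilde v(t) = u(\beta\delta t) + \alpha v(\delta t) + \alpha\beta w(t),
\]
which is exactly the claimed limit.

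\emph{Main obstacle.} The only genuinely delicate point is verifying the jump-sign condition \eqref{eqn:JumpSigns} at each stage; the reason it is painless here is the blanket hypothesis that $u,v,w$ have no negative jumps, which propagates through Step 1 (the time change by $\delta\,\mathrm{Id}$ is increasing and continuous, and scaling by $\beta>0$ preserves jump signs), so in both applications the set in \eqref{eqn:JumpSigns} is empty for the trivial reason that no negative jumps exist at all. A secondary bookkeeping point — and the thing most likely to need care in writing — is confirming that Theorem \ref{thm:compositionFluc} really does apply with a \emph{linear} (rather than merely strictly increasing) inner function $\varphi$, and that the derivative term $\psi'\circ\varphi$ there reduces to the constant $\alpha$ (resp.\ $\beta$), so that the composed limit collapses to the clean additive form stated. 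No estimates beyond invoking the cited theorem are required.
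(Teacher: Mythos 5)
Your proof is correct and is the natural argument the paper intends (the corollary is stated without proof as an immediate consequence of Theorem \ref{thm:compositionFluc}): iterate that theorem from the inside out, with linear $\psi,\varphi$ collapsing the derivative factor to a constant, and dispatch the sign condition \eqref{eqn:JumpSigns} at each stage using the no-negative-jumps hypothesis on $u,v,w$, which propagates to $\tilde v$ since continuous increasing time changes and positive scalings preserve jump signs. You are also right that the corollary's statement implicitly assumes $\varphi_n$ and $\gamma_n$ are non-decreasing — this is required both to invoke Theorem \ref{thm:compositionFluc} and to make $\varphi_n\circ\gamma_n$ non-decreasing for the second application, and it does hold in every instance where the paper uses the corollary.
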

	
	\subsection{Convergence of excursions}

	We now introduce a definition of good functions. This is essentially Definition 1 and 2 in \cite{DvdHvLS.20} combined. Recall the definition of $\EE_\infty$ from \eqref{eqn:EE_inftyDef} and let $\cY_\infty(\psi) = \{r: (l,r)\in \EE_\infty(\psi)\}$.
	\begin{definition}\label{def:good}
		We say that a c\`adl\`ag function $\psi$ is \textit{good} if
		\begin{enumerate}[i)]
			\item For all $(l,r)\in \EE_\infty(\psi)$, $\psi$ is continuous at $r$;
			\item The function $\psi$ does not attain a local minimum at any point $r\in\cY_\infty(\psi)$;
			\item The set $\cY_\infty(\psi)$ does not contain any isolated points;
			\item It holds that $\Leb\left(\R_+\setminus \bigcup_{e\in \EE_\infty(\psi)} e \right)  = 0$;
			\item For all $\eps>0$, there are finitely many $(l,r)\in \EE_\infty(\psi)$ such that $r-l\ge \eps$.
		\end{enumerate}
	\end{definition}
	\begin{remark}
		We include the assumption that $\psi$ is continuous at $r\in \cY_\infty(\psi)$ for simplifying subsequent proofs; however, the continuity follows from (ii)--(iv). See \cite[Remark 12]{DvdHvLS.20}.
	\end{remark}

	The following is an extension of Lemma 14 in \cite{DvdHvLS.20}, except we weaken the assumption to only $M_1$ convergence instead of $J_1$. It is useful to introduce analogues of $\EE_\infty, \cY_\infty,\LL_\infty^\downarrow$ on compact time horizons. Given a good function $\psi$, and a $T>0$ such that $\psi(T)>\inf_{s\le T} \psi(s)$ let
	\begin{align*}
		&\EE_T(\psi) = \{\{(l,r)\in \EE_\infty(\psi): r\le T\}\}, &
		&\LL_T^\downarrow (\psi) = \ORD(\{\{r-l: (l,r)\in \EE_T(\psi)\}\}),\\
		&\cY_T(\psi) = [0,T]\cap \cY_\infty(\psi).
	\end{align*}
	\begin{lemma}\label{lem:good}
		Let $(\psi_n;n\ge 1)$ and $\psi$ be a collection of c\`adl\`ag functions which do not jump downwards. Let $T\in(0,\infty)$ be such that $\psi(T) = \psi(T-)> \inf_{s\le T} \psi(s)$. Suppose that 
		\begin{enumerate}[i)]
			\item $\psi_n\overset{M_1}{\longrightarrow} \psi$;
			\item $\psi$ is good.
		\end{enumerate}
		Then $\LL^\downarrow_T(\psi_n){\longrightarrow } \LL^\downarrow_T(\psi)$ point-wise.
		
		If, in addition,
		\begin{enumerate}[i)]
			\setcounter{enumi}{2}
			\item  For all $\eps>0$, there exists an $N = N(\eps)<\infty$ and $T = T(\eps)<\infty$ such that 
			\begin{equation*}
				\sup\left\{r-l: (l,r)\in \EE_\infty(\psi_n), n\ge N, r\ge T \right\} <\eps;
			\end{equation*}
		\end{enumerate}
		then $\LL^\downarrow_\infty(\psi_n){\longrightarrow } \LL^\downarrow_\infty(\psi)$ point-wise.
	\end{lemma}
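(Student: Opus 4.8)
The plan is to establish matching lower and upper bounds on the ordered excursion‑length sequences, after first reducing to the reflected processes; compared with Lemma~14 of \cite{DvdHvLS.20} the only genuinely new input is that $M_1$ convergence (rather than $J_1$) must suffice. First I would pass to the reflected processes: since $\psi$ and every $\psi_n$ have no downward jumps, the running infima $\underline{\psi}_n(t):=\inf_{s\le t}\psi_n(s)$ and $\underline{\psi}(t):=\inf_{s\le t}\psi(s)$ are continuous, so applying part~(5) of Lemma~\ref{lem:composition} to $-\psi_n$ and then Lemma~\ref{lem:contpoints} gives $\underline{\psi}_n\to\underline{\psi}$ locally uniformly; since $\psi_n-\underline{\psi}_n$ has the same (upward) jumps as $\psi_n$, part~(1) of Lemma~\ref{lem:composition} then shows $\bar{\psi}_n:=\psi_n-\underline{\psi}_n$ converges in the $M_1$ topology to $\bar{\psi}:=\psi-\underline{\psi}$. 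The excursion intervals of $\psi_n$ (resp.\ $\psi$) above its running infimum coincide with the excursion intervals of $\bar{\psi}_n\ge 0$ (resp.\ $\bar{\psi}\ge 0$) away from $0$, so $\EE_\infty(\bar{\psi})=\EE_\infty(\psi)$ and $\cY_\infty(\bar{\psi})=\cY_\infty(\psi)$, and the goodness of $\psi$ (Definition~\ref{def:good}) is used through these identifications; from here on one works with $\bar{\psi}_n,\bar{\psi}$.

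For the lower bound, fix $\epsilon>0$ and list the finitely many (by goodness~(v)) excursions $(l_1,r_1),\dots,(l_M,r_M)\in\EE_T(\bar{\psi})$ of length at least $\epsilon$. The goodness hypotheses make it possible to choose, for each $i$, a compact subinterval $[a_i,b_i]\subset(l_i,r_i)$ with $a_i,b_i$ continuity points of $\bar{\psi}$, with $b_i-a_i\ge r_i-l_i-\epsilon$, with the $[a_i,b_i]$ pairwise disjoint and contained in $(0,T)$, and with $\inf_{[a_i,b_i]}\bar{\psi}\ge 2\eta$ for some $\eta>0$. By Corollary~\ref{cor:inf}, $\inf_{[a_i,b_i]}\bar{\psi}_n\to\inf_{[a_i,b_i]}\bar{\psi}\ge 2\eta$, so for $n$ large $\bar{\psi}_n>0$ on each $[a_i,b_i]$, which therefore lies inside a single excursion of $\bar{\psi}_n$; these excursions are distinct and, since $\bar{\psi}_n(T)\to\bar{\psi}(T)>0$, they belong to $\EE_T(\bar{\psi}_n)$. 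Hence $\liminf_n\LL^\downarrow_T(\bar{\psi}_n)_k\ge\LL^\downarrow_T(\bar{\psi})_k-2\epsilon$ for every $k$, and letting $\epsilon\downarrow0$ gives $\liminf_n\LL^\downarrow_T(\bar{\psi}_n)_k\ge\LL^\downarrow_T(\bar{\psi})_k$.

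For the upper bound I would argue by contradiction: if $\limsup_n\LL^\downarrow_T(\bar{\psi}_n)_k>\LL^\downarrow_T(\bar{\psi})_k$, then along a subsequence $\bar{\psi}_n$ has $k$ excursions in $\EE_T(\bar{\psi}_n)$ of length at least $\LL^\downarrow_T(\bar{\psi})_k+3\gamma$ for some $\gamma>0$, and after a further extraction their endpoints converge to intervals $(l^{(i)},r^{(i)})$, $i\le k$, with pairwise disjoint closures and lengths at least $\LL^\downarrow_T(\bar{\psi})_k+3\gamma$. Using that $\bar{\psi}_n>0$ on an exhausting family of compact continuity subintervals of $(l^{(i)},r^{(i)})$, together with Corollary~\ref{cor:inf} and goodness~(iv) (the complement of the excursions is Lebesgue‑null), one shows each $(l^{(i)},r^{(i)})$ contains an excursion of $\bar{\psi}$ of length at least $\LL^\downarrow_T(\bar{\psi})_k+2\gamma$ — here goodness~(ii)--(iii) is exactly what rules out $(l^{(i)},r^{(i)})$ being partitioned into many short excursions of $\bar{\psi}$ separated by returns to $0$. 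This produces $k$ excursions of $\bar{\psi}$ strictly longer than its $k$-th largest, a contradiction. Combined with the lower bound this gives $\LL^\downarrow_T(\bar{\psi}_n)_k\to\LL^\downarrow_T(\bar{\psi})_k$ for every $k$, which is the first assertion.

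For the global statement, given $\epsilon>0$ pick $k$ with $\LL^\downarrow_\infty(\psi)_k<\epsilon/2$ and, by goodness~(v), a continuity point $T$ of $\bar{\psi}$ with $\bar{\psi}(T)>\inf_{s\le T}\bar{\psi}$ beyond which $\psi$ has no excursion of length $\ge\epsilon/2$; the first part then gives $\LL^\downarrow_T(\bar{\psi}_n)_j\to\LL^\downarrow_\infty(\psi)_j$ for $j\le k$, while the extra hypothesis~(iii) of the lemma ensures that for $n$ large no excursion of $\psi_n$ with right endpoint $\ge T$ has length $\ge\epsilon$, so the tail excursions cannot affect the top $k$ coordinates, and letting $\epsilon\downarrow0$ finishes. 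The main obstacle is the upper bound: under $J_1$ the excursion intervals of $\psi_n$ are essentially rigid near the jumps of $\psi$, whereas under $M_1$ an approximating path may cross such a jump along a monotone time‑reparametrisation, so one has to exploit that the jumps of $\psi$ are upward and, by goodness~(i), strictly interior to excursions — so $\bar{\psi}$ stays bounded below near them and no spurious excursion boundary is created — and that near $\cY_\infty(\psi)$ it is precisely goodness~(ii)--(iv) that forces the excursion structure of $\bar{\psi}_n$ to stay close to that of $\bar{\psi}$.
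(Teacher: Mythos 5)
Your overall architecture (pass to the reflected process $\bar\psi=\psi-\underline\psi$, establish a $\liminf$ and a $\limsup$ for each ordered excursion length, then use hypothesis~(iii) to globalize) is the same as the paper's, and the reduction to $\bar\psi$ via Lemma~\ref{lem:composition} is a harmless reframing since excursion intervals are unaffected. The lower bound is essentially the paper's argument; both rely on Corollary~\ref{cor:inf} to force $\bar\psi_n>0$ on compact subintervals of the large excursions. You assert without justification that ``these excursions are distinct,'' which is the one nontrivial point; the paper's $\widetilde r_j\in[0,l_j]\cap\cY(\psi)$ together with the strict drop of $\underline\psi$ past any $r\in\cY_\infty(\psi)$ (a consequence of goodness~(ii)) is what prevents two of the straddled excursions of $\psi_n$ from merging. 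This can be filled in, but it is a real step, not a triviality.

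The genuine gap is in the $\limsup$. Your strategy (extract a subsequence along which $k$ long excursions of $\bar\psi_n$ have convergent endpoints, then show each limiting interval $(l^{(i)},r^{(i)})$ contains a comparably long excursion of $\bar\psi$) is a legitimately different route from the paper's, which instead builds a finite grid $r_1'<\dots<r_M'$ in $\cY_T(\psi)$ and uses $\psi(r_1')>\dots>\psi(r_M')$ (from goodness~(ii), cf.\ Lemma~\ref{lem:Zpath}(e)) to pin down the excursion structure of $\psi_n$. However, the key step of your route — showing $(l^{(i)},r^{(i)})$ is not cut into several short excursions of $\bar\psi$ by interior zeros — is asserted, not proved, and the tools you cite do not prove it. Corollary~\ref{cor:inf} combined with $\inf_{[a,b]}\bar\psi_n>0$ yields only $\inf_{[a,b]}\bar\psi\ge 0$, which is vacuous since $\bar\psi\ge0$ everywhere; and goodness~(iv) (complement of excursions Lebesgue-null) does not rule out isolated interior zeros, which is exactly what would split $(l^{(i)},r^{(i)})$. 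What actually makes your route work is a different mechanism that you never invoke: $\underline\psi_n$ is constant on each excursion interval of $\psi_n$, this constancy passes to $\underline\psi$ on compact subintervals of $(l^{(i)},r^{(i)})$ because $\underline\psi_n\to\underline\psi$ locally uniformly ($\underline\psi$ being continuous), and then goodness~(ii) (no local minimum of $\psi$ at points of $\cY_\infty$) forbids $\bar\psi$ from touching $0$ in the interior of an interval on which $\underline\psi$ is constant. Without that chain, the central claim of your contradiction argument is unsupported. You would also need to address distinctness of the resulting $k$ excursions of $\bar\psi$ when consecutive $(l^{(i)},r^{(i)})$ share an endpoint; the same constancy-of-$\underline\psi$ plus goodness~(ii) argument handles it, but again it is not in your write-up.
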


	\begin{proof} We prove the first statement. The second condition follows easily. Write
		\begin{equation*}
			\LL_T^\downarrow(\psi)  = (\zeta_1,\zeta_2,\dotsm)\qquad\textup{and}\qquad\forall n\ge 1\quad\LL_T^\downarrow(\psi_n)  = (\zeta_1^n,\zeta^n_2,\dotsm)
		\end{equation*} Fix some $k$ such that $\zeta_k>\zeta_{k+1}$, which is possible as $\sum_{j} \zeta_j < T$. It suffices to show for all $j\in[k]$ that the convergence $\zeta_j^n\to \zeta_j$ holds.
		In order to do this we will show that $\liminf_{n} \zeta_j^n \ge \zeta_j$ and $\limsup_n \zeta_j^n \le \zeta_j$. 
		
		We start with the $\liminf$. Fix some $\eps>0$ such that $\eps\ll \zeta_{k+1}$, which is possible since $\zeta_{k+1}>0$ as $\psi$ is good. Denote the corresponding excursion interval of length $\zeta_j$ by $(l_j,r_j)$. For each $j\in[k]$, we can find $\delta\in (0,\eps/2)$ such that $l_j + \delta, r_j-\delta$ are continuity points of $\psi$ for each $j\in[k]$. Such a $\delta$ exists as there are only countably many discontinuities of $\psi$. We can also find continuity points $t_j\in(l_j+\delta,r_j-\delta)$ for each $j$. Note that since $\psi$ is good, $l_j$ and $r_j$ are also continuity points of $\psi$. 
		
		Since $\psi$ is good we can find an $\eta>0$ such that for all $j\in[k]$
		\begin{equation}\label{eqn:onlyCareAboutRight1}
			\inf_{s\le l_j}\psi(s) +\eta = \inf_{s\le r_j}\psi(s) +\eta < \inf_{s\in[l_j+\delta, r_j-\delta]} \psi(s).
		\end{equation} As the interior of the excursion intervals of $\psi$ have full Lebesgue measure and since $t\mapsto \inf_{s\le t} \psi(t)$ is a non-increasing continuous function (recall that $\psi$ does not jump downwards) we can find an $\widetilde{r}_j\in[0,l_j]\cap \cY(\psi)$ such that 
		\begin{equation}\label{eqn:onlyCareAboutRight2}
			\inf_{s\le l_j} \psi(s) + \frac{\eta}{2} \ge \inf_{s\le \widetilde{r}_j} \psi(s) \ge \inf_{s\le l_j} \psi(s)
		\end{equation}

		By Lemma \ref{lem:contpoints} and Corollary \ref{cor:inf} we have
		\begin{align*}
			\begin{array}{l}\inf_{s\le u} \psi_n(s) \to \inf_{s\le u} \psi(s)\\
				\psi_n(u) \to \psi(u)\\
				\inf_{s\in[u,v]} \psi_n(s) \to \inf_{s\in[u,v]} \psi_n(s)
			\end{array}
			\qquad\forall u<v\in\{\widetilde{r}_j,l_j+\delta, t_j, r_j-\delta, r_j; j\in[k]\}
		\end{align*} as each such $u,v$ is a continuity point of $\psi$. In particular, using \eqref{eqn:onlyCareAboutRight1} and \eqref{eqn:onlyCareAboutRight2} we get
		\begin{align*}
			&\lim_{n\to\infty} \left(\inf_{s\in[l_j+\delta, r_j-\delta_j]} \psi_n(s) - \inf_{s\le \widetilde{r}_j} \psi_n(s) \right)\ge \frac{\eta}{2} \quad\textup{ and }\\
			&\lim_{n\to\infty} \left(\inf_{s\in[l_j+\delta, r_j-\delta_j]} \psi_n(s) - \inf_{s\le r_j} \psi_n(s) \right) \ge \frac{\eta}{2}.
		\end{align*}
		Hence, for all $n$ large enough, there exists an excursion interval of $\psi_n$ that straddles $t_j$ of length at least $\zeta_j-2\delta \ge \zeta_j-\eps$. Hence, 
		\begin{equation*}
			\liminf_{n} \zeta^n_j \ge \zeta_j,\qquad \forall j\in[k].
		\end{equation*}
		
		We now turn to the $\limsup$. To do this, fix an $\eps\ll \zeta_{k+1} - \zeta_{k}$. Now we can find some $r_1'<r_2'<\dotsm<r_M'\in \cY_T(\psi)$ such that (setting $r_0' = 0$) we have
		\begin{enumerate}
			\item There exists distinct $i_1,i_2,\dotms,i_k$ such that $r_{i_j}' - r_{i_j-1}'\le \zeta_j+\eps$;
			\item For each $p\notin\{i_1,\dotms,i_k\}$, $r_p' - r'
			_{p-1} \le \zeta_{k+1}+\eps< \zeta_k$;
			\item $r_M' \ge \sup\{\cY_T\} - \eps.$
		\end{enumerate}
		Indeed, the first follows from the fact that $\psi$ is good and hence each left excursion endpoint $l_j = \sup\{r\in \cY_T(\psi): r< l_j\}$ by property (iv) of the definition. The remaining two follow by similar logic.
		
		Now, by Corollary \ref{cor:inf} and the continuity of $\psi$ at each $r_j'$ we have
		\begin{equation*}
			\inf_{s\le u} \psi_n(s) \to \inf_{s\le r_j'}\psi(r_j) =  \psi(r_j')\qquad\textup{ for all }j\in[M].
		\end{equation*}
		As each $r_j'$ is not a local minimum of $\psi$, we see
		\begin{equation*}
			\psi(0)> \psi(r_1') >\dotsm > \psi(r_M')
		\end{equation*}
		and so for all $n$ sufficiently large we get
		\begin{equation}\label{eqn:boundsINfpsi}
			\inf_{s\le r_1'} \psi_n(s) > \inf_{s\le r_2'} \psi_n(s)> \dotsm > \inf_{s\le r_M'} \psi_n(s).
		\end{equation}
		We also see that for Lebesgue a.e. $\delta\in(0,T-r_M')$ that
		\begin{equation*}
			\inf_{s\in[r_{M}' + \delta, T]} \psi_n(s) > \psi(r_M') = \inf_{s\le r_M'} \psi(s).
		\end{equation*}
		We can clearly choose $\delta\in(\eps,2\eps)$ so that $r_M'+\delta$ is a continuity point of $\psi$ and conclude that for all $n$ large \eqref{eqn:boundsINfpsi} holds and
		\begin{equation*}
			\inf_{s\le r_M'} \psi_n(s) < \inf_{s\in[r_M'+\delta,T]} \psi_n(s).
		\end{equation*}
		This is enough to conclude that $\psi_n$ (for these large $n$) that $  \zeta_j^{n} \le \zeta_j+\eps$ for each $j\in[k]$. This establishes the claim.
	\end{proof}

	We will need a point process version of the preceding lemma. To handle this, given a point-set $\Xi\subset\R_+\times (\R^{k}_+\setminus\{0\})$ we say it is locally finite if for any compact $K \subset \R_+\times (\R_+^k\setminus \{0\})$ it holds that $\#(\Xi\cap K) <\infty.$ We will write $\Xi^{(n)}\to \Xi$ if the associated counting measures converge in the vague topology. The next lemma follows easily from Lemmas  \ref{lem:contpoints}, \ref{lem:good} above. See also \cite{Clancy.24}.
	\begin{lemma} \label{lem:ppconv} Suppose that $\psi_n\to\psi$ and $\varphi_n\to\varphi$ in the $M_1$ topology, where $\psi$ is good and $\varphi$ is strictly increasing.
		Suppose that $\Xi^{(n)} = \Xi^{(n),\varphi_n}$ and $\Xi = \Xi^{\varphi}$ are defined as
		\begin{align*}
			&\Xi^{(n)}:= \{(l,r-l, \varphi_n(r)-\varphi_n(l-)): (l,r)\in \EE_\infty(\psi_n)\},\\
			&\Xi = \{ (l,r-l,\varphi(r)-\varphi(l)): (l,r) \in \EE_\infty(\psi)\}.
		\end{align*} If $\varphi$ is continuous at each $t\in \{l,r: (l,r)\in\EE_\infty(\psi)\}$, then $\Xi^{(n)} \longrightarrow \Xi$.
	\end{lemma}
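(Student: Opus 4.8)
The plan is to prove $\Xi^{(n)}\to\Xi$ by verifying $\int g\,d\mu_n\to\int g\,d\mu$ for every nonnegative $g\in C_c\bigl(\R_+\times(\R^2_+\setminus\{0\})\bigr)$, where $\mu_n,\mu$ denote the counting measures of $\Xi^{(n)},\Xi$. The support of such a $g$ lies in a box $[0,T_*]\times[a,L]\times[0,M]$ with $a>0$; after slightly shrinking $a$ (which keeps $\supp g$ inside the box) we may assume $a$ is not the length of any excursion of $\psi$, and we fix $T>T_*+L$ which is a continuity point of $\psi$ with $\psi(T)>\inf_{s\le T}\psi(s)$ (Lebesgue-a.e. large $T$ works, since the excursion interiors of $\psi$ have full measure and $\psi$ has countably many jumps). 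Only excursions $(l,r)$ of $\psi$ with $r\le T$ and $r-l\ge a$ can make $g(l,r-l,\varphi(r)-\varphi(l))\ne0$; by Definition~\ref{def:good}(v) there are finitely many such, list them $(l_1,r_1),\dots,(l_m,r_m)$, so $\int g\,d\mu=\sum_{j=1}^m g\bigl(l_j,r_j-l_j,\varphi(r_j)-\varphi(l_j)\bigr)$.

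By the first conclusion of Lemma~\ref{lem:good} applied with horizon $T$ (condition (iii) there is unnecessary since we remain on a compact interval; as in that lemma, we take the $\psi_n$ to have no downward jumps), $\LL^\downarrow_T(\psi_n)\to\LL^\downarrow_T(\psi)$ pointwise, and since $a$ is not an excursion length of $\psi$, for all large $n$ the function $\psi_n$ has exactly $m$ excursion intervals with right endpoint $\le T$ and length $\ge a$, which we label $(l^n_1,r^n_1),\dots,(l^n_m,r^n_m)$ so that $r^n_j-l^n_j\to r_j-l_j$ for each $j$ (ties broken as in Lemma~\ref{lem:good}). The key additional point — and the only step requiring care — is that in fact $l^n_j\to l_j$ and $r^n_j\to r_j$, which is implicit in the proof of Lemma~\ref{lem:good}. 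Fix $j$ and a small $\delta>0$ with $l_j+\delta,r_j-\delta$ continuity points of $\psi$. The $\liminf$ half of that proof establishes $\inf_{s\in[l_j+\delta,r_j-\delta]}\psi_n(s)>\inf_{s\le l_j+\delta}\psi_n(s)$ for $n$ large, which forces $\psi_n(s)>\inf_{u\le s}\psi_n(u)$ for every $s\in[l_j+\delta,r_j-\delta]$, so this interval lies in a single excursion interval $(l',r')$ of $\psi_n$; hence $l'\le l_j+\delta$ and $r'\ge r_j-\delta$. The $\limsup$ half (run with its ``$\eps$'' equal to $\delta$) gives $r'-l'\le r_j-l_j+\delta$ for $n$ large, whence $l'\ge l_j-2\delta$ and $r'\le r_j+2\delta$; for $\delta$ small enough $r'-l'>a$ and $r'<T$, so $(l',r')$ is one of the $m$ large excursions above, and since its length tends to $r_j-l_j$ it must equal $(l^n_j,r^n_j)$. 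Letting $\delta\downarrow0$ yields $l^n_j\to l_j$ and $r^n_j\to r_j$. Making this endpoint localisation quantitative and matching it to the correct one of the finitely many large excursions is the spot where one must be careful, but it uses nothing beyond the proof of Lemma~\ref{lem:good}.

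It remains to handle the third coordinate. Fix $j$ and $\eta>0$; since $\varphi$ is strictly increasing and continuous at $l_j$, choose continuity points $b<l_j<b'$ of $\varphi$ with $\varphi(b')-\varphi(b)<\eta$. By Lemma~\ref{lem:contpoints}, $\varphi_n(b)\to\varphi(b)$ and $\varphi_n(b'-)\to\varphi(b')$, and for $n$ large $l^n_j\in(b,b')$, so monotonicity of $\varphi_n$ gives $\varphi(b)\le\liminf_n\varphi_n(l^n_j-)\le\limsup_n\varphi_n(l^n_j-)\le\varphi(b')$; letting $\eta\downarrow0$ gives $\varphi_n(l^n_j-)\to\varphi(l_j)$, and the same argument (using continuity of $\varphi$ at $r_j$) gives $\varphi_n(r^n_j)\to\varphi(r_j)$, so $\varphi_n(r^n_j)-\varphi_n(l^n_j-)\to\varphi(r_j)-\varphi(l_j)$. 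Finally, for $n$ large the only excursions of $\psi_n$ whose associated triple can lie in $\supp g$ are $(l^n_1,r^n_1),\dots,(l^n_m,r^n_m)$, so
\begin{align*}
\int g\,d\mu_n &=\sum_{j=1}^m g\bigl(l^n_j,\,r^n_j-l^n_j,\,\varphi_n(r^n_j)-\varphi_n(l^n_j-)\bigr)\\
&\longrightarrow\sum_{j=1}^m g\bigl(l_j,\,r_j-l_j,\,\varphi(r_j)-\varphi(l_j)\bigr)=\int g\,d\mu,
\end{align*}
by continuity of $g$ and the convergences above (if the $\psi$-limit of a term lies in the open complement of $\supp g$, that term vanishes for all large $n$). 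As $g$ was arbitrary, $\Xi^{(n)}\to\Xi$ in the vague topology.
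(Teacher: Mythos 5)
Your argument follows exactly the route the paper sketches (the paper's ``proof'' is a one-line pointer to Lemmas~\ref{lem:contpoints} and~\ref{lem:good}), and the real technical content — extracting convergence of the \emph{endpoints} $l^n_j\to l_j$, $r^n_j\to r_j$ from the internals of the proof of Lemma~\ref{lem:good}, and then bootstrapping to $\varphi_n(l^n_j-)\to\varphi(l_j)$, $\varphi_n(r^n_j)\to\varphi(r_j)$ via Lemma~\ref{lem:contpoints} and monotonicity of $\varphi_n$ — is correctly identified and carefully executed. That is the step the paper leaves implicit, and you handle it well.

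There is, however, one genuine gap at the very start. You assert that $\supp g\subset[0,T_*]\times[a,L]\times[0,M]$ with $a>0$, i.e.\ that the second coordinate (excursion length) is bounded away from zero on the support of a generic $g\in C_c\bigl(\R_+\times(\R_+^2\setminus\{0\})\bigr)$. That is false: a compact subset of $\R_+\times(\R_+^2\setminus\{0\})$ only has the \emph{pair} $(x,y)$ bounded away from the origin, so $\supp g$ can accumulate at $x=0$ provided $y$ stays bounded below. Concretely, your finite-sum reduction to excursions of length $\ge a$ silently discards excursions $(l,r)$ of $\psi_n$ with $r-l$ tiny but $\varphi_n(r)-\varphi_n(l-)$ of order one; nothing in the stated hypotheses (strictly increasing, continuous only at the endpoints $\{l,r:(l,r)\in\EE_\infty(\psi)\}$) prevents $\varphi$ from having jumps at accumulation points of $\cY_\infty(\psi)$ that are not themselves endpoints, and then such short-but-heavy excursions of $\psi_n$ can in principle land in $\supp g$ while not being tracked by your finitely many $(l_j,r_j)$. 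To close this you would need to argue separately that the excursions of $\psi$ (and then $\psi_n$) with $\varphi$-increment $\ge\delta$ over $[0,T]$ are finitely many (they are, since the increments over disjoint intervals sum to at most $\varphi(T)$) and that short excursions of $\psi_n$ near non-endpoint times of $\psi$ cannot carry a macroscopic $\varphi_n$-increment — the latter is exactly what is \emph{not} automatic under the stated hypotheses. In the two corollaries where the paper actually applies the lemma (Corollaries~\ref{cor:techcor} and~\ref{cor:techcor2}), $\varphi$ is either linear (hence globally continuous, so short excursions have short $\varphi$-increments) or the $\varphi$-coefficient $b_n\to0$, so there the gap is harmless; but as a proof of the lemma as stated you should either add a hypothesis (e.g.\ $\varphi$ continuous) or supply the missing control on short excursions with large $\varphi$-increment.
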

	
	We note that if $\Xi^{(n)}\to \Xi$ and $(t,x)\in \Xi$ then there exists $(t_n,x_n)\in \Xi^{(n)}$ (for all $n$ large) such that $(t_n,x_n)\to (t,x)$. See, for example, Proposition 4.26 in \cite{Maggi.12}. The next two results follow easily
	\begin{corollary}\label{cor:techcor}
		Suppose that $\psi_n\to\psi$ and $\varphi_n\to\varphi$ in the $M_1$ topology, where $\psi$ is good and $\varphi(t) = \gamma t$ for some $\gamma>0$. Suppose that $a_n\to a$ and $b_n\to b$ are such that $a+\gamma b >0$. 
		Suppose that ${\Xi}^{(n)}$ and ${\Xi}$ are defined as
		\begin{align*}
			&\Xi^{(n)}:= \left\{\Big(l, a_n\big(r-l\big)+ b_n\big( \varphi_n(r)-\varphi_n(l-)\big)\Big): (l,r)\in \EE_\infty(\psi_n)\right\},\\
			&\Xi := \left\{\Big (l,(a+\gamma b) (r-l) \Big): (l,r) \in \EE_\infty(\psi)\right\}.
		\end{align*} Then $\Xi^{(n)}\to \Xi$.
	\end{corollary}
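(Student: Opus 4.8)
The plan is to reduce Corollary~\ref{cor:techcor} to Lemma~\ref{lem:ppconv} by a continuous mapping argument. Since $\varphi(t)=\gamma t$ is strictly increasing and continuous — in particular at every point of $\{l,r:(l,r)\in\EE_\infty(\psi)\}$ — Lemma~\ref{lem:ppconv} applies and gives, as counting measures on $\R_+\times(\R^2_+\setminus\{0\})$, the vague convergence $\widehat\Xi^{(n)}\to\widehat\Xi$, where
\begin{align*}
\widehat\Xi^{(n)}&:=\{(l,\,r-l,\,\varphi_n(r)-\varphi_n(l-)):(l,r)\in\EE_\infty(\psi_n)\},\\
\widehat\Xi&:=\{(l,\,r-l,\,\gamma(r-l)):(l,r)\in\EE_\infty(\psi)\}.
\end{align*}
The limit $\widehat\Xi$ is carried by the cone $\{(t,x,\gamma x):t\ge0,\ x>0\}$, on which the map $(t,x,y)\mapsto(t,ax+by)$ acts by $(t,x,\gamma x)\mapsto(t,(a+\gamma b)x)$; as $a+\gamma b>0$ this pushes $\widehat\Xi$ onto $\Xi$. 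Writing $g_n(t,x,y)=(t,a_nx+b_ny)$, which converges locally uniformly to $g(t,x,y)=(t,ax+by)$, we have $\Xi^{(n)}=(g_n)_*\widehat\Xi^{(n)}$, and it remains to check that the vague convergence survives the asymptotically convergent pushforwards $g_n$.

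To do this I would fix $f\in C_c(\R_+\times(\R_+\setminus\{0\}))$ with $\supp f\subset[0,T]\times[\delta,M]$, $\delta>0$, and split $\langle\Xi^{(n)},f\rangle=\sum_{(l,r)\in\EE_\infty(\psi_n)}f\big(l,\,a_n(r-l)+b_n(\varphi_n(r)-\varphi_n(l-))\big)$ according to whether $r\le L$ or $r>L$, with $L:=T+2M/(a+\gamma b)+1$. For the excursions with $r\le L$: by Lemma~\ref{lem:contpoints} one has $\varphi_n\to\gamma\operatorname{Id}$ locally uniformly, so $\varphi_n(r)-\varphi_n(l-)=\gamma(r-l)+o(1)$ uniformly over these excursions and (for $n$ large) their third coordinates lie in a fixed bounded set; there $(t,x,y)\mapsto f(t,a_nx+b_ny)$ converges uniformly to $(t,x,y)\mapsto f(t,ax+by)$, and combining this with the vague convergence of $\widehat\Xi^{(n)}$ restricted to a compact region yields convergence of this partial sum to $\sum_{(l,r)\in\EE_\infty(\psi),\ r\le L}f(l,(a+\gamma b)(r-l))$, which equals $\langle\Xi,f\rangle$ since $r>L$ forces $(a+\gamma b)(r-l)>M$.

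The crux — and the step I expect to be the main obstacle — is the complementary tail estimate: excursions of $\psi_n$ with $l\le T$ but $r>L$ correspond to points of $\widehat\Xi^{(n)}$ that escape to infinity and are hence invisible to Lemma~\ref{lem:ppconv}, and one must rule out that $g_n$ sends them back into $[\delta,M]$, i.e. show $a_n(r-l)+b_n(\varphi_n(r)-\varphi_n(l-))\notin[\delta,M]$ for $n$ large. This is exactly where $a+\gamma b>0$ is essential (were $a+\gamma b=0$, that quantity could remain of order one while $r-l\to\infty$): using that $\varphi_n(l-)$ is bounded, that $\psi_n$ has no downward jumps, and a tail control on long excursions of $\psi_n$ of the type of hypothesis~(iii) of Lemma~\ref{lem:good} — which is available for all of the $\psi_n$ to which we apply this corollary — one forces the displayed quantity either above $M$ or below $\delta$. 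Granting this, summing the two contributions gives $\langle\Xi^{(n)},f\rangle\to\langle\Xi,f\rangle$ for all such $f$, i.e. $\Xi^{(n)}\to\Xi$.
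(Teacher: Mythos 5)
Your reduction to Lemma~\ref{lem:ppconv} followed by a continuous-mapping step with $g_n(t,x,y)=(t,a_nx+b_ny)$ is exactly the intended argument, and you correctly locate the one genuine subtlety: when $a$ or $b$ may be negative, $g_n$ can in principle send points of $\widehat\Xi^{(n)}$ that leave every compact set (excursions with $l\le T$ but $r$ large) back into a fixed compact window $[0,T]\times[\delta,M]$, which is precisely why $a+\gamma b>0$ is required.

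What is not right is the fix you propose for that step. You invoke a uniform tail condition ``of the type of hypothesis~(iii) of Lemma~\ref{lem:good},'' acknowledging it is not among the hypotheses of the corollary and justifying it by appeal to the specific $\psi_n$'s arising in applications. As written, that does not prove the corollary in the generality in which it is stated, and in fact the extra hypothesis is unnecessary: the needed tail control already follows from $\psi$ being good. Indeed, property~(ii) of Definition~\ref{def:good} forces the running minimum $t\mapsto\inf_{s\le t}\psi(s)$ to strictly decrease past any fixed $T$. (If it were constant on $[T,\infty)$, then property~(iv) yields an excursion of $\psi$ ending at some $r>T$; that $r$ would then be a global minimizer of $\psi$ and in particular a local minimum at a point of $\cY_\infty(\psi)$, contradicting~(ii).) Hence one may choose a continuity point $L>T$ of $\psi$, lying outside the excursion intervals, with $\inf_{s\le L}\psi(s)<\inf_{s\le T}\psi(s)$. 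By Lemma~\ref{lem:contpoints} and Corollary~\ref{cor:inf} this strict inequality persists for $\psi_n$ once $n$ is large, and it is incompatible with $\psi_n$ having an excursion $(l,r)$ with $l\le T<L<r$, since such an excursion would make the running minimum of $\psi_n$ constant on $[T,L]$. Thus for all large $n$ every point of $\Xi^{(n)}$ with first coordinate at most $T$ comes from an excursion with $r\le L$; on that compact window your local-uniformity argument applies and the proof closes with no extra hypothesis on $\psi_n$.
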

	
	\begin{corollary}\label{cor:techcor2}
		Suppose that $(\psi_n,\varphi_n)\to (\psi,\varphi)$ in the $M_1$ topology, where $\psi$ is good, $\varphi$ is strictly increasing and continuous at each $t\in \{l,r:(l,r)\in\EE_\infty(\psi)\}$. Suppose that $a_n\to a$ and $b_n\to 0$ are real numbers.
		Suppose that ${\Xi}^{(n)}$ and ${\Xi}$ are defined as
		\begin{align*}
			&\Xi^{(n)}:= \left\{\Big(l, a_n\big(r-l\big)+ b_n\big( \varphi_n(r)-\varphi_n(l-)\big)\Big): (l,r)\in \EE_\infty(\psi_n)\right\},\\
			&\Xi := \left\{\Big (l,a(r-l) \Big): (l,r) \in \EE_\infty(\psi)\right\}.
		\end{align*} Then $\Xi^{(n)}\to\Xi$.
	\end{corollary}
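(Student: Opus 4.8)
The plan is to deduce Corollary~\ref{cor:techcor2} from Lemma~\ref{lem:ppconv} by a converging-continuous-mapping argument for vague convergence of locally finite point sets, the new feature being that $b_n\to 0$ (rather than $b_n\to b$ with $a+\gamma b>0$, as in Corollary~\ref{cor:techcor}) is exactly what allows the second mark coordinate to be discarded in the limit. I take $a>0$, the natural analogue here of $a+\gamma b>0$; if $a=0$ then $\Xi=\varnothing$ and essentially the same scheme gives $\Xi^{(n)}(K)\to0$ for every compact $K\subset\R_+\times(\R_+\setminus\{0\})$. The hypotheses of the corollary include those of Lemma~\ref{lem:ppconv}, so, setting
\begin{equation*}
  \widehat\Xi^{(n)}:=\{(l,\,r-l,\,\varphi_n(r)-\varphi_n(l-)):(l,r)\in\EE_\infty(\psi_n)\},\qquad
  \widehat\Xi:=\{(l,\,r-l,\,\varphi(r)-\varphi(l)):(l,r)\in\EE_\infty(\psi)\},
\end{equation*}
Lemma~\ref{lem:ppconv} yields $\widehat\Xi^{(n)}\to\widehat\Xi$ in $\R_+\times(\R_+^2\setminus\{0\})$. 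Put $g_n(l,x,y)=(l,\,a_nx+b_ny)$ and $g(l,x,y)=(l,\,ax)$, so that $\Xi^{(n)}=g_n(\widehat\Xi^{(n)})$ and $\Xi=g(\widehat\Xi)$; since $a_n\to a$ and $b_n\to0$, the $g_n$ converge to $g$ uniformly on compact subsets of $\R_+\times\R_+^2$.

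It remains to check that pushing a vaguely convergent sequence of point sets forward along continuous maps converging uniformly on compacts preserves vague convergence — the subtlety being that $g$ is not proper. Concretely, fix a compact $K=[0,A]\times[\eps_0,A]\subset\R_+\times(\R_+\setminus\{0\})$ and a continuous $f$ with $\supp f\subset K$; I claim there is a compact $\widehat K\subset\R_+\times(\R_+^2\setminus\{0\})$, independent of $n$, such that for all large $n$ every $q\in\widehat\Xi^{(n)}$ with $g_n(q)\in K$ lies in $\widehat K$. Granting this, for large $n$ one has $\sum_{p\in\Xi^{(n)}}f(p)=\sum_{q\in\widehat\Xi^{(n)}\cap\widehat K}f(g_n(q))$, and since $\widehat\Xi^{(n)}\cap\widehat K$ is a finite point set converging to the finite set $\widehat\Xi\cap\widehat K$ (after enlarging $\widehat K$ slightly so that $\widehat\Xi$ charges no point of $\partial\widehat K$) while $f\circ g_n\to f\circ g$ uniformly on $\widehat K$, the sum tends to $\sum_{q\in\widehat\Xi}f(g(q))=\sum_{p\in\Xi}f(p)$. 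As $f$ was arbitrary, $\Xi^{(n)}\to\Xi$.

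The one genuine obstacle is the tightness claim above; it is also the only place the hypotheses are used. If $q=(l,x,y)\in\widehat\Xi^{(n)}$ with $g_n(q)\in K$ then $l\le A$ immediately; moreover $a_nx+b_ny\ge\eps_0$ together with $a_n\to a>0$, $b_n\to0$ and an upper bound $|y|\le C_T$ forces $x$ to be bounded below by a positive constant for $n$ large, so $(x,y)$ stays away from $0$. The bound on $|y|=|\varphi_n(r)-\varphi_n(l-)|$ follows once $r=l+x$ is confined to a fixed window $[0,T]$, since $M_1$-convergence $\varphi_n\to\varphi$ makes $\sup_{[0,T]}|\varphi_n|$ bounded uniformly in large $n$ whenever $T$ is a continuity point of $\varphi$. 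Thus everything reduces to bounding $x$ from above, and this is the delicate point when $b_n<0$: one must exclude points of $\widehat\Xi^{(n)}$ with both $x$ and $y$ large whose images $a_nx+b_ny$ nonetheless land in $[\eps_0,A]$. These are ruled out by the no-long-excursion property of $\psi_n$ inherited from the goodness of $\psi$ in the regime at hand (cf.\ hypothesis~(iii) of Lemma~\ref{lem:good}), or, equivalently, by an at-most-linear growth bound on $\varphi_n$ holding uniformly in large $n$ on the relevant time scale: such a bound gives $|b_n|\,|y|\le\tfrac a2 x$ once $x$ exceeds a fixed threshold and $n$ is large, whence $a_nx+b_ny\ge\tfrac a2 x$ and $x$ is bounded in terms of $A$ and $\eps_0$ alone. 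One then takes $\widehat K=[0,A]\times[x_-,x_*]\times[0,C_T]$ with $0<x_-\le x_*$, which is compact in $\R_+\times(\R_+^2\setminus\{0\})$, completing the argument.
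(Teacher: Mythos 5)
Your plan---push Lemma~\ref{lem:ppconv}'s convergence $\widehat\Xi^{(n)}\to\widehat\Xi$ forward along $g_n(l,x,y)=(l,a_nx+b_ny)\to g(l,x,y)=(l,ax)$---is the natural one, and you correctly isolate the one real difficulty: $g$ is not proper, so one must exhibit a fixed compact $\widehat K\subset\R_+\times(\R_+^2\setminus\{0\})$ containing, for all large $n$, every atom $q$ of $\widehat\Xi^{(n)}$ with $g_n(q)\in K=[0,A]\times[\eps_0,B]$. But your resolution of this tightness step imports hypotheses that Corollary~\ref{cor:techcor2} does not grant. Hypothesis~(iii) of Lemma~\ref{lem:good} is an \emph{additional} assumption in that lemma, not a consequence of ``$\psi$ good and $\psi_n\to\psi$ in $M_1$''; and no at-most-linear growth bound on $\varphi_n$ is available, since $M_1$-convergence controls $\varphi_n$ only on compact time windows. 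So the circularity you flag (bounding $y$ needs $r$ in a fixed window, bounding $x=r-l$ needs $y$ bounded when $b_n<0$) is broken only by appealing to facts ``in the regime at hand,'' which does not prove the corollary as stated.

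The tightness \emph{can} be closed from the stated hypotheses, but one should bound $x$ directly from goodness of $\psi$ rather than via $y$. Since $\psi$ is good, its excursion intervals are all bounded and fill $\R_+$ up to a null set, so after enlarging $A$ slightly we may take $A$ inside an excursion $(l_0,r_0)$ of $\psi$ with $r_0<\infty$. Choose $T>r_0$, a continuity point of $\psi$ and $\varphi$, lying inside a later excursion $(l_{j^*},r_{j^*})$ of $\psi$, so $l_{j^*}\ge r_0>A$. By the same type of argument used to prove Lemma~\ref{lem:good} (properties~(ii)--(iii) of goodness together with Lemma~\ref{lem:contpoints} and Corollary~\ref{cor:inf}), the excursion of $\psi_n$ containing $T$ has left endpoint tending to $l_{j^*}>A$; hence for $n$ large no $(l,r)\in\EE_\infty(\psi_n)$ with $l\le A$ satisfies $r>T$. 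This gives $x\le T$ uniformly, and then $0\le y\le\sup_{[0,T]}\varphi_n=:C_T$ is bounded in large $n$ exactly as you observe; the lower bound $x\ge\eps_0/(2a)$ you already have. Taking $\widehat K=[0,A]\times[\eps_0/(2a),T]\times[0,C_T]$ closes the argument, and with this repair (together with the separate $a=0$ case you mention) the proof goes through. The paper gives no proof of this corollary, stating only that it ``follows easily,'' so there is no paper argument to compare against.
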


	\section{Weak convergence of $V$} \label{sec:WeakConv}
	
	In this section we establish three different scaling limits for the exploration processes. These are contained in Propositions \ref{prop:vconvClass}, \ref{prop:vconvINter}, and \ref{prop:vconvbip}. In order to prove the results we recall a recent result by Limic \cite{Limic.19} which implies weak convergence of $X_n^{j,i}$ and $\widetilde{X}_n^{j,i}$ modulo recentering and rescaling. This is key to essentially all the subsequent analysis. For an application of part (4) of Lemma \ref{lem:composition}, we need to recall some elementary path properties of thinned L\'{e}vy processes.
	
	\subsection{Some Scaling Results}

	Recall that given a sequence $\bz = \bz^{(n)}\in\ell^f_\downarrow$ of finite length vectors we write $\bz\rightsquigarrow (\beta,\btheta)$ if \eqref{eqn:sigma2tozer}, \eqref{eqn:sigma3order}, and \eqref{eqn:sigmahumbs} hold. 
	Write $N^\bz =N^{\bz,q} = (N^\bz(t);t\ge 0)$ by
	\begin{equation*}
		N^\bz(t) = \sum_{j=1}^{\len(\bz)} z_j 1_{[\xi_j\le qt]},
	\end{equation*}
	where $\xi_j\sim \Exp(z_j)$ are independent exponential random variables. Also recall that $W^{\beta,\btheta,\lambda}$ is defined in \eqref{eqn:thinnedLevy}.
	
	\begin{theorem}[Limic \cite{Limic.19}]\label{thm:limic}
		Suppose that $\bz\rightsquigarrow (\beta,\btheta)$ and $q = \frac{1}{\sigma_2(\bz)}+ \lambda$. Let $c_n \sigma_2(\bz)\to 1$. Then in the $J_1$ (and hence $M_1$) topology
		\begin{equation*}
			c_n (N^\bz - \operatorname{Id})\weakarrow W^{\beta,\btheta,\lambda}.
		\end{equation*}
	\end{theorem}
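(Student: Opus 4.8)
The plan is to read this as a functional central limit theorem for a triangular array of independent pure-jump processes, to compute the limiting characteristic exponent explicitly, and then to invoke a general tightness criterion. Writing $\eta_j := \xi_j/q\sim\Exp(qz_j)$, we have $N^\bz(t) = \sum_{j=1}^{\len(\bz)} z_j\mathbf{1}_{[\eta_j\le t]}$, a finite sum of independent single-jump step functions, the $j$-th jumping from $0$ to $z_j$ at an exponential time of rate $qz_j$. First I would peel off the deterministic part by writing
\begin{equation*}
c_n\big(N^\bz(t)-t\big) = c_n\sum_{j}\Big(z_j\mathbf{1}_{[\eta_j\le t]} - q z_j^2 t\Big) + c_n\big(q\sigma_2(\bz)-1\big)t ;
\end{equation*}
since $q\sigma_2(\bz)-1 = \lambda\sigma_2(\bz)$, the last term equals $\lambda\,(c_n\sigma_2(\bz))\,t\to\lambda t$, which supplies the linear drift of $W^{\beta,\btheta,\lambda}$ in \eqref{eqn:thinnedLevy}.

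Next I would fix a truncation level $K$ and split the remaining sum over $j\le K$ and $j>K$. For each fixed $j$, $c_nz_j = (c_n\sigma_2(\bz))(z_j/\sigma_2(\bz))\to\theta_j$ and $qz_j\to\theta_j$ by \eqref{eqn:sigmahumbs}, so $\eta_j\weakarrow\Exp(\theta_j)$ and $\big(c_n(z_j\mathbf{1}_{[\eta_j\le t]}-qz_j^2t);t\ge0\big)\weakarrow\big(\theta_j(\mathbf{1}_{[\xi_j\le t]}-\theta_jt);t\ge0\big)$ in $\D(\R_+,\R)$; by independence this holds jointly over $j\le K$, producing the first $K$ terms of the thinned-L\'evy summand $J^{\btheta}$ of \eqref{eqn:Jcreg}.

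For the tail $S_n^{>K}(t):=c_n\sum_{j>K}(z_j\mathbf{1}_{[\eta_j\le t]}-qz_j^2t)$ I would identify the limiting characteristic exponent by a direct expansion of
\begin{equation*}
\log\E\exp\!\big(\mathrm{i}uS_n^{>K}(t)\big)=\sum_{j>K}\Big[-\mathrm{i}u c_n q z_j^2 t+\log\!\big(e^{-qz_jt}+(1-e^{-qz_jt})e^{\mathrm{i}uc_nz_j}\big)\Big],
\end{equation*}
Taylor expanding each summand in $z_j$ and using
\begin{equation*}
c_n^2 q\sum_{j>K}z_j^3 = (c_n\sigma_2(\bz))^2\,(q\sigma_2(\bz))\,\Big(\tfrac{\sigma_3(\bz)}{\sigma_2(\bz)^3}-\sum_{j\le K}(z_j/\sigma_2(\bz))^3\Big)\longrightarrow \beta+\sum_{j>K}\theta_j^3
\end{equation*}
from \eqref{eqn:sigma3order}--\eqref{eqn:sigmahumbs}. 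This yields the characteristic exponent of $\big(\sqrt{\beta}\,W(t)-\tfrac\beta2 t^2 + \sum_{j>K}\theta_j(\mathbf{1}_{[\xi_j\le t]}-\theta_jt);t\ge0\big)$, the independent complement (within $W^{\beta,\btheta,\lambda}$) of the first $K$ terms of $J^{\btheta}$; the $-\tfrac\beta2 t^2$ and the compensator corrections are also confirmed by the second-order expansion of $\E N^\bz(t)$. Combined with the previous step, all finite-dimensional distributions of $c_n(N^\bz-\operatorname{Id})$ converge to those of $W^{\beta,\btheta,\lambda}$, and the limit does not depend on $K$.

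The hard part is upgrading this to $J_1$ (hence, by Lemma \ref{lem:J1IMpliesM1}, $M_1$) functional convergence, i.e.\ tightness of $(c_n(N^\bz-\operatorname{Id}))_n$ in $\D(\R_+,\R)$; the delicate point is that the microscopic ``dust'' must aggregate into the continuous Gaussian part $\sqrt\beta W$ while there may be infinitely many macroscopic jumps $\theta_j$, and no mass may be lost in the limit --- precisely the phenomenon behind the entrance-boundary description of Aldous and Limic \cite{AL.98}. I would handle it by invoking a general functional limit theorem for row-wise sums of independent processes with no negative jumps (of Jacod--Shiryaev / Kallenberg type): what remains is to check a Lindeberg-type negligibility estimate such as $\limsup_n\sum_{j>K}\E\big[(c_nz_j)^2\mathbf{1}_{[\eta_j\le T]}\mathbf{1}_{[c_nz_j>\delta]}\big]<\infty$ with this quantity vanishing as $\delta\downarrow 0$ after $K\to\infty$, together with convergence of the truncated second moments to $\beta t$ and of the jump intensities $\sum_j q z_j\,\delta_{c_nz_j}$ to $\sum_{j:\theta_j>0}\theta_j\,\delta_{\theta_j}$ away from the origin --- all consequences of \eqref{eqn:sigma3order}--\eqref{eqn:sigmahumbs}. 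Alternatively, a diagonal argument letting $K\to\infty$ works, since for each fixed $K$ the $j\le K$ piece is a finite sum of elementary processes converging even in $J_1$, while the complement within $W^{\beta,\btheta,\lambda}$ converges, as $K\to\infty$, to the continuous (hence automatically tight) process $\big(\sqrt\beta\,W(t)-\tfrac\beta2 t^2;t\ge0\big)$. Finally, the convergence is genuinely in $J_1$ and not merely $M_1$ because the prelimit processes $c_nN^\bz$ have only finitely many jumps, all upward, and $W^{\beta,\btheta,\lambda}$ likewise jumps only upward with summable contribution since $\btheta\in\ell^3_\downarrow$. This is, in essence, the argument of Limic \cite{Limic.19}, building on \cite{AL.98}.
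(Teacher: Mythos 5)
The paper does not prove Theorem~\ref{thm:limic}: it is stated purely as a citation to Limic \cite{Limic.19}, and the only commentary the paper offers is the remark immediately following the theorem, which observes that Limic's hypotheses are $(\beta,\btheta)\in\cI$ rather than $\cI^\circ$ but that her argument in \cite{Limic.19,Limic.19_Supplement} applies without change to the wider parameter range. So there is no internal proof to compare your sketch against; you are reconstructing an external argument, not following or diverging from one in this paper.

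As a reconstruction, your sketch is sound in outline and consistent with the Limic/Aldous--Limic line of reasoning: extracting the $\lambda t$ drift from $q\sigma_2(\bz)-1=\lambda\sigma_2(\bz)$, truncating at level $K$ and sending the large-weight summands to the atoms of $J^{\btheta}$, identifying the tail's characteristic exponent and reading off $\sqrt{\beta}\,W(t)-\tfrac{\beta}{2}t^2$ from the scaling $c_n^2 q\sum_{j>K}z_j^3\to\beta+\sum_{j>K}\theta_j^3$, and then closing the gap with a Lindeberg/Jacod--Shiryaev-type criterion for row sums of independent c\`adl\`ag processes. The one caveat worth flagging is your final paragraph: the observation that all jumps are upward and summable is not by itself a reason the convergence is $J_1$ rather than merely $M_1$; $J_1$ convergence requires the prelimit jump times and sizes to track those of the limit, which in a complete proof is exactly what the Jacod--Shiryaev conditions (or an explicit coupling of the large jumps as in \cite{AL.98}) deliver. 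Since the paper supplies no proof of its own, nothing you wrote contradicts it.
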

	
	\begin{remark}
		This is not how the result is stated in \cite{Limic.19}. Therein, Limic assumes that $(\beta,\btheta)\in \cI$ instead of $\cI^\circ$ as she is primarily concerned with the eternal multiplicative coalescence, which requires that $(\beta,\btheta)\in \cI$. However, the proof that Limic gives in \cite{Limic.19} (and the supplement \cite{Limic.19_Supplement}) implies the result for all $(\beta,\btheta)\in \cI^\circ$ as well.
	\end{remark}
	
	The following corollary follows easily from Theorem \ref{thm:limic} and Lemma \ref{lem:composition}.
	\begin{corollary}
		\label{cor:Limic}
		Suppose that $\bz\rightsquigarrow (\beta,\btheta)$ and $q = \frac{\kappa}{\sigma_2(\bz)}  + \lambda_n$ where $c_n\sigma_2(\bz)\to 1$ and $\lambda_n\to \lambda$. Then in the $M_1$ topology
		\begin{equation*}
			c_n \left(N^\bz  - \kappa \operatorname{Id}\right) \weakarrow \left( W^{\beta,\btheta,0}(\kappa t) +  \lambda t ;t\ge 0 \right).
		\end{equation*}
	\end{corollary}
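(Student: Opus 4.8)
The plan is to deduce Corollary~\ref{cor:Limic} from Theorem~\ref{thm:limic} by a purely deterministic linear time-change, exploiting that $N^{\bz}=N^{\bz,q}$ depends on $q$ only through a reparametrisation of time. Set $\hat q:=1/\sigma_2(\bz)$ and build $N^{\bz,\hat q}$ from the \emph{same} exponentials $(\xi_j)$ as $N^{\bz,q}$; then, pathwise, $N^{\bz,q}(t)=N^{\bz,\hat q}(b_n t)$ for every $t\ge 0$, where $b_n:=q\,\sigma_2(\bz)=\kappa+\lambda_n\,\sigma_2(\bz)$ (implicitly $\kappa>0$). Since $\sigma_2(\bz)\to 0$ this gives $b_n\to\kappa$, and since $c_n\sigma_2(\bz)\to 1$ and $\lambda_n\to\lambda$ it gives $c_n(b_n-\kappa)=\lambda_n\,(c_n\sigma_2(\bz))\to\lambda$.

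Next, Theorem~\ref{thm:limic} applied with the rate $\hat q$ (i.e.\ with shift $0$; the Remark following that theorem guarantees validity for all $(\beta,\btheta)\in\cI^\circ$, which is all that $\bz\rightsquigarrow(\beta,\btheta)$ provides) yields $c_n(N^{\bz,\hat q}-\operatorname{Id})\weakarrow W^{\beta,\btheta,0}$ in the $M_1$ topology. Writing
\[
c_n\big(N^{\bz,q}(t)-\kappa t\big)=\big[c_n(N^{\bz,\hat q}-\operatorname{Id})\big](b_n t)+c_n(b_n-\kappa)\,t,
\]
the claim reduces to identifying the $M_1$-limit of the right-hand side. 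I would pass to a probability space on which $c_n(N^{\bz,\hat q}-\operatorname{Id})\to W^{\beta,\btheta,0}$ almost surely in $M_1$ (Skorohod representation) and then argue deterministically: the nondecreasing (for large $n$) maps $b_n\operatorname{Id}$ converge uniformly on compacts --- hence in $M_1$ --- to the strictly increasing continuous map $\kappa\operatorname{Id}$, so part~(3) of Lemma~\ref{lem:composition} gives $[c_n(N^{\bz,\hat q}-\operatorname{Id})](b_n\,\cdot)\to W^{\beta,\btheta,0}(\kappa\,\cdot)$ a.s.\ in $M_1$; and the continuous term $c_n(b_n-\kappa)\operatorname{Id}$ converges uniformly on compacts to $\lambda\operatorname{Id}$, so adding it via part~(1) of Lemma~\ref{lem:composition} is legitimate (the jump-sign hypothesis there is vacuous, since a continuous summand introduces no cancelling jump). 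Therefore $c_n(N^{\bz,q}-\kappa\operatorname{Id})\to\big(W^{\beta,\btheta,0}(\kappa t)+\lambda t;\,t\ge 0\big)$ a.s.\ in $M_1$, which is the asserted weak limit.

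I do not anticipate a real obstacle: once the time-change identity $N^{\bz,q}=N^{\bz,\hat q}\circ(b_n\operatorname{Id})$ is observed, everything is an instance of the deterministic $M_1$-continuity facts in Lemma~\ref{lem:composition}. The only mildly delicate point is that the composing maps $b_n\operatorname{Id}$ depend on $n$, so one cannot invoke a single continuous-mapping theorem directly; reducing first to almost-sure convergence via Skorohod representation and then applying the pathwise composition and addition statements sidesteps this, leaving only the elementary asymptotics of $b_n$ and $c_n(b_n-\kappa)$ recorded above.
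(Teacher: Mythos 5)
Your proof is correct, and it is essentially the argument the paper has in mind: the paper simply cites Theorem~\ref{thm:limic} and Lemma~\ref{lem:composition} without spelling it out, and your deterministic time-change identity $N^{\bz,q}=N^{\bz,\hat q}\circ(b_n\operatorname{Id})$ followed by Skorohod representation, Lemma~\ref{lem:composition}(3) for the composition with the linear inner map, and Lemma~\ref{lem:composition}(1) for the harmless continuous additive drift is exactly the intended route. The small caveats you flag (using the Remark after Theorem~\ref{thm:limic} to cover $(\beta,\btheta)\in\cI^\circ$, and the implicit standing assumption $\kappa>0$, which holds in every use of the corollary in the paper) are the right things to note.
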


	Examining \eqref{eqn:Xdef}, we will want to understand the asymptotics of $q_{12}/q_{jj}$ for each $j$. Ignoring the $o(1)$ terms in equation \eqref{eqn:Qextension} and $o(c_n^{-1})$ term in \eqref{eqn:weightWindow} which are both negligible in the limit, in the limit we have
	\begin{align*}
		\frac{q_{12}}{q_{22}} &= \frac{\kappa_{12} c_n + \lambda_{12}}{\kappa_{22} \sigma_2(\bw^1)^{-1} + \lambda_{22}} = \frac{\kappa_{12} +\lambda_{12}c_n^{-1}}{\kappa_{22} \frac{1}{\sigma_{2}(\bw^2)c_n} + \lambda_{11} c_n^{-1}}\\
		&=\frac{\kappa_{22} +\lambda_{12}c_n^{-1}}{\kappa_{22} (1+\alpha c_n^{-1}) + \lambda_{22} c_n^{-1}} = \frac{\kappa_{12}}{\kappa_{22}} + \frac{1}{\kappa_{22}}\left(\lambda_{12}  - \frac{\alpha \kappa_{12} + \lambda_{22}\kappa_{12}}{\kappa_{22}}\right)c_n^{-1} + o(c_n^{-1}). 
	\end{align*} A similar argument holds for $q_{12}/q_{11}$. Using the fact that $GL_2(\R)\subset \R^{2\times 2}$ is an open subset and $A\mapsto A^{-1}$ is continuous on $GL_2(\R)$ we get the following lemma:
	\begin{lemma}\label{lem:q12/qiiApprox}
		Suppose Assumption \ref{ass:main}. 
		In either Regime \ref{regime:classic} or Regime \ref{regime:interacting} then
		\begin{align*}
			\frac{q_{12}}{q_{11}} &= \frac{\kappa_{12}}{\kappa_{11}} + \frac{\alpha \kappa_{12}\kappa_{11}+ \lambda_{12}\kappa_{11} - \lambda_{11}\kappa_{12}}{\kappa_{11}^2} c_n^{-1} + o(c_n^{-1})\\
			\frac{q_{12}}{q_{22}} &= \frac{\kappa_{12}}{\kappa_{22}} + \frac{-\alpha \kappa_{12}\kappa_{22}+ \lambda_{12}\kappa_{22} - \lambda_{22}\kappa_{12}}{\kappa_{22}^2} c_n^{-1} + o(c_n^{-1}).
		\end{align*}
		In particular,  $      R = \operatorname{diag}(Q)^{-1} Q \longrightarrow \operatorname{diag}(K)^{-1}K.
		$
	\end{lemma}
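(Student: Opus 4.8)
The statement is a routine consequence of expanding the entries of $Q$ from Assumption \ref{ass:main} and using continuity of matrix inversion on $GL_2(\R)$, essentially as the preceding informal computation of $q_{12}/q_{22}$ already indicates. The plan is to carry out the analogous expansion for $q_{12}/q_{11}$ (the one for $q_{12}/q_{22}$ being displayed above), and then pass to the limit in $R$.

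\emph{Step 1: set up the expansions.} From \eqref{eqn:Qextension}, ignoring the $o(1)$ matrix term (negligible after multiplying by $c_n$ in all later uses, and certainly negligible here since we only track $o(c_n^{-1})$), we have $q_{ij} = \kappa_{ij}(\sigma_2(\bw^i)\sigma_2(\bw^j))^{-1/2} + \lambda_{ij}$. In particular $q_{12} = \kappa_{12} c_n + \lambda_{12}$ and $q_{jj} = \kappa_{jj}\sigma_2(\bw^j)^{-1} + \lambda_{jj}$. Using \eqref{eqn:weightWindow}, write $\sigma_2(\bw^1)^{-1} = c_n\sqrt{\sigma_2(\bw^1)/\sigma_2(\bw^2)} = c_n(1 + \alpha c_n^{-1} + o(c_n^{-1}))$ and $\sigma_2(\bw^2)^{-1} = c_n(1 - \alpha c_n^{-1} + o(c_n^{-1}))$, the second because $\sqrt{\sigma_2(\bw^2)/\sigma_2(\bw^1)} = (1+\alpha c_n^{-1} + o(c_n^{-1}))^{-1} = 1 - \alpha c_n^{-1} + o(c_n^{-1})$.

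\emph{Step 2: divide and expand.} For $q_{12}/q_{11}$, factor $c_n$ out of numerator and denominator:
\begin{equation*}
	\frac{q_{12}}{q_{11}} = \frac{\kappa_{12} + \lambda_{12}c_n^{-1}}{\kappa_{11}(1 + \alpha c_n^{-1} + o(c_n^{-1})) + \lambda_{11}c_n^{-1}} = \frac{\kappa_{12} + \lambda_{12}c_n^{-1}}{\kappa_{11} + (\alpha\kappa_{11} + \lambda_{11})c_n^{-1} + o(c_n^{-1})}.
\end{equation*}
Since in Regimes \ref{regime:classic} and \ref{regime:interacting} we have $\kappa_{11}>0$, a first-order Taylor expansion of $x\mapsto 1/x$ at $\kappa_{11}$ gives
\begin{equation*}
	\frac{q_{12}}{q_{11}} = \frac{1}{\kappa_{11}}\left(\kappa_{12} + \lambda_{12}c_n^{-1}\right)\left(1 - \frac{\alpha\kappa_{11} + \lambda_{11}}{\kappa_{11}}c_n^{-1} + o(c_n^{-1})\right) = \frac{\kappa_{12}}{\kappa_{11}} + \frac{\lambda_{12}\kappa_{11} - \kappa_{12}(\alpha\kappa_{11}+\lambda_{11})}{\kappa_{11}^2}c_n^{-1} + o(c_n^{-1}),
\end{equation*}
which is the claimed formula after distributing the numerator. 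The computation for $q_{12}/q_{22}$ is identical with the roles of the two weight vectors swapped, so that $\alpha$ is replaced by $-\alpha$ (from $\sigma_2(\bw^2)^{-1}$) and all subscripts $1$ become $2$; this yields the second displayed formula. Finally, since $c_n\to\infty$, both first-order coefficients are multiplied by $c_n^{-1}\to 0$, so $q_{12}/q_{11}\to\kappa_{12}/\kappa_{11}$ and $q_{12}/q_{22}\to\kappa_{12}/\kappa_{22}$, whence $R = \operatorname{diag}(Q)^{-1}Q\to\operatorname{diag}(K)^{-1}K$ entrywise.

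\emph{Main obstacle.} There is essentially no obstacle: the only point requiring care is that $\kappa_{jj}>0$, which is exactly what distinguishes Regimes \ref{regime:classic} and \ref{regime:interacting} from the nearly bipartite Regime \ref{regime:bipartite} (where $\kappa_{jj}=0$ and this lemma is not claimed), and which legitimizes the Taylor expansion of $1/x$ near $\kappa_{jj}$; the appeal to openness of $GL_2(\R)$ and continuity of inversion is only a clean way to phrase the final limit $R\to\operatorname{diag}(K)^{-1}K$ without tracking the error terms, since $\det(K)\ne 0$ ensures $\operatorname{diag}(K)^{-1}$ exists in both regimes.
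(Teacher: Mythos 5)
Your overall strategy — expand each entry of $Q$, divide, and Taylor-expand the denominator — is exactly what the paper does (the paper's only ``proof'' is an informal displayed computation just before the lemma). But there is a sign error in Step~1 that propagates to both final formulas. You wrote $\sigma_2(\bw^1)^{-1}=c_n\sqrt{\sigma_2(\bw^1)/\sigma_2(\bw^2)}$, but in fact
\[
c_n\sqrt{\frac{\sigma_2(\bw^1)}{\sigma_2(\bw^2)}}
=\frac{1}{\sqrt{\sigma_2(\bw^1)\sigma_2(\bw^2)}}\cdot\sqrt{\frac{\sigma_2(\bw^1)}{\sigma_2(\bw^2)}}
=\frac{1}{\sigma_2(\bw^2)},
\]
so the roles of the two weight vectors are reversed: the correct expansions are $\sigma_2(\bw^1)^{-1}=c_n\big(1-\alpha c_n^{-1}+o(c_n^{-1})\big)$ and $\sigma_2(\bw^2)^{-1}=c_n\big(1+\alpha c_n^{-1}+o(c_n^{-1})\big)$, i.e.\ with the opposite signs on $\alpha$ from what you used.

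As a result, your Step~2 produces
\[
\frac{q_{12}}{q_{11}}=\frac{\kappa_{12}}{\kappa_{11}}+\frac{\lambda_{12}\kappa_{11}-\alpha\kappa_{12}\kappa_{11}-\lambda_{11}\kappa_{12}}{\kappa_{11}^2}\,c_n^{-1}+o(c_n^{-1}),
\]
which has $-\alpha\kappa_{12}\kappa_{11}$ rather than the lemma's $+\alpha\kappa_{12}\kappa_{11}$; you claim this ``is the claimed formula after distributing'' but it is not. Likewise your $q_{12}/q_{22}$ would carry $+\alpha$ instead of the lemma's $-\alpha$. The fix is purely the Step~1 relabeling: redo the computation with $\sigma_2(\bw^1)^{-1}=c_n(1-\alpha c_n^{-1}+o(c_n^{-1}))$ for the $q_{12}/q_{11}$ expansion and $\sigma_2(\bw^2)^{-1}=c_n(1+\alpha c_n^{-1}+o(c_n^{-1}))$ for $q_{12}/q_{22}$; everything else (using $\kappa_{jj}>0$ to justify the expansion and continuity of inversion for the limit of $R$) is fine, and the final statement $R\to\operatorname{diag}(K)^{-1}K$ is unaffected because the $o(1)$ corrections vanish regardless of the sign.
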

	
	Recall $J^\bccc$ defined in \eqref{eqn:Jcreg}. In \cite{AL.98}, it is shown that the process is well-defined and basic scaling properties of their excursion lengths are noted. We include such a relation here.
	\begin{lemma}\label{lem:jumplevypart}
		For any $\bccc\in \ell^3_\downarrow$ and $a>0$
		\begin{equation*}
			\left(J^\bccc(at);t\ge 0\right) \overset{d}{=} (a^{-1} J^{a\bccc}(t);t\ge 0).
		\end{equation*}
		More generally, for any $(\beta,\btheta, \lambda)\in \cI^\circ\times \R$ and any $a>0$
		\begin{equation*}
			\left(a W^{\beta,\btheta,\lambda}(at) ;t\ge 0\right)\overset{d}{=} \left(W^{a^3\beta,a\btheta,a^2\lambda}(t);t\ge 0\right).
		\end{equation*}
	\end{lemma}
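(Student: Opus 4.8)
The plan is to establish the first identity by a change of variables in the defining exponential clocks, and then deduce the second from Brownian scaling together with the first.

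For the first identity, fix $\bccc = (c_p)_{p\ge 1}\in\ell^3_\downarrow$ and $a>0$, and let $(\xi_p)_{p\ge 1}$ be the independent exponentials with $\xi_p\sim\Exp(c_p)$ appearing in \eqref{eqn:Jcreg}. Set $\eta_p := \xi_p/a$. Since $\PR(\xi_p/a > t) = \PR(\xi_p > at) = e^{-a c_p t}$, we have $\eta_p\sim\Exp(ac_p)$, and independence is preserved under this deterministic pointwise rescaling, so $(\eta_p)_{p\ge 1}$ has exactly the law of the clocks used to define $J^{a\bccc}$. Now for every $t\ge 0$,
\[
J^\bccc(at) = \sum_{p\ge 1} c_p\big(1_{[\xi_p\le at]} - c_p a t\big) = \sum_{p\ge 1} c_p\big(1_{[\eta_p\le t]} - a c_p t\big) = a^{-1}\sum_{p\ge 1} (ac_p)\big(1_{[\eta_p\le t]} - (ac_p) t\big),
\]
where the series converges almost surely by the construction of $J^\bccc$ in \cite{AL.98}. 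The right-hand side is precisely $a^{-1}J^{a\bccc}(t)$ evaluated on the clocks $(\eta_p)_{p\ge 1}$, so the processes $(J^\bccc(at);t\ge 0)$ and $(a^{-1}J^{a\bccc}(t);t\ge 0)$ have the same law.

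For the second identity, recall from \eqref{eqn:thinnedLevy} that $W^{\beta,\btheta,\lambda}(t) = \sqrt\beta\,W(t) + \lambda t - \tfrac\beta2 t^2 + J^\btheta(t)$, with $W$ a standard Brownian motion independent of $J^\btheta$. Hence
\[
a\,W^{\beta,\btheta,\lambda}(at) = a\sqrt\beta\,W(at) + a^2\lambda t - \tfrac{a^3\beta}{2}t^2 + a\,J^\btheta(at).
\]
By Brownian scaling $(W(at);t\ge 0)\overset{d}{=}(\sqrt a\,W(t);t\ge 0)$, so $a\sqrt\beta\,W(at)\overset{d}{=}\sqrt{a^3\beta}\,W(t)$; and by the first identity $a\,J^\btheta(at)\overset{d}{=}J^{a\btheta}(t)$. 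These two couplings can be chosen jointly because the Brownian and jump components are independent, giving
\[
a\,W^{\beta,\btheta,\lambda}(at)\overset{d}{=}\sqrt{a^3\beta}\,W(t) + a^2\lambda t - \tfrac{a^3\beta}{2}t^2 + J^{a\btheta}(t) = W^{a^3\beta,\,a\btheta,\,a^2\lambda}(t)
\]
as processes in $t$; the right-hand side is well-defined since $(a^3\beta, a\btheta)\in\cI^\circ$ for $a>0$ and $\btheta\in\ell^3_\downarrow$.

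There is no serious obstacle here. The only points requiring care are that the pointwise rescaling $\xi_p\mapsto\xi_p/a$ turns a family of independent $\Exp(c_p)$ variables into a family of independent $\Exp(ac_p)$ variables, so that the full joint law is the correct one, and that these manipulations take place inside an almost surely convergent series, which is guaranteed by the construction of $J^\bccc$ in \cite{AL.98}.
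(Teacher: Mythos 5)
Your proof is correct and takes essentially the same approach as the paper: the first identity follows from the change of clocks $\xi_p\mapsto\xi_p/a$ (which the paper summarizes as ``standard properties of exponential random variables''), and the second follows by combining Brownian scaling with the first identity on the independent jump part. You simply spell out the exponential-clock computation more explicitly than the paper does.
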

	\begin{proof}
		The first follows from standard properties of exponential random variables. For the second note that for a standard Brownian motion $B$
		\begin{align*}
			&\left(W^{\beta,\btheta,\lambda}(at);t\ge 0\right) \overset{d}{=} \left(\sqrt{a\beta} B(t) - \frac{a^2\beta}{2}t^2+ \lambda a t + \frac{1}{a} J^{a\btheta}(t);t\ge 0\right)\\
			&= \left(\frac{1}{a}  \left(\sqrt{a^3\beta} B(t) - \frac{a^3\beta}{2}t^2 + \lambda a^2 t +J^{a\btheta}(t)\right);t\ge 0\right)=a^{-1} W^{a^3\beta,a\btheta, a^2 \lambda}.
		\end{align*}
	\end{proof}

	The above lemma has the following useful corollary.
	\begin{corollary}\label{cor:vectorvalue}
		Suppose that $(\beta_j,\btheta_j)\in \cI^\circ$ with $\sum_{j} (\beta_j,\btheta_j)\in \cI$. Fix $\Lambda$, a symmetric $2\times 2$ matrix. For any $\bv\in (0,\infty)^2$ let 
		\begin{equation}
			\label{eqn:betabv} \beta^\bv = v_1^3\beta_1+v_2^3 \beta_2,\qquad \btheta^\bv = v_1\btheta_1 \bowtie v_2\btheta_2,\qquad \lambda^\bv = \langle \bv, \Lambda\bv\rangle.
		\end{equation} If $\bu,\bv\in (0,\infty)^2$ are linearly dependent then
		\begin{equation*}
			\bzeta^{\beta^\bv, \btheta^\bv, \lambda^\bv} \bv \overset{d}{=} \bzeta^{\beta^\bu,\btheta^\bu,\lambda^\bu}\bu.
		\end{equation*} 
	\end{corollary}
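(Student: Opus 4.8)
\emph{Proof plan.} The plan is to reduce everything to the scaling identity recorded in Lemma \ref{lem:jumplevypart}. Since $\bu,\bv\in(0,\infty)^2$ are linearly dependent and have strictly positive entries, write $\bu = a\bv$ for some $a>0$. The first step is to record the elementary identities $\beta^\bu = v_1^3 a^3\beta_1 + v_2^3 a^3\beta_2 = a^3\beta^\bv$, $\lambda^\bu = \langle a\bv,\Lambda a\bv\rangle = a^2\lambda^\bv$, and $\btheta^\bu = a\btheta^\bv$; the last of these uses that the merge-and-sort operation $\bowtie$ commutes with multiplication by a positive scalar, i.e.\ $a\,(v_1\btheta_1\bowtie v_2\btheta_2) = (av_1\btheta_1)\bowtie(av_2\btheta_2)$, which holds because scaling the entries of two nonincreasing sequences by the common factor $a>0$ preserves the order in which they interleave.

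Next I would invoke the second display of Lemma \ref{lem:jumplevypart} with $(\beta,\btheta,\lambda) = (\beta^\bv,\btheta^\bv,\lambda^\bv)$ and this $a$, giving $W^{\beta^\bu,\btheta^\bu,\lambda^\bu} = W^{a^3\beta^\bv,\,a\btheta^\bv,\,a^2\lambda^\bv} \overset{d}{=} \bigl(aW^{\beta^\bv,\btheta^\bv,\lambda^\bv}(at);t\ge 0\bigr)$. Here one should note that the hypothesis $\sum_j(\beta_j,\btheta_j)\in\cI$ forces $\beta^\bv>0$ or $\btheta^\bv\in\ell^3_\downarrow\setminus\ell^2_\downarrow$ (since $v_1,v_2>0$), and likewise for $\bu$, so that $\bzeta^{\beta^\bv,\btheta^\bv,\lambda^\bv}$ and $\bzeta^{\beta^\bu,\btheta^\bu,\lambda^\bu}$ are genuinely defined through the Aldous--Limic excursion construction in \eqref{eqn:MCextremedef}.

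The key observation is then that the excursion-length functional $\LL^\downarrow_\infty$ is equivariant under the combined time--space rescaling $Z\mapsto(aZ(at);t\ge0)$: for any c\`adl\`ag $Z$ one has $aZ(at)>\inf_{s\le t}aZ(as)$ if and only if $Z(at)>\inf_{u\le at}Z(u)$, so the excursion intervals of $(aZ(at);t\ge0)$ above its running infimum are exactly $\{(l/a,r/a): (l,r)\in\EE_\infty(Z)\}$, and hence $\LL^\downarrow_\infty\bigl(aZ(a\cdot)\bigr) = a^{-1}\LL^\downarrow_\infty(Z)$. Applying this to the displayed distributional identity gives $\bzeta^{\beta^\bu,\btheta^\bu,\lambda^\bu} \overset{d}{=} a^{-1}\bzeta^{\beta^\bv,\btheta^\bv,\lambda^\bv}$. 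Finally, multiplying through by the constant vector $\bu = a\bv$ yields $\bzeta^{\beta^\bu,\btheta^\bu,\lambda^\bu}\,\bu \overset{d}{=} \bigl(a^{-1}\bzeta^{\beta^\bv,\btheta^\bv,\lambda^\bv}\bigr)(a\bv) = \bzeta^{\beta^\bv,\btheta^\bv,\lambda^\bv}\,\bv$, which is the assertion.

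There is no substantial obstacle in this argument; the only points that warrant a line of care are the equivariance of $\LL^\downarrow_\infty$ under the simultaneous rescaling of time and space (immediate from the definition, but worth stating) and the verification that the limiting $\ell^2_\downarrow$-valued objects are well defined under the standing hypothesis $\sum_j(\beta_j,\btheta_j)\in\cI$, which passes to $\bu$ and $\bv$ because their coordinates are strictly positive.
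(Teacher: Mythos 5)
Your proof is correct and matches the paper's (implicit) approach: the corollary is stated in the paper immediately after Lemma \ref{lem:jumplevypart} as a consequence of that scaling identity, and your argument — writing $\bu = a\bv$, deducing $(\beta^\bu,\btheta^\bu,\lambda^\bu)=(a^3\beta^\bv,a\btheta^\bv,a^2\lambda^\bv)$, applying the lemma to get $W^{\beta^\bu,\btheta^\bu,\lambda^\bu}\overset{d}{=}(aW^{\beta^\bv,\btheta^\bv,\lambda^\bv}(at);t\ge 0)$, and then reading off the $a^{-1}$ rescaling of the ordered excursion lengths — is exactly the intended route. Your care in checking that $\sum_j(\beta_j,\btheta_j)\in\cI$ and $v_1,v_2>0$ together put $(\beta^\bv,\btheta^\bv)\in\cI$ (so the excursion decomposition is available) is a worthwhile point that the paper leaves to the reader.
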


	The next lemma is useful in the sequel and follows from standard properties of Brownian motions. We omit the proof.
	\begin{lemma}\label{lem:summingtwothinned}
		Suppose that $W^{\beta_j,\btheta_j,\lambda_j}_j$, $j\in[2]$, are independent thinned L\'{e}vy processes defined in \eqref{eqn:thinnedLevy}. Then
		\begin{equation*}
			W_1^{\beta_1,\btheta_1,\lambda_1}+W_2^{\beta_2,\btheta_2,\lambda_2}\overset{d}{=} W^{\beta,\btheta,\lambda}
		\end{equation*}
		where $\beta= \beta_1+\beta_2, \lambda = \lambda_1+\lambda_2$ and $\btheta = \btheta_1\bowtie\btheta_2$.
	\end{lemma}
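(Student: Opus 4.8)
The plan is to expand both sides via the defining formula \eqref{eqn:thinnedLevy} and match the Gaussian, deterministic, and pure-jump ingredients separately, exploiting the independence of these three ingredients in the construction. Write $W_i^{\beta_i,\btheta_i,\lambda_i}(t) = \sqrt{\beta_i}\,B_i(t) + \lambda_i t - \tfrac{\beta_i}{2}t^2 + J^{\btheta_i}(t)$, where $B_1,B_2$ are independent standard Brownian motions, and $J^{\btheta_1}, J^{\btheta_2}$ are built (as in \eqref{eqn:Jcreg}) from two mutually independent families of exponential clocks $(\xi^1_p)_{p\ge1}$ and $(\xi^2_q)_{q\ge1}$, everything independent of $B_1,B_2$. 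Summing,
\begin{equation*}
	W_1^{\beta_1,\btheta_1,\lambda_1}(t)+W_2^{\beta_2,\btheta_2,\lambda_2}(t)=\Big(\sqrt{\beta_1}\,B_1(t)+\sqrt{\beta_2}\,B_2(t)\Big)+(\lambda_1+\lambda_2)t-\frac{\beta_1+\beta_2}{2}t^2+\big(J^{\btheta_1}(t)+J^{\btheta_2}(t)\big).
\end{equation*}

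First I would handle the Gaussian term: $\sqrt{\beta_1}\,B_1+\sqrt{\beta_2}\,B_2$ is a continuous centered Gaussian process with stationary independent increments and variance $(\beta_1+\beta_2)t$ at time $t$, hence equal in law to $\sqrt{\beta}\,B$ for a standard Brownian motion $B$, with $\beta=\beta_1+\beta_2$; and since it is a measurable function of $(B_1,B_2)$ alone, this identification is joint with, indeed independent of, the exponential clocks. The deterministic part is immediate: $(\lambda_1+\lambda_2)t-\tfrac{\beta_1+\beta_2}{2}t^2 = \lambda t - \tfrac{\beta}{2}t^2$ with $\lambda=\lambda_1+\lambda_2$.

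Next I would address the jump part. Since $\btheta_1,\btheta_2\in\ell^3_\downarrow$ we have $\sum_p(\theta^1_p)^3+\sum_q(\theta^2_q)^3<\infty$, so $\btheta=\btheta_1\bowtie\btheta_2\in\ell^3_\downarrow$ and $J^{\btheta}$ is well-defined. The key observation is that $J^{\bccc}$ depends on $\bccc$ only as a multiset of rates: the series in \eqref{eqn:Jcreg} converges a.s.\ and in $L^2$ on compact time intervals (this is the basic fact established in \cite{AL.98}), hence is unconditionally convergent, so rearranging the clocks into the decreasing order does not change its law. Concretely, construct $J^{\btheta}$ from independent $\Exp(\cdot)$ clocks indexed by the entries of $\btheta$; partition that index set according to whether a given entry came from $\btheta_1$ or from $\btheta_2$ (breaking ties among equal values in a fixed consistent way); the two resulting subfamilies are independent, the corresponding partial sums are distributional copies of $J^{\btheta_1}$ and $J^{\btheta_2}$, and their sum over the whole index set is $J^{\btheta}$. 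Thus $J^{\btheta_1}+J^{\btheta_2}\overset{d}{=}J^{\btheta}$, jointly with and independent of the Brownian part. Combining the three pieces, and using that in both $W_1+W_2$ and $W^{\beta,\btheta,\lambda}$ the Gaussian ingredient is independent of the jump ingredient, yields the claimed distributional identity.

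The only point requiring (mild) care is this reordering argument for the jump component, i.e.\ verifying that $J^{\bccc}$ is insensitive to the order of $\bccc$; this is exactly the content of the almost-sure and $L^2$ convergence of the defining series noted in \cite{AL.98}. Everything else is the routine bookkeeping of matching independent summands, which is why we omit the details.
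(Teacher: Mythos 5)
The paper omits the proof entirely, remarking only that it ``follows from standard properties of Brownian motions.'' Your argument is correct and supplies the details the paper takes for granted: you decompose each $W_i^{\beta_i,\btheta_i,\lambda_i}$ into its Gaussian, drift, and pure-jump ingredients, sum the Brownian terms using the fact that independent Brownian motions with variance parameters $\beta_1,\beta_2$ add to one with parameter $\beta_1+\beta_2$, add the drifts trivially, and recognize $J^{\btheta_1}+J^{\btheta_2}\overset{d}{=}J^{\btheta_1\bowtie\btheta_2}$ because the series defining $J^{\bccc}$ converges a.s. and in $L^2$ on compacts and hence is insensitive to reordering the independent exponential clocks. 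This is exactly the ``standard'' route the authors had in mind; the only point genuinely needing justification is the order-insensitivity of $J^{\bccc}$, which you correctly flag and correctly cite to \cite{AL.98}. No gaps.
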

	
	\subsection{Some Fluctuation Theory}
	In this section we recall some basic properties of thinned L\'{e}vy processes $W^{\beta,\btheta,\lambda}$ from \eqref{eqn:thinnedLevy}.

	\begin{lemma}[Aldous and Limic {\cite[Proposition 14]{AL.98}}]\label{lem:Zpath}
		Suppose $(\beta,\btheta)\in \cI$ and let $Z(t) = W^{\beta,\btheta,\lambda}(t)$. Then
		\begin{enumerate}[(a)]
			\item $Z(t)\to -\infty$ in probability as $t\to\infty$;
			\item $\PR(Z(t)> \inf_{s\le t} Z(s) ) = 1$ for all $t> 0$.
			\item $\sup\{r-l: (l,r)\in \EE_\infty(Z); l\ge t\} \to 0$ in probability as $t\to\infty$;
			\item With probability $1$, $\{r: (l,r)\in \EE_\infty(Z)\}$ has no isolated points;
			\item If $(l_1,r_1), (l_2,r_2)\in \EE_\infty(Z)$ and $l_1<l_2$ then $Z(r_2)< Z(r_1).$
		\end{enumerate}
	\end{lemma}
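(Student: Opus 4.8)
This is \cite[Proposition~14]{AL.98}, and the plan is to reprove it along the lines of that paper; I sketch the main steps. Write $m(t)=\inf_{s\le t}Z(s)$, and recall that $Z=W^{\beta,\btheta,\lambda}$ is a process with independent (non-stationary) increments and no negative jumps, namely the superposition of an independent Brownian motion with variance parameter $\beta$ and the compensated jump part $J^{\btheta}$. The observation that drives (a) is that, although $J^{\btheta}$ is a mean-zero martingale when $\btheta\in\ell^{2}_\downarrow$, in general it carries a negative super-linear drift:
\[
\E[J^{\btheta}(t)]=-\sum_{p\ge1}\theta_p\bigl(\theta_p t-1+e^{-\theta_p t}\bigr)\le 0,
\]
and since each summand divided by $t$ tends to $\theta_p^{2}$ as $t\to\infty$, Fatou's lemma gives $-\E[J^{\btheta}(t)]/t\to\sum_p\theta_p^{2}$, which is $+\infty$ exactly when $\btheta\notin\ell^{2}_\downarrow$. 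Because $\E[Z(t)]=\lambda t-\tfrac{\beta}{2}t^{2}+\E[J^{\btheta}(t)]$ and $(\beta,\btheta)\in\cI$ forces $\beta>0$ or $\btheta\notin\ell^{2}_\downarrow$, in either case $\E[Z(t)]/t\to-\infty$; combined with $\Var Z(t)=\beta t+\sum_p\theta_p^{2}e^{-\theta_p t}(1-e^{-\theta_p t})\le(\beta+\sigma_3(\btheta))\,t=o(\E[Z(t)]^{2})$, Chebyshev's inequality gives $\PR(Z(t)>-M)\to 0$ for every $M$, proving (a).

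For (e): since $Z$ has no negative jumps, each excursion ends by returning exactly to its initial running-minimum level, $Z(r_j)=m(r_j)=\inf_{s\le r_j}Z(s)$; as the running minimum strictly decreases from one excursion to the next, $l_1<l_2$ gives $Z(r_2)<Z(r_1)$. For (b), fix $t>0$ and time-reverse: with $\widehat Z(s):=Z(t)-Z((t-s)-)$ for $s\in[0,t]$, one has $m(t)=Z(t)-\sup_{s\le t}\widehat Z(s)$ (the endpoints are harmless since a.s.\ no $\xi_p$ equals $0$ or $t$), so it suffices to show $\sup_{s\le t}\widehat Z(s)>0$ a.s. Now $\widehat Z$ is again an additive process started at $0$ whose jumps are the negatives of those of $Z$: if $\beta>0$ its Brownian component takes strictly positive values in every right-neighbourhood of $0$, and if $\beta=0$ then $\btheta\notin\ell^{2}_\downarrow$, so $Z$ has no Gaussian part and its local behaviour at time $t$ is that of an infinite-variation spectrally one-sided process, whence $\widehat Z$ is regular for $(0,\infty)$ and visits $(0,\infty)$ arbitrarily early. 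This is the step where $(\beta,\btheta)\in\cI$ is used most essentially.

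For (c) I would show that a.s.\ only finitely many excursions have length $\ge\eps$, which gives (c) at once since each such excursion has a finite left endpoint. Fixing a grid of mesh $\eps/4$, an excursion $(l,r)$ with $r-l\ge\eps$ forces $Z$ to set no new running minimum on $(\sigma,\sigma+\tfrac{\eps}{4})$ for the grid point $\sigma\in[l,l+\tfrac{\eps}{4}]$; I would bound the probability that $Z$ sets no new running minimum on $(\sigma,\sigma+\tfrac{\eps}{4})$ by $\PR(Z(\sigma)-m(\sigma)>A_\sigma)+\PR\bigl(\inf_{0<v\le\eps/4}(Z(\sigma+v)-Z(\sigma))\ge -A_\sigma\bigr)$ for a slowly growing threshold $A_\sigma$: the first term is small because the ``overshoot'' $Z(\sigma)-m(\sigma)$ stays tight uniformly in $\sigma$ (by the same time-reversal, the reversed process having increments of uniformly negative mean), and the second is small by the super-linear drift and bounded increment variance from (a); a Borel--Cantelli argument then finishes it. For (d), a positive Gaussian component, or infinite variation of the jump part, makes the closed record set $\{s:Z(s)=m(s)\}$ perfect, so no excursion right endpoint is isolated in $\cY_\infty(Z)$.

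The genuinely delicate point, present in (b), (c) and (d), is the regime $\beta=0$, $\btheta\notin\ell^{2}_\downarrow$: there $Z$ has infinite variation with no decomposition into a finite drift plus finitely active jumps, so one cannot argue pathwise and must invoke --- or reprove by the finite-dimensional approximation used in \cite{AL.98} --- that such spectrally one-sided additive processes are regular for a half-line and have perfect record sets. I expect this to be the main obstacle; everything else is moment estimates, time-reversal, and bookkeeping with running minima.
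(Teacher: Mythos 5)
The paper itself offers no proof of this lemma: it is stated as a citation to Aldous and Limic \cite[Proposition 14]{AL.98}, and the processes $W^{\beta,\btheta,\lambda}$ appearing here are exactly those studied there, so there is nothing in the paper to compare your argument against other than that reference. Your computation for (a) is correct and self-contained: the formula $\E[J^{\btheta}(t)]=-\sum_p\theta_p(\theta_p t-1+e^{-\theta_p t})$, the Fatou step giving $-\E[J^{\btheta}(t)]/t\to\sigma_2(\btheta)$, the variance bound $\Var Z(t)\le(\beta+\sigma_3(\btheta))t$, and the Chebyshev conclusion are all right, and this is essentially the argument in \cite{AL.98}.

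For (b)--(e), your sketch correctly identifies the techniques (time reversal about a fixed $t$, moment estimates along a grid, regularity of the reversed process for $(0,\infty)$), and you are candid that the $\beta=0$, $\btheta\notin\ell^2_\downarrow$ case is where the real work is. I would add one concrete caveat on (e): the assertion that ``the running minimum strictly decreases from one excursion to the next'' is not a free observation. One can have $r_1=l_2$ with $m(\cdot):=\inf_{s\le\cdot}Z(s)$ constant on $[r_1,r_2]$, in which case $Z(r_1)=Z(r_2)$; ruling this out requires showing that a.s.\ $Z$ does not attain a local minimum at any excursion right endpoint, which is precisely the content of item (ii) of Definition~\ref{def:good} and of (d). So (e) is not a pathwise triviality but rides on the same regularity as (b)--(d). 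None of this is a mistake in your outline -- it is an honest sketch -- but if you were to write the proof out, (b), (d) and (e) would all need the regularity/perfectness input that you flag at the end, and that input is the actual theorem being cited; filling it in would essentially reproduce \cite[Proof of Proposition 14]{AL.98}.
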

	
	The following lemma was observed in \cite[Lemma 15]{DvdHvLS.20} in the case where $\beta = 0$. The same proof works in general. 
	
	\begin{lemma}[Dhara et al.\cite{DvdHvLS.20}]\label{lem:Zgood}
		With probability $1$, $W^{\beta,\btheta,\lambda}$ is good on $\R_+$ for any $(\beta,\btheta,\lambda)\in \cI\times \R$.    In particular, if $(\beta,\btheta,\lambda)\in \cI\times \R$, then with probability $1$, $W^{\beta,\btheta,\lambda}$ is continuous at $r$ for each $r\in\cY_\infty(W^{\beta,\btheta,\lambda})$.
	\end{lemma}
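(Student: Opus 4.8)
The plan is to verify, with probability one, each of the five defining properties (i)--(v) of Definition \ref{def:good} for $\psi = Z := W^{\beta,\btheta,\lambda}$. Since $Z$ has no negative jumps (the jumps of $J^{\btheta}$ are positive and the remaining terms of \eqref{eqn:thinnedLevy} are continuous), its running infimum $M(t) = \inf_{s\le t} Z(s)$ is continuous and non-increasing, a fact used repeatedly below. By the remark following Definition \ref{def:good}, property (i) follows from (ii)--(iv), so only (ii)--(v) must be checked, and all four will be deduced from the path properties recorded in Lemma \ref{lem:Zpath}, which applies since $(\beta,\btheta)\in\cI$.

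Properties (iii)--(v) are essentially immediate. Property (iii)---that $\cY_\infty(Z)$ has no isolated points---is exactly Lemma \ref{lem:Zpath}(d). For (iv), I would apply Tonelli's theorem: for each $T>0$,
\begin{equation*}
	\E\Big[\Leb\big\{t\in[0,T]:\, Z(t)=M(t)\big\}\Big]=\int_0^T \PR\Big(Z(t)=\inf_{s\le t}Z(s)\Big)\,dt = 0
\end{equation*}
by Lemma \ref{lem:Zpath}(b), so a.s.\ this set is Lebesgue-null for every integer $T$, hence for all $T$, which is (iv). For (v), observe that $t\mapsto \sup\{r-l:(l,r)\in\EE_\infty(Z),\,l\ge t\}$ is non-increasing and tends to $0$ in probability by Lemma \ref{lem:Zpath}(c); a non-increasing process converging to $0$ in probability converges to $0$ a.s., so a.s.\ for each $\eps>0$ there is a finite (random) $T$ past which no excursion has length $\ge\eps$. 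As excursion intervals are pairwise disjoint subsets of $[0,T]$ up to the one (if any) straddling $T$, at most $\lfloor T/\eps\rfloor+1$ of them have length $\ge\eps$, giving (v).

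The main obstacle is property (ii): that $Z$ attains no local minimum at any $r\in\cY_\infty(Z)$. I would argue by contradiction, following the case $\beta=0$ of \cite[Lemma 15]{DvdHvLS.20}, which carries over verbatim. Suppose $r\in\cY_\infty(Z)$ and $Z(s)\ge Z(r)=:m$ for all $s\in[r,r+\eps]$; since also $Z(s)\ge M(s)\ge M(r)=m$ for $s\le r$, the running infimum satisfies $M\equiv m$ on $[r,r+\eps]$. Let $(l,r)\in\EE_\infty(Z)$ be the excursion ending at $r$, whose floor value is $M(l)=Z(r)=m$. By property (iv), now established, the set $\{t\in[r,r+\eps]:Z(t)=m\}$ is Lebesgue-null, so $\{Z>m\}\cap(r,r+\eps)$ is a non-empty open set; let $(\alpha,\beta)$ be a connected component, so $\alpha\in[r,r+\eps)$. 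Using that $M\equiv m$ on $[r,r+\eps]$ and that $Z(\alpha)=m=M(\alpha)$ while $Z>m=M$ on $(\alpha,\beta)$, one checks that $\alpha$ is the left endpoint of an excursion $(\alpha,\beta')\in\EE_\infty(Z)$, whose floor value---and hence also the value $Z(\beta')$ at its right endpoint---equals $M(\alpha)=m$. Since $l<r\le\alpha$, Lemma \ref{lem:Zpath}(e) applied to the distinct excursions $(l,r)$ and $(\alpha,\beta')$ forces $Z(\beta')<Z(r)$, i.e.\ $m<m$, a contradiction. Hence a.s.\ no such $r$ exists, establishing (ii), and with it (i); the final ``in particular'' clause of the lemma is then immediate. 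I expect the only delicate point to be the bookkeeping at the end of this last paragraph---verifying that the component $(\alpha,\beta)$ genuinely lies inside a single excursion interval of $Z$ and correctly identifying $Z$ at its endpoints using the right-continuity of $Z$ and the flatness of $M$ on excursion intervals---which is exactly what is done in the referenced lemma.
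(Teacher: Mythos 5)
Your proposal is correct and takes essentially the same route as the paper, which simply cites \cite[Lemma 15]{DvdHvLS.20} and asserts that the argument there carries over from $\beta=0$ to general $(\beta,\btheta)\in\cI$. You have expanded that citation into a self-contained verification of properties (ii)--(v) of Definition \ref{def:good} from the path properties recorded in Lemma \ref{lem:Zpath}, with (i) following from the remark after the definition; the Tonelli argument for (iv), the monotone-convergence-in-probability argument for (v), and the contradiction via Lemma \ref{lem:Zpath}(e) for (ii) are all as in the cited lemma. One small imprecision worth noting: $\{Z>m\}\cap(r,r+\eps)$ need not be open for a c\`adl\`ag path, so rather than speaking of its connected components you should invoke (iv) directly to produce an excursion interval $(\alpha,\beta')\in\EE_\infty(Z)$ meeting $(r,r+\eps)$, then argue that disjointness from $(l,r)$ forces $\alpha\ge r$, and flatness of $M$ on $[r,r+\eps]$ gives $M(\alpha)=m$; the rest of your contradiction via Lemma \ref{lem:Zpath}(e) then goes through. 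You flagged this bookkeeping as the delicate step, and it is exactly the content of the referenced lemma.
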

	
	Finally, we will need one other property. 
	\begin{lemma}\label{lem:decreaseat0}
		Let $Z = W^{\beta,\btheta,\lambda}$ for some $(\beta,\btheta,\lambda)\in \cI\times \R$. Then a.s.
		\begin{equation*}
			\inf\{t: Z(t)< 0\} = 0.
		\end{equation*}
	\end{lemma}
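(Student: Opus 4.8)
The statement is that for $Z = W^{\beta,\btheta,\lambda}$ with $(\beta,\btheta,\lambda)\in\cI\times\R$, almost surely $\inf\{t: Z(t)<0\} = 0$; i.e. $Z$ goes strictly negative immediately. The plan is to separate into the two regimes $\beta>0$ and $\beta = 0$ (with $\btheta\in\ell^3_\downarrow\setminus\ell^2_\downarrow$, since $(\beta,\btheta)\in\cI$ forces one of these). When $\beta>0$, write $Z(t) = \sqrt{\beta}\,W(t) + \lambda t - \tfrac{\beta}{2}t^2 + J^{\btheta}(t)$. The drift terms $\lambda t - \tfrac{\beta}{2}t^2$ are $O(t)$ near $0$, and $J^{\btheta}$ is a mean-zero martingale that, on any short interval, has its first jump at an $\operatorname{Exp}$-distributed time bounded below away from $0$ with positive probability; so on a suitably small interval $[0,\eps]$ the process $Z$ agrees (on an event of probability close to $1$) with $\sqrt{\beta}W(t)$ plus an $O(t)$ perturbation. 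Since standard Brownian motion satisfies $\inf\{t: W(t)<0\} = 0$ a.s. (a classical fact, e.g. by Blumenthal's $0$--$1$ law, as $\{W(t)<0\}$ for small $t$ generates a tail event of probability $\ge$ its own value, hence $1$), and an $O(t)$ drift cannot beat the $\sqrt{t}\,\log\log(1/t)$-scale fluctuations of $W$ near $0$ (law of the iterated logarithm at $0$), we conclude $\inf\{t: Z(t)<0\} = 0$ a.s.

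For the case $\beta = 0$, $\btheta\in\ell^3_\downarrow\setminus\ell^2_\downarrow$, we have $Z(t) = \lambda t + J^{\btheta}(t) = \lambda t + \sum_{p\ge 1}\theta_p(1_{[\xi_p\le t]} - \theta_p t)$ where $\xi_p\sim\operatorname{Exp}(\theta_p)$. Here the compensator contributes drift $-\big(\sum_p\theta_p^2\big) t$; but $\sum_p\theta_p^2 = \infty$ exactly because $\btheta\notin\ell^2_\downarrow$. Concretely, for each fixed $M$, on the interval before the first jump of any of $\xi_1,\dots,\xi_M$ — which occurs at an $\operatorname{Exp}\big(\sum_{p\le M}\theta_p\big)$ time, positive a.s. — the partial process $\lambda t + \sum_{p\le M}\theta_p(-\theta_p t) + \sum_{p>M}\theta_p(1_{[\xi_p\le t]}-\theta_p t)$ has the deterministic part $\big(\lambda - \sum_{p\le M}\theta_p^2\big)t$, and the tail part $\sum_{p>M}\theta_p(1_{[\xi_p\le t]}-\theta_p t)$ is a martingale that is $O(t)$ in expectation-of-absolute-value on short intervals (its compensator is $\sum_{p>M}\theta_p^2\,t < \infty$). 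Choosing $M$ large enough that $\lambda - \sum_{p\le M}\theta_p^2 < -1$, we see that on a small random interval $(0,\eta)$, $Z(t)$ is bounded above by $-t/2$ with probability close to $1$; letting the probability $\to 1$ and intersecting over a sequence of such events gives $Z(t)<0$ for all small $t>0$ a.s. on that event, and a Blumenthal $0$--$1$ law argument (the event $\{\inf\{t:Z(t)<0\}=0\}$ is in $\mathcal{F}_{0+}$) upgrades this to probability $1$.

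The main obstacle is handling the tail contribution $\sum_{p>M}\theta_p(1_{[\xi_p\le t]}-\theta_p t)$ uniformly near $t = 0$: one must show it does not produce an upward excursion of size comparable to $t$ with non-negligible probability. The clean way is to apply Doob's $L^2$ maximal inequality to this tail martingale on $[0,\eps]$, giving $\E\big[\sup_{t\le\eps}\big(\sum_{p>M}\theta_p(1_{[\xi_p\le t]}-\theta_p t)\big)^2\big] \le 4\eps\sum_{p>M}\theta_p^2$, so its supremum is $O(\sqrt{\eps})$ in $L^2$ — hence $o(1)$ relative to the linear downward push $t\mapsto -t/2$ only if we are careful, but in fact combined with the strong-drift term it suffices since we only need $Z(t)<0$, i.e. the tail martingale supremum to be less than $t/2 + $ (first-jump cushion), which holds on an event of probability $\ge 1 - C\eps\sum_{p>M}\theta_p^2/(\text{something})$ after a further dyadic/chaining argument over $[2^{-k-1}\eps, 2^{-k}\eps]$. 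An alternative, possibly cleaner, is to invoke Lemma~\ref{lem:Zpath}(b), which gives $\PR(Z(t)>\inf_{s\le t}Z(s)) = 1$ for every fixed $t>0$: this says $Z$ is strictly above its running infimum at every deterministic time, which by right-continuity and a limiting argument over a countable dense set of times forces the running infimum to be strictly decreasing from $0$, hence $Z$ must drop below $Z(0) = 0$ immediately. I would try this route first, as it reduces the lemma to an already-cited fact with only a short measurability/limiting argument; the Blumenthal $0$--$1$ law argument above is the fallback.
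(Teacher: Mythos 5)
The paper's proof is a one-line coupling that your proposal does not find, and the routes you do propose have genuine gaps. The paper replaces each indicator $1_{[\xi_p\le t]}$ by an independent rate-$\theta_p$ Poisson process $I_p(t)$, obtaining a true spectrally positive L\'evy process
\[
L(t)=\sqrt{\beta}\,B(t)+\lambda t+\sum_p\theta_p\bigl(I_p(t)-\theta_p t\bigr)
\]
that dominates $Z$ pathwise ($Z(t)\le L(t)$ for all $t$), and then invokes Bertoin \cite{Bertoin.96}, Theorem~VII.1, to conclude $\inf\{t:L(t)<0\}=0$ a.s.; the lemma for $Z$ follows by domination. This sidesteps all of your casework.

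Concrete issues with your proposal. In the $\beta>0$ branch you claim that on a short interval $J^{\btheta}$ has its first jump at a strictly positive Exp-distributed time with positive probability; this requires $\sum_p\theta_p<\infty$, which does not follow from $\btheta\in\ell^3_\downarrow$. When $\sum_p\theta_p=\infty$ (compatible with $\beta>0$), the first jump time $\min_p\xi_p$ is a.s.~$0$ and the compensator drift $\sum_p\theta_p^2 t$ may even be infinite, so the decomposition of $Z$ into $\sqrt{\beta}W$ plus an $O(t)$ perturbation fails. In the $\beta=0$ branch, your Doob estimate is written incorrectly: since $\E\bigl[\bigl(\sum_{p>M}\theta_p(1_{[\xi_p\le\eps]}-\theta_p\eps)\bigr)^2\bigr]=\sum_{p>M}\theta_p^2\Var(1_{[\xi_p\le\eps]})\le\eps\sum_{p>M}\theta_p^3$, the bound is $4\eps\sum_{p>M}\theta_p^3$, not $4\eps\sum_{p>M}\theta_p^2$ (the latter is $+\infty$ when $\btheta\notin\ell^2$). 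More importantly, the supremum is of order $\sqrt{\eps}$ while the engineered drift is of order $\eps$, so the drift does not dominate; you notice this and defer to an unspecified chaining argument, so the estimate does not close as written. The Blumenthal $0$--$1$ law step also needs its own justification here, since $Z$ is not a L\'evy process. Finally, the ``cleaner alternative'' via Lemma~\ref{lem:Zpath}(b) does not work: that lemma says $\PR(Z(t)>\inf_{s\le t}Z(s))=1$ for each fixed $t>0$, which is perfectly consistent with $\{Z(s)\ge 0\ \forall s\in[0,\delta]\}$; on that event $\inf_{s\le t}Z(s)=0$ for $t\le\delta$ and the lemma only gives $Z(t)>0$ for rational $t\in(0,\delta]$, producing no contradiction, and nothing in ``right-continuity plus a countable dense set'' forces the running infimum to decrease strictly from $0$. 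You would need some genuinely new input --- goodness of the path, a Blumenthal argument made rigorous for this process, or the coupling above --- to close the argument.
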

	
	\begin{proof}
		This is established in \cite{AL.98} while proving a different result. Let $I_p(t)$ be rate $\theta_p$ independent rate Poisson processes and let $B$ be a standard Brownian motion. Let 
		\begin{equation*}
			L(t) = \sqrt{\beta}B(t) + \lambda t + \sum_{p} \theta_p(I_p(t)-\theta_pt).
		\end{equation*}One can easily couple $Z$ and $L$ so that $Z(t)\le L(t)$ for all $t\ge 0$. By Theorem VII.1 in \cite{Bertoin.96}, $ \PR(\inf\{t: L(t)<0\} = 0) = 1.$ The result now follows.
	\end{proof}

	\subsection{Convergence of $V$ in regime \ref{regime:classic}}
	
	\begin{proposition}\label{prop:vconvClass}
		Suppose Assumption \ref{ass:main} and Regime \ref{regime:classic}. Then jointly in the $M_1$ topology
		\begin{align}
			\nonumber &\left(c_n V_n(t);t\ge 0\right) \weakarrow \frac{1}{\kappa_{11}^2} W^{\beta^\bv,\btheta^\bv,\lambda^\bv}\\
			\label{eqn:Un2compXn21}&\left( U_n^2 \circ X_{n}^{2,1}(t);t\ge 0\right) \weakarrow \left(\frac{\kappa_{11} \kappa_{12}}{\kappa_{22}(1-\kappa_{22})}t;t\ge 0\right).
		\end{align}
		where $\bv = \begin{bmatrix}
			\kappa_{11}\\ \frac{\kappa_{11}\kappa_{12}}{1-\kappa_{22}}
		\end{bmatrix}$ and $\beta^\bv,\btheta^\bv, \lambda^\bv$ are defined via \eqref{eqn:betabv}.

	\end{proposition}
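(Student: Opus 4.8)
The plan is to build $V_n$ out of the blocks $N^1_n,N^2_n$ using the $M_1$-fluctuation calculus of Section~\ref{sec:generaltopology}, tracking limiting drifts so that the cancellation designed into $V_n$ becomes visible. \emph{Step 1 (building blocks).} Since $\sqrt{\sigma_2(\bw^1)/\sigma_2(\bw^2)}\to1$ we have $c_n\sigma_2(\bw^i)\to1$, and $q_{ii}=\kappa_{ii}/\sigma_2(\bw^i)+\lambda_{ii}+o(1)$, so Corollary~\ref{cor:Limic} gives, jointly by independence of $N^1_n$ and $N^2_n$,
\[
c_n\big(N^i_n-\kappa_{ii}\operatorname{Id}\big)\weakarrow \widehat W_i,\qquad \widehat W_i(t):=W^{\beta_i,\btheta_i,0}(\kappa_{ii}t)+\lambda_{ii}t,
\]
with $\widehat W_1,\widehat W_2$ independent and having only positive jumps. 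Combining this with Lemma~\ref{lem:q12/qiiApprox} (which controls $q_{12}/q_{11}$ and $q_{12}/q_{22}$ to order $c_n^{-1}$) and Lemma~\ref{lem:composition}(1)--(2) yields the fluctuation limits of the $X^{j,i}_n$: recalling $X^{1,1}_n(t)=N^1_n(t)-t$ and $X^{2,2}_n(t)=N^2_n(t)-t$, one gets $c_n(X^{1,1}_n-(\kappa_{11}-1)\operatorname{Id})\weakarrow\widehat W_1$, $c_n(X^{2,2}_n-(\kappa_{22}-1)\operatorname{Id})\weakarrow\widehat W_2$, while $c_n(X^{1,2}_n-\tfrac{\kappa_{12}\kappa_{22}}{\kappa_{11}}\operatorname{Id})$ and $c_n(X^{2,1}_n-\tfrac{\kappa_{11}\kappa_{12}}{\kappa_{22}}\operatorname{Id})$ converge to explicit thinned-L\'evy-plus-linear limits (whose linear parts carry the $c_n^{-1}$-corrections from Lemma~\ref{lem:q12/qiiApprox}), again with only positive jumps. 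By Lemma~\ref{lem:polish} I pass to a Skorohod representation and argue pathwise; every functional lemma used below needs only that these limits have no negative jumps, which holds almost surely.

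\emph{Step 2 ($L^2_n$, $U^2_n$, and the triple composition).} Writing $L^2_n=(-X^{2,2}_n)^\uparrow$ and applying Lemma~\ref{lem:sup} to $-X^{2,2}_n=\operatorname{Id}-N^2_n$ (drift $1-\kappa_{22}>0$) gives $c_n(L^2_n-(1-\kappa_{22})\operatorname{Id})\weakarrow-\widehat W_2$, whence Lemma~\ref{lem:passageFluctuations} gives $c_n\big(U^2_n-\tfrac1{1-\kappa_{22}}\operatorname{Id}\big)\weakarrow v$ with $v(t)=\tfrac1{1-\kappa_{22}}\widehat W_2\!\big(\tfrac{t}{1-\kappa_{22}}\big)$. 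Since $U^2_n$ and $X^{2,1}_n$ are non-decreasing with linear positive-slope limits and the three fluctuation limits of $X^{1,2}_n,U^2_n,X^{2,1}_n$ have only positive jumps, Corollary~\ref{cor:threefunc} applies and gives $c_n\big(X^{1,2}_n\circ U^2_n\circ X^{2,1}_n-\tfrac{\kappa_{12}^2}{1-\kappa_{22}}\operatorname{Id}\big)\weakarrow G$ for an explicit $G$ built from $\widehat W_1,\widehat W_2$; the same input, with Lemma~\ref{lem:weaklawFromCLT} and Lemma~\ref{lem:composition}(3), gives the second assertion $U^2_n\circ X^{2,1}_n\weakarrow\tfrac{\kappa_{11}\kappa_{12}}{\kappa_{22}(1-\kappa_{22})}\operatorname{Id}$. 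Here criticality of $K$, i.e.\ $\kappa_{12}^2=(1-\kappa_{11})(1-\kappa_{22})$, enters decisively: it forces $\tfrac{\kappa_{12}^2}{1-\kappa_{22}}=1-\kappa_{11}$, so the drift of $X^{1,2}_n\circ U^2_n\circ X^{2,1}_n$ exactly cancels that of $X^{1,1}_n$, which is why $c_n$ is the correct scaling.

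\emph{Step 3 (assembly and identification).} By this cancellation, $V_n=\big(X^{1,1}_n-(\kappa_{11}-1)\operatorname{Id}\big)+\big(X^{1,2}_n\circ U^2_n\circ X^{2,1}_n-(1-\kappa_{11})\operatorname{Id}\big)$, and since both summands have limits with only positive jumps, Lemma~\ref{lem:composition}(1) gives $c_nV_n\weakarrow\widehat W_1+G$. To identify the limit I collect terms: the coefficients of $\widehat W_1$ and $\widehat W_2$ in $\widehat W_1+G$ reduce (again using $\kappa_{12}^2=(1-\kappa_{11})(1-\kappa_{22})$) to $\tfrac1{\kappa_{11}}$ and $\tfrac{\kappa_{12}}{\kappa_{11}(1-\kappa_{22})}$, evaluated at time-scales $\kappa_{11}t$ and $v_2t$ with $v_1=\kappa_{11}$, $v_2=\tfrac{\kappa_{11}\kappa_{12}}{1-\kappa_{22}}$; the scaling identity of Lemma~\ref{lem:jumplevypart} turns these into $\tfrac1{\kappa_{11}^2}W^{v_1^3\beta_1,v_1\btheta_1,0}$ and $\tfrac1{\kappa_{11}^2}W^{v_2^3\beta_2,v_2\btheta_2,0}$, and Lemma~\ref{lem:summingtwothinned} merges the two independent pieces into $\tfrac1{\kappa_{11}^2}W^{\beta^\bv,\btheta^\bv,0}$. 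Finally I collect the remaining linear-in-$t$ terms — the $\lambda_{11},\lambda_{22}$ drifts inside $\widehat W_1,\widehat W_2$ together with the $\alpha$- and $\lambda_{ij}$-terms entering through the $c_n^{-1}$-corrections to $q_{12}/q_{11},q_{12}/q_{22}$ — and a direct computation shows the $\alpha$-terms cancel and the total equals $\tfrac1{\kappa_{11}^2}\langle\bv,\Lambda\bv\rangle\,t=\tfrac1{\kappa_{11}^2}\lambda^\bv t$, so $\widehat W_1+G\overset{d}{=}\tfrac1{\kappa_{11}^2}W^{\beta^\bv,\btheta^\bv,\lambda^\bv}$, as required. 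The two convergences hold jointly because along the Skorohod representation of Step~1 every quantity above converges pathwise.

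The main obstacle is the bookkeeping in Steps~2--3: arranging the chain sup $\to$ inverse $\to$ triple composition $\to$ sum so that every hypothesis of the $M_1$-fluctuation lemmas (non-decreasing inner functions, the no-opposite-jump conditions) is verified, and then carrying through the linear-term accounting so that the $\alpha$-dependence drops out and $\langle\bv,\Lambda\bv\rangle$ materializes. The conceptual crux — and what makes $c_n$ the right scaling — is the single scalar identity $\kappa_{12}^2=(1-\kappa_{11})(1-\kappa_{22})$, equivalent to the Perron--Frobenius eigenvalue of $K$ being $1$.
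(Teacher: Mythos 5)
Your proof is correct and follows essentially the same route as the paper: Corollary~\ref{cor:Limic} and Lemma~\ref{lem:q12/qiiApprox} for the building blocks, Lemma~\ref{lem:sup} and Lemma~\ref{lem:passageFluctuations} for $L^2_n,U^2_n$, Corollary~\ref{cor:threefunc} for the triple composition, Lemma~\ref{lem:composition}(1) for the sum, and Lemmas~\ref{lem:jumplevypart} and~\ref{lem:summingtwothinned} to identify the limit as $\frac{1}{\kappa_{11}^2}W^{\beta^\bv,\btheta^\bv,\lambda^\bv}$. The identity $\kappa_{12}^2=(1-\kappa_{11})(1-\kappa_{22})$ (equivalently $a_{12}a_{22}a_{21}=1-\kappa_{11}$ in the paper's notation) is precisely what the paper uses for the drift cancellation and coefficient simplification, so the two arguments coincide in all essentials.
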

	\begin{proof}
		By Corollary \ref{cor:Limic}, and Skorohod's representation theorem, we can assume that we are working on a probability space such that
		\begin{align*}
			&\left( c_n (N^{\bw^1}(t) - \kappa_{11}t);t\ge 0\right) \overset{M_1}{\longrightarrow} (W_1^{\beta_1,\btheta_1,0}(\kappa_{11}t) + \lambda_{11}t;t\ge 0)\\
			& \left( c_n (N^{\bw^2}(t) - \kappa_{22}t);t\ge 0\right) \overset{M_1}{\longrightarrow} (W_2^{\beta_2,\btheta_2,0}(\kappa_{22}t) + \lambda_{22}t;t\ge 0),
		\end{align*}
		where the two limits are independent of each other. Let us call the two limits above $Z_1$ and $Z_2$, respectively. 
		Recall \eqref{eqn:Xdef}. Therefore, by Lemma \ref{lem:q12/qiiApprox} and Lemma \ref{lem:composition}(2),
		\begin{align}
			\label{eqn:XiiconvClas}&\left(c_n (X_n^{i,i}(t) + (1-\kappa_{ii})t);t\ge 0\right)\overset{M_1}{\longrightarrow} Z_i\\
			\nonumber &\left(c_n(X_n^{j,i}(t) - \frac{q_{12}}{q_{jj}} \kappa_{ii} t;t\ge 0\right) \overset{M_1}{\longrightarrow} \left(\frac{\kappa_{12}}{\kappa_{jj}} Z_i(t);t\ge 0\right).
		\end{align}
		Using the approximations in Lemma \ref{lem:q12/qiiApprox} we have
		\begin{align}
			\label{eqn:x21class}  &\left(c_n(X_n^{2,1}(t) - \frac{\kappa_{12}\kappa_{11}}{\kappa_{22}} t;t\ge 0\right) \overset{M_1}{\longrightarrow} \left(\frac{\kappa_{12}}{\kappa_{22}} Z_1(t) + \mu_1t;t\ge 0\right),\\
			\nonumber  &\left(c_n(X_n^{1,2}(t) - \frac{\kappa_{12}\kappa_{22}}{\kappa_{11}} t;t\ge 0\right) \overset{M_1}{\longrightarrow} \left(\frac{\kappa_{12}}{\kappa_{11}} Z_2(t) + \mu_2t;t\ge 0\right),
		\end{align}
		where
		\begin{align*}
			\mu_{1} = \frac{\lambda_{12}-\alpha \kappa_{12}}{\kappa_{22}}\kappa_{11} - \frac{\lambda_{22}\kappa_{12}}{\kappa_{22}^2}\kappa_{11}&&\textup{and}&&&\mu_{2} = \frac{\lambda_{12}+\alpha \kappa_{12}}{\kappa_{11}}\kappa_{22} - \frac{\lambda_{11}\kappa_{12}}{\kappa_{11}^2}\kappa_{22}.
		\end{align*}
		By Lemma \ref{lem:sup} we see that
		\begin{align}
			\nonumber & \left( c_n (L_n^2(t) - (1-\kappa_{22})t );t\ge 0\right) \overset{M_1}{\longrightarrow}(-Z_2(t);t\ge 0).
		\end{align}
		Since $U^2_n$ is the first passage time of $L_n^2$ we see can apply Lemma \ref{lem:passageFluctuations} to get
		\begin{align}
			\label{eqn:uconv}  &\left(c_n( U_n^2 (t) - \frac{t}{1-\kappa_{22}}) 
			;t\ge 0\right)\overset{M_1}{\longrightarrow} \left(\frac{1}{1-\kappa_{22}} Z_2\left(\frac{t}{1-\kappa_{22}}\right);t\ge 0\right) .
		\end{align} Note that \eqref{eqn:x21class}, \eqref{eqn:uconv}, and Lemmas \ref{lem:weaklawFromCLT} and \ref{lem:composition} imply the desired convergence of $U_n^2\circ X^{2,1}_n(t)$ in \eqref{eqn:Un2compXn21}
		
		To apply Corollary \ref{cor:threefunc} in a more readable way, let us set 
		\begin{equation*}
			a_{12} = \frac{\kappa_{12}\kappa_{22}}{\kappa_{11}} \qquad\qquad a_{21} = \frac{\kappa_{12}\kappa_{11}}{\kappa_{22}}\qquad \qquad a_{22} = \frac{1}{1-\kappa_{22}}.
		\end{equation*}
		Note that by the eigenvalue assumption in Regime \ref{regime:classic} it holds that $a_{12}a_{22}a_{21} = 1-\kappa_{11}$. By Corollary \ref{cor:threefunc} and the fact that $Z_i$ do not jump downwards
		\begin{align*}
			&\left(c_n \left(X^{1,2}_n\circ U_n^2 \circ  X^{2,1}_n(t) - (1-\kappa_{11})t\right);t\ge 0\right)\\
			&\overset{M_1}\longrightarrow \bigg(\frac{\kappa_{12}}{\kappa_{11} }Z_2(a_{22}a_{21} t) + \mu_{2} a_{22}a_{21} t\\
			&\qquad\qquad+  a_{12}a_{22} Z_2(a_{22}a_{21}t)\\
			&\qquad \qquad\qquad +a_{12} a_{22} \frac{\kappa_{12}}{\kappa_{22}}Z_1(t) + a_{21}a_{22}\mu_1 t;t\ge 0\bigg).
		\end{align*}
		By using Lemma \ref{lem:composition}(1) we have
		\begin{align*}
			c_n V_n \overset{M_1}{\longrightarrow} \bigg(\left(\frac{\kappa_{12}}{\kappa_{11}}+a_{12}a_{22}\right)& Z_{2}(a_{22}a_{21} t) + \left(\frac{a_{12}a_{22}\kappa_{12}}{\kappa_{22}}+1 \right)Z_1(t)\\
			&+(a_{12}a_{22}\mu_{1} + \mu_{2}a_{22}a_{21}) t;t\ge0 \bigg).
		\end{align*} Call the limit above $V_\infty(t)$.
		To simplify this note
		\begin{align*}
			&\frac{\kappa_{12}}{\kappa_{11}} + a_{12}a_{22} = \frac{\kappa_{12}}{\kappa_{11}(1-\kappa_{22})}\\
			&\frac{a_{12}a_{22}\kappa_{12}}{\kappa_{22}} +1 = \frac{\kappa_{12}^2}{\kappa_{11}(1-\kappa_{22})}+1 = \frac{1-\kappa_{11}}{\kappa_{11}}+1 = \frac{1}{\kappa_{11} }\\
			& a_{12}a_{22}\mu_{1}+ \mu_{2} a_{22} a_{21} \\
			&\qquad = \frac{1}{1-\kappa_{22}} \left((\lambda_{12}-\alpha \kappa_{12})\kappa_{12} - \frac{\lambda_{22}\kappa_{12}^2}{\kappa_{22}} + (\lambda_{12}+\alpha \kappa_{12})\kappa_{12} - \frac{\lambda_{11}\kappa_{12}^2}{\kappa_{11}}\right)\\
			&\qquad =\frac{1}{1-\kappa_{22}}\left(2\lambda_{12}\kappa_{12} - \left(\frac{\lambda_{22}}{\kappa_{22}}+\frac{\lambda_{11}}{\kappa_{11}} \right)\kappa_{12}^2\right).
		\end{align*}
		Therefore,
		\begin{align*}
			V_\infty(t) &=  \frac{\kappa_{12}}{\kappa_{11}(1-\kappa_{22})} \left(W_2^{\beta_2,\btheta_2,0}\left(\frac{\kappa_{12}\kappa_{11}}{1-\kappa_{22}} t\right)+ \lambda_{22} \frac{\kappa_{12}\kappa_{11}}{\kappa_{22} (1-\kappa_{22})} t\right) \\
			&\qquad + \frac{1}{\kappa_{11}}\left(W_1^{\beta_1,\btheta_2,0}(\kappa_{11}t) + \lambda_{11}t\right)+\frac{1}{1-\kappa_{22}}\left(2\lambda_{12}\kappa_{12} - \left(\frac{\lambda_{22}}{\kappa_{22}}+\frac{\lambda_{11}}{\kappa_{11}} \right)\kappa_{12}^2\right)t\\
			&= \frac{\kappa_{12}}{\kappa_{11}(1-\kappa_{22})} W_2^{\beta_2,\btheta_2,0}\left(\frac{\kappa_{12}\kappa_{11}}{1-\kappa_{22}} t\right) + \frac{1}{\kappa_{11}} W_1^{\beta_1,\btheta_1,0}(\kappa_{11}t)\\
			& \qquad + \left(\lambda_{11} +\frac{2\lambda_{12}\kappa_{12}}{1-\kappa_{22}} + \frac{\lambda_{22} \kappa_{12}^2}{(1-\kappa_{22})^2} \right)t.
		\end{align*}
		Recall $\bv =\begin{bmatrix}\kappa_{11}\\  \frac{\kappa_{12}\kappa_{11}}{1-\kappa_{22}}.
		\end{bmatrix} = \begin{bmatrix}
			v_1\\v_2
		\end{bmatrix}$. Observe that we have shown
		\begin{align*}
			V_\infty(t) = \frac{1}{\kappa_{11}^2} v_2 W_2^{\beta_2,\btheta_2,0}(v_2 t) + \frac{1}{\kappa_{11}^2}  v_1 W_1^{\beta_1,\btheta_1,0}(v_1t) + \frac{1}{\kappa_{11}^2} \langle \bv ,\Lambda \bv\rangle t
		\end{align*} where we used for the drift term \begin{equation*}
			\frac{1}{\kappa_{11}^2} \langle \bv ,\Lambda \bv\rangle = \lambda_{11} + \frac{2\lambda_{12}\kappa_{12}}{1-\kappa_{22}} + \frac{\lambda_{22}\kappa_{12}^2}{(1-\kappa_{22})^2}.
		\end{equation*}By using Lemma \ref{lem:jumplevypart} we see
		\begin{align*}
			\left( V_\infty(t);t\ge 0\right) \overset{d}{=} \left(\frac{1}{\kappa_{11}^2} {W}_2^{\widetilde{\beta}_2, \widetilde{\btheta}_2,0}(t) + \frac{1}{\kappa_{11}^2}W_1^{\widetilde{\beta}_1,\widetilde{\btheta}_1,0} (t) + \frac{1}{\kappa_{11}^2}\langle \bv, \Lambda\bv\rangle t\right),
		\end{align*}
		where $\langle \bx,\by\rangle$ is the standard inner product on $\R^2$, $
		\widetilde{\beta}_j = v_j^3 \beta_j$ and $\widetilde{\btheta}_j = v_j\btheta_j.$
		The result now follows from Lemma \ref{lem:summingtwothinned}.
	\end{proof}
	
	The following lemma is easy to verify. 
	\begin{lemma}
		The vector $\bv$ from Proposition \ref{prop:vconvClass} is a right PF eigenvector of $K$.     
	\end{lemma}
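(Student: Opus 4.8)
The plan is to verify directly that $\bv$ satisfies $K\bv=\bv$ and has strictly positive coordinates; since $K\in(0,\infty)^{2\times2}$ in Regime~\ref{regime:classic} and its PF eigenvalue is $1$, the Perron--Frobenius theorem then identifies $\bv$ (up to scaling) as the right PF eigenvector of $K$.

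First I would extract the consequence of the criticality assumption. The PF eigenvalue of $K$ being $1$ means $\det(K-I)=0$, i.e.
\[
(1-\kappa_{11})(1-\kappa_{22}) = \kappa_{12}^2 .
\]
Since $\kappa_{12}>0$, the left-hand side is strictly positive, so $1-\kappa_{11}$ and $1-\kappa_{22}$ share a sign; the possibility $\kappa_{11},\kappa_{22}>1$ is excluded because it would force the other eigenvalue $\tr K - 1$ to exceed the Perron root $1$. Hence $\kappa_{22}<1$, so $v_1=\kappa_{11}>0$ and $v_2=\kappa_{11}\kappa_{12}/(1-\kappa_{22})>0$, i.e. $\bv\in(0,\infty)^2$.

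Next I would compute $K\bv$ coordinatewise, substituting $\kappa_{12}^2=(1-\kappa_{11})(1-\kappa_{22})$ where convenient. The first coordinate is
\[
\kappa_{11}v_1+\kappa_{12}v_2 = \frac{\kappa_{11}}{1-\kappa_{22}}\bigl(\kappa_{11}(1-\kappa_{22})+\kappa_{12}^2\bigr) = \frac{\kappa_{11}}{1-\kappa_{22}}(1-\kappa_{22}) = \kappa_{11}=v_1,
\]
and the second is
\[
\kappa_{12}v_1+\kappa_{22}v_2 = \kappa_{11}\kappa_{12}\Bigl(1+\frac{\kappa_{22}}{1-\kappa_{22}}\Bigr) = \frac{\kappa_{11}\kappa_{12}}{1-\kappa_{22}} = v_2 .
\]
Thus $K\bv=\bv$.

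Finally I would conclude: $\bv$ is a positive eigenvector of the positive matrix $K$ with eigenvalue $\rho(K)=1$, so by Perron--Frobenius it is a positive scalar multiple of the right PF eigenvector; in particular $\bv$ is itself a right PF eigenvector, and one checks $\bv=(v_1+v_2)\,\bu^\creg$ with $\bu^\creg$ the normalization from Theorem~\ref{thm:classic}, consistent with the slope $v_2/v_1=\kappa_{12}/(1-\kappa_{22})$ appearing in Proposition~\ref{prop:vconvClass}. The argument is essentially a $2\times2$ matrix multiplication together with the single identity coming from criticality, so I foresee no real obstacle; the only point worth stating carefully is the deduction $\kappa_{22}<1$, which is what guarantees $\bv$ lies in the open positive quadrant and hence is genuinely the PF (rather than the subdominant) eigenvector.
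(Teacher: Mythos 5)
Your proof is correct, and it is the natural direct verification the paper has in mind when it calls the lemma "easy to verify" (the paper supplies no proof). The coordinatewise computation of $K\bv=\bv$ using the criticality identity $(1-\kappa_{11})(1-\kappa_{22})=\kappa_{12}^2$ is exactly right, and you are also right to flag that one must rule out $\kappa_{11},\kappa_{22}>1$ (via the other eigenvalue $\tr K-1$ being $\le 1$) to guarantee $v_2>0$; this is the one detail that could otherwise be glossed over.
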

	
	Before moving to the interacting regime we prove the following lemma.
	\begin{lemma}\label{lem:jointtandu}
		Suppose that Assumption \ref{ass:main} and Regime \ref{regime:classic} hold. If $s_n\to s$, $t_n\to t$, and $R_n = \operatorname{diag}(Q)^{-1} Q$ then
		\begin{align*}
			\begin{bmatrix}
				t_n-s_n\\
				U_n^{2}\circ X_n^{2,1}(t_n)- U_n^2\circ X_{n}^{2,1}(s_n-)
			\end{bmatrix} \weakarrow  (t-s)\begin{bmatrix}
				1\\
				\frac{\kappa_{11}\kappa_{12}}{\kappa_{22}(1-\kappa_{22})}
			\end{bmatrix}.
		\end{align*}
		\begin{align*}
			R_n^{-1} \begin{bmatrix}
				t_n-s_n\\
				U_n^{2}\circ X_n^{2,1}(t_n)- U_n^2\circ X_{n}^{2,1}(s_n-)
			\end{bmatrix} \weakarrow  (t-s)\bv.
		\end{align*} where $\bv$ is as in Proposition \ref{prop:vconvClass}.
	\end{lemma}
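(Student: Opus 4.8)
The plan is to deduce both displays from the single process-level convergence \eqref{eqn:Un2compXn21} of Proposition~\ref{prop:vconvClass}, from the matrix convergence $R_n\to\operatorname{diag}(K)^{-1}K$ of Lemma~\ref{lem:q12/qiiApprox}, and from one short linear-algebra identity. Every limit appearing in the statement is a deterministic constant, so each ``$\weakarrow$'' is the same as convergence in probability, and it is enough to prove the latter. As in the proof of Proposition~\ref{prop:vconvClass}, I would first pass (via Skorohod's representation theorem) to a probability space on which $U_n^2\circ X_n^{2,1}\overset{M_1}{\longrightarrow}(ct;\,t\ge 0)$ almost surely, writing $c:=\frac{\kappa_{11}\kappa_{12}}{\kappa_{22}(1-\kappa_{22})}$ for the slope in \eqref{eqn:Un2compXn21}; since the limit is continuous, Lemma~\ref{lem:contpoints} upgrades this to locally uniform convergence on every $[0,T]$, almost surely.

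For the first display the top coordinate is immediate: $t_n-s_n\to t-s$. For the bottom coordinate I would show $U_n^2\circ X_n^{2,1}(t_n)-U_n^2\circ X_n^{2,1}(s_n-)\to c(t-s)$ a.s. The map $t\mapsto U_n^2\circ X_n^{2,1}(t)$ is non-decreasing (a composition of non-decreasing maps), which controls the left limit at $s_n$: for any fixed $\delta>0$ and all large $n$,
\begin{align*}
U_n^2\circ X_n^{2,1}(s_n-\delta)\ \le\ U_n^2\circ X_n^{2,1}(s_n-)\ \le\ U_n^2\circ X_n^{2,1}(s_n).
\end{align*}
Letting $n\to\infty$ and using the locally uniform convergence to $c\,\operatorname{Id}$ (so $U_n^2\circ X_n^{2,1}(s_n-\delta)\to c(s-\delta)$ and $U_n^2\circ X_n^{2,1}(s_n)\to cs$), then sending $\delta\downarrow 0$, gives $U_n^2\circ X_n^{2,1}(s_n-)\to cs$; similarly, and more simply, $U_n^2\circ X_n^{2,1}(t_n)\to ct$. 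Subtracting yields the bottom coordinate, hence the first display.

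For the second display, $R_n$ is deterministic and $R_n\to R_\infty:=\operatorname{diag}(K)^{-1}K$ by Lemma~\ref{lem:q12/qiiApprox}; since $\kappa_{11},\kappa_{22}>0$ and $\det K\neq 0$ we have $\det R_\infty=\det K/(\kappa_{11}\kappa_{22})\neq 0$, so $R_n^{-1}\to R_\infty^{-1}$ by continuity of inversion. Writing $Y_n$ for the vector in the first display and using $R_n^{-1}Y_n=R_\infty^{-1}Y_n+(R_n^{-1}-R_\infty^{-1})Y_n$ (the last term tending to $0$ in probability since $Y_n$ is tight), the claim reduces to checking $R_\infty^{-1}[1,c]^T=\bv$, equivalently $R_\infty\bv=[1,c]^T$. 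This is the one place criticality enters: the PF eigenvalue of $K$ being $1$ is equivalent to $\det K=\tr K-1$, i.e.\ $\kappa_{12}^2=(1-\kappa_{11})(1-\kappa_{22})$; substituting this into $R_\infty\bv$ with $\bv=\bigl[\kappa_{11},\ \kappa_{11}\kappa_{12}/(1-\kappa_{22})\bigr]^T$ and $R_\infty$ having entries $(\operatorname{diag}(K)^{-1}K)_{ij}=\kappa_{ij}/\kappa_{ii}$ collapses the two coordinates of $R_\infty\bv$ to $1$ and to $c$, respectively.

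I do not expect a genuine obstacle. The only mildly delicate step is taking the left limit at the moving base point $s_n\to s$ simultaneously, which is exactly where one needs both the monotonicity of $U_n^2\circ X_n^{2,1}$ and the continuity (hence local uniformity) of the $M_1$-limit; everything else is Slutsky-type bookkeeping and the single eigenvalue identity. One should also be careful that $\bv$ is the specific normalization from Proposition~\ref{prop:vconvClass} (the PF eigenvector with first coordinate $\kappa_{11}$), since $R_\infty\bv=[1,c]^T$ holds exactly only for that normalization.
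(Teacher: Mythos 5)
Your argument is correct and follows essentially the same route as the paper: both deduce the first display from the process-level convergence \eqref{eqn:Un2compXn21} together with Lemma~\ref{lem:contpoints}, and both obtain the second display by combining this with $R_n^{-1}\to K^{-1}\operatorname{diag}(K)$ from Lemma~\ref{lem:q12/qiiApprox} and the fact that $\bv$ is the eigenvalue-$1$ eigenvector of $K$. You unpack more of the detail — the monotone-sandwich treatment of the left limit at the moving point $s_n$, the Slutsky bookkeeping, and the explicit use of $\det K = \operatorname{tr}K - 1$ in place of the paper's shortcut $K^{-1}\bv=\bv$ — but these are just more explicit renditions of the same ideas rather than a different method.
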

	\begin{proof} 
		Using Lemma \ref{lem:q12/qiiApprox} and the continuity of $A\mapsto A^{-1}$ we have $
		R_n^{-1} \to K^{-1} \operatorname{diag}(K)$.
		By Lemma \ref{lem:contpoints} and Proposition \ref{prop:vconvClass} 
		\begin{equation*}
			\begin{bmatrix}
				t_n-s_n\\
				U_n^{2}\circ X_n^{2,1}(t_n)- U_n^2\circ X_{n}^{2,1}(s_n-)
			\end{bmatrix} \weakarrow (t-s)\begin{bmatrix}
				1\\
				\frac{\kappa_{11}\kappa_{12}}{\kappa_{22} (1-\kappa_{22})}
			\end{bmatrix}.
		\end{equation*}
		Hence
		\begin{align*}
			R_n^{-1} \begin{bmatrix}
				t_n-s_n\\
				U_n^{2}\circ X_n^{2,1}(t_n)- U_n^2\circ X_{n}^{2,1}(s_n-)
			\end{bmatrix} \weakarrow (t-s) K^{-1} \begin{bmatrix}
				\kappa_{11}\\
				\frac{\kappa_{11}\kappa_{12}}{1-\kappa_{22}}
			\end{bmatrix} = (t-s)\bv.
		\end{align*}
		Indeed, $\bv$ is an eigenvector of $K$ with eigenvalue $1$ and $K$ is invertible so $\bv$ is an eigenvector of eigenvalue $1$ of $K^{-1}$ as well.
	\end{proof}

	We now turn to the following lemma.
	\begin{lemma}\label{lem:vecforClassic} Suppose $\bW =\bW^{(n)}$ and $Q = Q^{(n)}$ are as in Regime \ref{regime:classic}. Let $(\bcM_l^{(n)};l\ge 1)$ be the vector-valued component masses of $\G(\bW,Q)$ ordered so that $\bcM_{l,1}^{(n)}\ge \bcM_{l,2}^{(n)}\ge\dotsm$. 
		Then
		\begin{equation*}
			\left(\bcM_{p,j}^{(n)};l\ge 1\right) \weakarrow \left(\zeta_p^\creg u_j^\creg;p\ge 1\right)\qquad\begin{array}{l}
				\textup{ in }\ell^2_\downarrow\textup{ for }j =1\textup{ and }\\
				\textup{pointwise }\textup{for }j =2\\
			\end{array}
		\end{equation*}
		where $\bzeta^\creg$ is as in Theorem \ref{thm:classic}.
	\end{lemma}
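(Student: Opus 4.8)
The plan is to extract the vector-valued component masses from the excursions of $V_n$ via Proposition~\ref{prop:vexcursions}, pass to the limit using the $M_1$ convergence of Proposition~\ref{prop:vconvClass} together with the point-process tools of Section~\ref{sec:generaltopology}, and then undo the size-biasing in order to pass from the limiting size-biased component vectors to the decreasingly ordered ones. Concretely, by Proposition~\ref{prop:vconvClass} and Skorohod's representation theorem I would realize everything on one probability space on which, jointly in $M_1$,
\begin{equation*}
c_n V_n \longrightarrow \psi := \tfrac{1}{\kappa_{11}^2}\, W^{\beta^\bv,\btheta^\bv,\lambda^\bv}, \qquad \varphi_n := U_n^2\circ X_n^{2,1} \longrightarrow \varphi := \gamma\operatorname{Id}, \qquad \gamma := \frac{\kappa_{11}\kappa_{12}}{\kappa_{22}(1-\kappa_{22})}.
\end{equation*}
Since $\sum_i(\beta_i,\btheta_i)\in\cI$ and $v_1,v_2>0$ one has $(\beta^\bv,\btheta^\bv)\in\cI$, so $W^{\beta^\bv,\btheta^\bv,\lambda^\bv}$ is good by Lemma~\ref{lem:Zgood}; multiplying by the positive constant $1/\kappa_{11}^2$ leaves the excursion set unchanged, hence preserves goodness, $\EE_\infty$, $\LL_\infty^\downarrow$, and in particular $\psi$ is good and $\EE_\infty(c_nV_n)=\EE_\infty(V_n)$. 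In Regime~\ref{regime:classic} one has $0<\kappa_{22}<1$ (the non-Perron--Frobenius eigenvalue of the positive matrix $K$ has modulus strictly below $1$), so $\gamma>0$ and $\varphi$ is strictly increasing.

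Next, for each $(l,r)\in\EE_\infty(V_n)$ I would form
\begin{equation*}
\bDelta_n(l,r) := R_n^{-1}\begin{bmatrix} r-l \\ \varphi_n(r)-\varphi_n(l-)\end{bmatrix}, \qquad R_n := \operatorname{diag}(Q)^{-1}Q.
\end{equation*}
By Lemma~\ref{lem:q12/qiiApprox}, $R_n^{-1}\to K^{-1}\operatorname{diag}(K)$, and by Lemma~\ref{lem:jointtandu}, $K^{-1}\operatorname{diag}(K)\,(1,\gamma)^T=\bv=(\kappa_{11},\,\kappa_{11}\kappa_{12}/(1-\kappa_{22}))^T$, whose two entries are positive. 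Applying the vector-valued form of Corollary~\ref{cor:techcor} coordinatewise — its proof tracks, for each limiting excursion, a matching sequence of $V_n$-excursions with convergent endpoints, so that all linear combinations of $r-l$ and $\varphi_n(r)-\varphi_n(l-)$ pass to the limit — I obtain, vaguely,
\begin{equation*}
\Xi_n := \bigl\{(l,\bDelta_n(l,r)) : (l,r)\in\EE_\infty(V_n)\bigr\} \;\longrightarrow\; \Xi := \bigl\{(l,(r-l)\bv) : (l,r)\in\EE_\infty(W^{\beta^\bv,\btheta^\bv,\lambda^\bv})\bigr\}.
\end{equation*}

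Then I would identify $\Xi_n$: by Proposition~\ref{prop:vexcursions} with $\vec\rho=(1,0)^T$ and $R_n^{-1}(R\bcM_j)=\bcM_j$, the marks of $\Xi_n$ are distributed as the size-biased sequence $\operatorname{SB}(\bcM_j,\cS_j)$ with $\cS_j=q_{11}^{-1}\cM_{j,1}$; concretely they are the vectors $\bcM_j$ over components with $\cM_{j,1}>0$, listed in an order size-biased by their type-$1$ masses. Since every mark of the limit $\Xi$ is a positive multiple of the fixed vector $\bv$, the vague convergence $\Xi_n\to\Xi$ — combined with goodness of $W^{\beta^\bv,\btheta^\bv,\lambda^\bv}$ (finitely many long excursions, plus condition (iii) of Lemma~\ref{lem:good}, which follows from Lemma~\ref{lem:Zpath}(c) and the tightness of the component sizes) and the theory of size-biased point processes of \cite{Aldous.97} — upgrades to: for each fixed $p$ the mark of $\Xi_n$ with $p$-th largest first coordinate converges to $\zeta_p^\bv\bv$, where $(\zeta_p^\bv;p\ge1)=\LL_\infty^\downarrow(W^{\beta^\bv,\btheta^\bv,\lambda^\bv})$, and $\ORD$ of the first coordinates converges in $\ell^2_\downarrow$. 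Ordering the marks of $\Xi_n$ by first coordinate is exactly ordering the positive-type-$1$-mass components by $\cM_{j,1}$, and components with $\cM_{j,1}=0$ contribute only zeros to $(\cM_{l,1};l\ge1)$, so this yields $(\cM_{p,1}^{(n)};p\ge1)\weakarrow(v_1\zeta_p^\bv;p\ge1)$ in $\ell^2_\downarrow$ and $\bcM_p^{(n)}\weakarrow\zeta_p^\bv\bv$ pointwise in $p$. Finally, Corollary~\ref{cor:vectorvalue}, together with the observation that $\bv$ and $\bu^\creg$ are collinear and $(\beta^{\bu^\creg},\btheta^{\bu^\creg},\lambda^{\bu^\creg})=(\beta^\creg,\btheta^\creg,\lambda^\creg)$, gives $(\zeta_p^\bv\bv;p)\overset{d}{=}(\zeta_p^\creg\bu^\creg;p)$, which is the claim.

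I expect the last step to be the main obstacle: turning vague point-process convergence of the \emph{size-biased} marks into convergence of the order statistics $\bcM_p^{(n)}$ requires the uniform excursion-tail estimate (condition (iii) of Lemma~\ref{lem:good}) for $V_n$ — a tightness bound ruling out a non-negligible component surfacing ``late'' in the exploration — and also control of the all-type-$2$ components omitted by the size-biasing, which should be forced to be microscopic by $\kappa_{12}>0$ (via an Aldous-type second-moment bound); the $\ell^2_\downarrow$ rather than merely pointwise statement for the type-$1$ coordinate additionally needs the matching convergence of $\sigma_2$.
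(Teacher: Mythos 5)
Your proposal is correct and follows essentially the same route as the paper's proof: both use Theorem~\ref{thm:ckl1} and Proposition~\ref{prop:vexcursions} to express the size-biased component vectors in terms of the excursion data of $V_n$, pass to the limit via Proposition~\ref{prop:vconvClass}, Lemma~\ref{lem:jointtandu}, Lemma~\ref{lem:ppconv} and Corollary~\ref{cor:techcor} (Aldous's size-biased point process machinery), and conclude with the scaling identification of Corollary~\ref{cor:vectorvalue}. One small correction to a side remark: $\kappa_{22}<1$ is not literally that the non-PF eigenvalue has modulus less than $1$ (that would be $\kappa_{11}+\kappa_{22}-1$), but rather is forced by positivity of the PF eigenvector $\bv$ since $\kappa_{12}v_1=(1-\kappa_{22})v_2$ with $\kappa_{12},v_1,v_2>0$.
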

	\begin{proof}
		Let $\cC_p^{(n)}$ be the connected component corresponding to the weight vector $\bcM_p^{(n)}$.  Each component of strictly positive mass $\bcM_{p,1}^{(n)}>0$ has a corresponding excursion interval of $V_n$ which we denote by $(l_{(p),n},r_{(p),n})$.
		
		For all $n$ large $R_n$ is invertible, and so Theorem \ref{thm:ckl1} implies
		\begin{equation}\label{eqn:asypmprop}
			\bcM_{p}^{(n)} = R_n^{-1} \begin{bmatrix}
				r_{(p),n}-l_{(p),n}\\
				U_n^2\circ X_n^{2,1} (r_{(p),n})- U_n^{2}\circ X_n^{2,1}(l_{(p),n}-)
			\end{bmatrix}.
		\end{equation}
		
		Note that by Proposition \ref{prop:vexcursions}, the excursions of $V_n$ in Regime \ref{regime:classic} appear in a size-biased order according to the size $\bcM_{p,1}^{(n)}$ (the scaling by $q_{11}$ does not affect the size-biasing). Aldous' general theory of size-biased point processes \cite{Aldous.97} extends to our setting by the technical Lemma \ref{lem:ppconv} and Corollary \ref{cor:techcor}. Therefore, Proposition \ref{prop:vconvClass} and Lemma \ref{lem:jointtandu} imply
		\begin{equation}\label{eqn:ell2bsmp1forClassic}
			\left(\bcM_{p;1}^{(n)};p\ge 1\right) \weakarrow \ORD\left((r-l) v_1 ; (l,r)\in \EE_\infty(Z^{\beta^\bv, \btheta^\bv, \lambda^{\bv}} )\right)\qquad \textup{ in }\ell^2_\downarrow.
		\end{equation} 
		By Lemma \ref{lem:jointtandu}, and \eqref{eqn:asypmprop} it follows that for each $p\ge 1$
		\begin{equation*}
			\lim_{n\to\infty} \cM_{p,j}^{(n)} = v_j \lim_{n\to\infty} (r_{(p),n}-l_{(p),n}) .
		\end{equation*}
		Therefore, the $\ell^2_\downarrow$ convergence in \eqref{eqn:ell2bsmp1forClassic} implies
		\begin{equation*}
			\left(\bcM_{p;2}^{(n)};p\ge 1\right) \weakarrow \ORD\left((r-l) v_2 ; (l,r)\in \EE_\infty(Z^{\beta^\bv, \btheta^\bv, \lambda^{\bv}} )\right)\qquad \textup{ pointwise.}
		\end{equation*}
		The stated result now follows by an application of the scaling Corollary \ref{cor:vectorvalue}.
	\end{proof}

	\subsection{Convergence of $V$ in regime \ref{regime:interacting}}
	
	\begin{proposition}\label{prop:vconvINter}
		Suppose Assumption \ref{ass:main} and $\bW$ and $Q$ are as in Regime \ref{regime:interacting}. Then in the jointly in the $M_1$ topology
		\begin{align*}
			&\left( c_n V_n(t);t\ge 0\right) \weakarrow \left( Z^\ireg (t);t\ge 0\right)\\
			&\left(U_n^{2}\circ X_n^{2,1}(t);t\ge 0\right) \weakarrow (\inf\{u: Z_2(s)<-\lambda_{12}t\};t\ge 0)
		\end{align*}
		were $Z^\ireg$, $Z_2$ are defined in \eqref{eqn:ZiIregDef} and \eqref{eqn:Zidef}, respectively.
	\end{proposition}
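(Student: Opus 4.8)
The plan is to reduce the statement to the joint convergence of the two driving processes $N^1,N^2$ furnished by Corollary \ref{cor:Limic}, and then to propagate that convergence through the $M_1$-continuous functionals of Lemma \ref{lem:composition}. First I would apply Corollary \ref{cor:Limic} to each weight vector $\bw^i$ with $\kappa=\kappa_{ii}=1$ and $\lambda=\lambda_{ii}$ (legitimate because $q_{ii}=\kappa_{ii}/\sigma_2(\bw^i)+\lambda_{ii}+o(1)$ and $c_n\sigma_2(\bw^i)\to 1$ by \eqref{eqn:weightWindow}), obtaining $c_n(N^i-\operatorname{Id})\weakarrow Z_i$ in the $M_1$ topology, jointly and with $Z_1\perp Z_2$ as in \eqref{eqn:Zidef} since the two are built from independent randomness. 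By Skorohod's representation theorem I pass to a probability space on which this holds almost surely, so that all subsequent steps are performed pathwise. Since $\kappa_{ii}=1$ one has (as in \eqref{eqn:XiiconvClas}) $c_nX^{i,i}_n\overset{M_1}{\longrightarrow}Z_i$; by Lemma \ref{lem:weaklawFromCLT} and Lemma \ref{lem:contpoints}, $N^i\to\operatorname{Id}$ uniformly on compact sets; and because $c_nq_{12}/q_{jj}\to\lambda_{12}$ by Lemma \ref{lem:q12/qiiApprox}, the off-diagonal coordinates satisfy $c_nX^{2,1}_n=(c_nq_{12}/q_{22})N^1\overset{M_1}{\longrightarrow}\lambda_{12}\operatorname{Id}$ and $c_nX^{1,2}_n=(c_nq_{12}/q_{11})N^2\overset{M_1}{\longrightarrow}\lambda_{12}\operatorname{Id}$, both limits uniform, with $X^{2,1}_n,X^{1,2}_n$ non-decreasing for $n$ large since $q_{12}=\lambda_{12}+o(1)>0$.

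Next I would analyse $U^2_n$. Applying Lemma \ref{lem:composition}(5) to $-c_nX^{2,2}_n\to -Z_2$ gives $c_nL^2_n=(-c_nX^{2,2}_n)^{\uparrow}\overset{M_1}{\longrightarrow}\widehat{Z}_2:=-\inf_{s\le\cdot}Z_2(s)$; as $Z_2$ has only positive jumps, $\widehat{Z}_2$ is continuous, it is a.s.\ strictly increasing at $0$ by Lemma \ref{lem:decreaseat0}, and a.s.\ unbounded by Lemma \ref{lem:Zpath}(a). Since $U^2_n(y)=\inf\{t:c_nL^2_n(t)>c_ny\}=(c_nL^2_n)^{-1}(c_ny)$, Lemma \ref{lem:composition}(4) gives $(c_nL^2_n)^{-1}\to\widehat{Z}_2^{-1}$ in $J_1$, hence in $M_1$; and since $c_nX^{2,1}_n$ is non-decreasing with strictly increasing continuous limit $\lambda_{12}\operatorname{Id}$, Lemma \ref{lem:composition}(3) yields
\begin{equation*}
	U^2_n\circ X^{2,1}_n=(c_nL^2_n)^{-1}\circ(c_nX^{2,1}_n)\ \overset{M_1}{\longrightarrow}\ \widehat{Z}_2^{-1}(\lambda_{12}\,\cdot).
\end{equation*}
Unwinding definitions, $\widehat{Z}_2^{-1}(\lambda_{12}t)=\inf\{u:-\inf_{s\le u}Z_2(s)>\lambda_{12}t\}=\inf\{u:Z_2(u)<-\lambda_{12}t\}$, which is the second claimed limit.

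For $V_n=X^{1,1}_n+X^{1,2}_n\circ U^2_n\circ X^{2,1}_n$, I would use $c_nX^{1,2}_n\circ h=(c_nq_{12}/q_{11})(N^2\circ h)$ together with $N^2\to\operatorname{Id}$ uniformly on compacts: composing with the non-decreasing $M_1$-convergent sequence $U^2_n\circ X^{2,1}_n$ (Lemma \ref{lem:composition}(3), the outer limit $\operatorname{Id}$ being continuous) and rescaling by $c_nq_{12}/q_{11}\to\lambda_{12}$ (Lemma \ref{lem:composition}(2)) gives $c_nX^{1,2}_n\circ U^2_n\circ X^{2,1}_n\overset{M_1}{\longrightarrow}\lambda_{12}\widehat{Z}_2^{-1}(\lambda_{12}\,\cdot)$. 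Both summands of $c_nV_n$ then converge, and their limits $Z_1$ and $\lambda_{12}\widehat{Z}_2^{-1}(\lambda_{12}\cdot)$ have only non-negative jumps, so the hypothesis of Lemma \ref{lem:composition}(1) is vacuously satisfied and $c_nV_n\overset{M_1}{\longrightarrow}Z_1+\lambda_{12}\widehat{Z}_2^{-1}(\lambda_{12}\cdot)=Z^\ireg$ by \eqref{eqn:ZiIregDef}. As everything lives on the common Skorohod space the convergence is automatically joint, and transferring back gives the asserted weak convergence.

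The step I expect to be the main obstacle is the passage-time analysis: one must keep careful track of the $c_n^{-1}$ scalings so that the composition $U^2_n\circ X^{2,1}_n$ — assembled from families that individually degenerate ($c_nX^{2,1}_n$ grows like a linear function while $X^{2,1}_n$ and $L^2_n$ themselves tend to $0$) — extracts the nondegenerate first-passage limit $\widehat{Z}_2^{-1}(\lambda_{12}\cdot)$. This is exactly why one must first pass to $J_1$-convergence of generalized inverses before composing, and why it is important that the inner limit $\lambda_{12}\operatorname{Id}$ be strictly increasing and continuous, so that the $M_1$ composition results remain applicable even though $\widehat{Z}_2^{-1}$ itself has jumps.
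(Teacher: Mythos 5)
Your proposal is correct and follows essentially the same route as the paper's proof: Corollary \ref{cor:Limic} plus Skorohod representation to obtain a.s.\ $M_1$ convergence of the building blocks, the identity $U^2_n\circ X^{2,1}_n=(c_nL^2_n)^{-1}\circ(c_nX^{2,1}_n)$ (the paper's $\overline{U}^2_n\circ\overline{X}^{2,1}_n$), Lemmas \ref{lem:decreaseat0}, \ref{lem:Zpath}(a) and \ref{lem:composition}(4)--(5) to pass the first-passage functional through the limit, and Lemma \ref{lem:composition}(1)--(3) to assemble $c_nV_n$. The only cosmetic difference is that you factor $c_nX^{1,2}_n$ as $(c_nq_{12}/q_{11})N^2$ and invoke locally uniform convergence of $N^2$, whereas the paper composes $\overline{X}^{1,2}_n$ directly via Lemma \ref{lem:composition}(3) with continuous outer limit $\lambda_{12}\operatorname{Id}$ -- the two arguments are equivalent.
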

	\begin{proof}
		The proof is analogous to the proof of Proposition \ref{prop:vconvClass}. By the same reasoning found therein, we can assume without loss of generality that for $i\in[2]$ and $j\neq i$
		\begin{equation}\label{eqn:xiiInteracting}
			\left(c_nX_n^{i,i}(t);t\ge 0\right) \overset{M_1}{\longrightarrow} \left(Z_i(t) ;t\ge 0\right)\qquad \left(c_n X_n^{j,i}(t);t\ge 0\right)\overset{M_1}{\longrightarrow} \left(\lambda_{12}t;t\ge 0\right)
		\end{equation} where $Z_i(t)$ is defined as in \eqref{eqn:Zidef}. Indeed, this is precisely \eqref{eqn:XiiconvClas} and \eqref{eqn:x21class} with $\kappa_{ii} = 1$ and $\kappa_{12} = 0$.
		Let us set $\overline{X}_n^{j,i}(t) = c_n X_n^{j,i}(t)$.
		
		Let $\overline{L}_n^{j}(t) = \inf_{s\le t} \overline{X}_n^{j,j}(s) = \sup_{s\le t} (-\overline{X}_n^{j,j}(s))$. By Lemma \ref{lem:composition}(5) we get
		\begin{equation*}
			\overline{L}_n^j\overset{M_1}{\longrightarrow} \left(-\inf_{s\le t}Z_j(s);t\ge 0\right).
		\end{equation*}
		Let $\overline{U}_n^j(t) = \inf\{s:\overline{L}_n^{j}(s)>t\}$. Observe
		$U_n^2(t/c_n) = \overline{U}^{2}_n(t)$
		and hence
		\begin{equation}\label{eqn:tildexij}
			c_n X_n^{1,2}\circ U_n^{2}\circ X_n^{2,1} = \overline{X}_n^{1,2}\circ \overline{U}_n^2 \circ \overline{X}_n^{2,1}.
		\end{equation} By Lemma \ref{lem:decreaseat0} and Lemma \ref{lem:composition}(4), we get
		\begin{equation*}
			\left(\overline{U}_n^2(t);t\ge 0\right) \overset{M_1}\longrightarrow \left(\inf\{s: Z_2(s)< -t\};t\ge 0\right).
		\end{equation*} Using the above convergence, the right-hand side of \eqref{eqn:xiiInteracting}, and Lemma \ref{lem:composition}(3) we see
		\begin{align*}
			\left( \overline{X}_n^{1,2}\circ \overline{U}_n^2 \circ \overline{X}_n^{2,1}(t);t\ge 0\right) \overset{M_1}{\longrightarrow} \left(\lambda_{12} \inf\{s: Z_2(s)< -\lambda_{12} t\};t\ge 0\right).
		\end{align*}
		The result now follows from \eqref{eqn:tildexij}, \eqref{eqn:xiiInteracting} and Lemma \ref{lem:composition}(1).
	\end{proof}
	
	\begin{lemma}
		Suppose $(\beta,\btheta,\lambda)\in \cI\times \R$ and let $\bzeta^{\beta,\btheta,\lambda}$ be as in \eqref{eqn:MCextremedef}. Then
		\begin{equation*}
			\inf\{u: W^{\beta,\btheta,\lambda}(u)< -t\} \overset{d}{=} J^{\bzeta^{\beta,\btheta,\lambda}}.
		\end{equation*} In particular, $Z^\ireg$ in \eqref{eqn:ZiIregDef} under Regime \ref{regime:interacting}
		\begin{equation*}
			\left(Z^{\ireg}(t);t\ge 0\right) = \left(W^{\beta_1,\btheta^\ireg, \lambda_{11}}(t);t\ge0\right)
		\end{equation*}
		where $\btheta^\ireg = \btheta_1\bowtie \lambda_{12}\bzeta^{\beta_2,\btheta_2,\lambda_{22}}.$
	\end{lemma}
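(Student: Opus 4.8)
The plan is to prove the first (and main) identity — that the first-passage process of $W:=W^{\beta,\btheta,\lambda}$ below the negative axis agrees in law with $J^{\bzeta^{\beta,\btheta,\lambda}}$ — and then to deduce the ``in particular'' from it together with the scaling and superposition identities already recorded in Lemmas \ref{lem:jumplevypart} and \ref{lem:summingtwothinned}.

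For the main identity I would first set up the first-passage process carefully. Since $W$ has no negative jumps, $L(t):=-\inf_{s\le t}W(s)$ is continuous and non-decreasing, and $\sigma(t):=\inf\{u:W(u)<-t\}$ is its right-continuous generalized inverse; by Lemma \ref{lem:decreaseat0}, $\sigma(0+)=0$, and by Lemma \ref{lem:Zpath}(a), $\sigma(t)<\infty$ a.s.\ for all $t$. By Lemma \ref{lem:Zgood} the path of $W$ is good, so $L$ increases only on a Lebesgue-null set; hence $\sigma$ is a pure-jump process whose jumps coincide with the flat stretches of $L$: it jumps by $r-l$ at level $L(l)$ for each excursion $(l,r)\in\EE_\infty(W)$, and by Lemma \ref{lem:Zpath}(c) this collection of jumps is locally finite. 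Thus the law of $\sigma$ is encoded by the marked point process $\Pi:=\{(L(l),\,r-l):(l,r)\in\EE_\infty(W)\}$ of (running-minimum level, excursion length) pairs, exactly as the jumps of $J^{\bzeta^{\beta,\btheta,\lambda}}$ are carried by the point process $\{(\xi_p,\zeta_p):p\ge1\}$ with $\xi_p\sim\Exp(\zeta_p)$ conditionally independent given $\bzeta=\bzeta^{\beta,\btheta,\lambda}$. So it suffices to prove
\[
\Pi\ \overset{d}{=}\ \{(\xi_p,\zeta_p):p\ge1\},
\]
that is, that the excursions of $W$ above its running minimum are opened, in increasing order of their opening level, in size-biased-by-length order. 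This is the structural fact underlying the construction of $\bzeta^{\beta,\btheta,\lambda}$ in \cite{AL.98}; I would either cite it from there, or reprove it by transferring the transparent discrete analogue — the components of the rank-$1$ graph $\G(\bz,q)$ are explored in size-biased order by mass (the rank-$1$ shadow of Theorem \ref{thm:ckl1}) — along the weak convergence of $N^{\bz}-\operatorname{Id}$ from Theorem \ref{thm:limic} and the $M_1$-continuity of first passage from Lemma \ref{lem:passageFluctuations}.

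Granting the main identity, the ``in particular'' is short. Apply it to $Z_2=W^{\beta_2,\btheta_2,\lambda_{22}}$, writing $\bzeta_2:=\bzeta^{\beta_2,\btheta_2,\lambda_{22}}$: the process $t\mapsto\inf\{u:Z_2(u)<-\lambda_{12}t\}$ has the law of $t\mapsto J^{\bzeta_2}(\lambda_{12}t)$, so by the first scaling identity in Lemma \ref{lem:jumplevypart} with $a=\lambda_{12}$,
\[
\big(\lambda_{12}\inf\{u:Z_2(u)<-\lambda_{12}t\};\ t\ge0\big)\ \overset{d}{=}\ \big(J^{\lambda_{12}\bzeta_2}(t);\ t\ge0\big).
\]
This process is a functional of $Z_2$, hence independent of $Z_1=W^{\beta_1,\btheta_1,\lambda_{11}}$, so \eqref{eqn:ZiIregDef} gives $Z^{\ireg}\overset{d}{=}W^{\beta_1,\btheta_1,\lambda_{11}}+W^{0,\lambda_{12}\bzeta_2,0}$ with the two summands independent. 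Conditioning on $\bzeta_2$ and applying Lemma \ref{lem:summingtwothinned} to these two independent thinned L\'evy processes identifies this, after averaging over $\bzeta_2$, with $W^{\beta_1,\,\btheta_1\bowtie\lambda_{12}\bzeta_2,\,\lambda_{11}}=W^{\beta_1,\btheta^{\ireg},\lambda_{11}}$, as claimed.

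The crux is the marked point process identity $\Pi\overset{d}{=}\{(\xi_p,\zeta_p)\}$: it says that for the non-time-homogeneous process $W^{\beta,\btheta,\lambda}$ the excursion lengths, indexed by the running-minimum level at which each excursion opens, are arranged in size-biased-by-length order (equivalently carry i.i.d.-given-$\bzeta$ $\Exp(\zeta_p)$ clocks). For a genuine spectrally positive L\'evy process this is classical excursion theory at the infimum, but here the parabolic drift breaks homogeneity, so one must lean on the corresponding statement in \cite{AL.98} or on the discrete-to-continuum transfer just described. The remaining items — local finiteness and measurability of $\Pi$ (from Lemma \ref{lem:Zpath}(c)) and the legitimacy of conditioning on the random parameter $\lambda_{12}\bzeta_2$ before invoking Lemma \ref{lem:summingtwothinned} — are routine.
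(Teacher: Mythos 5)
Your architecture tracks the paper's own proof quite closely: the paper settles the main identity by citing \cite[Corollary 16]{Aldous.97} (for $\btheta=\bzer$) and \cite[Claim 6.4]{Martin:2017}, and also remarks that it can be derived from the discrete analysis of $\overline{U}^2_n$ — the same two escape routes you offer — and reads off the ``in particular'' from Lemmas \ref{lem:jumplevypart} and \ref{lem:summingtwothinned} exactly as you do. The second half of your argument (scaling by $a=\lambda_{12}$, independence of the second summand from $Z_1$, conditioning on $\bzeta_2$ and then superposing via Lemma \ref{lem:summingtwothinned}) is a correct and complete spelling-out of what the paper leaves terse.

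The place where your write-up needs more is the reduction of the main identity to the marked point process equality $\Pi\overset{d}{=}\{(\xi_p,\zeta_p)\}$. You correctly observe, using goodness property (iv) and Lemma \ref{lem:Zgood}, that $\sigma(t):=\inf\{u:W(u)<-t\}$ is a non-decreasing \emph{pure-jump} process with jumps $r-l$ at levels $L(l)$, so that its law is indeed a measurable function of $\Pi$; but the map from the point process $\{(\xi_p,\zeta_p)\}$ to $J^{\bzeta^{\beta,\btheta,\lambda}}$ is a \emph{different} functional, because by the definition \eqref{eqn:Jcreg}
\[
J^{\bccc}(t)=\sum_{p}c_p 1_{[\xi_p\le t]}-t\,\sigma_2(\bccc),
\]
and for $\bccc=\bzeta^{\beta,\btheta,\lambda}\in\ell^2_\downarrow$ the compensator $t\,\sigma_2(\bzeta^{\beta,\btheta,\lambda})$ is an a.s.\ finite, strictly positive linear drift. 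Thus $J^{\bzeta^{\beta,\btheta,\lambda}}$ is neither pure jump nor monotone. Consequently, even granting $\Pi\overset{d}{=}\{(\xi_p,\zeta_p)\}$, what follows directly is $\sigma(t)\overset{d}{=}\sum_p\zeta_p 1_{[\xi_p\le t]}$, not $\sigma(t)\overset{d}{=}J^{\bzeta^{\beta,\btheta,\lambda}}(t)$; the two differ by the random linear term $t\,\sigma_2(\bzeta^{\beta,\btheta,\lambda})$. Your sentence ``so it suffices to prove $\Pi\overset{d}{=}\{(\xi_p,\zeta_p)\}$'' therefore does not establish the identity in the form stated, and you should either say explicitly how the compensator is accounted for, or note that the natural conclusion from the point process identity is the uncompensated one, in which case the extra drift $\lambda_{12}^2\sigma_2(\bzeta_2)\,t$ it produces has to be carried explicitly through the ``in particular'' computation (which is harmless for the downstream use in Corollary \ref{cor:zireggoodpath}, but does change the advertised formula for $Z^\ireg$). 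As written, the gap is that you treat the marked point process as if it determined both processes through the same pure-jump functional, which it does not.
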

	
	\begin{proof}
		For the first claim it follows from \cite[Corollary 16]{Aldous.97} (in the case $\btheta=\bzer$) and from Claim 6.4 in \cite{Martin:2017}. It can also be derived by the above analysis of $\overline{U}^2_n$. The second claim follows from Lemma \ref{lem:jumplevypart} and Lemma \ref{lem:summingtwothinned}.
	\end{proof}
	
	The next corollary follows from the previous lemma and Lemma \ref{lem:Zgood}.
	\begin{corollary}\label{cor:zireggoodpath}
		Under Regime \ref{regime:interacting}, the paths of $Z^\ireg$ in \eqref{eqn:ZiIregDef} are good with probability 1. In particular,
		\begin{equation*}
			t\mapsto     \inf\{u: Z_2(u)<-\lambda_{12}t\}
		\end{equation*} is continuous at each $l,r$ such that $(l,r)\in \EE_\infty(Z^\ireg).$
	\end{corollary}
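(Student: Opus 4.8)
The plan is to obtain both statements from the distributional identity recorded in the previous lemma, $Z^\ireg \overset{d}{=} W^{\beta_1,\btheta^\ireg,\lambda_{11}}$ with $\btheta^\ireg = \btheta_1 \bowtie \lambda_{12}\bzeta^{\beta_2,\btheta_2,\lambda_{22}}$, together with Lemma \ref{lem:Zgood}. First I would check that $(\beta_1,\btheta^\ireg,\lambda_{11})$ is an admissible triple, that is $(\beta_1,\btheta^\ireg)\in\cI$. Under Regime \ref{regime:interacting} we have $(\beta_i,\btheta_i)\in\cI$ for each $i$, and the Aldous--Limic theorem gives $\bzeta^{\beta_2,\btheta_2,\lambda_{22}}\in\ell^2_\downarrow\subset\ell^3_\downarrow$; since $\sigma_3$ is additive under $\bowtie$, the merged sequence $\btheta^\ireg$ lies in $\ell^3_\downarrow$, so $(\beta_1,\btheta^\ireg)\in\cI^\circ$. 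To land in $\cI$ rather than merely $\cI^\circ$ I would split cases: if $\beta_1>0$ we are done, and if $\beta_1=0$ then $(\beta_1,\btheta_1)\in\cI$ forces $\sigma_2(\btheta_1)=\infty$, hence $\sigma_2(\btheta^\ireg)=\infty$, and again $(\beta_1,\btheta^\ireg)\in\cI$. Since goodness is a measurable subset of $\D(\R_+,\R)$, Lemma \ref{lem:Zgood} then transfers across $\overset{d}{=}$, so $Z^\ireg$ is good almost surely.

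For the ``in particular'' claim I would argue deterministically from the decomposition $\lambda_{12}g = Z^\ireg - Z_1$, where $g(t) = \inf\{u:Z_2(u)<-\lambda_{12}t\}$ and the identity is the definition \eqref{eqn:ZiIregDef}. The relevant structural facts are that $Z_1 = W_1^{\beta_1,\btheta_1,\lambda_{11}}$ is a thinned L\'evy process, whose only jumps are the positive jumps of $J^{\btheta_1}$, and that $g$ is nondecreasing, so has only nonnegative jumps and, being a difference of c\`adl\`ag functions, is itself c\`adl\`ag. In particular $Z^\ireg$, a sum of functions with no downward jumps, has no downward jumps. Fix $(l,r)\in\EE_\infty(Z^\ireg)$. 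Continuity of $Z^\ireg$ at $r$ is part of goodness; continuity at $l$ follows since $l$ lies outside the open excursion set, so $Z^\ireg(l)=\inf_{s\le l}Z^\ireg(s)$, and an upward jump at $l$ would give $\inf_{s\le l}Z^\ireg(s)\le Z^\ireg(l-)<Z^\ireg(l)$, a contradiction, while downward jumps are excluded. At each $t\in\{l,r\}$ the vanishing jump of $Z^\ireg$ is the sum of the nonnegative jump of $Z_1$ and the nonnegative jump of $\lambda_{12}g$, so both vanish; thus $g(t-)=g(t)$, and right-continuity of $g$ upgrades this to continuity of $g$ at $t$, which is the assertion.

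I expect the only mild friction to be in the first step: confirming that merging an $\ell^3_\downarrow$ sequence with an $\ell^2_\downarrow$ one keeps us inside $\cI$ rather than on the excluded set $\{0\}\times\ell^2_\downarrow$, and noting that ``good'' is a genuine measurable event so that Lemma \ref{lem:Zgood} can be transported along the equality in distribution. Neither is substantive; the continuity of $g$ at excursion endpoints is then a short consequence of goodness together with the absence of downward jumps.
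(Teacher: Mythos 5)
Your proof is correct and takes essentially the same route as the paper: the paper's own proof is the single sentence ``the corollary follows from the previous lemma and Lemma~\ref{lem:Zgood},'' and you are simply expanding that appeal into its constituent steps. In particular you (i) verify that $(\beta_1,\btheta^\ireg)\in\cI$ (using $\bzeta^{\beta_2,\btheta_2,\lambda_{22}}\in\ell^2_\downarrow\subset\ell^3_\downarrow$ and a case split on $\beta_1$), so that Lemma~\ref{lem:Zgood} applies to $W^{\beta_1,\btheta^\ireg,\lambda_{11}}$; (ii) transfer goodness across the distributional identity furnished by the preceding lemma; and (iii) deduce continuity of $g(t)=\inf\{u:Z_2(u)<-\lambda_{12}t\}$ at excursion endpoints from the decomposition $Z^\ireg=Z_1+\lambda_{12}g$ together with the observation that both summands have only nonnegative jumps. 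Step (iii) is the only place where you add genuine content beyond the paper's citation: the paper's Definition~\ref{def:good}(i) only asserts continuity of $\psi$ at right endpoints $r$, and your argument that $Z^\ireg$ is also continuous at each left endpoint $l$ (via $Z^\ireg(l)=\inf_{s\le l}Z^\ireg(s)$ and the absence of downward jumps, which rules out an upward jump at $l$) is a small but necessary supplement to get the stated claim at both $l$ and $r$. All of this is sound; the proposal matches the paper's intent and fills in the details it leaves implicit.
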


	Just as in Lemma \ref{lem:vecforClassic} above, we can use the above corollary to establish the $\ell^2_\downarrow$ convergence of $\bcM_{p,1}$ in Regime \ref{regime:interacting}. In this case the matrix $R_n^{-1}\to I_{2\times 2}$ and we apply Corollary \ref{cor:techcor2} instead of Corollary \ref{cor:techcor}. Modulo those changes, the proof of Lemma \ref{lem:vecforClassic} remains largely unchanged. We leave these changes to the reader and conclude the following lemma.
	\begin{lemma}\label{lem:thmInteract}
		Theorem \ref{thm:interact} holds.
	\end{lemma}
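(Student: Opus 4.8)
The plan is to transcribe the proof of Lemma~\ref{lem:vecforClassic}, using $Z^\ireg$ in place of the process $Z^{\beta^\bv,\btheta^\bv,\lambda^\bv}$ and exploiting that in Regime~\ref{regime:interacting} one has $R_n\to I_{2\times 2}$, so that no Perron--Frobenius rescaling is needed at the end. First I would record the combinatorial input: order the components of $\G(\bW,Q)$ as $(\cC_p^{(n)};p\ge 1)$ by decreasing $\cM_{p,1}^{(n)}$, and for each $p$ with $\cM_{p,1}^{(n)}>0$ let $(l_{(p),n},r_{(p),n})$ be the excursion interval of $V_n$ associated to $\cC_p^{(n)}$ via Proposition~\ref{prop:vexcursions} with $\vec{\rho}=(1,0)^T$. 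Then, for all $n$ large enough that $R_n$ is invertible,
\begin{equation*}
	\bcM_p^{(n)} = R_n^{-1}\begin{bmatrix}
		r_{(p),n}-l_{(p),n}\\
		U_n^2\circ X_n^{2,1}(r_{(p),n}) - U_n^2\circ X_n^{2,1}(l_{(p),n}-)
	\end{bmatrix},
\end{equation*}
and, again by Proposition~\ref{prop:vexcursions}, the excursions of $V_n$ appear from left to right in size-biased order according to $\cM_{p,1}^{(n)}$ (the constant $q$ in $\cS_j = q^{-1}\cM_{j,1}$ being irrelevant to the size-biasing). Components with only type-$2$ vertices satisfy $\cM_{p,1}^{(n)}=0$ and are simply omitted from this list; they contribute only trailing zeros and are harmless in $\ell^2_\downarrow$.

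Next I would pass to the limit. In Regime~\ref{regime:interacting} we have $\kappa_{12}=0$ and $\kappa_{ii}=1$, so $q_{12}/q_{ii}\to 0$ and $R_n\to I_{2\times 2}$; writing $a_n=(R_n^{-1})_{11}\to 1$ and $b_n=(R_n^{-1})_{12}\to 0$, the first coordinate of the display above reads
\begin{equation*}
	\cM_{p,1}^{(n)} = a_n\bigl(r_{(p),n}-l_{(p),n}\bigr) + b_n\bigl(U_n^2\circ X_n^{2,1}(r_{(p),n}) - U_n^2\circ X_n^{2,1}(l_{(p),n}-)\bigr).
\end{equation*}
By Proposition~\ref{prop:vconvINter} and Skorohod's representation theorem I may assume $\psi_n:=c_nV_n\to Z^\ireg$ and $\varphi_n:=U_n^2\circ X_n^{2,1}\to\varphi$ almost surely in the $M_1$ topology, where $\varphi(t)=\inf\{u:Z_2(u)<-\lambda_{12}t\}$. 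Now $Z^\ireg$ is good by Corollary~\ref{cor:zireggoodpath}, which also gives that $\varphi$ is continuous at every excursion endpoint of $Z^\ireg$; moreover $\varphi$ is strictly increasing because $Z_2=W^{\beta_2,\btheta_2,\lambda_{22}}$ is spectrally positive (no negative jumps and no overshoot at downward passage). These are exactly the hypotheses of Corollary~\ref{cor:techcor2} with $a=1$, $b=0$, so
\begin{equation*}
	\Bigl\{\bigl(l,\,a_n(r-l)+b_n(\varphi_n(r)-\varphi_n(l-))\bigr):(l,r)\in\EE_\infty(\psi_n)\Bigr\}\longrightarrow\bigl\{(l,\,r-l):(l,r)\in\EE_\infty(Z^\ireg)\bigr\}
\end{equation*}
vaguely.

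Finally, the left-hand point set has its marks $\cM_{p,1}^{(n)}$ in size-biased order, so Aldous's size-biased point-process theory (applicable via Lemma~\ref{lem:ppconv} and Corollary~\ref{cor:techcor2}, exactly as in Lemma~\ref{lem:vecforClassic}) upgrades the above vague convergence to
\begin{equation*}
	\bigl(\cM_{p,1}^{(n)};p\ge 1\bigr)\weakarrow\ORD\bigl(r-l:(l,r)\in\EE_\infty(Z^\ireg)\bigr)=\LL_\infty^\downarrow(Z^\ireg)=\bigl(\zeta_l^\ireg;l\ge 1\bigr)
\end{equation*}
in $\ell^2_\downarrow$, which is Theorem~\ref{thm:interact}. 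I expect the only point requiring genuine care is confirming the path regularity of the random limit $\varphi$ — its strict monotonicity and its continuity at the excursion endpoints of $Z^\ireg$ — since everything else is a literal rewrite of the classic-regime argument with $R_n^{-1}\to I$; the passage from vague convergence of the point set to $\ell^2_\downarrow$ convergence of the size-biased marks is already packaged in Aldous's theory.
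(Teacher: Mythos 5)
Your proposal is correct and matches the paper's own (very terse) argument: repeat the proof of Lemma \ref{lem:vecforClassic} with $R_n^{-1}\to I_{2\times 2}$, invoke Proposition \ref{prop:vconvINter} in place of Proposition \ref{prop:vconvClass}, and apply Corollary \ref{cor:techcor2} (with $a_n\to 1$, $b_n\to 0$ and the random, strictly increasing $\varphi$) instead of Corollary \ref{cor:techcor}, with the hypotheses on $\varphi$ supplied by Corollary \ref{cor:zireggoodpath} and the continuity of the running infimum of the spectrally positive $Z_2$. You also correctly observe that the final rescaling step via Corollary \ref{cor:vectorvalue} drops out since there is no Perron–Frobenius normalization in this regime.
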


	\subsection{Convergence of $V_n^\bip$ in regime \ref{regime:bipartite}}

	\begin{proposition}\label{prop:vconvbip}
		
		Suppose $\bW$ and $Q$ are as in Regime \ref{regime:bipartite}. Then jointly in the $M_1$ topology
		\begin{align*}
			&\left(c_nV_n^\bip(t);t\ge 0\right) \weakarrow (Z^\bip(t);t\ge 0)\\
			&\left( \widetilde{U}_n^2\circ \widetilde{X}^{2,1}_n(t);t\ge 0\right)\weakarrow (t;t\ge 0)
		\end{align*}
		where $Z^\bip$ is defined in \eqref{eqn:zbipdef}.
	\end{proposition}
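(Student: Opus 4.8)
The plan is to run the argument of Proposition~\ref{prop:vconvClass} almost verbatim, but on the bipartite reformulation \eqref{eqn:Ydef}, so that the offending $q_{ii}=0$ never appears. First I would pin down the drift corrections hidden in \eqref{eqn:weightWindow}. With $q=c_n+\lambda_{12}^{(n)}$ as in \eqref{eqn:Ydef}, the window assumption gives $q\sigma_2(\bw^1)=1+(\alpha+\lambda_{12})c_n^{-1}+o(c_n^{-1})$ and $q\sigma_2(\bw^2)=1+(\lambda_{12}-\alpha)c_n^{-1}+o(c_n^{-1})$, hence $q=\sigma_2(\bw^i)^{-1}+\lambda^{(i)}+o(1)$ with $\lambda^{(1)}=\alpha+\lambda_{12}$ and $\lambda^{(2)}=\lambda_{12}-\alpha$, while $c_n\sigma_2(\bw^i)\to 1$ and $c_n\eps_i\to\lambda_{ii}$. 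Then Corollary~\ref{cor:Limic} (with $\kappa=1$) together with Skorohod's representation theorem lets me pass to a probability space on which, almost surely in the $M_1$ topology,
\begin{align*}
c_n\big(\widetilde{X}^{2,1}_n-\operatorname{Id}\big)&\to Z_1:=W^{\beta_1,\btheta_1,\alpha+\lambda_{12}}, & c_n\big(\widetilde{X}^{1,2}_n-\operatorname{Id}\big)&\to Z_2:=W^{\beta_2,\btheta_2,\lambda_{12}-\alpha},
\end{align*}
with $Z_1$ and $Z_2$ independent (they are built from the independent families $\xi^1,\xi^2$), and consequently $c_n(\widetilde{X}^{1,1}_n+\operatorname{Id})=c_n\eps_1\widetilde{X}^{2,1}_n\to\lambda_{11}\operatorname{Id}$ and $c_n(\widetilde{X}^{2,2}_n+\operatorname{Id})=c_n\eps_2\widetilde{X}^{1,2}_n\to\lambda_{22}\operatorname{Id}$ by Lemma~\ref{lem:composition}(2).

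Next I would handle the inner first-passage map. Writing $g_n:=-\widetilde{X}^{2,2}_n$, one has $\widetilde{U}^2_n(y)=\inf\{t:g_n(t)>y\}$, which is the generalized inverse of the non-decreasing function $g_n^\uparrow$; since $c_n(g_n-\operatorname{Id})=-c_n\eps_2\widetilde{X}^{1,2}_n\to-\lambda_{22}\operatorname{Id}$, Lemma~\ref{lem:sup} gives $c_n(g_n^\uparrow-\operatorname{Id})\to-\lambda_{22}\operatorname{Id}$ and then Lemma~\ref{lem:passageFluctuations} gives $c_n(\widetilde{U}^2_n-\operatorname{Id})\to\lambda_{22}\operatorname{Id}$, so in particular $\widetilde{U}^2_n\to\operatorname{Id}$ by Lemma~\ref{lem:weaklawFromCLT}. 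The three non-decreasing sequences $\widetilde{X}^{2,1}_n$, $\widetilde{U}^2_n$, $\widetilde{X}^{1,2}_n$ all have limit $\operatorname{Id}$, and their fluctuation limits $Z_1$, $\lambda_{22}\operatorname{Id}$, $Z_2$ have no negative jumps (thinned L\'evy processes jump only upward), so Corollary~\ref{cor:threefunc} applies and yields
\[
c_n\big(\widetilde{X}^{1,2}_n\circ\widetilde{U}^2_n\circ\widetilde{X}^{2,1}_n-\operatorname{Id}\big)\to Z_1+Z_2+\lambda_{22}\operatorname{Id}.
\]
Adding $c_n(\widetilde{X}^{1,1}_n+\operatorname{Id})\to\lambda_{11}\operatorname{Id}$ through Lemma~\ref{lem:composition}(1) — the jump-sign hypothesis is vacuous because one summand-limit is continuous — gives $c_nV_n^\bip\to Z_1+Z_2+(\lambda_{11}+\lambda_{22})\operatorname{Id}$. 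Finally, Lemma~\ref{lem:summingtwothinned} identifies $Z_1+Z_2\overset{d}{=}W^{\beta,\btheta,2\lambda_{12}}=W^{\beta,\btheta,0}+2\lambda_{12}\operatorname{Id}$ (the $\pm\alpha$ drift contributions cancel), so the limit equals $W^{\beta,\btheta,0}+(\lambda_{11}+2\lambda_{12}+\lambda_{22})\operatorname{Id}=Z^\bip$, matching \eqref{eqn:zbipdef}. The second assertion follows from $\widetilde{U}^2_n\circ\widetilde{X}^{2,1}_n\to\operatorname{Id}\circ\operatorname{Id}=\operatorname{Id}$ by Lemma~\ref{lem:composition}(3), and since every convergence above is established almost surely on the single Skorohod space, the two limits hold jointly.

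The only genuinely structural step is the application of Corollary~\ref{cor:threefunc}: it is exactly its no-negative-jump hypothesis — available here because $\widetilde{X}^{1,2}_n$, $\widetilde{X}^{2,1}_n$ and the first-passage map $\widetilde{U}^2_n$ are all monotone with upward-jumping fluctuation limits — that forces the $M_1$ rather than the $J_1$ topology, and getting that composition-fluctuation bookkeeping right (together with the first-order expansion $\lambda^{(1)}+\lambda^{(2)}=2\lambda_{12}$, which must come out independent of $\alpha$) is where I would expect most of the care to go. Everything else is a routine transcription of the classic-regime proof.
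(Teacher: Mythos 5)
Your proposal is correct and follows essentially the same route as the paper: expand $q=c_n+\lambda_{12}^{(n)}$ against each $\sigma_2(\bw^i)$ to identify the drifts $\lambda_{12}\pm\alpha$, invoke Corollary~\ref{cor:Limic} plus Skorohod representation for the two "off-diagonal" processes, deduce $c_n(\widetilde X^{i,i}_n+\operatorname{Id})\to\lambda_{ii}\operatorname{Id}$ from $c_n\eps_i\to\lambda_{ii}$, run Lemmas~\ref{lem:sup} and~\ref{lem:passageFluctuations} for $\widetilde U^2_n$, assemble the composition via Corollary~\ref{cor:threefunc}, add via Lemma~\ref{lem:composition}(1), and identify the limit with $Z^{\bip}$ through Lemma~\ref{lem:summingtwothinned}. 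The only cosmetic differences are that you spell out the $\alpha$-cancellation and the final application of Lemma~\ref{lem:summingtwothinned} that the paper leaves implicit, and for the second assertion you use Lemma~\ref{lem:composition}(3) where the paper passes through locally uniform convergence via Lemma~\ref{lem:weaklawFromCLT} and Lemma~\ref{lem:contpoints} — both are valid.
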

	
	\begin{proof}
		Note that $q_{12} = \frac{1}{\sqrt{\sigma_2(\bw^1)\sigma(\bw^2)}}+\lambda_{12}+o(1)$ and so
		\begin{equation*}
			q_{12} = \sqrt{\frac{\sigma_2(\bw^1)}{\sigma_2(\bw^2)}} \frac{1}{\sigma_2(\bw^1)} + \lambda_{12}+o(1) = \frac{1}{\sigma_2(\bw^1)}+\lambda_{12}+\alpha
		\end{equation*}
		and similarly 
		\begin{equation*}
			q_{12} = \frac{1}{\sigma_2(\bw^2)} +\lambda_{12}-\alpha.
		\end{equation*}
		Therefore, by Corollary \ref{cor:Limic}, in the $M_1$ topology
		\begin{align*}
			&\left(c_n(\widetilde{X}^{2,1}_n(t) - t)\right) \weakarrow (W_1^{\beta_1,\btheta_1,\lambda_{12}+\alpha}(t);t\ge 0)\\
			&
			\left(c_n(\widetilde{X}^{1,2}_n(t) - t)\right) \weakarrow (W_2^{\beta_2,\btheta_2,\lambda_{12}-\alpha}(t);t\ge 0).
		\end{align*} Using Skorohod's representation theorem, we assume without loss of generality  that the convergence above holds a.s. instead of in distribution. By Lemma \ref{lem:weaklawFromCLT} the above convergence implies
		\begin{equation}\label{eqn:xjiconvbpfluct}
			(\widetilde{X}^{j,i}(t);t\ge 0) \overset{M_1}{\longrightarrow} (t;t\ge 0).
		\end{equation}
		Note that $\widetilde{X}^{i,i}(t) = -t + \frac{\lambda_{ii}+o(1)}{c_n} \widetilde{X}^{j,i}(t)$ and so 
		\begin{equation}\label{eqn:vnbip1}
			\left(c_n(\widetilde{X}^{i,i}(t)+t);t\ge 0\right) = \left((\lambda_{ii}+o(1))\widetilde{X}^{j,i}(t);t\ge 0\right) \overset{M_1}\longrightarrow (\lambda_{ii} ;t\ge 0).
		\end{equation} Let $\widetilde{L}_n^2(t) = -\inf_{s\le t} \widetilde{X}_n^{2,2}(s) = \sup_{s\le t}(-\widetilde{X}_n^{2,2}(s)).$ It follows from Lemma \ref{lem:sup} that
		\begin{align*}
			&(c_n(\widetilde{L}_n^2(t) - t);t\ge 0) \overset{M_1}{\longrightarrow} (\lambda_{22}t;t\ge 0)
		\end{align*}
		and so an application of Lemma \ref{lem:passageFluctuations} to $U_n^2$, the first passage time of $L_n^2$, gives
		\begin{align}\label{eqn:fluctForUBP}
			&(c_n(\widetilde{U}_n^2(t) - t);t\ge 0) \overset{M_1}{\longrightarrow} (\lambda_{22}t;t\ge 0).
		\end{align}

		Applying Corollary \ref{cor:threefunc}, we get
		\begin{align}
			\label{eqn:vnbip2}&\left( c_n(\widetilde{X}^{1,2}_n\circ \widetilde{U}_n^2 \circ \widetilde{X}^{2,1}_n(t)-t;t\ge 0\right) \\
			\nonumber&\qquad \overset{M_1}{\longrightarrow} \left(W_1^{\beta_1,\btheta_1,\lambda_{12}+\alpha}(t)+ \lambda_{22}t+W_2^{\beta_2,\btheta_2,\lambda_{12}-\alpha}(t);t\ge 0\right).
		\end{align}
		The convergence of $c_nV_n^\bip$ now follows from summing equations \eqref{eqn:vnbip1} and \eqref{eqn:vnbip2} and using the continuity of addition found in Lemma \ref{lem:composition}(1). 
		
		To get the second weak convergence result, we us Lemma \ref{lem:weaklawFromCLT} to see that \eqref{eqn:fluctForUBP} implies $\widetilde{U}_n^2 \to \operatorname{Id}$ locally uniformly and \eqref{eqn:xjiconvbpfluct} implies $\widetilde{X}^{2,1}\to \operatorname{Id}$ locally uniformly.
	\end{proof}
	
	By the same reasoning leading to Lemma \ref{lem:vecforClassic}, we can establish the following lemma. The details are omitted.
	\begin{lemma}\label{lem:vecforBP}
		Suppose $\bW =\bW^{(n)}$ and $Q = Q^{(n)}$ are as in Regime \ref{regime:bipartite}. Let $(\bcM_l^{(n)};l\ge 1)$ be the vector-valued component masses of $\G(\bW,Q)$ ordered so that $\bcM_{l,1}^{(n)}\ge \bcM_{l,2}^{(n)}\ge\dotsm$. 
		Then 
		\begin{equation*}
			\left(\bcM_{p,j}^{(n)};l\ge 1\right) \weakarrow \left(\zeta_p^\bip;p\ge 1\right)\qquad\begin{array}{l}
				\textup{ in }\ell^2_\downarrow\textup{ for }j =1\textup{ and }\\
				\textup{pointwise }\textup{for }j =2\\
			\end{array}
		\end{equation*}
		where $\bzeta^\bip$ is as in Theorem \ref{thm:classic}.
	\end{lemma}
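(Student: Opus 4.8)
The plan is to run the argument for Lemma \ref{lem:vecforClassic} essentially verbatim, with the classic-regime objects replaced by their bipartite counterparts from Sections \ref{sec:exploration} and \ref{sec:WeakConv}. Let $\cC_p^{(n)}$ be the connected component whose weight vector is $\bcM_p^{(n)}$, and let $(l_{(p),n},r_{(p),n})$ be the excursion interval of $V_n^\bip$ above its running infimum that Proposition \ref{prop:vexcursionsbip} attaches to $\cC_p^{(n)}$. Since $\eps_i=\lambda_{ii}^{(n)}/(c_n+\lambda_{12}^{(n)})\to 0$, for all $n$ large the matrix $R^\bip=\begin{bmatrix}\eps_1&1\\1&\eps_2\end{bmatrix}$ is invertible with $(R^\bip)^{-1}=(\eps_1\eps_2-1)^{-1}\begin{bmatrix}\eps_2&-1\\-1&\eps_1\end{bmatrix}\to\begin{bmatrix}0&1\\1&0\end{bmatrix}$, and Proposition \ref{prop:vexcursionsbip} gives
\begin{equation*}
\bcM_p^{(n)}=(R^\bip)^{-1}\begin{bmatrix}r_{(p),n}-l_{(p),n}\\ \widetilde U_n^2\circ\widetilde X_n^{2,1}(r_{(p),n})-\widetilde U_n^2\circ\widetilde X_n^{2,1}(l_{(p),n}-)\end{bmatrix},
\end{equation*}
with the excursions of $V_n^\bip$ listed in the size-biased order associated to $\cS_p=q^{-1}\cM_{p,1}^{(n)}$.

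Next, using Proposition \ref{prop:vconvbip} and Skorohod's representation theorem we may assume that, almost surely and jointly, $c_nV_n^\bip\to Z^\bip$ in the $M_1$ topology and $\widetilde U_n^2\circ\widetilde X_n^{2,1}\to\operatorname{Id}$ locally uniformly. The excursion intervals of $V_n^\bip$ and of $c_nV_n^\bip$ coincide; and since $Z^\bip=W^{\beta,\btheta,\lambda_{11}+2\lambda_{12}+\lambda_{22}}$ with $(\beta,\btheta)=(\beta_1+\beta_2,\btheta_1\bowtie\btheta_2)\in\cI$ by the hypothesis of Regime \ref{regime:bipartite}, Lemma \ref{lem:Zgood} makes $Z^\bip$ a.s.\ good, whence every $\zeta_p^\bip$ is strictly positive (so the type-$2$-only components, which have $\cM_{p,1}^{(n)}=0$ and are ordered last, never occupy a fixed finite index in the limit). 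Writing the rows of $(R^\bip)^{-1}$ as $(a_n^{(j)},b_n^{(j)})$ we have $(a_n^{(1)},b_n^{(1)})\to(1,0)$ and $(a_n^{(2)},b_n^{(2)})\to(0,1)$, so with $\gamma=1$ one has $a^{(j)}+\gamma b^{(j)}=1>0$ in both cases, and each coordinate
\begin{equation*}
\cM_{p,j}^{(n)}=a_n^{(j)}\big(r_{(p),n}-l_{(p),n}\big)+b_n^{(j)}\big(\widetilde U_n^2\circ\widetilde X_n^{2,1}(r_{(p),n})-\widetilde U_n^2\circ\widetilde X_n^{2,1}(l_{(p),n}-)\big)
\end{equation*}
is exactly of the shape to which Corollary \ref{cor:techcor} applies, with $\widetilde U_n^2\circ\widetilde X_n^{2,1}$ in the role of $\varphi_n$ and limit $\operatorname{Id}$ (strictly increasing and continuous). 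Hence the associated marked point processes converge, and Aldous's theory of size-biased point processes \cite{Aldous.97} — transferred to the present setting exactly as in the proof of Lemma \ref{lem:vecforClassic} via Lemma \ref{lem:ppconv} — upgrades this to $(\cM_{p,1}^{(n)};p\ge 1)\weakarrow\ORD\big((r-l);(l,r)\in\EE_\infty(Z^\bip)\big)=(\zeta_p^\bip;p\ge 1)$ in $\ell^2_\downarrow$. Finally, for each fixed $p$ the endpoints $l_{(p),n},r_{(p),n}$ converge, so the bracketed vector in the first display converges to $\zeta_p^\bip(1,1)^T$ and $\bcM_p^{(n)}\to\zeta_p^\bip\boldsymbol{1}$ entrywise; combined with the $\ell^2_\downarrow$ statement this yields the asserted pointwise convergence of $(\cM_{p,2}^{(n)};p\ge 1)$.

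I expect the only nontrivial part to be the last step: verifying that the size-biased point-process machinery of \cite{Aldous.97} genuinely transfers — that the marks above are asymptotically positive and converge jointly as point processes, so that Corollary \ref{cor:techcor} is applicable, and that one obtains $\ell^2_\downarrow$ convergence for the first coordinate rather than merely pointwise convergence of the ordered excursion lengths. But all of this was already carried out in the proof of Lemma \ref{lem:vecforClassic}; the only substantive differences here are that the PF eigenvector $\bu^\creg$ is replaced by $\boldsymbol{1}$, that $Z^\creg$ is replaced by $Z^\bip$ — which is \emph{already} the limit furnished by Proposition \ref{prop:vconvbip}, so that no final application of the scaling Corollary \ref{cor:vectorvalue} is needed — and that $R_n$ is replaced by $R^\bip$, whose (degenerate-looking but invertible) limit is the swap matrix. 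This is exactly why the details may be omitted.
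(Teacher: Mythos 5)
Your proposal follows the same route the paper intends: run the argument of Lemma \ref{lem:vecforClassic} with $V_n^\bip$, Proposition \ref{prop:vexcursionsbip}, Proposition \ref{prop:vconvbip}, and $R^\bip$ in place of $V_n$, Proposition \ref{prop:vexcursions}, Proposition \ref{prop:vconvClass}, and $R_n$, then invoke the size-biased point process machinery via Lemma \ref{lem:ppconv} and Corollary \ref{cor:techcor} for the $\ell^2_\downarrow$ conclusion in the first coordinate, and deduce the pointwise convergence in the second coordinate from convergence of the excursion endpoints together with $\widetilde U_n^2\circ\widetilde X_n^{2,1}\to\operatorname{Id}$. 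Your identification that $\gamma=1$, that $(R^\bip)^{-1}$ tends to the swap matrix, and that no terminal application of Corollary \ref{cor:vectorvalue} is needed are all correct and are exactly the changes the paper has in mind when it writes ``the details are omitted.''

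One small bookkeeping slip: you wrote $(a_n^{(1)},b_n^{(1)})\to(1,0)$ and $(a_n^{(2)},b_n^{(2)})\to(0,1)$, but from
\[
(R^\bip)^{-1}=\frac{1}{\eps_1\eps_2-1}\begin{bmatrix}\eps_2&-1\\-1&\eps_1\end{bmatrix}\longrightarrow\begin{bmatrix}0&1\\1&0\end{bmatrix}
\]
the first row tends to $(0,1)$ and the second to $(1,0)$ (note also that the off-diagonal entries are the ones tending to $1$, and $a_n^{(1)}=\eps_2/(\eps_1\eps_2-1)$ is actually negative for $n$ large). This has no effect on your argument: in both cases $a^{(j)}+\gamma b^{(j)}=1>0$ with $\gamma=1$, so Corollary \ref{cor:techcor} applies and the limiting mark is $(r-l)$ for each coordinate. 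You correctly observe that $b^{(1)}\neq 0$ is why Corollary \ref{cor:techcor} (rather than Corollary \ref{cor:techcor2}, which the paper uses in the interacting regime) is the appropriate tool here.
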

	
	\section{Proof of Main Results}\label{sec:proofsofMain}
	
	The general approach to proving scaling limits in $\ell^2_\downarrow$ is to use the general size-biased point process approach of Aldous \cite{Aldous.97}. As we have seen in Lemmas \ref{lem:vecforClassic} and \ref{lem:vecforBP}, this works quite well for the marginal convergence of the mass of type 1 vertices in each connected component, but this does not hold jointly in the $\ell^2$ topology. 
	
	\subsection{Preliminary Lemmas}
	In order to extend the aforementioned lemmas, we establish with some elementary yet general results about convergence of sequences of vectors in $\ell^2(\N\to\R^2_+)$. We will denote generic elements of $\ell^2(\N\to \R^2_+)$ by $\bz = (\vec{z}_l;l\ge 1)$ where $\vec{z}_l = (x_l,y_l)$. Denote by $\ORD_1(\bz) = (\vec{z}_{\tau_l};l\ge 1)$ as the rearrangement of $\{\{\vec{z}_l;l\ge 1\}\}$ in the unique way that $l\mapsto x_{\tau_l}$ is non-decreasing and if $x_{\tau_l} = x_{\tau_r}$ for $l<r$ then $\tau_l<\tau_r$. Similarly denote $\ORD_2(\bz)$ as the rearrangement so that the ``$y$'' coordinate is decreasing. Let $\pi_1(\bz) = (x_l;l\ge 1)$ and $\pi_2 (\bz) = (y_l;l\ge 1)$.
	
	We begin with a simple deterministic lemma. 
	\begin{lemma}
		Suppose that $\bz^{(n)}$ is a sequence in $\ell^2(\N\to\R^2_+)$. Let $a_n\to 1$ and $b_n\to 0$ are sequences of real numbers and that $\bw = (w_l;l\ge 1)\in\ell^2_\downarrow.$ If
		\begin{enumerate}
			\item $\{\pi_2(\bz^{(n)});n\ge 1\}$ is pre-compact in $\ell^2$;
			\item As $n\to\infty$ 
			\begin{equation*}
				\ORD(\pi_1(\bz)) \longrightarrow \bw\textup{ in }\ell^2_\downarrow.
			\end{equation*}
		\end{enumerate}
		Then $\ORD(a_n x_l^{(n)}+b_n y_l^{(n)};l\ge 1)\to \bw$ in $\ell^2$.
	\end{lemma}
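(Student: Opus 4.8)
The plan is to isolate the one substantive ingredient --- that the non-increasing rearrangement $\ORD$ is a contraction for the $\ell^2$-norm on the nonnegative cone --- and otherwise argue by triangle inequalities. Write $\bz^{(n)}=(\vec z^{(n)}_l;l\ge1)$ with $\vec z^{(n)}_l=(x^{(n)}_l,y^{(n)}_l)$, and set
\[
  \bv^{(n)}:=a_n\,\pi_1(\bz^{(n)}),\qquad \bu^{(n)}:=a_n\,\pi_1(\bz^{(n)})+b_n\,\pi_2(\bz^{(n)}),
\]
so that the sequence on the left-hand side of the conclusion is precisely $\ORD(\bu^{(n)})$ (which, as in the paper's convention for $\ORD$, is implicitly a nonnegative element of $\ell^2$).

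First I would record the ingredients. \textbf{(a)} A pre-compact subset of $\ell^2$ is bounded, so $C:=\sup_n\|\pi_2(\bz^{(n)})\|_{\ell^2}<\infty$; in fact only this boundedness is used, not full pre-compactness. \textbf{(b)} For nonnegative $\mathbf{p},\mathbf{q}\in\ell^2$ one has $\|\ORD(\mathbf{p})-\ORD(\mathbf{q})\|_{\ell^2}\le\|\mathbf{p}-\mathbf{q}\|_{\ell^2}$: since $\|\ORD(\mathbf{p})\|_{\ell^2}=\|\mathbf{p}\|_{\ell^2}$, expanding $\|\ORD(\mathbf{p})-\ORD(\mathbf{q})\|_{\ell^2}^2=\|\mathbf{p}\|_{\ell^2}^2+\|\mathbf{q}\|_{\ell^2}^2-2\langle\ORD(\mathbf{p}),\ORD(\mathbf{q})\rangle$ and likewise for $\mathbf{p}-\mathbf{q}$ reduces the claim to $\langle\mathbf{p},\mathbf{q}\rangle\le\langle\ORD(\mathbf{p}),\ORD(\mathbf{q})\rangle$, which for finitely supported sequences is the rearrangement inequality and in general follows by truncation. \textbf{(c)} The scaling identity $\ORD(a\mathbf{p})=a\,\ORD(\mathbf{p})$ for $a\ge0$.

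Now, since $\bu^{(n)}-\bv^{(n)}=b_n\,\pi_2(\bz^{(n)})$, ingredient (a) gives $\|\bu^{(n)}-\bv^{(n)}\|_{\ell^2}\le|b_n|\,C\to0$, and (b) then yields $\|\ORD(\bu^{(n)})-\ORD(\bv^{(n)})\|_{\ell^2}\to0$. For $n$ large $a_n>0$, so by (c) we have $\ORD(\bv^{(n)})=a_n\,\ORD(\pi_1(\bz^{(n)}))$; since hypothesis (2) says $\ORD(\pi_1(\bz^{(n)}))\to\bw$ in $\ell^2_\downarrow$, so that in particular $\sup_n\|\ORD(\pi_1(\bz^{(n)}))\|_{\ell^2}<\infty$, while $a_n\to1$, the triangle inequality gives
\[
  \|\ORD(\bv^{(n)})-\bw\|_{\ell^2}\le|a_n-1|\,\|\ORD(\pi_1(\bz^{(n)}))\|_{\ell^2}+\|\ORD(\pi_1(\bz^{(n)}))-\bw\|_{\ell^2}\longrightarrow0.
\]
Combining the last two convergences via one more triangle inequality yields $\ORD(\bu^{(n)})\to\bw$ in $\ell^2$, which is the assertion.

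The only step that is not routine bookkeeping is ingredient (b), the $\ell^2$-contraction property of $\ORD$; it is classical (the sequence analogue of the non-expansiveness of the decreasing rearrangement on $L^2$), and in the write-up I would either cite it or spell out the short truncation argument sketched above.
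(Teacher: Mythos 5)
Your proof is correct, and it is a genuinely cleaner arrangement of the argument than the one in the paper, even though both rest on the same core tool (the $\ell^2$-contraction property of $\ORD$, which you prove via the rearrangement inequality). The paper first reorders the index set so that $\pi_1(\bz^{(n)})=\ORD(\pi_1(\bz^{(n)}))$, then argues pointwise that $a_nx_l^{(n)}+b_ny_l^{(n)}\to w_l$ (using $\sup_{n,l}y_l^{(n)}<\infty$), invokes pre-compactness of the combined sequence to upgrade pointwise to $\ell^2$ convergence, and only then applies the contraction. You bypass the reordering and the ``pointwise plus pre-compact implies $\ell^2$'' step entirely: you apply the contraction directly to $\ORD(\bu^{(n)})-\ORD(\bv^{(n)})$ to absorb the $b_n\,\pi_2$ tilt, then deal with the $a_n$ scaling by the identity $\ORD(a\mathbf p)=a\,\ORD(\mathbf p)$ and a triangle inequality. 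This also exposes that only boundedness of $\{\pi_2(\bz^{(n)})\}$ is used, not pre-compactness, which you explicitly note and which is a genuine (if small) sharpening of the hypotheses.

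One caveat that applies equally to your proof and the paper's: $\ORD$ as defined in the paper acts on sequences of non-negative reals, but if $b_n<0$ the entries $a_nx_l^{(n)}+b_ny_l^{(n)}$ can be negative, so the contraction inequality is being invoked slightly outside the stated domain. In the applications in Section 6 the relevant coefficients are nonnegative, so this is harmless, but a careful write-up should either impose $a_nx_l^{(n)}+b_ny_l^{(n)}\ge0$ as a hypothesis or extend $\ORD$ (e.g.\ truncate negatives to $0$, which preserves the contraction property against a nonnegative target). Your step (b) derivation of the contraction by polarization plus the rearrangement inequality is correct for nonnegative sequences.
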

	\begin{proof}
		Without loss of generality we assume that $\vec{z}_l^{(n)}$ is re-ordered so that $\pi_1(\bz^{(n)}) = \ORD(\pi_1(\bz^{(n)}))$. It is easy to see that $\{(a_nx_l^{(n)}+b_n y_l^{(n)};l\ge 1): n\ge 1\}$ is pre-compact in $\ell^2$ and also that \begin{equation*}
			\sup_{n,l} y_l^{(n)}<\infty.
		\end{equation*} It follows that for each fixed $l\ge 1$ that $a_nx_l^{(n)} + b_n y_l^{(n)} \to w_l$, where we recall that we have reorded $\bz^{(n)}$ so that $x_l^{(n)}\to w_l$. 
		
		As point-wise convergence and relative compactness implies $\ell^2$-convergence, we obtain 
		\begin{equation*}
			(a_nx_l^{(n)}+b_n y^{(n)}_l;l\ge 1)\longrightarrow \bw\qquad \textup{ in }\ell^2.
		\end{equation*}
		The result now follows from recognizing $\ORD$ is a contraction on $\ell^2$. That is for any two vectors $\bw$ and $\bw'$ in $\ell^2$:
		\begin{equation*}
			\|\ORD(\bw)-\ORD(\bw')\|^2_{\ell^2} \le \|\bw-\bw'\|^2_{\ell^2.}
		\end{equation*}
	\end{proof}
	
	The next lemma is a trivial and its proof is omitted.
	\begin{lemma}
		Suppose that $\bz^{(n)}, n\ge 1$ and $\bz$ are elements $\ell^2(\N\to\R^2_+)$. Let $A_n\to A\in\R^{2\times 2}$. If $\bz^{(n)}\to \bz$ in $\ell^2(\N\to\R_+^2)$ then 
		\begin{equation*}
			(A_n \vec{z}_l^{(n)};l\ge 1) \longrightarrow \left(A \vec{z}_l;l\ge 1\right)\qquad\textup{ in }\ell^2(\N\to\R^2_+).
		\end{equation*} 
	\end{lemma}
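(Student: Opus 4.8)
The plan is to treat the assignment $\bz\mapsto(A\vec{z}_l;l\ge 1)$ as a bounded linear operator on the ambient separable Banach space $\ell^2(\N\to\R^2)$ whose norm is controlled by the operator norm of $A$, and which depends Lipschitz-continuously on $A$; the statement is then just the joint continuity of $(A,\bz)\mapsto(A\vec{z}_l;l\ge1)$ at the given point. Before the estimate I would fix the Euclidean norm $|\cdot|$ on $\R^2$ and the induced operator norm $\|\cdot\|$ on $\R^{2\times2}$. Since all norms on $\R^2$ are equivalent, $\bz\mapsto(\sum_{l\ge1}|\vec{z}_l|^2)^{1/2}$ generates the same topology on $\ell^2(\N\to\R^2)$ as the norm $\|\cdot\|_{2,1}$ of \eqref{eqn:ell21def}, and $\ell^2(\N\to\R^2_+)$ carries the subspace topology, so it is harmless to argue with the Euclidean-based norm throughout.

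The only computation I would carry out is the splitting, valid for each $l\ge1$,
\[
A_n\vec{z}_l^{(n)}-A\vec{z}_l = A_n(\vec{z}_l^{(n)}-\vec{z}_l)+(A_n-A)\vec{z}_l,
\]
followed by the elementary bound $|a+b|^2\le 2|a|^2+2|b|^2$ and summation over $l$, which yields
\[
\sum_{l\ge1}|A_n\vec{z}_l^{(n)}-A\vec{z}_l|^2 \le 2\left(\sup_{m\ge1}\|A_m\|\right)^2\sum_{l\ge1}|\vec{z}_l^{(n)}-\vec{z}_l|^2 + 2\|A_n-A\|^2\sum_{l\ge1}|\vec{z}_l|^2.
\]
I would then conclude by noting that $\sup_{m\ge1}\|A_m\|<\infty$ because convergent sequences in $\R^{2\times2}$ are bounded, that $\sum_{l\ge1}|\vec{z}_l^{(n)}-\vec{z}_l|^2\to0$ is exactly the hypothesis $\bz^{(n)}\to\bz$, that $\|A_n-A\|\to0$ by assumption, and that $\sum_{l\ge1}|\vec{z}_l|^2<\infty$ since $\bz\in\ell^2(\N\to\R^2_+)$; hence both terms on the right vanish as $n\to\infty$.

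I do not expect any genuine obstacle here: the lemma is elementary, as the paper notes, and the argument above is a routine Banach-space estimate. The only point worth an explicit remark is that, read literally, the target space $\ell^2(\N\to\R^2_+)$ requires both $(A_n\vec{z}_l^{(n)};l\ge1)$ and $(A\vec{z}_l;l\ge1)$ to have non-negative coordinates; this is automatic when $A_n$ and $A$ have non-negative entries, and it also holds in every use we make of the lemma — for instance the vector $\bcM_p^{(n)}$ in \eqref{eqn:asypmprop} is a genuine non-negative mass vector even though $R_n^{-1}$ itself need not have non-negative entries — so there is no loss in recording the convergence as taking place in $\ell^2(\N\to\R^2_+)$.
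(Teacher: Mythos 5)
Your proof is correct, and since the paper explicitly omits the proof as trivial, there is nothing to compare against; the decomposition $A_n\vec{z}_l^{(n)}-A\vec{z}_l = A_n(\vec{z}_l^{(n)}-\vec{z}_l)+(A_n-A)\vec{z}_l$ together with the elementary square-sum bound is exactly the expected one-line argument. Your closing remark about non-negativity of the image sequence is a fair observation about the literal reading of the statement, and your resolution (that in each application the limit and prelimit vectors are genuine mass vectors, so the convergence does indeed take place inside the cone $\ell^2(\N\to\R^2_+)$) is the right way to dispose of it.
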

	
	The next lemma is key to proving Theorems \ref{thm:classic} and \ref{thm:bipartite}. 
	\begin{lemma}\label{lem:margtowhole}
		Suppose that $\bz^{(n)}$ is a sequence in $\ell^2(\N\to\R^2_+)$, $\alpha,\beta>0$, and $\bx, \by\in \ell^2_\downarrow$. If
		\begin{enumerate}
			\item $\ORD(\pi_1(\bz^{(n)})) \to \bx$ in $\ell^2$;
			\item $\ORD(\pi_2(\bz^{(n)})) \to \by$ in $\ell^2$;
			\item $\ORD_1(\bz^{(n)}) \to \left(\begin{bmatrix}
				1\\\alpha
			\end{bmatrix}x_l;l\ge 1\right)$ in the product topology;
			\item $\ORD_2(\bz^{(n)}) \to \left(\begin{bmatrix}
				\beta\\1
			\end{bmatrix}y_l;l\ge 1\right)$ in the product topology; and
			\item $y_l>0$ for all $l\ge 1.$
		\end{enumerate}
		Then $\ORD_1(\bz^{(n)}) \to \left(\begin{bmatrix}
			1\\\alpha
		\end{bmatrix}x_l;l\ge 1\right)$ in $\ell^2(\N\to \R^2_+).$
	\end{lemma}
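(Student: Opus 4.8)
The strategy is to upgrade the coordinatewise convergence in hypothesis (3) to convergence in $\ell^2(\N\to\R^2_+)$ by showing that the sequence $\big(\ORD_1(\bz^{(n)})\big)_{n\ge1}$ is relatively compact in $\ell^2(\N\to\R^2_+)$. Write $\vec z^\star_l=(x_l,\alpha x_l)$ and $\bz^\star=(\vec z^\star_l;l\ge1)$; since $\bx\in\ell^2_\downarrow$ we have $\bz^\star\in\ell^2(\N\to\R^2_+)$. Because $\ell^2(\N\to\R^2_+)$ is Polish and $\ell^2$-convergence implies coordinatewise convergence, once relative compactness is in hand the proof concludes by a routine diagonal argument: every subsequence has a further subsequence converging in $\ell^2$ to some limit, that limit is also the coordinatewise limit, hence equals $\bz^\star$ by (3); so the whole sequence converges in $\ell^2(\N\to\R^2_+)$ to $\bz^\star$. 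Thus everything reduces to relative compactness.

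By the standard characterization, relative compactness in $\ell^2(\N\to\R^2_+)$ is a uniform norm bound together with equismall tails, $\lim_{K\to\infty}\sup_n\sum_{l>K}\big\|\big(\ORD_1(\bz^{(n)})\big)_l\big\|^2=0$ (using equivalence of norms on $\R^2$, the choice of norm is irrelevant). The uniform bound is immediate, because $\big\|\ORD_1(\bz^{(n)})\big\|^2$ is comparable to $\sigma_2\big(\pi_1(\bz^{(n)})\big)+\sigma_2\big(\pi_2(\bz^{(n)})\big)$, and both summands converge by (1) and (2). For the tails I would split the sum into its first- and second-coordinate contributions. Since $\ORD_1$ orders the first coordinates decreasingly, the first-coordinate contribution is exactly $\sum_{l>K}\big(\ORD(\pi_1(\bz^{(n)}))_l\big)^2$, which is $<\eps$ for all large $K$, uniformly in $n$, by relative compactness of $\{\ORD(\pi_1(\bz^{(n)}))\}$ in $\ell^2_\downarrow$ coming from (1).

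The second-coordinate contribution is the heart of the matter, and is where (2), (4) and (5) enter. Fix $\eps>0$ and, by (2), pick $J$ with $\sup_n\sum_{l>J}\big(\ORD(\pi_2(\bz^{(n)}))_l\big)^2<\eps$. The key point is that, for $n$ large, the $J$ entries of $\bz^{(n)}$ with the largest second coordinates sit among boundedly many entries with largest first coordinate: by (4) the first coordinates of these $J$ entries converge to $\beta y_1,\dots,\beta y_J$, all strictly positive by (5), so for $n$ large each of them has first coordinate at least $\delta:=\tfrac12\beta y_J>0$; and by (1) there are, for $n$ large, at most $K_0$ indices with first coordinate $\ge\delta$, for any fixed $K_0>\sigma_2(\bx)/\delta^2$. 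As $\ORD_1$ sorts by first coordinate, these $J$ heaviest-in-second-coordinate entries therefore occupy positions $\le K_0$ in $\ORD_1(\bz^{(n)})$, so the tail index set $\{l>K_0\}$ avoids all of them, whence
\[
\sum_{l>K_0}\Big(\pi_2\big(\ORD_1(\bz^{(n)})\big)_l\Big)^2\ \le\ \sum_{l>J}\big(\ORD(\pi_2(\bz^{(n)}))_l\big)^2\ <\ \eps .
\]
Combining with the first-coordinate bound and enlarging $K_0$ finitely to cover the finitely many remaining small $n$ (each $\bz^{(n)}$ having finite norm) gives the equismall-tails condition, which finishes the proof.

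The main obstacle is exactly this second-coordinate tail: a priori, in the $\ORD_1$-ordering the entries with large second coordinate could sit arbitrarily far out, and pinning them to bounded positions is precisely what forces one to use (4) jointly with the positivity hypothesis (5). Hypothesis (3) is used only at the end, to identify the limit; in particular it is not necessary to know in advance that $\alpha\beta=1$, although this consistency relation does in fact follow from (1)--(5).
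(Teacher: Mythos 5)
Your proof is correct and takes essentially the same approach as the paper's: both reduce the claim to a uniform second-coordinate tail bound for $\ORD_1(\bz^{(n)})$ and then use hypotheses (4) and (5) to show that the entries with large second coordinate have first coordinate bounded below, hence sit in boundedly many positions in the $\ORD_1$ ordering. The paper runs this step by contradiction and invokes Dini's theorem with an indicator cutoff $1_{[y_l^{(n)}\le\delta]}$, while you argue directly via a counting bound (at most $K_0$ entries with first coordinate $\ge\delta$ by $\sigma_2$-convergence) and the tail criterion for relative compactness in $\ell^2$; this is a presentational difference, not a distinct route.
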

	
	\begin{proof}
		
		Without loss of generality, we can suppose that $\bz^{(n)} = \ORD_1(\bz^{(n)})$. It suffices to show
		\begin{equation}\label{eqn:veczconv}
			\lim_{k\to\infty} \limsup_{n\to\infty} \sum_{l>k} \left\|\vec{z}_l^{(n)} - x_l \begin{bmatrix}
				1\\\alpha
			\end{bmatrix}\right\|_{1}^2  = 0.
		\end{equation}
		To do this first note that by the equivalence of norms on $\R^2$ that there is a constant $C\in(0,\infty)$ such that
		\begin{align*}
			\sum_{l>k}& \left\| \vec{z}_l^{(n)} - x_l \begin{bmatrix}
				1\\\alpha
			\end{bmatrix}\right\|^2_1 \le C   \sum_{l>k} \left\| \vec{z}_l^{(n)} - x_l \begin{bmatrix}
				1\\\alpha
			\end{bmatrix}\right\|^2_2 \\
			&\le C \sum_{l>k } \left((x_l^{(n)} - x_l)^2+ (y_l^{(n)})^2 +\alpha x_l^2 \right).
		\end{align*}
		In the last inequality we used $(a-b)^2\le a^2+b^2$ for $a,b\ge 0$.
		As $\pi_1(\bz^{(n)}) \to \bx$ in $\ell^2_\downarrow$ and $\bx\in\ell^2$, equation \eqref{eqn:veczconv} follows from showing
		\begin{equation*}
			\lim_{k\to\infty} \limsup_{n\to\infty} \sum_{l>k} (y_l^{(n)})^2 = 0.
		\end{equation*}
		
		We will now show that for all $\eps>0$, we can find a $k = k(\eps)$ such that 
		\begin{equation}\label{eqn:taily3}
			\limsup_{n\to\infty}  \sum_{l>k} (y_l^{(n)})^2\le \eps.
		\end{equation}Let us write $\bx^{(n)} = \pi_1(\bz^{(n)}), \by^{(n)} = \pi_2(\bz^{(n)})$, $\widetilde{\by}^{(n)} = \ORD(\by^{(n)})$ and $\widetilde{\bx}^{(n)} = \pi_1(\ORD_2(\bz^{(n)}))$. Note that since $\widetilde{\by}^{(n)}\longrightarrow \by$ in $\ell^2$ it follows that $\{\widetilde{\by}^{(n)};n\ge 1\}$ is pre-compact and hence 
		\begin{equation*}
			\lim_{\delta\downarrow 0} \sup_{n\ge 1} \sum_{l=1}^\infty ({y}^{(n)}_l1_{[{{y}}_l^{(n)}\le \delta]})^2 =    \lim_{\delta\downarrow 0} \sup_{n\ge 1} \sum_{l=1}^\infty (\widetilde{y}^{(n)}_l1_{[{\widetilde{y}}_l^{(n)}\le \delta]})^2 = 0.
		\end{equation*} This follows from Dini's theorem. Hence, there is a $\delta = \delta_\eps>0$ such that 
		\begin{equation}\label{eqn:taily2}
			\sup_{n\ge 1} \sum_{l=1}^\infty\left( y^{(n)}_l\right)^2 1_{[y_l^{(n)}\le\delta]}\le \eps.
		\end{equation}
		Note that as $\by\in\ell^2_\downarrow$, we can find an $m = m(\eps)$ such that 
		\begin{equation*}
			y_l \le \delta/4\qquad\forall l> m.
		\end{equation*}
		Similarly, we can find a $k = k(\eps) \ge m$ such that 
		\begin{equation*}
			x_l \le \frac{\beta y_m}{4}\qquad \forall l > k.
		\end{equation*}
		As $\bx^{(n)}\to \bx$ and $\widetilde{\by}^{(n)}\to \by$ in $\ell^2_\downarrow$, it follows that $x_p^{(n)}\to x_p$ and $\widetilde{y}_p^{(n)}\to y_p$ for all $p\le k$. Recall, both $\bx^{(n)}$ and  $\widetilde{\by}^{(n)}$ are ordered. Therefore, we can find an $n_1$ sufficiently large such that for all $n\ge n_1$
		\begin{align}\label{eqn:tildeyl}
			&\textup{ if}\,\,\, x^{(n)}_l \ge \frac{\beta y_m}{2}\,\, \textup{   then  
			} \,\,l\le k &&\textup{and}&&\textup{ if}\,\,\widetilde{y}^{(n)}_p \ge \delta/2\,\, \textup{   then  
			} \,\,p\le m.
		\end{align}
		
		We claim that there exists an $n_2\ge n_1$ sufficiently large such that
		\begin{equation}\label{eqn:taily}
			\sum_{l>k} (y_l^{(n)})^2 \le \sum_{l=1}^\infty \left(y_l^{(n)}\right)^2 1_{[y_l^{(n)}\le \delta]}  \qquad\forall n\ge n_2.
		\end{equation}
		To show this, suppose that it does not hold. That means there exists a sequence $N_j\to \infty$ with $N_1\ge n_1$ such that
		\begin{equation*} 
			\sum_{l>k} \left(y_l^{(N_j)}\right)^2 > \sum_{l=1}^\infty \left(y_l^{(N_j)}\right)^2 1_{[y_l^{(n)}\le \delta]}\qquad \forall j\ge 1.
		\end{equation*}
		But this implies that for each $j\ge 1$ there exists a $p_j>k$ such that $y_{p_j}^{(N_j)} > \delta$. As $\widetilde{\by}^{(N_j)}= \ORD(\by^{(N_j)})$, we conclude from \eqref{eqn:tildeyl} that 
		\begin{equation*}
			y_{p_j}^{(N_j)} \in \{ \widetilde{y}^{(N_j)}_l : l\le m\}.
		\end{equation*} Therefore,
		\begin{equation*}
			\begin{bmatrix}
				x_{p_j}^{(N_j)}\\
				y_{p_j}^{(N_j)}
			\end{bmatrix} \in \left\{\begin{bmatrix}
				\widetilde{x}^{(N_j)}_l \\
				\widetilde{y}^{(N_j)}_l 
			\end{bmatrix}; l\le m\right\}
		\end{equation*}
		This implies that
		\begin{equation*}
			\liminf_{j\to\infty} x^{(N_j)}_{p_j} \ge \liminf_{j\to\infty} \min_{l\le m}\widetilde{x}_l^{(N_j)} = \min_{l\le m} \lim_{j\to\infty} \widetilde{x}_l^{(N_j)} = \beta y_m> \frac{\beta y_m}{2}
		\end{equation*}
		where the strict inequality follows from $y_l>0$ for all $l$. But also, as $p_j > k$ we know from \eqref{eqn:tildeyl} that $x_{p_j}^{(N_j)}< \beta y_m/2$. This gives the desired contradiction and \eqref{eqn:taily} holds. By \eqref{eqn:taily2} and \eqref{eqn:taily}, \eqref{eqn:taily3} holds and the result follows.
	\end{proof}

	\subsection{Proofs of Theorems \ref{thm:classic} and \ref{thm:bipartite}}
	
	The proofs of Theorems \ref{thm:classic} and \ref{thm:bipartite} are nearly identical from this point onwards. We restrict our attention to Theorem \ref{thm:classic}.
	
	To begin, note that by Lemma \ref{lem:vecforClassic} that jointly in $j\in[2]$
	\begin{equation*}
		\pi_j\ORD_1 (\bcM_l^{(n)};l\ge 1) \weakarrow (\bzeta_p^\creg u_j^\creg)
	\end{equation*} where the topology is point-wise for $j =2$ and in $\ell^2_\downarrow$ for $j=1$. By reversing the roles of the type 1 and type 2 vertices, we see that for some element $\bzeta' = (\zeta_p';p\ge 1)\in \ell^2_\downarrow$ and some $\vec{v} = (v_1,v_2)\in (0,\infty)^2$ it holds that
	\begin{equation*}
		\pi_j\ORD_2 (\bcM^{(n)}_l;l\ge 1) \weakarrow (\zeta'_pv_j)
	\end{equation*} 
	where the convergence is pointwise for $j = 1$ and in $\ell^2_\downarrow$ for $j = 2$.
	
	Note that $\ell^2_\downarrow$ and $\R_+^\infty$ equipped with the topology of $\ell^2$ convergence and point-wise convergence (respectively) are Polish. Hence, a standard tightness argument implies
	\begin{align*}
		\bigg\{\Big(&\pi_1\big(\ORD_1 (\bcM_l^{(n)};l\ge 1)\big), \pi_2\big(\ORD_1 (\bcM_l^{(n)};l\ge 1)\big), \\
		& \qquad \pi_1\big(\ORD_2 (\bcM_l^{(n)};l\ge 1)\big),\pi_2\big(\ORD_2 (\bcM_l^{(n)};l\ge 1)\big)\Big); n\ge 1\bigg\} 
	\end{align*} 
	is tight in $\ell^2_\downarrow\times \R^\infty_+\times \R_+^\infty \times \ell^2_\downarrow.$ As this space is Polish, each subsequence of $n$ (say $n_r$) has a further subsequential weak limit (along say $n'_r$). By Skorohod's representation theorem, we can suppose that along this subsequence a.s.
	\begin{align*}
		\Big(&\pi_1\big(\ORD_1 (\bcM_l^{(n'_r)};l\ge 1)\big), \pi_2\big(\ORD_1 (\bcM_l^{(n'_r)};l\ge 1)\big), \\
		& \qquad \pi_1\big(\ORD_2 (\bcM_l^{(n'_r)};l\ge 1)\big),\pi_2\big(\ORD_2 (\bcM_l^{(n'_r)};l\ge 1)\big)\Big)\\
		&\longrightarrow\left( (\zeta_p^\creg u_1)_{p\ge 1}, (\zeta_p^\creg u_2)_{p\ge 1}, (\zeta'_pv_1)_{p\ge 1}, (\zeta'_pv_2)_{p\ge 1}\right)
	\end{align*} in $\ell^2_\downarrow\times\R_+^\infty\times\R_+^\infty\times \ell_\downarrow^2.$ Now just apply Lemma \ref{lem:margtowhole} and the observation that all subsequential limits are the same.

	\bibliographystyle{abbrv}

\end{document}